\documentclass{article} 

\pdfoutput=1

\usepackage[english]{babel}
\usepackage{a4wide}
\usepackage{amsmath,amsfonts,amsthm}
\usepackage{wasysym}
\usepackage[dvips]{graphicx}
\usepackage{enumerate}
\usepackage{mathtools}
\usepackage{mathrsfs}
\usepackage{bbm} 
\usepackage{xcolor}
\usepackage{enumitem}
\usepackage[pdfpagemode=UseNone,bookmarksopen=false,colorlinks=true,urlcolor=blue,citecolor=blue,citebordercolor=blue,linkcolor=blue]{hyperref}
%\linespread{1.7}
\usepackage{xfrac}

\usepackage[margin=1in]{geometry}

%%%%%%%

\usepackage[numbers]{natbib}

\usepackage{pifont}% http://ctan.org/pkg/pifont

\newcommand{\eps}{\varepsilon}
\newcommand{\al}{\alpha}
\newcommand{\Nat}{\mathbb{N}}
\newcommand{\Real}{\mathbb{R}}

\newcommand{\ex}[1]{\mathbb{E}\left[#1\right]}
\newcommand{\Pa}{{\mathcal P}}

\newcommand{\E}{{\mathcal E}}

\newcommand{\C}{{\mathcal C}}

\newcommand{\Aux}{{\mathcal G}}

\newcommand{\scs}[1]{\text{{\scshape#1}}}
\newcommand{\pois}[1]{\mathrm{Po}\left(#1\right)}

\newcommand{\eul}{\mathrm{e}}
\newcommand{\Var}[1]{\mathrm{Var}\left(#1\right)}

\newtheorem{firsttheorem}{Proposition}

\newtheorem{theorem}[firsttheorem]{Theorem}
\newtheorem{lemma}[firsttheorem]{Lemma}
\newtheorem{corollary}[firsttheorem]{Corollary}

\newtheorem{proposition}[firsttheorem]{Proposition}

\numberwithin{equation}{section}
\numberwithin{firsttheorem}{section}

\newcounter{parentlemma}
\newenvironment{sublemmas}
 {%
  \refstepcounter{firsttheorem}%
  \setcounter{parentlemma}{\value{firsttheorem}}%
  \edef\theparentlemma{\thefirsttheorem}%
  \setcounter{firsttheorem}{0}%
  \renewcommand{\thefirsttheorem}{\theparentlemma(\Roman{firsttheorem})}%
  
  \ignorespaces
 }
 {\setcounter{firsttheorem}{\value{parentlemma}}\ignorespacesafterend}

\newtheorem{secondtheorem}{Theorem}
\newtheorem{sectheorem}[secondtheorem]{Theorem}
\numberwithin{secondtheorem}{section}

%
% Useful
%

\newcommand{\card}[1]{\left|#1\right|}
\newcommand{\pr}[1]{\mathrm{Pr}\left[#1\right]}

\newcommand{\bigO}[1]{\mathcal{O}\left(#1\right)}

\newcommand{\iu}{\mathrm{i}\mkern1mu}

\newcommand{\e}[1]{\exp\left\{#1\right\}}

\DeclarePairedDelimiter\floor{\lfloor}{\rfloor}

\begin{document}
\title{Expansive Multisets: Asymptotic Enumeration} 
\author{Konstantinos Panagiotou\thanks{Department of Mathematics, Ludwigs-Maximilians-Universit\"at M\"unchen. E-mail: kpanagio@math.lmu.de.}\, and Leon Ramzews\thanks{Department of Mathematics, Ludwigs-Maximilians-Universit\"at M\"unchen. E-mail: ramzews@math.lmu.de. Funded by the Deutsche Forschungsgemeinschaft (DFG, German Research Foundation), Project PA 2080/3-1.}}
\date{\today}
\maketitle

\begin{abstract}
Consider a non-negative sequence $c_n = h(n) \cdot n^{\al-1} \cdot \rho^{-n}$, where $h$ is slowly varying, $\al>0$, $0<\rho<1$ and $n\in\Nat$. We investigate the coefficients of $G(x,y) = \prod_{k\ge1}(1-x^ky)^{-c_k}$, which is the bivariate generating series of the multiset construction of combinatorial objects. By a powerful blend of probabilistic methods based on the Boltzmann model and analytic techniques exploiting the well-known saddle-point method we determine the number of multisets of total size $n$ with $N$ components, that is, the coefficient of $x^ny^N$ in $G(x,y)$, asymptotically as $n\to\infty$ and for all ranges of $N$. Our results reveal a phase transition in the structure of the counting formula that depends on the ratio $n/N$ and that demonstrates a prototypical passage from a bivariate local limit to an univariate one.
\end{abstract}

{
\small
\tableofcontents
}
\newpage

%%%%%%%%%%%%%%%%%%%%%%%%% Introduction %%%%%%%%%%%%%%%%%%%%%%%%
\section{Introduction and Main Results}
\label{sec:introduction_and_main_results}
\paragraph{Multisets}
The focus of this work is on a simple, yet powerful and ubiquitous construction. Given a set~$\C$,
we define the class $\Aux = \scs{Mset}(\C)$ of \emph{$\C$-multisets} or \emph{partitions weighted by $\C$} to be the collection of all elements of the form
\[
    \big\{(C_1,d_1),\dots,(C_k,d_k)\big\}, \quad C_i\in \C \text{ pairwise distinct, }~d_i\in\Nat,~~1\le i\le k\in\Nat. 
\]
The tuple $(C,d)$ (informally) means that the object $C\in\C$ occurs $d$ times in the multiset. The asymptotic study -- enumeration, properties, limit laws -- of multisets has a long and rich history. One of the very first and monumental results in this context was achieved by~\citet{Hardy1918}, who studied number partitions, which are nothing else than multisets of natural numbers, i.e., $\C = \Nat$ in our notation. They discovered the beautiful asymptotic formula 
\[ 
    |{\cal P}_n| \sim \frac{1}{4\sqrt{3}n} \e{\pi\sqrt{\frac{2}{3}n}},
    \quad
    \textrm{as }
    n\to\infty,
\]
where ${\cal P}_n$ contains all number partitions of  $n\in\Nat$. In their paper they made the simple, though substantial, observation that a very general class of enumeration problems can be solved by applying Cauchy's  formula
%to the  respective generating series
and performing an appropriate analysis of the resulting complex integral.
This powerful idea, which is nowadays known as the \emph{saddle-point method} or \emph{method of steepest descent} and which is (much) older than~\cite{Hardy1918}, has seen tremendous extensions, has been rediscovered numerous times, and has seen widespread applications in various fields inside and outside of mathematics, particularly in physics.
The field is far too broad so that we could review it here; we refer to the classical textbook~\cite{Bruijn1981}, the modern exposition~\cite{miller2006applied}, and to the excellent treatments in~\cite{Flajolet2009,Pemantle2013}, which also put a particular emphasis on the combinatorial perspective. 
%\marginpar{\tiny hier zwei Refs: eine sehr allgemeine und eine spezielle für combinatorics -- \textcolor{red}{\cite{Bruijn1981}:2564, \cite{Wong1989}:1529, \cite{Andrews1976}:5295 (additive number theory + spm), \cite{Odlyzko1995}:490 (combinatorics survey), \cite{Petrova1997}:100 (historical study origins of spm), \cite{Olver1974}:5600, nach Flajolet kommt das erste mal SPM in stat physics in lectures von Schrödinger zu thermodynamics vor (1944) <- gehe am Mo in die Bib}}  
%One of the first \emph{systematic} approaches to multiset enumeration was performed by~\citet{Meinardus1954} \marginpar{\tiny tun wir hier Hayman nicht unrecht ? -- \textcolor{red}{der hat ja nicht speziell die ogf multisets betrachtet, aber vielleicht noch als allg. Methode erwähnen?}}, who set up a scheme of conditions guaranteeing that the method is applicable.
%Since then, the approach has been simplified and extended to various directions, and it has also been applied to several related models in combinatorics, probability and physics, see for example~\cite{Hwang2001,Granovsky2008,Mutafchiev2011,Mutafchiev2013,Granovsky2015,Stark2021}.

Another predominant idea in this area is to reformulate the counting problem into a probabilistic question, so as to profit from probability theory's huge toolbox. A central result was established by~\citet{Arratia1994}, who showed that many random combinatorial objects possess a component structure whose joint distribution is equal to that of random variables that are conditioned to have a specific (weighted) sum and are otherwise \emph{independent}. This, of course, rings the bell of every probabilist and calls for local limit theorems.
%, as enumeration problems can be directly related to the probability that a sum of independent random variables has a certain value.
Moreover, it lies at the core of~\emph{Khinchin's probabilistic method} that originated in~\cite{Khinchin1960}, see~\cite{Freiman2005} for some historical perspective, and that makes the relation of counting and probability a general and guiding principle.
%and that has been used to solve hard problems related to multisets, see~\cite{}.
%A%nother particularly prominent way to decompose random multisets, though this time not into component counts but cycle counts, is the famous \emph{Boltzmann model} having its roots in the pioneering paper~\cite{Duchon2004}, see~\cite{} for successful applications of this model. 
%We will explain these two approaches in more detail in Section~\ref{subsec:notes_and_perspective} and tease only for now that their common and crucial feature is that the corresponding independent random variables depend 
% that the counting problem at hand is reduced to the probability that a sum of independent random variables equals a certain value. The crucial property of these random variables is that they depend 

In this paper we are concerned with the asymptotic enumeration of multisets, and the two fundamentally different methods/principles will play an important role in what is to come.
% with respect to two parameters: the total size and the number of components.
To be precise, assume for the moment that $\C$ is a combinatorial class, that is, it is equipped with a size function $\lvert\cdot\rvert : \C \to \Nat$ such that $\C_n :=\{C\in\C:\lvert C\rvert = n\}$ contains only finitely many objects for $n\in\Nat$.  Then we associate to elements in $\Aux  = \scs{Mset}(\C)$ in a natural way a (total) size and a number of components, that is, 
\[
    \lvert G\rvert := \sum_{1\le i\le k}d_i\lvert C_i\rvert,
    \quad
    \kappa(G):=\sum_{1\le i\le k}d_i,
    \qquad
    \textrm{where}
    \qquad
    G=\big\{(C_1,d_1),\dots,(C_k,d_k)\big\}\in\Aux.
\]
We further set
\[
	\Aux_n:=\{G\in\Aux:\lvert G\rvert = n\}
	\quad
	\text{and}
	\quad
	\Aux_{n,N} := \{G\in\Aux_n:\kappa(G)=N\}, \qquad n,N\in\Nat.
\]
We write $C(x) := \sum_{k\ge 1}\lvert \C_k\rvert x^k$ for the generating series of $\C$. Then it is well-known, see~\cite{Flajolet2009,Leroux1998}, that the bivariate generating series for the class $\Aux$ fulfills the fundamental relations
\[  
    G(x,y)
    := \sum_{k,\ell\ge 1}\lvert \Aux_{k,\ell}\rvert x^ky^\ell
    = \prod_{k\ge 1}(1-x^ky)^{-c_k}
    = \mathrm{exp}\bigg\{\sum_{j\ge 1}C(x^j)y^j/j \bigg\}.
\]
For a generating series $F(x)$ we will write $[x^n]F(x)$ for the coefficient of $x^n$ in $F(x)$, and for a bivariate series $F(x,y)$ we will denote by $[x^ny^N]F(x,y)$ the coefficient of $x^ny^N$ in $F(x,y)$.
As already mentioned, if $\C=\Nat$, then we obtain the class of number partitions $\Pa=\scs{Mset}(\Nat)$ and $\lvert\Pa_n\rvert = [x^n]G(x,1)$, which is the starting point for the analytical methods employed in~\cite{Hardy1918}. 
Naturally, the determination of $g_n := [x^n]G(x,1)$ for general $C(x)$, respectively~$(|\C_n|)_{n \ge 1}$, has become a very active research topic. From today's viewpoint, the asymptotic value of $g_n$ is very well-understood and was determined under various general conditions and for several specific classes. A particularly prominent and broad setting, and the one considered here, is when
\begin{align}
	\label{eq:c_n_intro}
	c_n
	:= |\C_n|
	= h(n)\cdot n^{\al-1} \cdot \rho^{-n},
	\quad
	~\al\in\Real,~0<\rho\le 1,
\end{align}
where $h:[1,\infty)\to[0,\infty)$ is an eventually positive, continuous and slowly varying, which means that 
\[
    \lim_{x\to\infty}\frac{h(\lambda x)}{h(x)} = 1 ~\text{ for all }~ \lambda>0.
\]
As it turns out, depending on the value of $\al$ the models have fairly different features. The state-of-the-art results for multiset enumeration, i.e., the (asymptotic) determination of $g_n = [x^n]G(x,1)$ in the so-called \emph{expansive} case $\al > 0$ are by~\citet{Granovsky2006}, whereas the \emph{logarithmic} case $\al = 0$ is treated in the excellent book by~\citet{Arratia2003} and the \emph{convergent} case $\al < 0$ is studied in~\citet{Stufler2020}.  There are many  other treatises that also highlight several other aspects, like the typical structure of multisets, and that provide explicit formulas~\cite{Granovsky2015,Mutafchiev2013,Mutafchiev2011,Granovsky2008,Freiman2005,Barbour2005,Bell2000,Meinardus1954}, the methods ranging from purely analytic to probabilistic as previously described. 

\paragraph{Our Contribution}
While the enumeration problem is fairly well-studied, the fundamental question about the distribution of the number of components in a (uniformly random) multiset, and in particular the asymptotic determination of $g_{n,N} := |\Aux_{n,N}|$, is far less understood and actually much more complex. 
Indeed, both the saddle-point method and the probabilistic method hit a barrier.
In the saddle point method, we are faced with an integral over $\mathbb{C}^2$ and obtaining control over the integration contour is -- as the orders of magnitude of $n$ and $N$ may be vastly incompatible -- extremely challenging from a technical viewpoint; in the probabilistic method, on the other hand, we have to deal with bivariate local limit theorems for random variables with a specific dependency structure. 
Although there are many general and notable results that address the multivariate setting in both the analytic and probabilistic settings, see for example the extensive treatment in~\cite{Pemantle2013}, they are not sufficient for the desired level of generality considered here.

In this paper we study the expansive case $\al > 0$ in~\eqref{eq:c_n_intro} and we demonstrate that a \emph{combination} of both methods is very effective for determining $g_{n,N}$ (and not only ...) in all relevant cases, that is, when $n\to \infty$ and $N$ is not too close to one or to the maximum possible number of components. Indeed, we first set up an appropriate probabilistic framework, which differs substantially from the ``classical'' aforementioned ones used in enumeration of multisets; to wit, the previous approaches, that are directly or indirectly based on~\cite{Khinchin1960}, use a description with negative binomials, while our approach, which is inspired by modern Boltzmann sampling techniques in combinatorics~\cite{Bodirsky2007}, makes use of Poisson variables. A thorough discussion and comparison is made in Section~\ref{subsec:notes_and_perspective}. Having achieved this, we use probabilistic methods to reduce the determination of $g_{n,N}$ to an one dimensional problem, which then, in turn, is tackled with the saddle-point method. At this point it seems unavoidable to resort to probabilistic methods only, as the problem involves a genuine triangle array of random variables for which we need a local limit theorem.

Before we build up the theory and present the results in the following sections we review some previous work that addresses the determination of $g_{n,N}$ in specific settings.
In the context of number partitions ${\cal P} = \textsc{Mset}(\Nat)$ the picture is quite clear.
In~\cite{Knessl1990} the asymptotic order of $|{\cal P}_{n,N}|$ for $n, N, n-N \to \infty$ is determined and a phase transition, depending on whether $N$ is $\mathcal{O}(n^{1/2})$ or $\omega(n^{1/2})$, in the structure of the counting sequence is observed. 
For $N \ge d\sqrt{n}\ln n$ and $d>d_0$ for some $d_0>0$ it is even true that $g_{n,N}\sim |{\cal P}|_{n-N}$, as shown in~\cite{Hwang1997}. Another setting in which the picture is fully described is the subexponential case in~\eqref{eq:c_n_intro}, that is, when $\al < 0$. For example, if $c_1 = 1$, the authors showed that there is an $A > 0$ such that $g_{n,N} \sim A c_{n-N}$ for all $n,N, n-N \to \infty$, that is, the number of $N$-component $\C$-multisets is proportional to $c_{n-N}$ for all ranges of $n-N$. The result was shown with the help of a probabilistic method, but as the result itself suggests, this bivariate problem is actually and in essence a univariate one. 
Finally, in the expansive case $\Aux = \textsc{Mset}(\C)$, under the stronger assumptions $c_n \sim Cn^{\al-1}$ for $C>0$, $\al>0$ and additional analytical assumptions, $g_{n,N}$ was determined asymptotically for $N = \omega(\ln^3n)$ and $N=o(n^{\al/(\al+1)})$ in~\citet{Stark2021}. Actually, in that paper the author accomplishes the herculean task of performing a bivariate saddle-point integration, though only for a limited range of the parameters. In any case, the results of~\cite{Stark2021} give reason to conjecture that $g_{n,N}$, as in the case of number partitions, undergoes a phase transition depending on the ratio of $N$ and $n^{\al/(\al+1)}$ in this general setting as well. We will confirm in this paper that such transitions are prototypical for the considered counting problems.
\subsection{Main Results}
\label{sec:main_results}
%%%%%%%%%%%%%%%%%%%%%%%%%%%%%%%%%%%%%%%%%%%%%%%%%%%%%%%%%%%%%%%
%\paragraph{Main Results} 
Let us fix the setting that we consider. Let $h:[1,\infty)\to[0,\infty)$ be an eventually positive, continuous and slowly varying function.
Consider the real-valued sequence $(c_n)_{n\in\Nat}$ given by
\begin{align}
	\label{eq:c_n}
	c_n
	= h(n)\cdot n^{\al-1} \cdot \rho^{-n}
	\quad \text{where } \al>0,0<\rho < 1 \text{ and }n\in\Nat.
\end{align}
Note that there are two differences to~\eqref{eq:c_n_intro}. First, the sequence here does not have to be a counting sequence, since the $c_n$'s may not be integral. Second, we assume $\rho < 1$ and exclude the single case $\rho=1$, as our methods necessitate this. 
Following~\cite{Granovsky2006}, we call sequences $(c_k)_{k\in\Nat}$ fulfilling~\eqref{eq:c_n} \emph{expansive}. We also say that the corresponding generating series $C(x):=\sum_{k\ge 1}c_kx^k$ is expansive. 

Further, define $m = m(C) \in\mathbb{N}$ to be the smallest integer such that $c_m \neq 0$, that is,  $c_1=\cdots=c_{m-1}=0$ and $c_m>0$. Recall/consider the power series
\begin{align}
	\label{eq:G(x,y)}
	G(x,y)
	:= \e{\sum_{j\ge 1}\frac{C(x^j)}{j} y^j}
 	\quad\text{and}\quad
    G^{\ge 2}(x,y)
    := \e{\sum_{j\ge 2} \frac{C(x^j)}{j} y^j}.
% 	\quad\text{where}\quad
% 	C(x)
% 	:= \sum_{k\ge m}c_kx^k.
\end{align}
Note that since we defined $(c_n)_{n\in\Nat}$ to be real-valued the series $G(x,y)$ has a priori no combinatorial meaning. However, if $(c_n)_{n\in\Nat}$ is the counting sequence of a combinatorial class $\C$, then, as we saw previously, $G(x,y)$ is the generating series of $\Aux=\scs{Mset}(\C)$, where $x$ tags the size and $y$ the number of components. 
%In other words, in the combinatorial setting $G(x,y) = \sum_{k,\ell\ge 0}\lvert\Aux_{k,\ell}\rvert x^ky^\ell$.
Nonetheless, we study $G(x,y)$ as a series having possibly non-integral coefficients for the rest of this work unless stated otherwise.
The series $G^{\ge 2}(x,y)$ is an auxiliary object that we will need later.

As already mentioned, the quantity $g_n=[x^n]G(x,1)$ is a well-researched object. The authors of~\cite{Granovsky2006} investigated, among other cases, $g_n$ in the expansive case.
%Set
%\[
%    G^{\ge 2}(x,y)
%    := \e{\sum_{j\ge 2} \frac{C(x^j)}{j} y^j}.
%\]
We present the following theorem that is a straightforward consequence from the results and their proofs in~\cite[Thm.~1 and Cor.~1]{Granovsky2006}. However, as we use a different notation and the connection to~\cite{Granovsky2006} is not immediately obvious, we will give a short self-contained two-page proof that also demonstrates our methodology quite well in Section~\ref{pf:coeff_g_n_granovsky}. 
\begin{theorem}
\label{thm:coeff_g_n_granovsky} 
Suppose that $C(x)$ is expansive as in~\eqref{eq:c_n}. Let $z_n$ be the unique solution to $z_nC'(z_n)=n$. Then, as $n\to\infty$,
\begin{equation}
    \label{eq:thm_proba}\tag{LLT}
    z_n\sim \rho
    \quad
    \textrm{and}
    \quad
    [x^n]G(x,1)
    \sim G^{\ge 2}(\rho,1) \cdot \frac{\e{C(z_n)}}{\sqrt{2\pi z_n^2 C''(z_n)}} \cdot z_n^{-n}.
\end{equation}
\end{theorem}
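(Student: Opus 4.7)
The plan is to reduce the asymptotics of $g_n = [x^n] G(x,1)$ to a saddle-point analysis of $\exp\{C(x)\}$ carried out via a Poisson representation, with the factor $G^{\ge 2}(x,1)$ contributing only the multiplicative constant $G^{\ge 2}(\rho,1)$. Starting from the factorisation $G(x,1) = \exp\{C(x)\}\cdot G^{\ge 2}(x,1)$, and observing that $C(x^j)$ has radius of convergence $\rho^{1/j}\ge\sqrt\rho$ for $j\ge 2$, the series $G^{\ge 2}(x,1)$ is analytic on $|x|<\sqrt\rho$, so $G^{\ge 2}(\rho,1)$ is a well-defined finite positive real. Set $a_n := [x^n]\exp\{C(x)\}$. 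The aim is to prove $a_n \sim \eul^{C(z_n)} z_n^{-n}/\sqrt{2\pi z_n^2C''(z_n)}$ and then recover the stated formula for $g_n$.

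To locate the saddle point, note that $xC'(x)$ is strictly increasing on $(0,\rho)$, vanishes at $0$, and tends to $+\infty$ as $x\to\rho^-$ (since $\alpha>0$ already forces $\sum c_k\rho^k=\infty$); hence $z_nC'(z_n)=n$ admits a unique solution $z_n\in(0,\rho)$ and $z_n\to\rho$. An application of Karamata's Tauberian theorem to $h$ gives, as $x\to\rho^-$,
\begin{equation*}
    xC'(x)\sim\Gamma(\alpha+1)\, h\!\left(\frac{1}{1-x/\rho}\right)(1-x/\rho)^{-(\alpha+1)},
\end{equation*}
with an analogous asymptotic for $x^2C''(x)$ involving $\Gamma(\alpha+2)$ and exponent $\alpha+2$; in particular $z_n^2C''(z_n)\to\infty$. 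For the probabilistic ingredient, introduce independent Poisson variables $Y_k\sim\pois{c_k z_n^k}$ for $k\ge m$ and set $X := \sum_{k\ge m} k Y_k$. Expanding $\exp\{C(z_n)\}=\prod_k \exp\{c_k z_n^k\}$ and matching coefficients of $z_n^n$ yields the key identity
\begin{equation*}
    \pr{X=n} = \eul^{-C(z_n)}\, z_n^n\, a_n,
\end{equation*}
from which $\ex{X} = z_nC'(z_n) = n$ by the saddle-point equation, together with $\Var{X} = z_n^2 C''(z_n) + n\sim z_n^2 C''(z_n)$.

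The asymptotic for $a_n$ then hinges on a local limit theorem $\pr{X=n}\sim(2\pi\Var{X})^{-1/2}$. I would prove this by Fourier inversion: split $\pr{X=n}=\frac{1}{2\pi}\int_{-\pi}^{\pi}\eul^{-\iu tn}\ex{\eul^{\iu tX}}\,dt$, use a Gaussian approximation of $\ex{\eul^{\iu tX}}=\exp\{\sum_k c_k z_n^k(\eul^{\iu tk}-1)\}$ on a small window around $t=0$, and bound the contribution from $|t|$ bounded away from $0$ modulo $2\pi$ using that $\mathrm{Re}\log\ex{\eul^{\iu tX}}=\sum_k c_k z_n^k(\cos(tk)-1)$ is strongly negative thanks to expansivity. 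Finally, the convolution $g_n=\sum_{k=0}^n[x^k]G^{\ge 2}(x,1)\cdot a_{n-k}$, combined with the superexponential upper bound $[x^k]G^{\ge 2}(x,1)=\bigO{\rho^{-k/2}}$ up to slowly varying factors, the pointwise limit $a_{n-k}/a_n\to\rho^k$ (which follows from the closed-form asymptotic for $a_n$ together with $z_n\to\rho$), and dominated convergence, yields $g_n\sim G^{\ge 2}(\rho,1)\cdot a_n$.

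The hard part will be the local limit theorem itself: the $Y_k$ form a triangular array whose law depends on $n$ through $z_n$, so the Gaussian approximation must be uniform in $n$, and the delicate point is the tail bound on the characteristic function outside a small neighbourhood of the origin. It is there that expansivity --- the power $n^{\alpha-1}$ together with the slow variation of $h$ --- is essential, as it guarantees sufficient mass on the support of $(c_k)$ to drive down $|\ex{\eul^{\iu tX}}|$ exponentially fast away from $0$.
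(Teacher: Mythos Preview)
Your approach is correct and the overall architecture matches the paper's: factor out $G^{\ge 2}$, reduce to $a_n = [x^n]\eul^{C(x)}$, and prove a local limit theorem via a Poisson representation tuned at the saddle point $z_n$. The difference lies in how the LLT is executed. You work with $X = \sum_k k Y_k$, $Y_k \sim \pois{c_k z_n^k}$, and attack $\pr{X=n}$ by a single Fourier inversion, the log-characteristic function being exactly $C(z_n \eul^{\iu t}) - C(z_n)$. The paper instead writes the same random variable as a randomly stopped sum $K = \sum_{i=1}^P C_i$ with $P \sim \pois{C(z_n)}$ and $C_i$ iid Boltzmann samples, conditions on $P = p$, invokes its general iid local limit theorem (Lemma~\ref{lem:L_p_deviation_from_mean}) for each $\pr{K_p = n}$, and then integrates the Poisson density of $P$ against the resulting Gaussian via Euler--Maclaurin. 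Your route is more direct and self-contained; the paper's is shorter \emph{in context} because Lemma~\ref{lem:L_p_deviation_from_mean} and its tail estimates (Lemma~\ref{lem:bounds_prob_gen_fct}) are needed anyway for the main Theorems~\ref{thm:coeff_G_l<d} and~\ref{thm:coeff_G_l>d}, so the paper simply reuses them here.

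One point to tighten: invoking ``dominated convergence'' for $g_n/a_n = \sum_k b_k\,a_{n-k}/a_n \to G^{\ge 2}(\rho,1)$ requires a summable $n$-independent majorant for $b_k\,a_{n-k}/a_n$, and the pointwise limit $a_{n-k}/a_n \to \rho^k$ alone does not supply one (the crude saddle-point bound $a_{n-k}\le \eul^{C(z_n)}z_n^{-(n-k)}$ gives $a_{n-k}/a_n \lesssim z_n^k\sqrt{z_n^2C''(z_n)}$, which carries an $n$-dependent factor). The clean fix is to first observe that your asymptotic for $a_n$, together with $z_{n-1}-z_n = O(1/z_n^2 C''(z_n))$, yields $a_{n-1}/a_n \to \rho$, and then apply the standard transfer lemma for products of power series (Lemma~\ref{lem:coeff_product} in the paper): if the dominant factor has consecutive-ratio limit $\rho$ and the other factor has radius of convergence $>\rho$, the product inherits the dominant factor's asymptotics up to the constant $G^{\ge 2}(\rho,1)$. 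This is exactly how the paper handles the step, citing~\cite{Bell2003} for the ratio condition and Lemma~\ref{lem:coeff_product} for the transfer.
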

The form of the enumeration result~\eqref{eq:thm_proba} is prototypical: there is a saddle-point ($z_n$), an exponential term ($z_n^{-n}$), a term in which the power series is evaluated ($G^{\ge2}(\rho,1)\exp{C(z_n)} \sim G(z_n,1)$), and a polynomial term $(2\pi z_n^2 C''(z_n))^{-1/2}$. The latter is the result of an appropriate integration around the saddle point, or, in a terminology that we prefer here, the result of a local limit theorem, i.e., the probability that a sum of specific independent and identically distributed random variables equals its mean. We will see much more of that later. Let us remark, however, that in the generality considered here, we cannot expect to be able to say much more than~\eqref{eq:thm_proba}: in general, it is not possible to derive a more explicit asymptotic expression for $\exp\{C(z_n)\}$ (though it is possible to do so for $C(z_n)$ and for its derivatives), and the actual order of magnitude depends very much on the micro-structure of $h$.

We now move on the main mission of this work, namely the study of $g_{n,N} = [x^ny^N]G(x,y)$. We need some preparations. For $(n,N)\in\Nat^2$ consider the system of equations in the variables~$x,y$ 
\begin{align}
	\label{eq:saddle_point_equations}
	xyC'(x) + mc_m\frac{x^my}{1-x^my} = n, \enspace
	yC(x) + c_m\frac{x^my}{1-x^my} = N, \enspace
    x,y>0\enspace
    \text{and}\enspace
	x^my<1.
\end{align}
We will explain later in detail where these equations come from and only tease for now that the left-hand sides are (more or less) the expected value of the size and the number of components, which we ``tune'' to $n$ and $N$, respectively, in a specifically designed random multiset. Equations~\eqref{eq:saddle_point_equations} are -- in some sense -- the bivariate equivalent of the saddle-point equation in Theorem~\ref{thm:coeff_g_n_granovsky}.
Further, for $v\in\Real^+$ consider the equation in the single variable $u$
\begin{align}
\label{eq:N_star_general}
	u \cdot h(u)^{1/(\al+1)} 
	= v^{1/(\al+1)} \quad\text{and}\quad 1\le u\le v.
\end{align}
Our first auxiliary result is that the previous systems of equations have unique solutions.
\begin{lemma}
\label{lem:existence_of_solutions}
The following  statements are true.
\begin{enumerate}[label=(\roman*)]
\item \label{item:lem_x0_y_n} Suppose that $C(x)$ is expansive. For $n,N$ and $n-mN$ sufficiently large there is a unique solution $(x_{n,N},y_{n,N})$  to~\eqref{eq:saddle_point_equations}.
\item \label{item:lem_N_star} For $v$ sufficiently large there is a unique solution $u_v$ to~\eqref{eq:N_star_general}  given by $u_v =  v^{1/(\al+1)}/g(v)$, where $g:\Real^+\to \Real^+$ is slowly varying.
\end{enumerate}
\end{lemma}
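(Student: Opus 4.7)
The two equations in~\eqref{eq:saddle_point_equations} share the rational piece $c_m x^m y/(1-x^my)$, which can be eliminated by subtracting $m$ times the second from the first. This gives the clean identity
\[
    y\cdot F(x) = n-mN, \qquad F(x) := xC'(x) - mC(x) = \sum_{k>m}(k-m)\,c_k x^k.
\]
Since $(c_k)$ is expansive with $\al>0$, $F$ is analytic and strictly increasing on $(0,\rho)$ with $F(0^+)=0$ and $F(\rho^-)=\infty$, so $y=(n-mN)/F(x)$ is a continuous strictly decreasing function of $x\in(0,\rho)$ as soon as $n-mN>0$. Substituting back into the second equation reduces the whole system to the one-variable equation $\psi(x)=N$, where
\[
    \psi(x) = (n-mN)\frac{C(x)}{F(x)} + \frac{c_m(n-mN)}{F(x)/x^m - (n-mN)}.
\]
The admissibility constraint $x^my<1$ is equivalent to $F(x)/x^m > n-mN$; since the map $x\mapsto F(x)/x^m=\sum_{k>m}(k-m)c_k x^{k-m}$ is itself strictly increasing from $0$ to $\infty$ on $(0,\rho)$, there is a unique $x_0\in(0,\rho)$ with $F(x_0)/x_0^m=n-mN$, and the admissible $x$-range is $(x_0,\rho)$.

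The crux is to verify that $\psi:(x_0,\rho)\to(0,\infty)$ is a continuous strictly decreasing bijection. Strict monotonicity of the second summand is immediate, its numerator being constant while $F/x^m$ strictly increases. For the first summand I would establish $(\log F)'(x)>(\log C)'(x)$, which has a transparent probabilistic proof: normalise $(c_k x^k)_{k\ge m}$ and $((k-m)c_k x^k)_{k>m}$ to probability measures; the latter is the size-biased version of the former with weight $(k-m)$ and so has strictly larger mean $E[K]$, and the two logarithmic derivatives are precisely $E[K]/x$. Combined with the boundary values $\psi(x_0^+)=\infty$ (the geometric piece blows up) and $\psi(\rho^-)=0$ (both summands decay because $F$ dominates $C$ and $x^m$ near $\rho$), the intermediate value theorem delivers a unique $x_{n,N}$, and hence a unique $y_{n,N}$. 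The hypothesis that $n,N,n-mN$ are sufficiently large only serves to place $N$ inside the range of $\psi$ and keep $x_0$ bounded away from $\rho$.

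\textbf{Plan for Part~(ii).} The function $f(u):=u\cdot h(u)^{1/(\al+1)}$ is continuous and regularly varying of index $1$, because $h^{1/(\al+1)}$ is slowly varying. In particular $f(u)\to\infty$, and for $v$ large one has $f(1)\le v^{1/(\al+1)}\le f(v)$, the upper bound following from $f(v)/v^{1/(\al+1)}=v^{\al/(\al+1)}h(v)^{1/(\al+1)}\to\infty$ since $\al>0$. The intermediate value theorem then provides at least one $u_v\in[1,v]$ solving the equation. To pin down its form I substitute the ansatz $u_v=v^{1/(\al+1)}/g(v)$, turning the equation into
\[
    g(v)^{\al+1} = h\bigl(v^{1/(\al+1)}/g(v)\bigr).
\]
If $g$ is slowly varying, the uniform-on-compacta property of slow variation gives $h(v^{1/(\al+1)}/g(v))\sim h(v^{1/(\al+1)})$, so the fixed-point relation forces $g(v)\sim h(v^{1/(\al+1)})^{1/(\al+1)}$, which is indeed slowly varying in $v$ and closes the bootstrap.

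Uniqueness is the main technical obstacle, because a continuous slowly varying function need not be monotone. I would handle it by combining the general inversion theorem for regularly varying functions of positive index (the asymptotic inverse is determined uniquely up to asymptotic equivalence) with Karamata's representation theorem, which lets one replace $h$ asymptotically by a continuous, eventually strictly monotone slowly varying function; together with the a~priori confinement of any solution to the narrow multiplicative window around $v^{1/(\al+1)}$ (forced by $f(u)/u^{1\pm\eps}$ diverging or vanishing), this upgrades asymptotic invertibility to genuine uniqueness on the interval $[1,v]$ for all $v$ sufficiently large.
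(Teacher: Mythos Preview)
Your Part~(i) is correct and follows the paper's route almost verbatim: eliminate the shared rational term by subtracting $m$ times the second equation from the first, solve for $y$ as a function of $x$, substitute into the second equation, and show that the resulting one-variable function is a strictly decreasing bijection from the admissible $x$-interval onto $(0,\infty)$. Your size-biasing argument for $(\log F)'>(\log C)'$ is a pleasant variant of the paper's observation that $xC'(x)/C(x)$ is strictly increasing on $(0,\rho)$; the two are equivalent since $C/F = 1/\bigl(xC'/C - m\bigr)$.

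Part~(ii) has a genuine gap in the uniqueness step. Karamata's representation theorem does \emph{not} let you replace $h$ by an asymptotically equivalent \emph{monotone} slowly varying function: in $h(x)=c(x)\exp\bigl(\int_1^x \eps(t)\,t^{-1}\,dt\bigr)$ the function $\eps$ tends to $0$ but may oscillate in sign, so even the normalised version need not be monotone. And even granting such a replacement $\tilde h\sim h$, two solutions $u_1<u_2$ of the original equation $u\,h(u)^{1/(\al+1)}=v^{1/(\al+1)}$ are not solutions of the modified one, so monotonicity of the surrogate buys you nothing; the asymptotic-inverse theorem gives at best $u_1\sim u_2$, never $u_1=u_2$. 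The paper instead uses the Potter bounds directly, and this is exactly the ingredient you already invoke for the confinement window: any two solutions lie in $\bigl(v^{1/(\al+1)-\eps},v^{1/(\al+1)+\eps}\bigr)$ and hence tend to infinity, so from $u_1/u_2=\bigl(h(u_2)/h(u_1)\bigr)^{1/(\al+1)}$ and the Potter estimate $h(u_2)/h(u_1)\ge (u_1/u_2)^{\delta}$ one obtains $(u_1/u_2)^{1-\delta/(\al+1)}\ge 1$, a contradiction for any $0<\delta<\al+1$. Equivalently, the Potter bounds make $u\mapsto u\,h(u)^{1/(\al+1)}$ strictly increasing on some ray $[u_0,\infty)$, which already contains all solutions for $v$ large. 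Your bootstrap for the slow variation of $g$ is also circular as written (it assumes $g$ slowly varying in order to conclude it); once uniqueness is secured, the paper verifies $g(\lambda v)/g(v)\to 1$ by computing $\bigl(g(v)/g(\lambda v)\bigr)^{\al+1}=h(u_v)/h(u_{\lambda v})$ and bounding this ratio with the same Potter inequalities.
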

With the slowly varying function $g$ from Lemma~\ref{lem:existence_of_solutions}\ref{item:lem_N_star} at hand, define the ``magic'' value
\begin{align}
    \label{eq:N_star}
    N_n^*
    := C_0 \cdot g(n) \cdot n^{\al/(\al+1)}, \quad\text{where}\quad	C_0
	:= \al^{-1}{(\rho^{-m}\Gamma(\al+1))^{{1}/(\al+1)}}.
\end{align}
As we will see shortly, the quantity $N^*_n$ marks a phase transition in the structure of the counting sequence $[x^ny^N]G(x,y)$ (and much more \dots) dependent on whether $N/N^*_n < 1$ or $N/N^*_n > 1$.
More precisely, assume that we are given a real-valued positive sequence $(\lambda_n)_{n\in\Nat}$ such that, as $n\to\infty$,
\begin{align}
    \label{eq:N_equal_lambda_n}
	N_n
	= \lambda_n N^*_n \in \Nat,
	\quad
	N_n\to \infty
	\quad\text{and}\quad
	n-mN_n \to\infty.
\end{align}
Note that by these definitions $N_n$ becomes a function of $n$ and the solutions $({x}_n,y_n)\equiv (x_{n,N_n},y_{n,N_n})$ to~\eqref{eq:saddle_point_equations} and $N^*_n$ from~\eqref{eq:N_star} are all well defined for sufficiently large $n$ due to Lemma~\ref{lem:existence_of_solutions}. We will from now on distinguish the two cases 
\[
	(I) : \limsup_{n\to\infty}\lambda_n < 1 \qquad\text{and}\qquad
	(II) : \liminf_{n\to\infty}\lambda_n > 1.
\]
% \subsubsection{The First Regime $(I)$}
Our first main result determines the asymptotic growth (in several equivalent forms that have their own merits) of $[x^ny^{N_n}]G(x,y)$ as $n\to\infty$ in case $(I)$. 
\begin{sectheorem}
\label{thm:coeff_G_l<d}
Suppose that $C(x)$ is expansive. In case $(I)$, that is, if $\limsup_{n \to \infty} \lambda_n < 1$, 
\begin{align}
    \tag{LLT-I} \label{eq:thm_proba_l<d}	
    [x^ny^{N_n}]G(x,y)
	&\sim G^{\ge 2}(\rho, y_n) 
	\cdot
	\frac{\e{y_nC({x}_n)}}
	    {2\pi\sqrt{N_ny_n{x}_n^2C''({x}_n)/(\al+1)}}
	   % {2\pi\sqrt{N_ny_n\left({x}_n^2C''({x}_n)-({x}_nC'({x}_n))^2/C({x}_n)\right)}}
	\cdot {x}_n^{-n}y_n^{-N_n} \\
	\tag{Explicit-I}  \label{eq:thm_explicit_l<d}
    &\sim G^{\ge 2}(\rho, y_n)
    \cdot \frac{\sqrt{\al }}{2\pi}
    \cdot \exp\bigg\{-\frac{c_m\rho^my_n}{1-\rho^my_n}\bigg\}\cdot \frac{1}{n}
    \cdot {x}_n^{-n}(y_n/\eul)^{-N_n} \\
    \tag{Comb-I}  \label{eq:thm_comb_l<d}
    &\sim G^{\ge 2}(\rho, y_n)
    \cdot  \frac{1}{N_n!}[x^n]C(x)^{N_n}.
    %,\qquad n\to\infty.
\end{align}
\end{sectheorem}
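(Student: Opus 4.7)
The plan is to convert the bivariate coefficient extraction into a joint probability via a Boltzmann-type encoding, tune the model so that the expected size and component count hit $n$ and $N_n$, and then push it through a bivariate local limit theorem whose prefactor is delivered by Karamata-type asymptotics at the saddle.

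\textbf{Probabilistic set-up and saddle.} I would factor $G(x,y)=(1-x^my)^{-c_m}\,\e{y(C(x)-c_mx^m)}\,H(x,y)$, where $H(x,y):=\e{\sum_{k>m,\,j\ge 2}c_k(x^ky)^j/j}$ collects everything outside the $k=m$ block and the $j=1$ layer. For parameters with $x^my<1$, let $D_m\sim\mathrm{NegBin}(c_m,x^my)$ sample the first factor, let $Y_k\sim\pois{c_kx^ky}$ be independent over $k>m$ sampling the second, and let $(\tilde S,\tilde K)$ be a compound-Poisson sample from $H$; set $S:=mD_m+\sum_{k>m}kY_k+\tilde S$ and $K:=D_m+\sum_{k>m}Y_k+\tilde K$. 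Substituting $(x,y)\mapsto(xu,yv)$ in each generating function gives $\ex{u^Sv^K}=G(xu,yv)/G(x,y)$, hence the master identity
\[
    [x^ny^N]G(x,y)=G(x,y)\,x^{-n}y^{-N}\,\prob[S=n,\,K=N].
\]
Equations~\eqref{eq:saddle_point_equations} are precisely $\ex{S}=n$ and $\ex{K}=N$: $\ex{mD_m}$ and $\ex{D_m}$ produce the $mc_mx^my/(1-x^my)$ and $c_mx^my/(1-x^my)$ summands, while the $Y_k$ contribute $xyC'(x)-mc_mx^my$ and $yC(x)-c_mx^my$, which combine to the stated form. Lemma~\ref{lem:existence_of_solutions}\ref{item:lem_x0_y_n} supplies the saddle $(x_n,y_n)$. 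Karamata's expansions $C^{(r)}(x)\sim\Gamma(\al+r)h(1/(\rho-x))\rho^\al(\rho-x)^{-\al-r}/\Gamma(\al)$ as $x\uparrow\rho$ then force, in case~$(I)$, $x_n\uparrow\rho$ and $x_n^my_n$ bounded away from $1$, so $(D_m,\tilde S,\tilde K)$ stay tight and $G^{\ge 2}(x_n,y_n)\to G^{\ge 2}(\rho,y_n)$.

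\textbf{The local limit theorem --- main obstacle.} The heart of the proof is the bivariate LLT
\[
    \prob[S=n,\,K=N_n]\sim\frac{1}{2\pi\sqrt{\det\Sigma_n}},\qquad\det\Sigma_n\sim N_ny_nx_n^2C''(x_n)/(\al+1).
\]
The difficulty is that the covariance matrix of $(S,K)$ is nearly singular: $\Var{S}$ is of order $x_n^2y_nC''(x_n)$, $\Var{K}$ is of order $N_n$, and $\mathrm{Cov}(S,K)$ is of order $n$, so Cauchy--Schwarz is nearly saturated and the $1/(\al+1)$-factor emerges only after the delicate cancellation between $C,C',C''$ at $x_n$ is tracked through the Karamata expansion. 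I would establish the LLT by the characteristic-function route: diagonalize the Gaussian quadratic form, apply a Gaussian estimate in a shrinking box around the origin, and obtain exponential decay outside via a Poisson concentration bound exploiting the dense range $k\sim 1/(\rho-x_n)$ where most of the $Y_k$-mass lives, in the spirit of the univariate triangular-array LLT underlying~\cite{Granovsky2006}.

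\textbf{Assembling the three forms.} Feeding the LLT into the master identity and using $G(x_n,y_n)=\e{y_nC(x_n)}G^{\ge 2}(x_n,y_n)\sim \e{y_nC(x_n)}G^{\ge 2}(\rho,y_n)$ delivers~\eqref{eq:thm_proba_l<d}. For~\eqref{eq:thm_explicit_l<d} I would substitute $x_n^2C''(x_n)\sim(\al+1)n^2/(\al N_ny_n)$, read off from~\eqref{eq:saddle_point_equations} together with Karamata, and the saddle identity $y_nC(x_n)-N_n=-c_mx_n^my_n/(1-x_n^my_n)\to -c_m\rho^my_n/(1-\rho^my_n)$; the factor $\e{y_nC(x_n)}y_n^{-N_n}$ then collapses to $(y_n/\eul)^{-N_n}\,\e{-c_m\rho^my_n/(1-\rho^my_n)}$ and the $\sqrt{\al}/(2\pi n)$ prefactor falls out. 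Finally,~\eqref{eq:thm_comb_l<d} follows from the elementary identity $\tfrac{1}{N_n!}[x^n]C(x)^{N_n}=[x^ny^{N_n}]\e{yC(x)}$ by re-running exactly the same LLT for the pure Poisson pair $(\sum_{k\ge m}kY_k,\sum_{k\ge m}Y_k)$, i.e., without the $D_m$- and $(\tilde S,\tilde K)$-corrections; the two LLTs agree up to precisely the factor $G^{\ge 2}(\rho,y_n)$, closing the chain of equivalences.
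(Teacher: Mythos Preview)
Your approach is a genuinely different route from the paper's, and your determinant computation $\det\Sigma_n\sim N_ny_nx_n^2C''(x_n)/(\al+1)$ is correct. The paper, however, \emph{deliberately avoids} the bivariate LLT you propose. Instead of computing $\pr{S=n,K=N_n}$ directly, it factors this as $\pr{\Pa_{N_n}}\cdot\pr{\E_n\mid\Pa_{N_n}}$, where $\Pa_{N_n}=\{K=N_n\}$ and $\E_n=\{S=n\}$. In case~$(I)$ the first factor is essentially a Poisson point probability, $\pr{\Pa_{N_n}}\sim(2\pi N_n)^{-1/2}$ (Lemma~\ref{lem:P_N_l<d}), and the second reduces to a \emph{univariate} LLT for $\sum_{1\le i\le N_n}C_{1,i}$ (Lemma~\ref{lem:E_n_cond_P_N_l<d} together with Lemma~\ref{lem:L_p_deviation_from_mean}); the product recovers exactly your $1/(2\pi\sqrt{\det\Sigma_n})$. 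This conditioning trick converts the near-singular two-dimensional problem into two well-conditioned one-dimensional ones, and the paper's Lemma~\ref{lem:L_p_deviation_from_mean} (a triangular-array LLT proved by a careful one-dimensional saddle-point analysis) is where the real work happens.

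What your approach buys is conceptual directness and a cleaner link to~\eqref{eq:thm_comb_l<d}. What the paper's buys is that the hard analytic step is one-dimensional: the tail estimates (Lemma~\ref{lem:bounds_prob_gen_fct}) only need to control $|C(\omega e^{i\theta})/C(\omega)|$ on a single circle. Your sketch ``diagonalize, Gaussian in a shrinking box, exponential decay outside via Poisson concentration'' is the right template, but in the near-singular regime the box is an elongated ellipse whose axes differ by a factor $\chi_n^{-1}\to\infty$, and the decay outside must be established uniformly in both directions for a triangular array; this is not covered by standard references and is precisely the difficulty the paper's introduction flags as the reason for the conditioning detour. Two minor points: your probabilistic model is a NegBin/Poisson hybrid closer to the Khinchin conditioning relation than to the paper's P\'olya--Boltzmann cycle-index model; and~\eqref{eq:saddle_point_equations} are not \emph{precisely} $\ex{S}=n$, $\ex{K}=N$ in either model --- there is an $O(1)$ offset (the paper makes this explicit around~\eqref{eq:N_nsimplf}), harmless here but worth stating correctly.
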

% \textcolor{red}{ Zur Kontrolle:
% \begin{align*}
%     x_n^2C''(x_n) - \frac{x_n^2C'(x_n)^2}{C(x_n)}
%     &\sim \Gamma(\al+2)h(\chi^{-1})\chi^{-(\al+2)} - \frac{\Gamma(\al+1)^2h(\chi^{-1})^2\chi^{-2(\al+1)}}{\Gamma(\al)h(\chi^{-1})\chi^{-\al}} \\
%     &\sim \Gamma(\al+1) h(\chi^{-1}) \chi^{-(\al+2)}.
% \end{align*}}
Some remarks are in place. First,~\eqref{eq:thm_proba_l<d} is the prototypical form of the result that very much resembles~\eqref{eq:thm_proba} in a bivariate setting. 
The second form~\eqref{eq:thm_explicit_l<d} is handy and most convenient to work with, as it includes no evaluation of derivatives and -- crucially -- powers of $C$ at the saddle point. We feel lucky that we were able to derive such a form of the sequence, and this is mostly owed to the structure of the Equations~\eqref{eq:saddle_point_equations} that have a very special property in case $(I)$ (just to look ahead a bit, in that case $x^my$ stays bounded away from one, so that $1/(1-x^my)$ remains bounded).
The last identity~\eqref{eq:thm_comb_l<d} hints at an interesting fact when $G(x,y)$ is viewed in the combinatorial setting, i.e., as the generating series of multisets of a class $\cal C$.
Indeed, $C(x)^N$ is the generating series for \emph{sequences} $(C_1, \dots, C_N)$ that are composed of $N$ objects from $\cal C$.
Then $C(x)^N/N!$ enumerates \emph{sets} $\{C_1, \dots, C_N\}$, \emph{provided that all elements are distinct}; otherwise there is no reasonable interpretation.
So,~\eqref{eq:thm_comb_l<d} may let us speculate that a typical $\cal C$-multiset in $\Aux_{n,N_n}$ has only distinct components, and moreover, that we can accurately describe a typical/random element in $\Aux_{n,N_n}$ by a sequence of $N_n$ objects from $\cal C$ that are conditioned to have total size $n$ and are otherwise independent.
All this and actually much more is somehow true as we show in the companion paper~\cite{ar:pr22+}, where we study limit laws of random multisets in $\Aux_{n,N_n}$.

Let us finish the discussion about Theorem~\ref{thm:coeff_G_l<d} with the following remark. 
If  $\lambda := \lim_{n\to\infty}\lambda_n\in[0,1)$ exists, then, as we shall see in Lemma~\ref{lem:saddle_point_l<d} below,  $\lim_{n\to\infty}y_n = \rho^{-m}\lambda^{\al+1}$. Consequently, by writing $d(\lambda) = G^{\ge 2}(\rho,\rho^{-m}\lambda^{\al+1}) $, the counting sequence has the simple(-r) form
\[
    [x^ny^{N_n}]G(x,y)
    \sim d(\lambda) \cdot \frac{1}{N_n!}[x^n]C(x)^{N_n}
    \sim d(\lambda) \cdot \frac{\sqrt{\al }}{2\pi}
    \cdot \exp\bigg\{-\frac{c_m\lambda^{\al+1}}{1-\lambda^{\al+1}}\bigg\}\cdot \frac{1}{n}
    \cdot {x}_n^{-n}(y_n/\eul)^{-N_n}.
\] 
% \subsubsection{The Second Regime $(II)$}
In order to treat $g_{n,N}$ in the second case $(II)$ we define
% let $\C_{>m}$ be the class containing all elements in $\C$ of size greater than $m$ with modified size function $\lvert C\rvert_{>m}:=\lvert C\rvert - m>0$ for all $C\in\C_{>m}$. Then its generating series is given by $C_{>m}(x) = (C(x)-c_mx^m)/x^m$. The generating series of the multiset class $\Aux_{>m}=\scs{Mset}(\C_{>m})$ hence is
\[
	G_{>m}^{\ge 2}(x,y)
	:= \e{\sum_{j\ge 2}\frac{C(x^j)-c_mx^{jm}}{jx^{jm}}y^j}.
\]
Let us give a quick explanation for the choice of notation. Consider the class $\C_{>m}$ of all elements in $\C$ of size greater than $m$ together with the modified size function $\lvert C\rvert_{>m}:=\lvert C\rvert - m>0$ for all $C\in\C_{>m}$. We obtain that the generating series of $\C_{>m}$ is given by $C_{>m}(x) = (C(x)-c_mx^m)/x^m$. Then the generating series of $\Aux_{>m}:=\scs{Mset}(\C_{>m})$ is given by $G_{>m}(x)=\textrm{exp}\{\sum_{j\ge 1}C_{>m}(x^j)/j\}$. The superscript in $G_{>m}^{\ge 2}$ accounts for the fact that we only sum up starting at $j=2$.
\begin{sectheorem}
\label{thm:coeff_G_l>d}
Suppose that $C(x)$ is expansive. In case $(II)$, that is, if $\liminf_{n \to \infty} \lambda_n  > 1$, there is a non-negative sequence $(a_n)_{n\in\Nat}$ given by 
\[
    a_n
    := \lambda_n^{-1} \cdot \frac{g(n-mN_n)}{g(n)}\cdot \left(\frac{n-mN_n}{n}\right)^{\al/(\al+1)}
\]
such that
\begin{align}
    \tag{LLT-II} \label{eq:thm_proba_l>d}
	[x^ny^{N_n}]G(x,y)
	&\sim G_{>m}^{\ge 2}(\rho)\cdot \frac{\e{C_{>m}({x}_n)}}{\sqrt{2\pi \rho^{-m}{x}_n^2C''({x}_n)}} \cdot {x}_n^{-(n-mN_n)} \\
% 	\tag{Explicit-II}  \label{eq:thm_explicit_l>d}
% 	&\sim G_{>m}^{\ge 2}(\rho)\cdot \sqrt{\frac{\al}{2\pi(\al+1)}}\cdot\frac{\sqrt{a_n\cdot N_n}}{n-mN_n}
% 	\cdot \eul^{C_{>m}({x}_n)}\cdot {x}_n^{-(n-mN_n)} \\
	\tag{Comb-II}  \label{eq:thm_comb_l>d}
	&\sim G_{>m}^{\ge 2}(\rho)
	\cdot \frac{\big((1-a_n)N_n\big)^{c_m-1}}{\Gamma(c_m)}\cdot 
	[x^{n-mN_n}]\eul^{C_{>m}(x)},
	\quad
	n\to\infty.
\end{align}
\end{sectheorem}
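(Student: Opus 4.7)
The plan is to combine a structural factorization with a one-dimensional saddle-point / local-limit analysis in the spirit of Theorem~\ref{thm:coeff_g_n_granovsky}. The starting point is the algebraic identity
\[
G(x,y) = (1 - x^m y)^{-c_m} \cdot H(x, x^m y), \qquad H(x,u) := \exp\Bigl\{\sum_{j \ge 1}\frac{C_{>m}(x^j)}{j}\,u^j\Bigr\},
\]
which follows from splitting $C(x^j) = c_m x^{jm} + x^{jm} C_{>m}(x^j)$ inside the exponent defining $G$. The factor $(1-x^m y)^{-c_m}$ is singular at $x^m y = 1$, and this is precisely the direction in which the saddle-point pair $(x_n, y_n)$ from~\eqref{eq:saddle_point_equations} travels in case~(II): one can check that $x_n \to \rho$ with $x_n^m y_n \to 1$. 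Writing further $H(x,u) = \exp\{C_{>m}(x)\,u\}\,G^{\ge 2}_{>m}(x,u)$ separates the ``first-cycle'' part $\exp\{C_{>m}(x) u\}$ from the ``higher-cycle'' part $G^{\ge 2}_{>m}(x, u)$, the latter of which is smooth at $(\rho, 1)$ and evaluates there to $G^{\ge 2}_{>m}(\rho)$.

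Expanding the singular factor via $(1 - x^m y)^{-c_m} = \sum_{k \ge 0} \binom{c_m + k - 1}{k}(x^m y)^k$ and setting $h_j(x) := [u^j] H(x,u)$, one obtains the exact identity
\[
[x^n y^{N_n}] G(x,y) = \sum_{j=0}^{N_n}\binom{c_m + N_n - j - 1}{N_n - j}\,[x^{n - m N_n}]\,h_j(x).
\]
The asymptotic analysis of this sum uses the gamma-function estimate $\binom{c_m + N - j - 1}{N - j} \sim (N - j)^{c_m - 1}/\Gamma(c_m)$, valid whenever $N - j \to \infty$, together with the expansion $h_j(x) = \sum_{i \ge 0}[u^i] G^{\ge 2}_{>m}(x,u) \cdot C_{>m}(x)^{j-i}/(j-i)!$. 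Heuristically, the $(i,j)$-sum concentrates on small $i$ and on $j$ close to the ``typical'' value $a_n N_n$, which is where the inner coefficient $[x^{n - m N_n}] C_{>m}(x)^j/j!$ is maximised. Pulling out the slowly varying prefactor $(N_n - j)^{c_m - 1}/\Gamma(c_m) \sim ((1 - a_n)N_n)^{c_m - 1}/\Gamma(c_m)$ and the factor $G^{\ge 2}_{>m}(\rho)$ coming from the $i$-sum, the remaining $j$-sum collapses to $\sum_{j \ge 0}(j!)^{-1}[x^{n - m N_n}] C_{>m}(x)^j = [x^{n-mN_n}] e^{C_{>m}(x)}$, which gives~\eqref{eq:thm_comb_l>d}. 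The passage to~\eqref{eq:thm_proba_l>d} is then a univariate saddle-point argument (of the same flavour as Theorem~\ref{thm:coeff_g_n_granovsky}) applied to $e^{C_{>m}(x)}$ at the saddle $\tilde x$ solving $\tilde x\,C'_{>m}(\tilde x) = n - m N_n$; the required identifications $\tilde x \sim x_n$ and $C''_{>m}(x) \sim \rho^{-m} C''(x)$ as $x \to \rho$ follow respectively from~\eqref{eq:saddle_point_equations} and from $C_{>m}(x) = x^{-m} C(x) - c_m$.

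The main obstacle is making the concentration of the $j$-sum rigorous. Because $x_n^m y_n \to 1$, the multiplicity of the size-$m$ component (a $\mathrm{NegBin}(c_m, x_n^m y_n)$ variable under the Boltzmann measure at $(x_n, y_n)$) does not satisfy a Gaussian CLT but admits a Gamma limit law after rescaling, and it is exactly this Gamma limit that produces the prefactor $((1-a_n)N_n)^{c_m - 1}/\Gamma(c_m)$ in~\eqref{eq:thm_comb_l>d}. Rigorously justifying the heuristic above thus requires a local limit theorem for $j \mapsto [x^{n - m N_n}] C_{>m}(x)^j/j!$ uniform over a window of width comparable to the Gamma standard deviation, together with a uniform bound showing that the $i \ge 1$ contributions in the expansion of $h_j$ are of lower order. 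This LLT-plus-error-control estimate is the step that genuinely requires the probabilistic Boltzmann/Poisson framework of the paper; everything else is an exact algebraic manipulation followed by a one-dimensional saddle-point argument.
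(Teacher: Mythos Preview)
Your factorization $G(x,y)=(1-x^my)^{-c_m}H(x,x^my)$ and the resulting exact identity
\[
[x^ny^{N_n}]G(x,y)=\sum_{j=0}^{N_n}\binom{c_m+N_n-j-1}{N_n-j}\,[x^{n-mN_n}]h_j(x)
\]
are correct and give a genuinely different organization than the paper's. The paper never isolates the factor $(1-x^my)^{-c_m}$; instead it works throughout with the bivariate Poisson--Boltzmann model at the saddle $(x_n,y_n)$, writes $[x^ny^{N_n}]G=x_n^{-n}y_n^{-N_n}G(x_n,y_n)\,\pr{\Pa_{N_n}}\,\pr{\E_n\mid\Pa_{N_n}}$ via Corollary~\ref{coro:coeff_as_prob}, and evaluates the three factors separately through Lemmas~\ref{lem:G({x}_n,y_n)_l>d},~\ref{lem:P_N_l>d},~\ref{lem:E_n_cond_P_N_l>d}, Corollary~\ref{coro:randomly_stopped_L_l>d} and Lemma~\ref{lem:comb_reform_L_l>d}. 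Your scheme is closer in spirit to the negative-binomial/Khinchin setup the paper discusses (and sets aside) in Section~\ref{subsec:notes_and_perspective}: it makes the univariate reduction visible from the outset, whereas the paper's Poisson scheme buys a single machinery that handles cases~(I) and~(II) in parallel.

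Both routes, however, funnel into exactly the same technical bottleneck: a local limit theorem for $p\mapsto[x^{n-mN_n}]C_{>m}(x)^p/p!$ (equivalently for $L_p=\sum_{i\le p}(C_{1,i}-m)$), uniform over a window around $p\sim a_nN_n$, followed by a summation over $p$ weighted by a Poisson density. This is precisely Lemma~\ref{lem:L_p_deviation_from_mean} combined with the argument of Corollary~\ref{coro:randomly_stopped_L_l>d}. One correction to your last paragraph: that LLT is established in Section~\ref{sec:lem:L_p_deviation_from_mean} by a self-contained saddle-point integration for a triangular array, not by the Boltzmann framework as such---the Poisson language is organizational. Your direct approach could quote or reprove Lemma~\ref{lem:L_p_deviation_from_mean} verbatim without ever introducing the bivariate model; the remaining ingredients you need (tail control for the $i\ge1$ contributions and for $j$ far from $a_nN_n$) are the analogues of Lemma~\ref{lem:R_ge_r_all_cases} and the Poisson tails~\eqref{eq:poisson_chernoff}, and they transfer to your setting with only notational changes.
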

% \textcolor{red}{Zur Nachrechnung: Mit Lemma~\ref{lem:chi_to_zero} haben wir $x_n^2C''(x_n) \sim \Gamma(\al+2)g(n-mN)^{-1} ((n-mN)/y_n\Gamma(\al+1))^{(\al+2)/(\al+1)}$. Und $a_n = \lambda_n^{-1} g(n-mN)/g(n) ((n-mN)/n)^{\al/(\al+1)}$. Dann 
% \begin{align*}
%     y_nx_n^2C''(x_n)
%   & \sim \rho^{-m}\Gamma(\al+2) g(n-mN)^{-1} \left(\frac{n-mN}{\rho^{-m}\Gamma(\al+1)}\right)^{(\al+2)/(\al+1)} \\
%     &= \frac{\al+1}{\al} C_0^{-1} g(n-mN)^{-1} (n-mN)^{(\al+2)/(\al+1)}
% \end{align*}
% Als nächstes $a_n^{-1} \cdot a_n$ multiplizieren, $a_n$ ausschreiben und $N=\lambda_n N^*$ sowie $N^* = C_0g(n)n^{\al/(\al+1)}$ benuzen.}
We have again remarks. 
First,~\eqref{eq:thm_proba_l>d} is the classical form that looks like~\eqref{eq:thm_proba_l<d} and~\eqref{eq:thm_proba}.
Note that, however,~\eqref{eq:thm_proba_l>d} looks much more like~\eqref{eq:thm_proba} in the sense that it resembles a \emph{uni}variate local limit theorem --  $y_n$ does not appear in the formulation at all!
This simplification is quite surprising, as we would expect a bivariate law like in Theorem~\ref{thm:coeff_G_l<d}.
Here the alternative form~\eqref{eq:thm_comb_l>d} comes to help and gives -- as before -- a hint about what may be going on.
When $G(x,y)$ is the generating series of $\C$-multisets, then $[x^n y^N]G(x,y)$ is the number of such multisets with $N$ components and size $n$. But the factor
\[
    ((1-a_n)N)^{c_m-1}/\Gamma(c_m)
    \sim
    \binom{ (1-a_n)N + c_m - 1}{c_m-1}
\]
in~\eqref{eq:thm_comb_l>d} counts the number of ways to create a multiset with $\sim (1-a_n)N$ objects from $\C_m$. Where are the remaining $a_n N$ components? For the other term note that $\e{C_{>m}(x)} = \sum_{k\ge 0}C_{>m}(x)^k/k!$.
Then, as before, $C_{>m}(x)^k$ is the generating series of sequences of $\C_{>m}$-objects of length $k$ and, provided all elements in the sequence are distinct, $C_{>m}(x)^k/k!$ counts sets of $k$ objects.
Then $[x^{n-mN}]\e{C_{>m}(x)}$  enumerates \emph{sets} of distinct $\C_{>m}$-objects with a varying number of components, which, however, concentrates around $a_n N$, and thus with (actual) size $\sim n - mN_n(1-a_n)$.
Concluding, we may speculate that a typical $\C$-multiset in~$\Aux_{n,N}$ contains a random set of $N'$ (that typically is~$\sim a_n N$) components with size~$> m$, and the remaining $N-N'$ components are of size~$m$. So, since there is no restriction for the component count for objects of size $> m$, we arrive at a univariate limit law as in~\eqref{eq:thm_proba_l>d}. Finally, let us note that in  case $(II)$ an explicit form as in~\eqref{eq:thm_explicit_l<d} is in general out of reach due to the reasons outlined after Theorem~\ref{thm:coeff_g_n_granovsky} -- although it is possible to establish the first order of $C_{>m}(x_n)$ (and its derivatives), achieving a similar statement for $\e{C_{>m}(x_n)}$ seems intractable in the general setting considered here. 

Let us finish the discussion about Theorem~\ref{thm:coeff_G_l>d} with the following remark. As we will see in Lemma~\ref{lem:saddle_point_l>d} below, if the limit $\lambda := \lim_{n\to\infty}\lambda_n$ exists, then $a_n \sim \lambda^{-1}$ so that defining $d(\lambda):=G_{>m}^{\ge2}(\rho)(1-\lambda^{-1})^{c_m-1}$, Theorem~\ref{thm:coeff_G_l>d} yields the slightly simpler form
\[
     [x^ny^{N_n}]G(x,y)
     \sim d(\lambda) \cdot \frac{N^{c_m-1}}{\Gamma(c_m)} \cdot [x^{n-mN_n}]\eul^{C_{>m}(x)}.
\] 
% The theorem implies that $\lambda_n$ such that $\lim_{n\to\infty}\lambda_n\in(1,\infty)$ exists or $\lambda_n \to \infty$ that there is a constant $d_2(\lambda)$ with $\lim_{\lambda\to\infty}d_2(\lambda)= G_{>m}^{\ge 2}(\rho,1)$ such that
% \[
%     [x^ny^{N_n}]G(x,y)
%     \sim d_2(\lambda) \cdot \frac{N_n^{c_m-1}}{\Gamma(c_m)} \cdot [x^{n-mN_n}]\e{\frac{C(x)-c_mx^m}{x^m}}.
% \] 
Let us close this section with a final remark. We, unfortunately, cannot offer a Theorem $(I \textrm{\textonehalf})$ that describes what happens when $\lambda_n \to 1$. The point is that the answer actually depends on how this limit is approached and at what speed. In this context we leave it as an open problem to describe the scaling window and the exact behavior in- and outside of it.

%%%%%%%%%%%%%%%%%%%%%%%%% Examples %%%%%%%%%%%%%%%%%%%%%%%%%%%%
\subsection{Notes on the Proof \& Perspective}
\label{subsec:notes_and_perspective}
%%%%%%%%%%%%%%%%%%%%%%%%%%%%%%%%%%%%%%%%%%%%%%%%%%%%%%%%%%%%%%% 

% Polya:
% \begin{itemize}
%     \item~\cite{Stufler2020}: UNLABELLED and subexponential, $g_n$ wird bestimmt \"uber Polya-Boltzmann-Zerlegung. Sein Ergebnis ist viel allgemeiner (Gibbs partitions) 
%     \item~\cite{Stufler2018}: LABELLED $\rightarrow$ streichen
%     \item~\cite{Stufler2018_2}: Enriched trees - nicht so ersichtlich, dass Polya-Boltzmann-Zerlegung zentrale Idee ist $\rightarrow$ streichen
%     \item~\cite{Panagiotou2020}: unser subexp paper
%     \item~\cite{Panagiotou2018}: scaling limits Polya trees, multiset Polya-Boltzmann sampler is used
%     \item~\cite{Panagiotou2012}: tails number of vertices in Polya tree size $n$
% \end{itemize}

% neg bin:
% \begin{itemize}
%     \item~\cite{Stark2021}: $g_{n,N}$ expansive with $\rho=1$ and $N=o(n^{r/(r+1)})$
%     \item~\cite{Mutafchiev2011}: limiting distributions of $\kappa(\mathsf{G}_n)$ auch mit zwei parameter. Er l\"ost aber nur f\"ur ${x}_n$
%     \item~\cite{Granovsky2006}: $g_n$ 
%     \item~\cite{Barbour2005}: $\kappa(\mathsf{G}_n)$ converges in distribution and $\mathsf{G}_n$ has giant component. (nicht nur f\"ur multiset sondern auch für set und sequence)
%     \item~\cite{Arratia2003}: book, several different results (enumeration etc.)
% \end{itemize}

%\paragraph{Probabilistic Formulation}

Let us consider the combinatorial setting in which $C(x)$ is the generating series of a combinatorial class $\C$ and $G(x,y)$ is the (bivariate) generating series of $\Aux$, the class of all $\C$-multisets.
The main idea of the proof is to translate the task of determining $g_{n,N}=[x^ny^N]G(x,y)$ into a probabilistic problem by considering a multiset $\mathsf{G}_n$ drawn uniformly at random from $\Aux_n$.
Let us first   discuss the quantity $g_n := [x^n]G(x,1)$ -- the \emph{total} number of multisets of size $n$ with no restrictions on the number of components -- for which this kind of analysis has already been   carried out successfully in several cases.  Assume that we have at our disposal a randomized algorithm/stochastic process~$\mathsf{G}$ that outputs elements from $\Aux$ with \emph{a priori} no control on the size or the number of components, but with the guarantee that all objects of the same size are equiprobable. Then 
\begin{align}
\label{eq:notes_on_proof_1}
    \frac{1}{g_n}
    = \pr{\mathsf{G}_n = G}
    = \frac{\pr{\mathsf{G}=G}}{\pr{\lvert\mathsf{G}\rvert = n}} 
    \quad \text{for any }G\in\Aux_n.
\end{align}
The goal is to find an algorithm such that we can determine/estimate the terms in the latter expression, namely $\pr{\mathsf{G}=G}$ (that is usually easy, by design of the algorithm) and $\pr{\lvert\mathsf{G}\rvert = n}$ (the hard one). As it turns out, such algorithms exist and two different approaches stand out in the literature. 

The first and classical approach, based on Khinchin's probabilistic method and also referred to as \emph{conditioning relation}, generates $\mathsf{G}$ by (first) sampling for each $k\in\mathbb{N}$ independently a random number $X_k$ of components of size $k$; thus $|\mathsf{G}| = \sum_{k\ge 1} k X_k$. The ``right" choice for $X_k$ is a negative binomial distribution with parameters $(c_k, {x}_n^k)$ for a control value ${x}_n$. 
This is related to the alternative (and equivalent) representation of the generating function~\eqref{eq:G(x,y)} for $\cal C$-multisets given by
\begin{equation}
\label{eq:alternativeG}
	G(x,y) = \prod_{k \ge 1} (1-yx^k)^{-c_k}
\end{equation} 
for $y=1$, see~\cite{Flajolet2009,Leroux1998}. Then, by ``tuning"  ${x}_n$ such that $\ex{\lvert\mathsf{G}\rvert} = n$ a local limit theorem for $\pr{\lvert\mathsf{G}\rvert = n}$ can be shown to be true in certain cases.
For example, if $c_n = \Theta(n^{\al-1}\rho^{-n})$ with $\al>0, 0<\rho\le1$ the authors of~\cite{Granovsky2006} proceed as described to determine~$g_n$ asymptotically.
This tuning procedure is in general a feasible method for expansive multisets, as $\ex{\lvert\mathsf{G}\rvert}$ is proportional to $C'({x}_n)/C({x}_n)$ and $\lim_{{x}_n\to \rho}C'({x}_n) = \infty$ for $\al>0$; in other cases, in particular when $\al < 0$, tuning  is not possible.
The conditioning relation is also used by other authors for solving various related problems, see for example the aforementioned works~\cite{Granovsky2015,Mutafchiev2011,Granovsky2008,Barbour2005, Arratia2003, Fristedt1993}. Some historical remarks about Khinchin's probabilistic method~\cite{Khinchin1960} are made in~\cite{Freiman2005}.

On the other hand, and this is the method that we choose and develop further in this work, the \emph{P{\'o}lya-Boltzmann model}~\cite{Bodirsky2011} is used to find a decomposition of $\mathsf{G}$ into random $\C$-objects attached to cycles of a random permutation, which is helpful to get rid of cumbersome appearances of symmetries and that gives rise to Poisson distributions instead of negative binomials; this difference is reflected by the two different representations~\eqref{eq:G(x,y)} and~\eqref{eq:alternativeG}. 
Again we obtain independent random variables tuned by a control parameter ${x}_n$ that can be used to describe $\lvert\mathsf{G}\rvert$ in a better-to-handle manner.
For a precise description of the involved random variables have a peek at Section~\ref{sec:generalproof}, where we present (a \emph{bi}variate extension of) the framework in full detail.
The crucial property of the corresponding  variables is that all objects attached to cycles  of length $j\ge 2$ in the permutation  are drawn according to a probability distribution with mean proportional to $C({x}_n^j)$. Since $0<{x}_n\le \rho$  we infer that $C({x}_n^j)$ is decreasing exponentially fast in $j$ for $0<\rho<1$.
Hence, objects associated to fixpoints, that is, for $j=1$, are dominant in the structure of $\mathsf{G}$; this essentially reduces the analysis to the study of iid random variables, rendering this method particularly useful whenever $0<\rho<1$. For example,~\cite{Stufler2020} determines (among other more general things) $g_n$ with this technique in the prominent setting where $c_n$ is subexponential, which corresponds to  our setting with $\al<0$.
For other applications of the P{\'o}lya-Boltzmann model regarding random multisets, see for instance~\cite{Panagiotou2018,Panagiotou2012}.

Let us return to our actual problem, where we want to find an asymptotic expression for $g_{n,N}$ as $n\to\infty$ and for essentially all $N\to\infty$. As in~\eqref{eq:notes_on_proof_1} we obtain
\begin{align}
\label{eq:notes_on_proof_2}
    \frac{g_{n,N}}{g_n}
    = \pr{\kappa(\mathsf{G}_n)=N}
    = \frac{\pr{\lvert\mathsf{G}\rvert = n, \kappa(\mathsf{G})=N}}
     {\pr{\lvert\mathsf{G}\rvert = n}}.
\end{align}
Here we see a major difficulty that suddenly may appear:  the one-parametric models have the property $\ex{\lvert\mathsf{G}\rvert} = n$ that we obtained by tuning ${x}_n$, but at the same time $\mathbb{\kappa(\mathsf{G})}$ can possibly be far away from~$N$.
This, however, is not always a problem, and the one-parameteric models can also be useful, as demonstrated in~\cite{Panagiotou2020} in the aforementioned subexponential case. 
%; the results give rise to a condensation effect and let $g_{n,N}$ be asymptotically proportional to $N_n^{c_m-1}c_{n-mN}$.
On the other hand, in the expansive case a two-parametric description of $\mathsf{G}$ must be found in order to tune the expectation of $\kappa(\mathsf{G})$ to be (close to) $N$. Then the problem of determining $g_{n,N}$ boils down to finding a two-dimensional local limit law for $K_{n,N}:=\pr{\lvert\mathsf{G}\rvert = n, \kappa(\mathsf{G})=N}$. Achieving this is a difficult problem, since there is a significant interplay between $\lvert \mathsf{G} \rvert$ and $\kappa(\mathsf{G})$. The author of~\cite{Stark2021} solved this problem under the assumption that $c_n \sim C n^{\al-1}$ for $C,\al>0$ and the restrictions $N=o(n^{\al/(\al+1)})$ and $N = \omega(\ln^3 n)$ by using the conditioning relation and a bivariate saddle-point integration.
%Similar to the one parametric case the author reformulates $\lvert\mathsf{G}\rvert = \sum_{k\ge 1}kX_k$ and $\kappa(\mathsf{G})=\sum_{k\ge 1}X_k$ for $X_k$ having negative binomial distribution with parameters $(c_k, y_n{x}_n^k)$ for two control parameters ${x}_n,y_n$ and $k\ge 1$. Then by  choosing carefully ${x}_n$ and $y_n$ such that $\ex{\lvert\mathsf{G}\rvert}=n$ and $\ex{\kappa(\mathsf{G})}=N$ a local limit theorem for $K_{n,N}$ is proven when $N=o(n^{\al/(\al+1)})$ and $\lim_{n\to\infty}N/\ln(n)^{d}=\infty$ for any $d>3$.
Other parameter ranges have not been studied as far as we are aware of.

In this work we consider the general setting~\eqref{eq:c_n}. To get a grip on $K_{n,N}$ we conduct a novel application of a bivariate Boltzmann model with the parameters ${x}_{n,N}$ and $y_{n,N}$ solving~\eqref{eq:saddle_point_equations}. This solution asserts (more or less) that $\ex{\lvert\mathsf{G}\rvert}=n$ and $\ex{\kappa(\mathsf{G})}=N$. As the two parameters interact in a complex way such that depending on the ratio of $n$ and $N$ the main contribution to $\lvert\mathsf{G}\rvert$ or $\kappa(\mathsf{G})$ is not necessarily given only by the fixpoints anymore, we come to the conclusion that there is a phase transition in $N$ at which the behaviour of $g_{n,N}$ changes significantly. This can be recognized in the different natures of Theorems~\ref{thm:coeff_G_l<d} and~\ref{thm:coeff_G_l>d}.

\subsection{Plan of the Paper}
%%%%%%%%%%%%%%%%%%%%%%%%%%%%%%%%%%%%%%%%%%%%%%%%%%%%%%%%%%%%%%%
The preparations for the proofs of the main theorems are contained in Section~\ref{sec:preparations}, which is a collection of many auxiliary  results and technical statements. 
In particular, properties of ${x}_n,y_n,N^*_n$ and more general results regarding coefficients of products of certain power series tailored to our needs are derived.
In Section~\ref{subsec:motivation} we begin with the proof and introduce the Boltzmann model.  Subsequently, we present the proof of our main theorems in a layered structure starting with the statement of more general important lemmas in Section~\ref{subsec:gen_setup}, where we also describe how they can be combined such as to arrive at the assertions of Theorems~\ref{thm:coeff_G_l<d} and~\ref{thm:coeff_G_l>d}.
The respective proofs of these lemmas can be then found in Section~\ref{sec:proofs}.
Finally, some general (textbook) results about slowly varying functions are placed in the self-contained Appendix~\ref{sec:appendix}.

%%%%%%%%%%%%%%%%%%%%%%%%% PotP %%%%%%%%%%%%%%%%%%%%%%%%%%%%
\paragraph{Acknowledgements}
%%%%%%%%%%%%%%%%%%%%%%%%%%%%%%%%%%%%%%%%%%%%%%%%%%%%%%%%%%%%%%%
The authors are grateful to Michael Drmota who gave valuable insights into the saddle-point method.

%%%%%%%%%%%%%%%%%%%%%%%%% Preparations %%%%%%%%%%%%%%%%%%%%%%%
\section{Preparations}
\label{sec:preparations}
%%%%%%%%%%%%%%%%%%%%%%%%%%%%%%%%%%%%%%%%%%%%%%%%%%%%%%%%%%%%%%

In this section we start by exposing some general results that are relevant for all forthcoming proofs in the paper: in Section~\ref{subsec:existence} we prove Lemma~\ref{lem:existence_of_solutions} establishing the existence and uniqueness of the solutions to~\eqref{eq:saddle_point_equations} and~\eqref{eq:N_star_general}.
Then, in Section~\ref{subsec:prob_estimates} we recall some well-known probabilistic estimates about Poisson distributed random variables.
Subsequently, Section~\ref{subsec:estimates_pow_series} contains statements about large coefficients of products of power series. Finally, the Euler-Maclaurin-formula that will help us to approximate sums by integrals is presented in Section~\ref{sec:euler-maclaurin}.

%%%%%%%%%%%%%%%%%%%%%%%%%%%%%%%%%%%%%%%%%%%%%%%%%%%%%%%%%%%%%
\subsection{Existence, Uniqueness and Properties of $N^*_n$ and $({x}_n,y_n)$}
\label{subsec:existence}
%%%%%%%%%%%%%%%%%%%%%%%%%%%%%%%%%%%%%%%%%%%%%%%%%%%%%%%%%%%%%

In this section we prove Lemma~\ref{lem:existence_of_solutions} and find some further properties of ${x}_n,y_n$ and $N^*_n$.
\begin{proof}[Proof of Lemma~\ref{lem:existence_of_solutions}\ref{item:lem_x0_y_n}]
Define the power series $S:\Real_+ \to \Real \cup \{\infty\}$, $z\mapsto \sum_{k\ge 1} z^k$ and consider the system of equations
\begin{align}
	\label{eq:saddle_point_1}
	xyC'(x) + mc_mS(x^my) &= n,\\
	\label{eq:saddle_point_2}
	yC(x) + c_mS(x^my) &= N.
\end{align}
If we find a pair $({x}_{n,N},y_{n,N})\in\Real_+^2$ satisfying~\eqref{eq:saddle_point_1} and~\eqref{eq:saddle_point_2}, then this pair is also a solution to our system~\eqref{eq:saddle_point_equations}.
$S({x}_{n,N}^my_{n,N})$ diverges for ${x}_{n,N}^my_{n,N}\ge1$, so that ${x}_{n,N}^my_{n,N}<1$ and $S({x}_{n,N}^my_{n,N}) = {x}_{n,N}^my_{n,N}/(1-{x}_{n,N}^my_{n,N})$.
Vice versa, any solution to~\eqref{eq:saddle_point_equations} satisfies~\eqref{eq:saddle_point_1}, \eqref{eq:saddle_point_2}. Thus it suffices to find a pair $({x}_{n,N},y_{n,N})\in\Real_+^2$ satisfying~\eqref{eq:saddle_point_1},~\eqref{eq:saddle_point_2}.
Subtracting $m$ times~\eqref{eq:saddle_point_2} from~\eqref{eq:saddle_point_1} we obtain
\begin{align}
	\label{eq:y_as_fct_of_x}
	y
	= (n-mN)\left(\frac{xC'(x)}{C(x)}-m\right)^{-1} C(x)^{-1}
	=: a(x) \cdot C(x)^{-1}.
\end{align}
Plugging this into~\eqref{eq:saddle_point_2} and reformulating yields the one-variable equation
\begin{align}
\label{eq:saddle_point_only_dep_x}
	f(x)
	:= a(x) + 
	c_mS\left(b(x)\right)
	= N,
	\quad \text{where} \quad b(x)
	:= x^m \cdot a(x) {C(x)^{-1}}.
\end{align}
Before we solve this equation let us have a closer look at the expression $xC'(x)/C(x)$. Since $(c_k)_{k\ge 1}$ is a non-negative and non-zero sequence and since $C$ has radius of convergence $\rho$ we know that $xC'(x)/C(x)$ is continuous and strictly increasing on the (open) interval $(0,\rho)$. Moreover, as $m$ is the first index such that $c_m>0$ and as $c_n = h(n)n^{\al-1}\rho^{-n}$ for some $\al > 0$ 
\[
    \lim_{x\to 0} \frac{xC'(x)}{C(x)} = m
    \quad
    \textrm{and}
    \quad
    \lim_{x\to \rho} \frac{xC'(x)}{C(x)} = \infty.
\]
With these facts at hand we study $a$ and $b$. The monotonicity properties of $xC'(x)/C(x)$ imply that $a(x)$ is strictly decreasing in $(0,\rho)$ with
\[
    \lim_{x\to 0} a(x) = \infty
    \quad
    \textrm{and}
    \quad
    \lim_{x\to \rho} a(x) = 0.
\]
Moreover, $b(x)$ is also strictly decreasing on $(0,\rho)$ as the product of two strictly decreasing positive functions $a(x)$ and $x^m/C(x)=1/\sum_{k\ge m}c_kx^{k-m}$ and satisfies
\[
    \lim_{x\to 0} b(x) = \infty
    \quad
    \textrm{and}
    \quad
    \lim_{x\to \rho} b(x) = 0.
\]
Let $\delta \in (0, \rho)$ be the unique number such that $b(\rho-\delta) = 1$ and so $b(x) \in (0,1)$ for $x\in (\rho-\delta,\rho)$. We immediately obtain that any solution to~\eqref{eq:saddle_point_only_dep_x}, that is, any $x$ solving $f(x) = N$ must be in $(\rho-\delta,\rho)$ and we look only in this interval for solutions.

Note that if there is any solution, then it must be unique. Indeed, $a,b$ are both strictly decreasing, and $S(b(x))$ too, since $S$ has only non-negative coefficients. Thus, $f$ is strictly decreasing in $(\rho-\delta,\rho)$.

Finally, we argue that there exists a solution to $f(x)=N$. From our considerations we obtain that the function $S(b(x))$, defined on $(\rho-\eps,\rho)$,  takes any value in $(0,\infty)$.
On the other hand, we know that $a(x)\to 0$ for $x\to\rho$. We conclude that $f(x) \to 0 $ as $x\to \rho$ and $f(x)\to \infty$ as $x\to \rho-\delta$. This implies the existence of (a unique value) ${x}_{n,N}$ such that $f({x}_{n,N})=N$. With this at hand we determine $y_{n,N}$ by~\eqref{eq:y_as_fct_of_x}, and the proof is completed.
\end{proof}
%We continue with statements concerning the solution $u_v$ to~\eqref{eq:N_star_general}. We start with proving that $u_v$ exists and is unique if $v$ is large enough.
\begin{proof}[Proof of Lemma~\ref{lem:existence_of_solutions}\ref{item:lem_N_star}]
Let $0<\eps<1/(\al+1)$. We first show that there is a solution in the interval $(u_-,u_+)$, where $u_- := v^{1/(\al+1)-\eps}$ and $u_+ := v^{1/(\al+1)+\eps}$, and that there are no solutions in $[1,v]\setminus(u_-,u_+)$. Set
\[
	F_v(x)
	:= xh(x)^{1/(\al+1)} - v^{1/(\al+1)}.
\]
Then $F_v(u_-) = v^{1/(\al+1)-\eps} h(v^{1/(\al+1)-\eps}) - v^{1/(\al+1)} <0$ for $v$ sufficiently large, since $h$ being expansive guarantees that $h(t) = t^{o(1)}$ as $t \to \infty$, see also~\eqref{eq:svissubpoly}. Analogously we obtain that $F_v(u_+)>0$ for large $v$. Since $h$ is continuous by assumption, also $F_v$ is continuous and  there is $u_v\in(u_-,u_+)$ with $F_v(u_v)=0$ for $v$ large enough. 

Now consider $1\le u\le u_-$. Let $\delta>0$ be such that $(1/(\al+1)-\eps)(1+\delta)<1/(\al+1)$. Due to~\eqref{eq:svissubpoly},  for $v$ large enough 
\[
    F_v(u) 
    = uh(u)^{1/(\al+1)} - v^{1/(\al+1)} 
    \le u_-^{1+\delta} -v^{1/(\al+1)}
    = v^{(1/(\al+1)-\eps)(1+\delta)} - v^{1/(\al+1)}
    < 0.
\]
So, there is no solution in $[1,u_-]$.
Next consider $u_+\le u\le v$. 
Then with room to spare~\eqref{eq:svissubpoly} guarantees $h(u)^{1/(\al+1)}>u^{-\eps/2} \ge v^{-\eps/2}$ for $v$ sufficiently large. Hence 
\[
	F_v(u) 
	= uh(u)^{1/(\al+1)} - v^{1/(\al+1)} 
    > v^{1/(\al+1)+\eps}v^{-\eps/2} -v^{1/(\al+1)}
    > 0.
\]
which proves that there is no solution in $[u_+,v]$. 

Next, let us show that the solution $u_v\in(u_-,u_+)$ is unique. Assume that $u_-<u_-^*<u_+^*<u_+$ are two distinct solutions. Then due to~\eqref{eq:svissubpoly} we obtain for $0<\delta<1$ and $v$ sufficiently large
\[
	\frac{u_-^*}{u_+^*} 
	= \frac{h(u_+^*)^{1/(\al+1)}}{h(u_-^*)^{1/(\al+1)}}
	\ge \left(\frac{u_-^*}{u_+^*}\right)^{\delta}
\]
implying that $u_-^*/u_+^* \ge 1$, a contradiction.

Finally, we show that $g(v) = h(u_v)^{1/(\al+1)}$ is slowly varying; since $u_v=g(v)^{-1} v^{1/(\al+1)}$ the proof is finished. Let $\lambda>0$ be arbitrary. Denote by $u_{\lambda v}$ the solution to $u_{\lambda v}h(u_{\lambda v})^{1/(\al+1)} = (\lambda v)^{1/(\al+1)}$ which, as we have just shown, exists and is unique for $\lambda v$ sufficiently large.
% and denote by $\tilde{N}$ the solution to~\eqref{eq:N_star} where $n$ is replaced by $\lambda n$. 
Then
\[
    \left(\frac{g(v)}{g(\lambda v)}\right)^{\al+1}
    =\frac{h(u_v)}{h(u_{\lambda v})}
    = \frac{h\big(h(u_v)^{-1/(\al+1)}v^{1/(\al+1)}\big)}{h\big(h(u_{\lambda v})^{-1/(\al+1)}(\lambda v)^{1/(\al+1)}\big)}.
\]
Abbreviate $t = {h(u_v)}/{h(u_{\lambda v})}$. By applying~\eqref{eq:svissubpoly} to the last expression in the previous display we obtain  for any  $0<\delta<1$ and  sufficiently large $v$
\[
    \min
    \left\{
        (\lambda t)^\delta, (\lambda t)^{-\delta}
    \right\}
    \le
    t \le
    \max
    \left\{
        (\lambda t)^\delta, (\lambda t)^{-\delta}
    \right\}.
\]
Note that  $\lambda^{\delta}, \lambda^{-\delta}$ get arbitrarily close to $1$ if we let $\delta \to 0$. So, since $\delta < 1$ we have proven that $g$ is slowly varying, i.e., $g(v)/g(\lambda v) \to 1$ as $n\to\infty$. 
\end{proof}
In the rest of the section we retrieve useful asymptotic properties of $({x}_n,y_n)\equiv (x_{n,N_n},y_{n,N_n})$, where it is instructive to write ${x}_n = \rho e^{-\chi_n}$. This will allow us to determine the values of $y_nC({x}_n)$ in the different regimes of~$N_n$.
\begin{lemma}
\label{lem:chi_to_zero}
Let $N_n=\lambda_n N^*_n$, $n\in \Nat$ be a sequence as in~\eqref{eq:N_equal_lambda_n} and set $\chi_n = \ln(\rho / {x}_n)$. Let $g$ be the slowly varying function from Lemma~\ref{lem:existence_of_solutions}\ref{item:lem_N_star}. Then, as $n\to\infty$,
\begin{equation}
\label{eq:bounds_chi_xyC'}
	\chi_n
    \sim g\left(\frac{n-mN_n}{y_n}\right)
    \cdot \left(\frac{n-mN_n}{\Gamma(\al+1)y_n}\right)^{-1/(\al+1)}
    \sim o(1),
\end{equation}
and further, for $k\in\Nat_0$ as $n\to\infty$,
\begin{align}
    \label{eq:asymptotic_for_lem_chi_to_0}
% 	{x}_n^kC^{(k)}({x}_n)
% 	\sim \Gamma(\al+k) h(\chi_n^{-1}) \chi_n^{-(\al+k)} \quad \text{and}\quad 
% 	x_ny_nC'(x_n) \sim n-mN_n.
    {x}_n^kC^{(k)}({x}_n)
    \sim \Gamma(\al+k) \cdot h(\chi_n^{-1}) \cdot \chi_n^{-\al-k}
    \sim \Gamma(\al+k) \cdot g\left(\frac{n-mN_n}{y_n}\right)^{1-k} 
    \cdot \left(\frac{n-mN_n}{\Gamma(\al+1)y_n}\right)^{\frac{\al+k}{\al+1}}
\end{align}
and moreover $x_ny_nC'(x_n) \sim n-mN_n$.
\end{lemma}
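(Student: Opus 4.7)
The plan rests on a single algebraic trick: subtracting $m$ times the second equation in~\eqref{eq:saddle_point_equations} from the first eliminates the awkward term $c_mx^my/(1-x^my)$ entirely and yields the exact identity
\begin{equation}
\label{eq:key-id-plan}
y_n\bigl(x_nC'(x_n) - mC(x_n)\bigr) = n - mN_n.
\end{equation}
This identity is the backbone of everything: it holds in both cases $(I)$ and $(II)$ and bypasses the potentially singular behaviour of $x_n^m y_n$.

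First I would verify $\chi_n\to 0$, i.e.\ $x_n\to \rho$. Arguing by contradiction, suppose $x_n\le \rho-\eps$ along a subsequence. The constraint $x_n^my_n<1$ forces $y_n\le (\rho-\eps)^{-m}$, hence $y_nC(x_n)=O(1)$; the second saddle-point equation then gives $c_mx_n^my_n/(1-x_n^my_n) = N_n+O(1)$, and substituting into the first equation reveals $n = mN_n+O(1)$, contradicting $n-mN_n\to\infty$.

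With $\chi_n\to 0$ secured, I would invoke the standard Karamata--Tauberian transfer: since $c_n = h(n)n^{\al-1}\rho^{-n}$ with $\al>0$ and $h$ slowly varying, the sequence $n(n-1)\cdots(n-k+1)c_n$ is again expansive (with $\al$ replaced by $\al+k$), yielding
\[
    x^kC^{(k)}(x) \sim \Gamma(\al+k)(1-x/\rho)^{-(\al+k)}\,h\bigl(1/(1-x/\rho)\bigr), \quad x\to\rho^-.
\]
Inserting $x_n = \rho e^{-\chi_n}$ together with $1-e^{-\chi_n}\sim\chi_n$ and the slow variation of $h$ produces the first $\sim$ in~\eqref{eq:asymptotic_for_lem_chi_to_0}. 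Applying this for $k=0,1$ in~\eqref{eq:key-id-plan} gives $m y_n C(x_n)/(y_n x_n C'(x_n)) \sim m\chi_n/\al \to 0$, whence $x_ny_nC'(x_n)\sim n-mN_n$, which is the final assertion of the lemma.

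For the remaining formulas, rearranging $y_n\Gamma(\al+1)\chi_n^{-\al-1}h(\chi_n^{-1})\sim n-mN_n$ gives the implicit equation
\[
    \chi_n^{-1}\cdot h(\chi_n^{-1})^{1/(\al+1)}\sim \left(\frac{n-mN_n}{\Gamma(\al+1)y_n}\right)^{1/(\al+1)},
\]
which is exactly~\eqref{eq:N_star_general} with $v := (n-mN_n)/(\Gamma(\al+1)y_n)$. Lemma~\ref{lem:existence_of_solutions}\ref{item:lem_N_star} then yields $\chi_n^{-1}\sim v^{1/(\al+1)}/g(v)$, and slow variation of $g$ lets me replace $g(v)$ by $g((n-mN_n)/y_n)$; this is~\eqref{eq:bounds_chi_xyC'}. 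Substituting $\chi_n^{-1}\sim v^{1/(\al+1)}/g(v)$ together with the identity $h(\chi_n^{-1})\sim g(v)^{\al+1}$ (which follows from the defining relation $u_v h(u_v)^{1/(\al+1)} = v^{1/(\al+1)}$) back into the first form of~\eqref{eq:asymptotic_for_lem_chi_to_0} produces the second. The main technical obstacle is the legitimate invocation of the Karamata-type transfer to derivatives of $C$ with a merely slowly varying $h$; however, this is a well-established fact from the theory of regular variation collected in Appendix~\ref{sec:appendix}, so the rest of the argument is fairly mechanical.
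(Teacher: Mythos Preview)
Your approach matches the paper's: subtract to get the identity \eqref{eq:key-id-plan}, prove $\chi_n\to 0$ by contradiction, apply the Tauberian transfer for the derivative asymptotics, then feed $x_ny_nC'(x_n)\sim n-mN_n$ into Lemma~\ref{lem:existence_of_solutions}\ref{item:lem_N_star}. There is, however, a genuine slip in your contradiction argument. From $x_n\le\rho-\eps$ and $x_n^my_n<1$ you \emph{cannot} conclude $y_n\le(\rho-\eps)^{-m}$: the inequality $y_n<x_n^{-m}$ goes the wrong way when $x_n$ is small, and nothing so far rules out $x_n\to 0$ with $y_n\to\infty$ along the subsequence. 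The paper handles exactly this by a separate preliminary step: assuming $y_{n_\ell}\to\infty$ forces $x_{n_\ell}\to 0$, whence $C(x)\sim c_mx^m$ and $xC'(x)\sim mc_mx^m$ near $0$ give $y_{n_\ell}C(x_{n_\ell})=O(1)$ and $x_{n_\ell}y_{n_\ell}C'(x_{n_\ell})=O(1)$, contradicting \eqref{eq:key-id-plan}.

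Your intended conclusion $y_nC(x_n)=O(1)$ is nonetheless salvageable in one line without first bounding $y_n$: write $y_nC(x_n)=(x_n^my_n)\cdot C(x_n)/x_n^m$; the first factor is $<1$, and $C(x)/x^m=\sum_{k\ge 0}c_{m+k}x^k$ is increasing on $[0,\rho)$, hence bounded by $C(\rho-\eps)/(\rho-\eps)^m$. The same trick bounds $x_ny_nC'(x_n)$. One further omission: invoking Lemma~\ref{lem:existence_of_solutions}\ref{item:lem_N_star} requires $v=(n-mN_n)/(\Gamma(\al+1)y_n)\to\infty$, i.e.\ $y_n$ bounded. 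This does follow once $\chi_n\to0$ is secured (then $y_n<x_n^{-m}\to\rho^{-m}$), but you should say so explicitly.
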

\begin{proof}
%First note that $N_n=\lambda_n N^*_n$ is such that $n,N_n,n-mN_n\to\infty$ and hence for $n$ large enough $({x}_n,y_n)$ exist and are unique according to Lemma~\ref{lem:existence_of_solutions} so that all the expressions in Lemma~\ref{lem:chi_to_zero} are well defined.

% We introduce the following notation.
% For any $\Nat$-valued sequence $(n_\ell)_{\ell\in\Nat}$ we let $N^*_{n_\ell}$ be the solution to~\eqref{eq:N_star} with $n$ replaced by $n_\ell$.
% Then we set $N_{n_\ell} = \lambda_{n_\ell}N^*_{n_\ell}$ and we let $x_{n_\ell}:=x(n_\ell,N_{n_\ell})$ and $y_{n_\ell}:=y(n_\ell,N_{n_\ell})$ to be the unique solutions of~\eqref{eq:saddle_point_equations}. 

First we show that $y_n$ is bounded from above. For the sake of  contradiction, assume that there is an increasing $\Nat$-valued sequence $(n_\ell)_{\ell\in\Nat}$ such that $y_{n_\ell}\to\infty$. Since $x_{n_\ell}^my_{n_\ell}<1$ due to~\eqref{eq:saddle_point_equations} we have $x_{n_\ell}\to 0$ as $\ell\to\infty$. As $C(x) \sim c_mx^m$ and $C'(x) \sim mc_nx^{m-1}$ as $x\to 0$ we get as $\ell \to \infty$ 
\[
    y_{n_\ell}C(x_{n_\ell})
    =\bigO{1}
    \quad\textrm{and}\quad
    x_{n_\ell}y_{n_\ell}C'(x_{n_\ell}) 
    =\bigO{1}.
\]
But $x_{n_\ell}y_{n_\ell}C'(x_{n_\ell})-my_{n_\ell}C(x_{n_\ell}) = n_\ell-mN_{n_\ell} \to \infty$, contradicting~\eqref{eq:saddle_point_equations}. 
%We thus established that~$y_n$ is bounded from above.

Next we show that $\chi_n\to 0$. For the sake of contradiction, assume that there is a strictly increasing $\Nat$-valued sequence $(n_\ell)_{\ell\in\Nat}$ and an $\varepsilon > 0$ such that $\chi_{n_\ell} := \ln(\rho/x_{n_\ell}) \ge \varepsilon$ for all $\ell \in \Nat$. Since both $C,C'$ have radius of convergence $\rho$ we infer that there is a $\Delta > 0$ such that $C(x_{n_\ell}), x_{n_\ell}C'(x_{n_\ell}) \le \Delta$ for all $\ell\in\Nat$. From~\eqref{eq:saddle_point_equations} and the fact that $y_n$ is bounded we then obtain that, as $\ell\to\infty$, 
\[
    c_mS_{n_\ell}
    := c_m x_{n_\ell}^my_{n_\ell}/(1-x_{n_\ell}^my_{n_\ell})
    = N_{n_\ell} + \bigO{1}
    ~~\textrm{and}~~
    mc_m S_{n_\ell} = n_\ell+ \bigO{1}.
\]
Since $n_\ell-mN_{n_\ell}\to\infty$ this is a contradiction, and we have established that $\chi_n\to 0$.

The proof can now be finished rather routinely. By applying a well-known result for slowly varying functions, see \cite[Thm.~1.7.1]{Bingham1987} and also Theorem~\ref{thm:UhatU}, we immediately obtain 
\begin{align}
    \label{eq:C(x^k)_asymptotic_proof}
    x_n^kC^{(k)}(x_n) \sim \Gamma(\al+k) h(\chi_n^{-1}) \chi_n^{-\al-k},
    \quad
    \textrm{$k\in\Nat_0$, $n\to\infty$}.
\end{align}
From this we readily obtain that  ${x}_nC'({x}_n)=\omega(C({x}_n))$, and then, since ${x}_ny_nC'({x}_n) - my_nC({x}_n) = n - mN_n$ according to~\eqref{eq:saddle_point_equations} we infer that ${x}_ny_nC'({x}_n)\sim n-mN_n$. Plugging~\eqref{eq:C(x^k)_asymptotic_proof} into ${x}_ny_nC'({x}_n)\sim n-mN_n$ and rearranging the terms yields
\begin{equation}
\label{eq:chihchiasympt}
    \chi_n^{-1}h(\chi_n^{-1})^{1/(\al+1)} 
    = \left(\frac{n-mN_n}{\Gamma(\al+1)y_n}\right)^{1/(\al+1)}(1+o(1)).
\end{equation}
As $y_n>0$ is bounded from above it follows that $t:=(n-mN_n)/(\Gamma(\al+1)y_n) \cdot (1+o(1)) \to\infty$. Consequently, Lemma~\ref{lem:existence_of_solutions}\ref{item:lem_N_star} asserts that  there is a unique solution $\chi_n^{-1} = t^{1/(\al+1)} / g(t)$.
As $g$ is slowly varying we obtain~\eqref{eq:bounds_chi_xyC'}.
%conclude
%\[
%    \chi_n \sim g\left(\frac{n-mN_n}{y_n}\right) \cdot \left( \frac{n-mN_n}{\Gamma(\al+1)y_n}\right)^{-1/(\al+1)}
%\]
%as claimed.
Further, plugging~\eqref{eq:bounds_chi_xyC'} into~\eqref{eq:C(x^k)_asymptotic_proof} yields the last remaining statement~\eqref{eq:asymptotic_for_lem_chi_to_0}.
\end{proof} 
\subsection{Probabilistic Estimates}
\label{subsec:prob_estimates}
%%%%%%%%%%%%%%%%%%%%%%%%%%%%%%%%%%%%%%%%%%%%%%%%%%%%%%%%%%%%%%%

%\begin{lemma}
%\label{lem:poiss_dev_mean}
%Let $P$ be a $\pois{\lambda}$-distributed random variable. If $\lambda\to\infty$ and $\eps\equiv\eps(\lambda)\to 0$ then
%\[
%	\pr{P = (1+\eps)\lambda}
%	\sim  \e{ \lambda f(\eps)} \pr{P = \lambda},
%	\qquad f(\eps) 
%	:= \eps - (1+\eps)\ln(1+\eps).
%\]
%\end{lemma}
%\begin{proof}
%We use Stirling's formula to obtain
%\begin{align*}
%	\pr{P = (1+\eps)\lambda}
%	&= \eul^{-\lambda} \frac{\lambda^{(1+\eps)\lambda}}{((1+\eps)\lambda)!}\\
%	&\sim \e{-\lambda + (1+\eps)\lambda\ln\lambda - (1+\eps)\lambda\ln((1+\eps)\lambda) + (1+\eps)\lambda} 
%	\frac{1}{\sqrt{2\pi(1+\eps)\lambda}} \\
%	& \sim \frac{1}{\sqrt{2\pi\lambda}}
%	\e{\lambda( \eps- (1+\eps)\ln(1+\eps))}.
%\end{align*}
%%Set $f(\eps):= \eps - (1+\eps)\ln(1+\eps)$. We have
%%\[
%%	f(\eps) 
%%	= \eps - (1+\eps)\sum_{k\ge 1}(-1)^{k+1}\frac{\eps^k}{k}
%%	= \sum_{k\ge 2}(-1)^{k+1}\frac{\eps^k}{k(k-1)}
%%\]
%%Thus, if $\eps = o(\lambda^{-1/m})$ for some $m\ge 2$
%%\begin{align}
%%\label{eq:f(eps)_as_sum}
%%	\lambda f(\eps)
%%	=\lambda \sum_{2\le k\le m-1}(-1)^{k+1}\frac{\eps^k}{k(k-1)} + o(1).
%%\end{align}
%We conclude by noting that a further application of Stirling's formula yields
%\[
%	\pr{P=\lambda}
%	\sim \frac{1}{\sqrt{2\pi\lambda}}
%\]
%\end{proof}
The following well-known statement  gives estimates for Poisson distributed random variables. A proof can be easily conducted by using Stirling's formula.
\begin{proposition}
\label{prop:poisson_inequalities}
Let $X$ be a $\pois{\lambda}$-distributed random variable. Then there exists $a>0$ such that uniformly for $x,\lambda>0$
\begin{align}
\label{eq:poisson_chernoff}
	\pr{\lvert X - \lambda\rvert \ge x\sqrt{\lambda}}
	\le \eul^{-a x \min\{x,\sqrt{\lambda}\}}.
\end{align}
% For $x=\omega(\sqrt{\lambda})$ we obtain that
% \begin{align}
% \label{eq:poisson_chernoff_all_x}
% 	\pr{\lvert X - \lambda\rvert \ge x\sqrt{\lambda}}
% 	\le \eul^{-\omega(\sqrt{\lambda})x}
% 	\qquad\text{as }\lambda\to\infty.
% \end{align}
Further, for $x = o(\lambda^{1/6})$ 
\begin{align}
\label{eq:poisson_llt}
	\pr{X = \floor{\lambda + x\sqrt{\lambda}}}
	\sim (2\pi\lambda)^{-1/2} \eul^{-{x^2}/{2}} 
	\qquad\text{as }\lambda\to\infty.
\end{align}
\end{proposition}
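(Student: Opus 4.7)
The statement is a textbook fact, so the plan is simply to outline the two standard computations with a nod to uniformity.

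For the Chernoff-type bound~\eqref{eq:poisson_chernoff}, I would use the moment generating function $\ex{\eul^{tX}} = \e{\lambda(\eul^t - 1)}$ and Markov's inequality. For the upper tail, $\pr{X \ge \lambda + x\sqrt{\lambda}} \le \e{\lambda(\eul^t - 1) - t(\lambda + x\sqrt{\lambda})}$ for any $t>0$. Optimizing with $t = \ln(1 + x/\sqrt{\lambda})$ yields the classical bound $\e{-\lambda \psi(x/\sqrt{\lambda})}$ with $\psi(u) = (1+u)\ln(1+u) - u$. The lower tail (for $x\sqrt{\lambda} \le \lambda$, otherwise trivial) is handled symmetrically with $t<0$ and an analogous function. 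Then I would invoke the two elementary estimates $\psi(u) \ge c u^2$ for $u \in [0,1]$ and $\psi(u) \ge c u \ln(1+u) \ge c u$ for $u\ge 1$, with a common constant $c>0$. In the regime $x \le \sqrt{\lambda}$, setting $u = x/\sqrt{\lambda}\le 1$ gives $\lambda\psi(u)\ge c x^2 = c x\min\{x,\sqrt{\lambda}\}$; in the regime $x \ge \sqrt{\lambda}$, one gets $\lambda \psi(u) \ge c\lambda\cdot u = cx\sqrt{\lambda} = c x\min\{x,\sqrt{\lambda}\}$. Combining both cases yields~\eqref{eq:poisson_chernoff} with a uniform $a>0$.

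For the local limit theorem~\eqref{eq:poisson_llt}, let $k = \floor{\lambda + x\sqrt{\lambda}}$ and write $k = \lambda + y$ with $y = x\sqrt{\lambda} + O(1)$. Starting from $\pr{X=k} = \eul^{-\lambda}\lambda^k/k!$ and taking logarithms, Stirling yields
\[
    \ln \pr{X=k}
    = -\lambda + k\ln\lambda - k\ln k + k - \tfrac{1}{2}\ln(2\pi k) + O(1/k).
\]
Rewriting $k\ln(\lambda/k) = -k\ln(1 + y/\lambda)$ and Taylor-expanding the logarithm to third order produces
\[
    \ln\pr{X=k}
    = -\frac{y^2}{2\lambda} + \frac{y^3}{6\lambda^2} + O\Big(\frac{y^4}{\lambda^3} + \frac{1}{\lambda}\Big) - \tfrac{1}{2}\ln(2\pi\lambda) + o(1),
\]
after absorbing the difference $\ln k - \ln\lambda = O(x/\sqrt{\lambda} + 1/\lambda)$ into the $o(1)$ term. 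Substituting $y = x\sqrt{\lambda} + O(1)$ converts the leading term into $-x^2/2 + O(x/\sqrt{\lambda})$ and the cubic correction into $O(x^3/\sqrt{\lambda})$.

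The only place where care is needed is the final step: all the error terms collected above are $o(1)$ precisely when $x = o(\lambda^{1/6})$, because this forces $x^3/\sqrt{\lambda} \to 0$, which dominates the other contributions; the remaining quartic and lower-order terms are smaller. Exponentiating yields~\eqref{eq:poisson_llt}. There is no real obstacle beyond bookkeeping of the error terms, and the constant $a$ in~\eqref{eq:poisson_chernoff} is uniform in $\lambda,x$ by the case split above.
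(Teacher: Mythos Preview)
Your proposal is correct and in line with the paper, which does not actually write out a proof but merely remarks that the statement is well-known and that ``a proof can be easily conducted by using Stirling's formula.'' Your Stirling-based expansion for~\eqref{eq:poisson_llt} is exactly what that hint points to, and your MGF/Markov derivation of~\eqref{eq:poisson_chernoff} via $\psi(u)=(1+u)\ln(1+u)-u$ together with the case split $u\lessgtr 1$ is the standard route and cleanly delivers the uniform constant $a$.
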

\subsection{Estimates for Power Series}
\label{subsec:estimates_pow_series}
%%%%%%%%%%%%%%%%%%%%%%%%%%%%%%%%%%%%%%%%%%%%%%%%%%%%%%%%%%%%%%%

%\begin{lemma}[{\cite[Theorem 3.42]{Burris2001} or~\cite[Exercise 178, p. 32]{Polya1970}}]
%\label{lem:coeff_product}
%Let $A(x), B(x)$ be power series such that 
%\[
%	\frac{[x^{n-1}]A(x)}{[x^n]A(x)}
%	\to \rho_A 
%	> 0
%	\quad\text{as }n\to\infty.
%\]
%Assume that the radii of convergence $\rho_A$ and $\rho_B$ of $A$ and $B$, respectively, satisfy $\rho_A < \rho_B$ and $B(\rho_A) \neq 0$. Then
%\[
%	[x^n]A(x)B(x) \sim B(\rho_A) \cdot [x^n]A(x), 
%	\quad n\to \infty.
%\]
%\end{lemma}
In the proofs of our main results we will often find ourselves in the situation where we have to retrieve coefficients of the product of two power series. In this basic setting the coefficient of the product is proportional to the coefficient of the series with the smaller radius of convergence as shown in the next classical lemma. 
% This basic setting has been treated in~\cite[Thm.~3.42]{Burris2001} or~\cite[Ex.~178, p.~32]{Polya1970}. 
\begin{lemma}{\cite[Thm.~3.42]{Burris2001}}
\label{lem:coeff_product}
Let $A(x), R(x)$ be power series with radii of convergence $\rho_A, \rho_R > 0$. Suppose that  $[x^{n-1}]A(x)/[x^n]A(x)\sim\rho_A^{-1}$. Moreover, assume that $\rho_R > \rho_A$ and $R(\rho_A) \neq 0$. Then
\[
	[x^n]A(x)R(x) \sim R(\rho_A) \cdot [x^n]A(x), 
	\quad n\to \infty.
\]
\end{lemma}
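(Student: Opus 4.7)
Write $a_n := [x^n]A(x)$ and $r_k := [x^k]R(x)$. The Cauchy product gives
\[
\frac{[x^n]A(x)R(x)}{a_n} \;=\; \sum_{k=0}^{n} r_k\,\frac{a_{n-k}}{a_n},
\]
and the whole plan is to show that this quantity tends to $R(\rho_A) = \sum_{k\ge 0} r_k \rho_A^k$ as $n \to \infty$. The argument is a dominated-convergence argument, where the sum in $k$ is viewed as an integral against the counting measure on $\mathbb{N}_0$ of the integrand $r_k\, a_{n-k}/a_n$ (extended by zero for $k>n$).

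The pointwise limit is routine: for each fixed $k$, the ratio $a_{n-k}/a_n$ telescopes into a product of $k$ consecutive coefficient ratios $a_{j-1}/a_j$, and the hypothesis forces each factor to the same limit. Hence $a_{n-k}/a_n \to \rho_A^k$ as $n\to\infty$ for every fixed $k$, and in particular $\sum_{k=0}^{K} r_k\,a_{n-k}/a_n \to \sum_{k=0}^{K} r_k \rho_A^k$ for any fixed cutoff $K$.

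The main work is to produce a summable dominating bound that is uniform in both $n$ and $k$, so that the tail of the $k$-sum can be controlled. I would fix $\varepsilon > 0$ so small that $\rho_A + \varepsilon < \rho_R$; then $\sum_{k\ge 0} |r_k|(\rho_A+\varepsilon)^k$ converges. The ratio hypothesis gives a threshold $n_0$ such that $a_{n-1}/a_n \le \rho_A + \varepsilon$ for all $n \ge n_0$. Splitting the telescoping product $a_{n-k}/a_n = \prod_{j=n-k+1}^{n} a_{j-1}/a_j$ into the factors indexed by $j \ge n_0$ (each at most $\rho_A + \varepsilon$) and the at most $n_0$ factors with $j < n_0$ (each bounded by a constant depending only on $n_0$), one arrives at a uniform estimate of the shape
\[
\frac{a_{n-k}}{a_n} \;\le\; C_\varepsilon\,(\rho_A + \varepsilon)^k, \qquad 0 \le k \le n.
\]

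The delicate point of this step is precisely the regime in which $k$ is close to $n$, where the product reaches down to the small-index coefficients of $A$ that are not governed by the asymptotic ratio; the remedy is to absorb these finitely many bounded factors into the constant $C_\varepsilon$. With the uniform bound and the summability of the dominating series in hand, dominated convergence yields
\[
\sum_{k=0}^{n} r_k\,\frac{a_{n-k}}{a_n} \;\longrightarrow\; \sum_{k\ge 0} r_k \rho_A^k \;=\; R(\rho_A),
\]
which is the claimed asymptotic.
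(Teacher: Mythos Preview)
Your argument is correct. The paper does not prove this lemma itself (it is quoted from \cite{Burris2001}), but your dominated-convergence approach --- split the Cauchy sum at a fixed cutoff, use the pointwise ratio limit on the head, and control the tail via the uniform geometric bound $|a_{n-k}/a_n|\le C_\varepsilon(\rho_A+\varepsilon)^k$ together with absolute convergence of $R$ at $\rho_A+\varepsilon<\rho_R$ --- is exactly the method the paper employs when proving the $n$-dependent generalisation in Lemma~\ref{lem:coeff_product_dep_n}. One small remark: you have tacitly read the hypothesis as $a_{n-1}/a_n\to\rho_A$ rather than the printed $\rho_A^{-1}$; that is indeed the intended meaning, consistent both with the ratio test and with how the paper itself verifies the hypothesis when it applies the lemma.
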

In our forthcoming arguments the involved series $A, R$ will also  depend on $n$; there we will use the following (rather technical) statement, which we tried to simplify as much as possible; it seems that the conditions cannot be weakened to obtain the desired conclusion that mimics Lemma~\ref{lem:coeff_product}.
\begin{lemma}
\label{lem:coeff_product_dep_n}
Let $\big(A_n(x)\big)_{n\in\Nat}$ be a sequence of power series and $(\rho_n)_{n\in\Nat}$  a real-valued positive sequence with $\overline{\rho} = \limsup_{n\to\infty}\rho_n \in \Real$ such that
\begin{align}
	\label{eq:lem_coeff_prod_1}
	\frac{[x^{n-k}]A_n(x)}{[x^{n}]A_n(x)}
	&\sim \rho_n^k
	\quad \text{for }k\in\Nat_0\text{ and as }n\to\infty.
\end{align}
Moreover, assume that there exist $\eps>0$ and $n_0,k_0\in\Nat$ such that
\begin{align}
		\label{eq:lem_coeff_prod_2}
	\left\lvert\frac{[x^{n-k}]A_n(x)}{[x^n]A_n(x)}\right\rvert
	&\le
	(1+\eps)^k \rho_n^k
	\quad\text{for } k_0\le k \le n\text{ and } n\ge n_0.
\end{align}
Let $(R_n(x))_{n\in\Nat}$ be a sequence of power series with radii of convergence at least $a := (1+\eps)\overline{\rho}$. Moreover, suppose that there is a  sequence $(d_k)_{k\in\Nat}$ such that $\lvert[x^k]R_n(x)\rvert\le d_k$ for all $k,n\in\Nat$ and $\sum_{k\ge 1} d_k a^k<\infty$. In addition, let $Q:[0,a] \to \Real$ be such that $R_n$ converges uniformly to $Q$ on $[0,a]$. If $\inf_{n\ge n_0}Q(\rho_n)>0$, then
%Leon's Gegenbeispiel: $R_n(x) = \cos(nx)/n$
%Let $Q(x)$ be a real-valued function analytic for $\lvert x\rvert < \rho_Q$ and $(R_n(x))_{n\in\Nat}$ be a sequence of power series with radii of convergence greater than $a$ for some $(1+\eps)\overline{\rho} \le a$. Assume that $(R_n(x))_{n\in\Nat}$ converges uniformly to $Q(x)$ on $[0,a]$. 
%In addition, suppose that there is a  sequence $(d_n)_{n\in\Nat}$ such that $\lvert[x^k]R_n(x)\rvert\le d_k$ for all $k,n\in\Nat$ and $\sum_{k\ge 1} d_k a^k<\infty$.
%for $b=\max\{a,(1+\eps)\overline{\rho}\}$ \marginpar{\tiny kann man das nicht vermeiden indem man $a > (1+\eps)\overline{\rho}$ wählt? -- \textcolor{red}{das w\"are eine st\"arkere Bedingung oder? Erf\"ullt w\"are sie auf jeden Fall in unseren Anwendungen}}. 
%If $\inf_{n\ge n_0}Q(\rho_n)>0$, then
\[
	[x^n]A_n(x)R_n(x) \sim Q(\rho_n) \cdot [x^n]A_n(x)
	\quad\text{as } n\to \infty.
\]

% such that \marginpar{\tiny \textcolor{red}{habe das $(1+\eps)$ wieder hinzugef\"ugt, da sp\"ater $(1+\eps)\rho_n\le a<\rho_Q$ gefordert wird}}
% \begin{align}
% 	\label{eq:lem_coeff_prod_3}
% 	(1+\eps)\limsup_{n\ge n_0}\rho_n 
% 	&< \rho_{Q}\qquad\text{and} \\
% 	\label{eq:lem_coeff_prod_4}
% 	\liminf_{n\ge n_0}Q(\rho_n) 
% 	&>0. 
% \end{align}
% Let $(R_n(x))_{n\in\Nat}$ be a sequence of power series which converges uniformly to $Q(x)$ in some closed interval $[0,a]$ such that $(1+\eps)\rho_n\le a<\rho_Q$ for all $n$ sufficiently large. Additionally assume that $\lvert[x^k]R_n(x)\rvert\le d_k$ uniformly for all $k,n\in\Nat$ and for a real-valued sequence $(d_n)_{n\in\Nat}$ such that $\sum_{k\ge 0}d_k x^k$ converges for all $0\le x\le a$.
% Then
% \[
% 	[x^n]A_n(x)R_n(x) \sim Q(\rho_n) \cdot [x^n]A_n(x)
% 	\quad\text{as } n\to \infty.
% \]
\end{lemma}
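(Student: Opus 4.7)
The plan is to expand the coefficient by convolution, divide by $[x^n]A_n(x)$, and analyze
\[ T_n := \sum_{k=0}^{n} \frac{[x^{n-k}]A_n(x)}{[x^n]A_n(x)}\,[x^k]R_n(x). \]
Since $Q$ is the uniform limit on the compact $[0,a]$ of the continuous $R_n$ it is bounded there, and by hypothesis $\inf_{n\ge n_0}Q(\rho_n)>0$; hence $T_n\sim Q(\rho_n)$ is equivalent to the additive statement $T_n-Q(\rho_n)=o(1)$. Uniform convergence together with $\rho_n\in[0,a]$ for large $n$ yields $R_n(\rho_n)-Q(\rho_n)=o(1)$, where the series $R_n(\rho_n)=\sum_{k\ge 0}[x^k]R_n(x)\rho_n^k$ converges absolutely since $|[x^k]R_n|\le d_k$ and $\sum_k d_k a^k<\infty$. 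Thus the goal reduces to proving $T_n-R_n(\rho_n)=o(1)$.

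For this I would use the classical split-at-a-cutoff. Fixing $K\ge k_0$, write
\[ T_n-R_n(\rho_n)\;=\;\sum_{k=0}^{K}\Big(\tfrac{[x^{n-k}]A_n}{[x^n]A_n}-\rho_n^k\Big)[x^k]R_n\;+\;\sum_{k=K+1}^{n}\tfrac{[x^{n-k}]A_n}{[x^n]A_n}\,[x^k]R_n\;-\;\sum_{k>K}\rho_n^k[x^k]R_n, \]
and call the three pieces $(\mathrm{H})$, $(\mathrm{M})$, $(\mathrm{T})$. The head $(\mathrm{H})$ is a finite sum whose summands are $o(\rho_n^k)\cdot[x^k]R_n=o(1)$ as $n\to\infty$ by~\eqref{eq:lem_coeff_prod_1}, so $(\mathrm{H})\to 0$ for each fixed $K$. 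For the tail $(\mathrm{T})$, use $|[x^k]R_n|\le d_k$ and $\rho_n\le a$ eventually to get $|(\mathrm{T})|\le\sum_{k>K}d_ka^k$, which vanishes as $K\to\infty$. For the middle piece, apply~\eqref{eq:lem_coeff_prod_2} to obtain $|(\mathrm{M})|\le\sum_{k>K}(1+\eps)^k\rho_n^k d_k$; the calibration $a=(1+\eps)\overline{\rho}$ is designed exactly so that this is again bounded by $\sum_{k>K}d_ka^k$ once $(1+\eps)\rho_n\le a$. Choosing $K=K(\eta)$ so that $|(\mathrm{M})|+|(\mathrm{T})|<\eta$ uniformly in large $n$ and then enlarging $n$ so $|(\mathrm{H})|<\eta$ delivers $T_n-R_n(\rho_n)\to 0$.

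The main technical obstacle is precisely the uniform control $(1+\eps)\rho_n\le a$ in the middle piece bound. The $\eps>0$ margin in~\eqref{eq:lem_coeff_prod_2} and the choice $a=(1+\eps)\overline{\rho}$ are exactly the calibration that makes $\sum_k d_ka^k<\infty$ the right dominating envelope; the subtlety that $\overline{\rho}$ is a $\limsup$ (so $\rho_n$ could temporarily exceed $\overline{\rho}$) is resolved by passing to $n$ large enough that $\rho_n$ is arbitrarily close to $\overline{\rho}$, while shrinking $\eps$ very slightly to create a safe buffer below $a$ if necessary. Everything else reduces to routine bookkeeping and absolute summability.
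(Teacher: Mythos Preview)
Your approach is essentially identical to the paper's: normalize by $[x^n]A_n(x)$, split the convolution at a cutoff $K$, use~\eqref{eq:lem_coeff_prod_1} on the finite head and~\eqref{eq:lem_coeff_prod_2} together with the envelope $(d_k)$ on the tail, and invoke uniform convergence $R_n\to Q$ to pass from $R_n(\rho_n)$ to $Q(\rho_n)$. The only difference is cosmetic---the paper bundles your $(\mathrm{M})$ and $(\mathrm{T})$ into one piece $I(K,n)$ and absorbs the $Q(\rho_n)$ comparison into the head estimate.

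One remark on your last paragraph: the proposed fix ``shrinking $\eps$ very slightly'' does not work as stated, since $\eps$ is fixed by hypothesis~\eqref{eq:lem_coeff_prod_2} and cannot be decreased without losing that bound. The honest issue is that $\overline{\rho}=\limsup\rho_n$ does not guarantee $\rho_n\le\overline{\rho}$ eventually, so the inequality $(1+\eps)\rho_n\le a$ need not hold. The paper's own proof glosses over exactly this point (it silently replaces $\rho_n^k$ by $\overline{\rho}^k$ in the tail bound). In every application within the paper the envelope $\sum d_k x^k$ actually converges on a disc of radius strictly larger than $a$, which absorbs the slack; so the gap is harmless in context, but as a standalone lemma the hypothesis would be cleaner with either $\rho_n\le\overline{\rho}$ eventually or $\sum d_k(a+\delta)^k<\infty$ for some $\delta>0$.
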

\begin{proof}
We abbreviate
$a_{k,n} := [x^k]A_n(x)$
and
$r_{k,n} := [x^k]R_n(x)
$
for $k, n\in\Nat$. Write for $K\in\Nat$ 
\[
	\frac{1}{a_{n,n}}[x^n]A_n(x)R_n(x) 
	= I(0,K) + I(K, n),
	~~\text{where}~~
	I(i,j) 
	:= \sum_{i \le k < j} \frac{a_{n-k,n}}{a_{n,n}}r_{k,n}.
\]
Let $\eps'>0$.
%We are going to show that there is a choice for $K$ such that $I(0,K)$ differs at most by $2\eps'$ from $Q(\rho_n)$ and the remaining terms are also bounded by $2\eps'$.
Let $K_0$ be such that for all $K \ge K_0$
\[
    \Big|\sum_{0 \le k < K}  r_{k,n} \rho_n^k - Q(\rho_n)\Big|
    \le
    | R_n(\rho_n)-Q(\rho_n)| +
        \Big|\sum_{k>K}r_{k,n}\rho_n^{k}\Big|
    < \eps';
\]
such a $K_0$ exists since $R_n\to Q$ uniformly on $[0,a]$, $\rho_n\le a$ for all $n$, and since
$|\sum_{k>K}r_{k,n}\rho_n^{k}| \le \sum_{k>K} d_k a^{k}$
gets arbitrarily small  by the assumption that $\sum_{k\ge 1}d_k a^k$ converges.

Note further that for any $K \ge K_0$ the property~\eqref{eq:lem_coeff_prod_1} entails for sufficiently large $n$
\[
	\Big|I(0,K) - \sum_{0 \le k < K}  r_{k,n}\rho_n^k\Big| < \eps'.
\]
The triangle inequality readily implies that $\lvert  I(0,K) - Q(\rho_n)\rvert < 2\eps'$ for all $K \ge K_0$ and $n$ sufficiently large.
Further,~\eqref{eq:lem_coeff_prod_2} guarantees that there is $\eps>0$ such that $0 < a_{n-k,n}/a_{n,n} \le (1+\eps)^k \rho_n^k$ for all $k,n$ sufficiently large. From that we conclude that there is a $K_1 \in \Nat$ such that for all $K \ge K_1$ 
\[
	\lvert I(K, n) \rvert 
	\le \sum_{k \ge K}  \card{r_{k,n}} (1+\eps)^k \rho_n^k
	\le \sum_{k \ge K}  d_k (1+\eps)^k\overline{\rho}^k < \eps'.
\]
%As $\rho_B^{-1} = \limsup_{n\to\infty}\card{b_n}^{1/n}$ we further find that for $n$ sufficiently large that $\card{b_n} \le (1+\delta)^n \rho_B^{-n}$. The condition~\eqref{eq:subexp2} on $G(x)$ reformulated as $g_n/g_{n-1} \sim \rho_G^{-1}$ yields for a constant $A_1>0$ and $n$ large enough $g_n \ge A_1 (1+\delta)^{-n} \rho_G^{-n}$. These two facts are combined to estimate the remaining term: we find a constant  $A_2 > 0$ such that for large $n$
%\[
%	\lvert I(n-N_0,n+1) \rvert
%	\le A_2 \left(\frac{(1+\delta)^2\rho_G}{\rho_H}\right)^n
%	< \eps.
%\]
All together, fixing $K\ge \max\{K_0,K_1\}$, we proved that there is an error term $|E| < 3\eps'$ such that
$
	a_{n,n}^{-1} \cdot [x^n]A_n(x)R_n(x) = Q(\rho_n) + E
$
for $n$ sufficiently large. Since $\eps'>0$ was arbitrary and $Q(\rho_n)$ is bounded away from zero, the claim follows.
\end{proof}
The next statement applies Lemma~\ref{lem:coeff_product_dep_n} to the  special case $A_n(x) = \e{h_n x}$ and $Q(x)=R_n(x) = (1-x)^{-\gamma}$, where $\gamma>0$ and $h_n$ is approaching infinity. In particular, we observe what the effect of $Q$ is on $[x^k]A_n(x) = h_n^k / k!$.
\begin{lemma}
\label{lem:h_n_coeff}
Let $\gamma>0$ and for a (eventually) positive sequence $(\al_n)_{n\in\Nat}$ define $h_n := \al_n n$.
\begin{enumerate}[label=(\roman*)]
\vspace{-2mm}
\item \label{item:lem_h_n_coeff_i}
For  $k\in\Nat$ and $\al_n$ such that $h_n \to\infty$
\begin{align}
\label{eq:h_n_coeff_fixed}
	 [x^{k}]\frac{1}{(1-x)^{\gamma}}\eul^{h_nx}
	 \sim 	 [x^{k}]\eul^{h_nx}
	 \sim \frac{h_n^k}{k!}
	 \qquad n\to\infty.
\end{align}
\item \label{item:lem_h_n_coeff_ii}
If $\liminf_{n\to\infty} \al_n > 1$
\begin{align}
\label{eq:h_n_coeff_l>d}
	[x^n]\frac{1}{(1-x)^\gamma} \eul^{h_nx}
	\sim \left(\frac1{1-\al_n^{-1}}\right)^\gamma
	\cdot \frac{h_n^n}{n!},
	\qquad n\to\infty.
\end{align}
%In particular if $\al_n \to \infty$ we see that $[x^n](1-x)^{-k} \eul^{h_nx}\sim h_n/N_n!$.
\item \label{item:lem_h_n_coeff_iii}
If $\limsup_{n\to\infty} \al_n < 1$ and $h_n\to\infty$ then
\begin{align}
\label{eq:h_n_coeff_l<d}
	[x^n]\frac{1}{(1-x)^\gamma} \eul^{h_nx}
	\sim \frac{\big((1-\al_n) n\big)^{\gamma-1}}{\Gamma(\gamma)} \eul^{h_n},
	\qquad n\to\infty.
\end{align}
\end{enumerate}
\end{lemma}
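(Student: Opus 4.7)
Expanding the product term by term gives
\[
    [x^k] \frac{\eul^{h_n x}}{(1-x)^\gamma}
    = \sum_{j=0}^k \binom{\gamma+j-1}{j} \frac{h_n^{k-j}}{(k-j)!},
\]
and for fixed $k$ as $h_n\to\infty$ the $j=0$ summand $h_n^k/k!$ dominates the remaining terms, each of which is $O(h_n^{k-1})$. This yields~\eqref{eq:h_n_coeff_fixed} immediately.

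\paragraph{Part (ii).}
My plan is to apply Lemma~\ref{lem:coeff_product_dep_n} with $A_n(x) = \eul^{h_n x}$, $R_n(x) = Q(x) = (1-x)^{-\gamma}$ and $\rho_n := \alpha_n^{-1}$. A direct computation,
\[
    \frac{[x^{n-k}]A_n(x)}{[x^n]A_n(x)}
    = \frac{n!}{(n-k)!\,h_n^k}
    = \rho_n^k \prod_{i=0}^{k-1}(1-i/n),
\]
shows that this ratio is $\sim \rho_n^k$ for each fixed $k$ and bounded above by $\rho_n^k$ for all $0\le k\le n$, so conditions~\eqref{eq:lem_coeff_prod_1} and~\eqref{eq:lem_coeff_prod_2} hold with any $\eps>0$. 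The assumption $\liminf \alpha_n>1$ gives $\overline{\rho} := \limsup \rho_n < 1$; choosing $\eps>0$ with $(1+\eps)\overline{\rho}<1$, the radius of convergence of $(1-x)^{-\gamma}$ exceeds $(1+\eps)\overline{\rho}$, and the polynomial bound $\binom{\gamma+k-1}{k} = O(k^{\gamma-1})$ makes $\sum_k\binom{\gamma+k-1}{k}((1+\eps)\overline{\rho})^k$ convergent. Since $Q(\rho_n) = (1-\alpha_n^{-1})^{-\gamma} \ge 1$ is bounded away from zero, Lemma~\ref{lem:coeff_product_dep_n} yields $[x^n]A_nR_n \sim Q(\rho_n)\cdot h_n^n/n!$, which is~\eqref{eq:h_n_coeff_l>d}.

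\paragraph{Part (iii).}
Now $h_n < n$ asymptotically, so the extraction point lies past the dominant mass of $\eul^{h_nx}$ (which sits at $k\sim h_n$), and Lemma~\ref{lem:coeff_product_dep_n} no longer applies. Instead I would write
\[
    [x^n] \frac{\eul^{h_n x}}{(1-x)^\gamma}
    = \sum_{k=0}^{n} \frac{h_n^k}{k!}\,B(n-k),
    \qquad B(j) := \binom{\gamma+j-1}{j} \sim \frac{j^{\gamma-1}}{\Gamma(\gamma)},
\]
and recognize $\eul^{-h_n}h_n^k/k!$ as the probability mass function of $X\sim\pois{h_n}$. Let $\mathcal{B}_n := \{k : |k-h_n|\le \sqrt{h_n}\log n\}$ denote the Poisson bulk. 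The relations $\sqrt{h_n}\log n = o(n)$ (since $h_n\le n$) and $1-\alpha_n \ge \delta > 0$ force $n-k = (1-\alpha_n)n(1+o(1))\to\infty$ uniformly over $k\in\mathcal{B}_n$, so $B(n-k) \sim ((1-\alpha_n)n)^{\gamma-1}/\Gamma(\gamma)$ uniformly there. Combined with $\pr{X\in\mathcal{B}_n}\to 1$, this produces exactly the claimed main term $\eul^{h_n}((1-\alpha_n)n)^{\gamma-1}/\Gamma(\gamma)$ from the bulk.

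\paragraph{Main obstacle.}
The delicate step is the off-bulk estimate for (iii): using the crude bound $B(n-k) \le C\,n^{\max(\gamma-1,0)}$ together with~\eqref{eq:poisson_chernoff} gives $\pr{X\notin\mathcal{B}_n} \le \exp\{-a\log n \cdot \min(\log n, \sqrt{h_n})\}$. For $\gamma\ge 1$ one only needs this to be $o(1)$; in the more delicate case $\gamma<1$ the main term has order $\eul^{h_n}n^{\gamma-1}$ and so the tail must be $o(n^{\gamma-1})$, which reduces to $\min(\log n, \sqrt{h_n})\to\infty$. The latter holds precisely because $h_n\to\infty$, which is exactly the standing assumption of (iii).
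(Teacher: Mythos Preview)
Your proof is correct and follows essentially the same route as the paper's: part~(i) is the identical finite-sum argument, part~(ii) is the same application of Lemma~\ref{lem:coeff_product_dep_n} with $A_n(x)=\eul^{h_nx}$ and $\rho_n=\alpha_n^{-1}$, and part~(iii) is the same Poisson-bulk splitting at width $\sqrt{h_n}\ln n$. The only cosmetic difference is in the tail bound of~(iii): the paper bounds the pointwise Poisson mass and sums the binomial coefficients via $\sum_{k\le n}\binom{n-k+\gamma-1}{\gamma-1}=\binom{n+\gamma}{\gamma}=\Theta(n^\gamma)$, whereas you bound $B(n-k)$ uniformly and use the tail probability directly---your version is in fact slightly more careful in tracking the $\min(\log n,\sqrt{h_n})$ dependence when $h_n$ grows slowly.
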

\begin{proof}
Statement~\ref{item:lem_h_n_coeff_i} is easily verified, as for \emph{fixed} $k$
\[
	[x^k]\frac{1}{(1-x)^\gamma} \eul^{h_nx}
	=\sum_{0\le\ell\le k}\binom{k-\ell+\gamma-1}{\gamma-1} \frac{h_n^\ell}{\ell!}
	\sim \frac{h_n^k}{k!}.
\]
We proceed to Part~\ref{item:lem_h_n_coeff_ii}. Set $R_n(x):=Q(x):=(1-x)^{-\gamma}$ for all $n\in\Nat$ and $A_n(x) := \eul^{h_n\cdot x}$. We want to apply Lemma~\ref{lem:coeff_product_dep_n} and verify its conditions one by one. First,
\[
    \frac{[x^{n-k-1}]A_n(x)}{[x^{n-k}]A_n(x)}
    = \frac{n-k}{h_n}
    \sim \frac{1}{\al_n}
    := \rho_n,\qquad k\in\Nat_0, n\to\infty
\]
that is,~\eqref{eq:lem_coeff_prod_1} is established. Next we see that~\eqref{eq:lem_coeff_prod_2} is valid since
\[
    \frac{[x^{N_n-k}]A_n(x)}{[x^n]A_n(x)}
    =\frac{n(n-1)\cdots (n-k-1)}{h_n^k}
    \le \rho_n^k,\qquad 0\le k\le N_n\in\Nat.
\]
Since $\liminf_{n\to\infty}\al_n>1$ we have that $\overline{\rho}=\limsup_{n\to\infty}\rho_n<1$. Let $\eps>0$ be such that $a:=(1+\eps)\overline{\rho}<1$. Establishing uniform convergence of $R_n\to Q$ on $[0,a]$ and estimating the coefficients of $R_n(x)$ is trivial, as $R_n=Q$ for all $n\in\Nat$ and $Q$ is absolute convergent on $[0,a]$. Lastly, $Q(\rho_n)$ is bounded from below away from zero as $0<\rho_n\le \overline{\rho}<1$ for all $n\in\Nat$.
Hence Lemma~\ref{lem:coeff_product_dep_n} entails
\[
    [x^n]\frac{1}{(1-x)^\gamma}\eul^{h_nx}
    \sim \frac{1}{(1-\al_n^{-1})^\gamma} [x^n]\eul^{h_nx}
    = \frac{1}{(1-\al_n^{-1})^\gamma} \frac{h_n^n}{n!}.
\]
Finally, we show~\ref{item:lem_h_n_coeff_iii}. Let $X \sim \mathrm{Po}(h_n)$. We split up the sum
\begin{align*}
	[x^n]\frac{1}{(1-x)^\gamma} \eul^{h_nx}
	&= \sum_{0\le k\le n} \binom{n- k+\gamma-1}{\gamma-1}
	\frac{h_n^ k}{ k!}\\
	&= \eul^{h_n} \left(
	\sum_{\lvert  k-h_n\rvert \le\sqrt{h_n}\ln n} 
	+\sum_{\lvert  k-h_n\rvert >\sqrt{h_n}\ln n}
	\right)\binom{n- k+\gamma-1}{\gamma-1}\pr{X= k} 
	=: \eul^{h_n}(S_1 + S_2).
\end{align*}
For $h_n = \al_n n$ we have $n -  k \sim (1-\al_n) n =\omega(1)$ for all $\lvert  k - h_n\rvert \le \sqrt{h_n}\ln n$. Hence
\[
	S_1
	\sim \frac{\big((1-\al_n) n\big)^{\gamma-1}}{\Gamma(k)}
	\pr{\lvert X - h_n\rvert\le \sqrt{h_n}\ln n  }
	%= \frac{\big((1-\al_n) n\big)^{k-1}}{\Gamma(k)}\left( 1 - 
	%\pr{\lvert X - h_n\rvert>\sqrt{h_n}\ln n } \right)
	.
\]
By applying~\eqref{eq:poisson_chernoff} we obtain for some $d>0$ that $\text{Pr}[\lvert X - h_n\rvert>\sqrt{h_n}\ln n] \le \eul^{-d\ln^2 n}$, so that $\lvert X - h_n\rvert\le \sqrt{h_n}\ln n  $ with probability $\sim  1$.
Moreover, it is easy to see that $S_2$ is negligible: note that
\[
	S_2
	\le \eul^{-d\ln^2 n} \sum_{0\le k\le n}\binom{n- k+\gamma-1}{\gamma-1}
	= \eul^{-d\ln^2n} \binom{n+\gamma}{\gamma}
	=\Theta\left(\eul^{-d\ln^2 n} n^\gamma\right)
	= o(S_1).
\]
%This concludes the proof of Part $iii)$ and we are done.
\end{proof}
The following simple and well-known saddle-point estimate for the coefficients of a power series will be useful several times. 
\begin{lemma}
Let $F(x) = \sum_{k\ge 0}f_kx^k$ be a power series with non-negative coefficients. Then, for any $n\in\mathbb{N}$ and $z > 0$
\begin{align}
\label{eq:trivial_saddle_point_bound}
	f_n = 
	[x^n]F(x)
	\le F(z){z^{-n}}.
\end{align}
\end{lemma}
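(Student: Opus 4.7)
The statement is an instance of the elementary saddle-point (or Markov-type) bound, and the argument is essentially one line. My plan is as follows.

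Since every coefficient $f_k$ is non-negative and $z > 0$, each term $f_k z^k$ in the series $F(z) = \sum_{k \ge 0} f_k z^k$ is non-negative. Hence the full sum dominates any individual term: in particular $F(z) \ge f_n z^n$, and dividing by $z^n > 0$ yields $f_n \le F(z) z^{-n}$, which is exactly the claim.

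Two minor points are worth noting, but neither is really an obstacle. First, if $z$ lies outside the radius of convergence of $F$, then $F(z) = +\infty$ and the inequality holds trivially; so one can either tacitly allow this or restrict attention to $z$ with $F(z) < \infty$. Second, the argument uses nothing about $n$ beyond $n \ge 0$, so the bound holds for every $n \in \Nat$ uniformly. There is no main obstacle here — the lemma is really just the observation that a series with non-negative terms bounds any single term from above, phrased in a form convenient for use as a crude contour bound elsewhere in the paper.
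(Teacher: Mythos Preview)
Your proof is correct and is exactly the standard one-line argument the paper has in mind; the paper in fact states the lemma without proof, calling it a ``simple and well-known saddle-point estimate,'' so there is nothing to compare.
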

Finally, we present a simple estimate which holds for any power series with non-negative coefficients and positive radius of convergence.
\begin{lemma}
\label{lem:C(x)_approx}
Let $m\in\Nat$. Let $F(x) = \sum_{k\ge m}f_kx^k$ be a power series with non-negative coefficients such that $f_m>0$ and radius of convergence $\rho>0$. Let $0<\eps<1$. Then there exists $A>0$ such that for all $0 \le z\le (1-\eps)\rho$
\[
	1 \le \frac{F(z)}{f_m z^{m}} \le 1 + A z.
\]
\end{lemma}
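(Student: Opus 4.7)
The plan is to reduce the claimed two-sided bound to a boundedness statement about a shifted power series on the compact interval $[0,(1-\eps)\rho]$.

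The lower bound is immediate: since all coefficients $f_k$ are non-negative, we have $F(z) = f_m z^m + \sum_{k>m} f_k z^k \ge f_m z^m$ for all $z\ge 0$, and dividing by $f_m z^m$ (which is legitimate for $z>0$; the case $z=0$ is trivial once we interpret the ratio as a limit) gives $F(z)/(f_m z^m) \ge 1$.

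For the upper bound, I will write
\[
    \frac{F(z)}{f_m z^m} - 1 = \frac{1}{f_m} \sum_{k>m} f_k z^{k-m} = \frac{z}{f_m} \cdot \sum_{j \ge 0} f_{m+1+j} z^j =: z \cdot R(z).
\]
The key observation is that the shifted series $R(z)=f_m^{-1}\sum_{j\ge 0} f_{m+1+j}\,z^j$ has the same radius of convergence $\rho$ as $F$, because dropping finitely many leading terms and shifting the index does not change the exponential growth rate of the coefficients. In particular, $R$ is finite and continuous on $[0,\rho)$, and since $R$ has non-negative coefficients it is monotone non-decreasing on $[0,\rho)$. Therefore $R$ attains its supremum on the compact interval $[0,(1-\eps)\rho]$ at the right endpoint, and
\[
    A := R\bigl((1-\eps)\rho\bigr) = \frac{1}{f_m}\sum_{j\ge 0} f_{m+1+j}\bigl((1-\eps)\rho\bigr)^{j} < \infty.
\]
This gives the desired upper bound $F(z)/(f_m z^m) \le 1 + Az$ uniformly for $z\in[0,(1-\eps)\rho]$.

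There is no real obstacle here; the only point to verify carefully is that $R$ indeed has radius of convergence $\rho$ (so that $(1-\eps)\rho$ lies strictly inside the disk of convergence and the sum defining $A$ converges). This is a standard fact about shifting and truncating power series, and once it is invoked, both inequalities follow at once with the explicit constant $A$ above.
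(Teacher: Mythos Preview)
Your proof is correct and follows essentially the same approach as the paper: both factor the excess $F(z)/(f_m z^m)-1$ as $z$ times a shifted tail series and then bound that series by its value at the right endpoint $a=(1-\eps)\rho$. Your constant $A=R(a)$ coincides with the paper's $\frac{1}{f_m}\sum_{k>m}f_k a^{k-m-1}$ (the paper then relaxes this slightly to $F(a)a^{-m-1}/f_m$), so the arguments are identical in substance.
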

\begin{proof}
The first inequality follows directly from the definition of $F$ and $m$. Abbreviate $a:=(1-\eps)\rho$ and note that since $\rho<1$ also $a <\rho < 1$. Then
\begin{equation}
\label{eq:Ccmzj}
	\frac{F(z)}{f_m z^{m}} 
	\le 1 + z\frac{1}{f_m} \sum_{k > m} f_k a^{k-m-1}
	= 1 + z\frac{a^{-m-1}}{f_m}  \sum_{k > m} f_k a^{k}.
\end{equation}
% Since $m \ge 1,k \ge m+1$ and $j\ge 2$ we obtain that $j(k-m-1)/2+m+1 \ge k$.
% \[
% 	jk - j(m+1) +2m+2
% 	\ge 2(k - m-1) + 2m+2
% 	= 2k.
% \]
Thus, as $a < \rho < 1$, we obtain from~\eqref{eq:Ccmzj} the claimed bound with $A = F(a)a^{-m-1}/f_m$.
\end{proof}

\subsection{Euler-Maclaurin Summation}
\label{sec:euler-maclaurin}
For a function $g$ we will need several times to compute the sum $\sum_{k=a}^bg(k)$ for some $a<b$. The Euler-Maclaurin summation formula, discussed for example in~\cite[Ch.~9.5]{Graham1994},  relates the sum to an integral, which is sometimes easier to compute.
\begin{lemma}
    Let $g:\Real\to\Real$ be differentiable and  $P_1(x):=x-\floor{x}-1/2$. Then for any $a<b$
    \[
        \sum_{k=a}^b g(k)
        = \int_a^bg(x) dx + \frac{g(a)+g(b)}{2} + \int_a^b g'(x)P_1(x) dx.
    \]
\end{lemma}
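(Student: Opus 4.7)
The plan is to reduce the identity to a routine integration-by-parts calculation on each unit interval with integer endpoints and then sum the pieces. Since the sum $\sum_{k=a}^b g(k)$ is written over integer indices, we may assume $a, b \in \mathbb{Z}$ with $a < b$.

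First I would fix an integer $k$ and exploit the fact that on the open interval $(k, k+1)$ the sawtooth function satisfies $P_1(x) = x - k - 1/2$, which is smooth there with derivative $1$. Integration by parts then gives
\[
    \int_k^{k+1} g'(x) P_1(x)\, dx
    = \bigl[g(x)(x - k - 1/2)\bigr]_k^{k+1} - \int_k^{k+1} g(x)\, dx
    = \frac{g(k) + g(k+1)}{2} - \int_k^{k+1} g(x)\, dx,
\]
where the boundary evaluation uses the one-sided limits $P_1(k^+) = -1/2$ and $P_1((k+1)^-) = 1/2$. This is legitimate since altering the integrand on the measure-zero set of integers does not affect the values of the integrals involved.

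Next I would sum the identity from $k = a$ to $k = b-1$. Additivity of the integral produces $\int_a^b g'(x) P_1(x)\, dx$ on the left and $\int_a^b g(x)\, dx$ on the right, while the trapezoidal boundary contributions collapse to
\[
    \sum_{k=a}^{b-1} \frac{g(k) + g(k+1)}{2} = \sum_{k=a}^{b} g(k) - \frac{g(a) + g(b)}{2}.
\]
Rearranging the resulting equation delivers the claimed formula. There is no genuine obstacle here beyond the mild bookkeeping of the boundary values of $P_1$ at the integers, which is handled cleanly by the right/left-limit convention on each subinterval; the argument is essentially a single integration by parts, repeated and telescoped.
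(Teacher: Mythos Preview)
Your proof is correct and is the standard derivation of the first-order Euler--Maclaurin formula via integration by parts on each unit interval followed by telescoping. The paper does not actually supply a proof of this lemma; it simply states the result and refers the reader to \cite[Ch.~9.5]{Graham1994}, so there is no in-paper argument to compare against.
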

We will apply this to functions of the form $g(x)=  \eul^{-dx^2}$ for some $d>0$. Note that $\lvert P_1(x)\rvert \le 1$ and $\lvert g'(x)\rvert$ equals $g'(x)$ for $x<0$ and $-g'(x)$ for $x>0$. Accordingly, 
\begin{align}
\label{eq:euler-maclaurin-summation-easy-remainder}
    \sum_{k=a}^bg(k)
    = \int_a^bg(x) dx + Q,
    \quad \text{where} \quad
    \lvert Q\rvert \le \frac{g(a)+g(b)}{2} + \int_{-\infty}^\infty |g'(x)| dx
    \le 3.
\end{align}
This will simplify the computation of $\sum g(k)$ considerably, as it will turn out that the remainder~$Q$ is negligible compared to $\int g(x)dx$ which, in turn, is well-studied.

%%%%%%%%%%%%%%%%%%%%%%%%% General Proof Strategy %%%%%%%%%%%%%%%%%%%%%%%%%%%%%%
\section{General Proof Strategy}
\label{sec:generalproof}
%%%%%%%%%%%%%%%%%%%%%%%%%%%%%%%%%%%%%%%%%%%%%%%%%%%%%%%%%%%%%%%
In this section we describe the general proof strategy. On the way, several important intermediate statements are stated and we explain how to assemble them in order to prove Theorems~\ref{thm:coeff_G_l<d} and~\ref{thm:coeff_G_l>d}. Then all is left to show is that these intermediate statements are valid, the proof of which is deferred to Section~\ref{sec:proofs}.

\subsection{Proof Setup: the Boltzmann Model}
\label{subsec:motivation}

We begin with the definition of the Boltzmann model that will be central in the forthcoming considerations. Given a non-negative real-valued sequence $(c_n)_{n\in\Nat}$, the associated power series $C(x)$, and a positive real $x_0$ such that $C(x_0) < \infty$, we define  the random variable $\Gamma C(x_0)$ by 
\[
    \pr{\Gamma C(x_0) = n} = c_n \frac{x_0^n}{C(x_0)}, \qquad n\in\Nat.
\]
We will refer to this distribution as the \emph{Boltzmann distribution} or the \emph{Boltzmann model} for $C(x)$ or for $(c_n)_{n\in\Nat}$ at $x_0$.
The Boltzmann model has a natural interpretation, if $(c_n)_{n\in\Nat}$ is the counting sequence of some combinatorial class $\cal C$; this is actually where this terminology originates, see the seminal paper~\cite{Duchon2004}.
Indeed, imagine in that case that we put a ``weight'' of $x_0^n$ to any object of size $n$ in $\cal C$, so that the total weight $C(x_0)$ is finite.
If we then draw an object from $\cal C$ with a probability that is proportional to its weight, then we get precisely the Boltzmann distribution. Since we are going to need that later several times without explicitly referencing it, note that
\[
    \mathbb{E}[\Gamma C(x_0)] = \frac{x_0 C'(x_0)}{C(x_0)}
    \quad \textrm{and}\quad
    \mathbb{E}\big[\Gamma C(x_0)^2\big] = \frac{x_0^2 C''(x_0)}{C(x_0)} + \frac{x_0 C'(x_0)}{C(x_0)}.
\]
In a completely analogous way we can define a bivariate variant of the Boltzmann model. Suppose that we are given a sequence of non-negative sequences $((g_{n,N})_{N \in \Nat})_{n\in\Nat}$ with associated power series $G(x,y)$ and positive reals $x_0,y_0$ such that $G(x_0,y_0) < \infty$. Then we consider the $\Nat^2$-valued random variable $\Gamma G(x_0,y_0)$ with distribution
\begin{equation}
\label{eq:bivariateBoltzmannG}
    \pr{\Gamma G(x_0,y_0) = (n,N)} = g_{n,N} \frac{ x_0^ny_0^N}{G(x_0,y_0)},
    \qquad n,N\in\Nat,
\end{equation}
that we also call a Boltzmann distribution/model for $G$ at $(x_0,y_0)$.

The crucial point behind the previous definitions is that we can exploit them to actually determine~$c_n$ and $g_{n,N}$. Indeed, if we could determine the probability on the left-hand side  of~\eqref{eq:bivariateBoltzmannG}, then would also know $g_{n,N}$. This consideration is of course only useful if we had an appropriate hands-on description of $\Gamma G(x_0,y_0)$ that we could study appropriately. Here the Boltzmann models come into play: a particularly useful property of them -- and one that made them so successful in combinatorics -- is that they \emph{compose well}, see also~\cite{Duchon2004,Bodirsky2011}. For example, suppose that $C(x) = A(x)B(x)$ and $x_0$ be such that $A(x_0),B(x_0) <\infty$. Then we can relate the Boltzmann models $\Gamma C(x_0)$ and $\Gamma A(x_0), \Gamma B(x_0)$ as follows. Consider the following simple random process, that first draws independently from $\Gamma A(x_0), \Gamma B(x_0)$ and then creates the sum:
\begin{enumerate}
    \item[(P)] Let independently $A = \Gamma A(x_0)$ and $B = \Gamma B(x_0)$ and set $C = A + B$.
%    \item Set $C := A+B$.
\end{enumerate}
Then it is quite easy to see that $C$ is distributed like $\Gamma C(x_0)$; we do not show that here, since we do not need it, but we keep the guiding principle in mind: the \emph{product of power series corresponds to independent components in the Boltzmann model}. Let us study a second example. Suppose that we are given $C(x)$ and $D(x) = \exp\{C(x)\}$\footnote{In combinatorial terms this corresponds to the set-construction, that is, $[x^n]D(x)$ is the number of objects of size~$n$ that are unordered collections of elements in $\cal C$, cf.~\cite{Flajolet2009}}.
Assume that $x_0$ is such that $C(x_0) < \infty$, so that $D(x_0) < \infty$ as well. Then we can relate the models $\Gamma C(x_0)$ and $\Gamma D(x_0)$ by first drawing a Poisson random variable and then summing up independent $\Gamma C(x_0)$'s:
\begin{enumerate}
    \item[(S1)] Let $P$ be a Poisson distributed random variable with parameter $C(x_0)$.
    \item[(S2)] Let $C_1, \dots, C_P$ independent random variables distributed like $\Gamma C(x_0)$.
    \item[(S3)] Set $D = C_1 + \dots + C_P$.
\end{enumerate}
Then it is quite easy to see that $D$ is distributed like $\Gamma D(x_0)$; again, we do not show that, but keep in mind: \emph{exponentation on the power series level corresponds to a Poisson distribution on the Boltzmann level}. Moreover, we can say that $\Gamma D(x_0)$ has a number of $C$-components distributed like Po$(C(x_0))$. Let us look at a last example. Suppose that $H(x) = C(x^j)$ for some $j\in \Nat$ and $x_0$ be such that $H(x_0) < \infty$. Consider the process
\begin{enumerate}
    \item[(E)] Let $C = \Gamma C(x_0^j)$ and set $H= jC$.
    %\item Set $H := jC$.
\end{enumerate}
Then we obtain that $H$ is distributed like $\Gamma H(x_0)$, so that \emph{potentiation of the argument on the power series level corresponds to multiplication on the Boltzmann level}, and we can say that $\Gamma H(x_0)$ has $j$ components.

Here we are interested in the Boltzmann model on 
\[
    G(x,y)
    = \exp\left\{\sum_{j \ge 1} C(x^j)y^j/j\right\}
    = \prod_{j\ge 1 } \exp\left\{C(x^j)y^j/j\right\},
    \quad
    \text{where }
    [x^n]C(x) = c_n \text{ satisfies~\eqref{eq:c_n}}
\]
at some $(x_0,y_0) \in (\Real^+)^2$ such that $G(x_0,y_0) < \infty$. Guided by the general principles (product $\to$ independent components, exponentation $\to$ Poisson, potentiation $\to$ multiplication) we consider the following process:
\begin{enumerate}
\item Let $(P_j)_{j\ge 1}$ be independent Poisson random variables with parameters $(C({x}_0^j)y_0^j/j)_{j\ge 1}$.
\item Let $(C_{j,i})_{j,i\ge1}$ be independent random variables with $C_{j,i}\sim\Gamma C(x_0^j)$ for $j,i\ge 1$.
%Further let $(\mathsf{C}_{j,i})_{j,i\ge 1}$ be independent from $(P_j)_{j\ge 1}$.
\item Set $\Lambda ({x}_0,y_0) := \big(\sum_{j \ge 1} j \sum_{1\le i \le P_j} C_{j,i},\,\sum_{j \ge 1} jP_j \big)$. 
\end{enumerate}
Then, rather unsurprisingly, we obtain the following statement, whose proof is in Section~\ref{sec:lem:coeff_as_prob}.
\begin{lemma}
\label{lem:boltzmann_algorithm_identical}
The distributions of $\Gamma G({x}_0,y_0)$ and $\Lambda ({x}_0,y_0)$ are identical.
\end{lemma}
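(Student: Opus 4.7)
The plan is to compute the bivariate probability generating function (PGF) of each random vector and observe that they coincide on a neighbourhood of the origin, so that the two distributions agree by uniqueness of PGFs of $\Nat^2$-valued random variables.

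For the direct side, by definition of the bivariate Boltzmann model~\eqref{eq:bivariateBoltzmannG},
\[
    \ex{s^{\Gamma G({x}_0,y_0)_1}\, t^{\Gamma G({x}_0,y_0)_2}}
    = \sum_{n,N\ge 0} g_{n,N} \frac{(s{x}_0)^n (ty_0)^N}{G({x}_0,y_0)}
    = \frac{G(s{x}_0,\,ty_0)}{G({x}_0,y_0)},
\]
which is finite for $(s,t)$ in a neighbourhood of $(0,0)$ since $G({x}_0,y_0)<\infty$ and the series defining $G$ has non-negative coefficients.

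For $\Lambda({x}_0,y_0)$, I would exploit that the blocks $B_j := \bigl(j\sum_{1\le i\le P_j} C_{j,i},\, jP_j\bigr)$ are independent across $j\ge 1$, so the joint PGF factorises. For fixed $j$, since $C_{j,i}\sim \Gamma C({x}_0^j)$ we have $\ex{s^{C_{j,i}}} = C(s{x}_0^j)/C({x}_0^j)$. Conditioning on $P_j$ and using independence of the $C_{j,i}$ yields
\[
    \ex{s^{j\sum_{i\le P_j} C_{j,i}}\, t^{jP_j}}
    = \sum_{p\ge 0} \pr{P_j = p}\left(\frac{C((s{x}_0)^j)}{C({x}_0^j)}\right)^{\!p}\! t^{jp}.
\]
Since $P_j \sim \pois{C({x}_0^j)y_0^j/j}$, the standard identity $\sum_{p\ge 0}\eul^{-\lambda}\lambda^p w^p/p! = \eul^{\lambda(w-1)}$ simplifies this to $\exp\bigl\{\bigl((ty_0)^j C((s{x}_0)^j) - y_0^j C({x}_0^j)\bigr)/j\bigr\}$. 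Taking the product over $j\ge 1$ and invoking the exponential form of $G$ from~\eqref{eq:G(x,y)} gives
\[
    \prod_{j\ge 1} \ex{s^{B_{j,1}} t^{B_{j,2}}}
    = \frac{\e{\sum_{j\ge 1} (ty_0)^j C((s{x}_0)^j)/j}}{\e{\sum_{j\ge 1} y_0^j C({x}_0^j)/j}}
    = \frac{G(s{x}_0,\,ty_0)}{G({x}_0,y_0)},
\]
which matches the PGF computed for $\Gamma G({x}_0, y_0)$.

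There is no real obstacle here; the proof is a clean manipulation built from exactly the three guiding principles that were highlighted just before the statement (product $\to$ independence, exponentiation $\to$ Poisson, potentiation of the argument $\to$ multiplication). The only fine point worth a line is justifying the exchange of sum and product when expanding the infinite product, which is immediate from absolute convergence on a small polydisc around the origin (all coefficients are non-negative, and both series converge at $(s,t) = (1,1)$ by the assumption $G({x}_0,y_0)<\infty$), together with the Fubini-type rearrangement $\sum_j \sum_p = \sum_p \prod_j$ implicit in the factorisation step.
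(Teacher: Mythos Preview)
Your proof is correct and takes a genuinely different route from the paper. The paper computes the point probabilities $\pr{\Lambda({x}_0,y_0)=(n,N)}$ directly: it conditions on the full profile $(P_j)_{j\ge 1}=(p_j)_{j\ge 1}$ with $\sum_j jp_j=N$, rewrites the conditional probability that the size equals $n$ as a coefficient extraction $[x^n]\prod_j (C(x^j)/C({x}_0^j))^{p_j}$, and then recombines everything into $[x^ny^N]G(x,y)\cdot {x}_0^ny_0^N/G({x}_0,y_0)$. You instead compare the bivariate probability generating functions and invoke uniqueness. Your approach is shorter and more transparently probabilistic: the factorisation over $j$ and the Poisson identity do all the work, and you avoid the bookkeeping over partitions of $N$. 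The paper's approach has the minor advantage of being entirely coefficient-based and self-contained (no appeal to uniqueness of PGFs), which fits its analytic-combinatorial style, but there is no substantive gain either way.
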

However, we can extract more from the aforementioned description of $\Lambda$. Let us write -- motivated from the combinatorial background -- for short for a pair $P = (n,N)$ (like $\Lambda(x_0,y_0)$) $\kappa(P) = N$ for the ``number of components'' and $|P| = n$ for the ``size''. Define the events
{
\small
\begin{align}
	\label{eq:Pa_N,E_n}
	\Pa_N
	:= 
	\left\{\kappa(\Lambda(x_0,y_0)) = N\right\}
	= \left\{\sum_{j\ge 1}jP_j = N\right\},
	~
	%~~\text{and}~~
	\E_n
	:=
	\left\{|\Lambda(x_0,y_0)| = n\right\}
	= \left\{\sum_{j\ge 1}j\sum_{1\le i\le P_j} C_{j,i}=n\right\}.
\end{align}}
\noindent
Then Lemma~\ref{lem:boltzmann_algorithm_identical} reveals that 
\[
    [x^ny^N]G(x,y)\frac{x_0^ny_0^N}{G(x_0,y_0)}
    =\pr{\Gamma G(x_0,y_0) = (n,N)} 
    = \pr{\Lambda (x_0,y_0) = (n,N)} = \pr{\E_n,\Pa_N}.
\]
Rewriting this yields an alternative representation of $[x^ny^N]G(x,y)$ in terms of iid random variables.
\begin{corollary}
\label{coro:coeff_as_prob}
Let ${x}_0,y_0 > 0$ be such that $G({x}_0,y_0) < \infty$.
For any $n,N\in\Nat$ such that $[x^ny^N]G(x,y)>0$
\[
	[x^ny^N]G(x,y)
	= {x}_0^{-n} y_0^{-N} G({x}_0,y_0)\pr{\Pa_N} \pr{\E_n\mid\Pa_N}.
\]
\end{corollary}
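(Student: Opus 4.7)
The plan is to derive the identity essentially by unwinding the two equivalent descriptions of the Boltzmann model at $(x_0, y_0)$. By the definition of the bivariate Boltzmann distribution in~\eqref{eq:bivariateBoltzmannG},
\[
    \pr{\Gamma G(x_0, y_0) = (n, N)} = [x^n y^N]G(x,y) \cdot \frac{x_0^n y_0^N}{G(x_0, y_0)}.
\]
On the other hand, Lemma~\ref{lem:boltzmann_algorithm_identical} asserts that $\Gamma G(x_0,y_0)$ and $\Lambda(x_0, y_0)$ have the same distribution, so the left-hand side equals $\pr{\Lambda(x_0, y_0) = (n,N)}$.

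The second step is to observe that, by the very definitions in~\eqref{eq:Pa_N,E_n}, the event $\{\Lambda(x_0, y_0) = (n, N)\}$ is precisely $\E_n \cap \Pa_N$, so that
\[
    \pr{\Lambda(x_0, y_0) = (n, N)} = \pr{\E_n, \Pa_N} = \pr{\Pa_N} \cdot \pr{\E_n \mid \Pa_N}.
\]
Here the assumption $[x^n y^N] G(x,y) > 0$ guarantees $\pr{\E_n, \Pa_N} > 0$, hence in particular $\pr{\Pa_N} > 0$, so the conditioning is well-defined. Equating the two expressions for $\pr{\Gamma G(x_0, y_0) = (n, N)}$ and solving for $[x^n y^N]G(x,y)$ yields the claim.

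There is essentially no obstacle here beyond keeping track of definitions; the real content is hidden in Lemma~\ref{lem:boltzmann_algorithm_identical}, whose proof is deferred. The only subtlety worth flagging is the well-definedness of the conditional probability, which is handled by the hypothesis $[x^n y^N]G(x,y) > 0$.
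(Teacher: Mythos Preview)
Your proof is correct and follows exactly the same route as the paper: use the Boltzmann definition~\eqref{eq:bivariateBoltzmannG}, invoke Lemma~\ref{lem:boltzmann_algorithm_identical} to pass to $\Lambda(x_0,y_0)$, identify $\{\Lambda(x_0,y_0)=(n,N)\}=\E_n\cap\Pa_N$, and factor via conditional probability. The paper presents this as a one-line derivation immediately before the corollary, and your remark about the hypothesis $[x^ny^N]G(x,y)>0$ ensuring $\pr{\Pa_N}>0$ is a nice explicit justification that the paper leaves implicit.
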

In other words, if we can compute $G(x_0,y_0)$, $\pr{\Pa_N}$ and $\pr{\E_n\mid\Pa_N}$ then we also obtain the desired quantity $[x^ny^N]G(x,y)$ and we are done. How this can achieved is the topic of the next section.

\subsection{Proof of the Main Results}
\label{subsec:gen_setup}

In order to prove Theorems~\ref{thm:coeff_G_l<d} and~\ref{thm:coeff_G_l>d} we will apply Corollary~\ref{coro:coeff_as_prob}. Let us begin with a remark. Although Corollary~\ref{coro:coeff_as_prob} is true for all $n,N\in \Nat$ and $x_0,y_0>0$ such that $G(x_0,y_0)<\infty$, we certainly cannot expect that it is \emph{useful} for all choices of the parameters. Here, where we want to prove Theorems~\ref{thm:coeff_G_l<d} and~\ref{thm:coeff_G_l>d}, we consider (large) sizes $n\in \Nat$ and a corresponding sequence $N_n$ satisfying~\eqref{eq:N_equal_lambda_n} and~\eqref{eq:N_star}, that is,
\[
    N_n = \lambda_n N_n^*
    \quad
    \textrm{such that}
    \quad
    N_n, n-mN_n \to \infty.
\]
Then $x_0,y_0$ should be chosen is such a way that the events $\Pa_{N_n}$ and $\E_n$ are 'typical'. Note that the expectations satisfy
\begin{align*}
%\label{eq:size_mean=n}
	\ex{\lvert\Lambda (x_0,y_0)\rvert}
	= \ex{\sum_{j\ge 1}j\sum_{1\le i\le P_j}C_{j,i}}
	= \sum_{j\ge 1}\ex{P_j}j\ex{C_{j,i}}
	= \sum_{j\ge 1} x_0^jy_0^jC'(x_0^j).
\end{align*}
and
\begin{align*}
%\label{eq:comp_mean=N}
	\ex{\kappa(\Lambda (x_0,y_0))}
	= \ex{\sum_{j\ge 1}j\ex{P_j}}
	= \sum_{j\ge 1} y_0^j C(x_0^j).
\end{align*}
So, in order to get the most out of Corollary~\ref{coro:coeff_as_prob}, it seems reasonable to choose $x_0,y_0$ such that 
\begin{align}
\label{eq:saddle_point_extended}
	\sum_{j\ge 1}{x}_0^jy_0^jC'({x}_0^j) = n
	\quad\text{and}\quad
	\sum_{j\ge 1}y_0^jC({x}_0^j) = N_n.
\end{align}
Note, however, that actually we will \emph{not} (quite) do that. Instead, we will choose $({x}_0,y_0)$ to be the unique solution $(x_n,y_n)$ of~\eqref{eq:saddle_point_equations}; to wit,~\eqref{eq:saddle_point_equations} reads here
\begin{align}
	\label{eq:saddle_point_digest}
	{x}_ny_nC'({x}_n) + mc_m\frac{{x}_n^my_n}{1-{x}_n^my_n} = n,
	~~
	y_nC({x}_n) + c_m\frac{{x}_n^my_n}{1-{x}_n^my_n} = N_n,
	~~
	x_n,y_n>0,
	~~
	x_n^my_n<1.
\end{align}
To justify -- informally, at this point -- the ``switch" to the set of simpler equations let us look closer at the second equation in~\eqref{eq:saddle_point_extended}:
\begin{equation*}
    N_n = \sum_{j\ge 1}y_0^jC({x}_0^j)
    = y_0 C(x_0) + c_m\sum_{j\ge 1}(x_0^my_0)^j + \sum_{j\ge 2}y_0^j(C(x_0^j) - c_m x_0^{jm})
    - c_m x_0^m y_0.
\end{equation*}
(Note that, somehow arbitrarily, we pulled out the term $c_mx_0^my_0$ so that the we got a geometric series starting at $1$. This will turn out convinient later, but actually it makes no difference.)
Then we must certainly have that $x_0 < \rho$ and  $0<x_0^my_0<1$, and so the first sum on the right-hand side equals $(x_0^m y_0)/(1- x_0^m y_m)$ and the second one is bounded (since uniformly $C(x_0^j) - c_m x_0^{jm} = {\cal O}(x_0^{j(m+1)})$ by Lemma~\ref{lem:C(x)_approx} and $\rho < 1$). So, 
\begin{equation}
\label{eq:N_nsimplf}
    N_n = y_0 C(x_0) + c_m\frac{x_0^my_0}{1-x_0^my_0} + \bigO{y_0}
\end{equation}
and by ignoring the additive error term we arrive at the second equation in~\eqref{eq:saddle_point_digest}. Similarly we can justify the switch to the first equation.

\vspace{3mm}
\noindent
{\bf To wrap up}, in all of the following we will work with $\Lambda(x_n,y_n)$, where $(x_n,y_n)$ is the unique solution to~\eqref{eq:saddle_point_digest} and $N_n = \lambda_n N^*_n$ satisfies~\eqref{eq:N_star} and~\eqref{eq:N_equal_lambda_n}. In particular, we will consider independent random variables $(P_j)_{j\ge 1}$ and $(C_{j,i})_{j,i\ge 1}$ such that
\begin{align}
\label{eq:HIER_BRAUCHT_ES_EIN_LABEL_DASS_ICH_AUS_VERSEHEN_ENTFENRT_HABE}
	P_j \sim \pois{C(x_n^j)y_n^j/j},
	~~ j \in \Nat, 
    \quad \textrm{and} \quad
	\pr{C_{j,i} = k}
	= c_k\frac{ x_n^{jk}}{C(x_n^j)},
	~~ j,i,k\in\Nat.
\end{align}
By applying Corollary~\ref{coro:coeff_as_prob} we see that for the proof of Theorems~\ref{thm:coeff_G_l<d} and~\ref{thm:coeff_G_l>d} it suffices to determine
\[
	G({x}_n,y_n), \quad
	\pr{\Pa_{N_n}}
	\quad\text{and}\quad
	\pr{\E_n\mid\Pa_{N_n}}
\]
for $\Pa_{N_n}$ and $\E_n$ defined in~\eqref{eq:Pa_N,E_n}. This will be performed in versions $(I)$ and $(II)$ of Lemmas~\ref{lem:G({x}_n,y_n)_all_cases}, \ref{lem:P_N_all_cases} and \ref{lem:E_n_cond_P_N_all_cases}.
Along the way some more (intermediate) statements will be needed. We will also abbreviate throughout without further reference
\[
	S_n 
	:= \frac{{x}_n^my_n}{1-{x}_n^my_n}.
\] 
Up to this point there is nothing special about the relation of $n$ and $N_n$. However, towards the proof of Theorems~\ref{thm:coeff_G_l<d} and~\ref{thm:coeff_G_l>d} we establish in the next lemma the key role of the value $N^*_n$. In particular, if we write $x_n = \rho e^{-\chi_n}$, then Lemma~\ref{lem:chi_to_zero} reveals that $\chi_n = o(1)$ so that $x_n \sim \rho$. If we now consider $S_n = x_n^my_n/(1 - x_n^my_n)$, then it is obvious that $y_n$ plays a crucial role: if $y_n$ stays well below~$\rho^{-m}$, then $S_n$ is bounded, otherwise it becomes large. This transition happens precisely at~$N_n^*$, as established in the following lemma, and it has far reaching consequences in the remainder; depending on whether $\limsup_{n\to \infty} < \rho^{-m}$ or $y_n\sim \rho^{-m}$  there isn't/there is a crucial interplay between the terms $x_ny_nC'(x_n)$ (and $y_n C(x_n)$) and $S_n$ in~\eqref{eq:saddle_point_digest}. 
\begin{sublemmas}
\label{lem:saddle_point_short_all_cases}
\begin{lemma}
\label{lem:saddle_point_l<d_short} 
In case $(I)$, that is, when $\limsup_{n\to\infty}\lambda_n<1$, 
\begin{align*}
    \limsup_{n\to\infty}y_n<\rho^{-n}
    \qquad\text{and consequently}\qquad
	y_nC({x}_n) = N_n + \Theta(y_n),
	\quad
	S_n = \Theta(y_n).
\end{align*}
\end{lemma}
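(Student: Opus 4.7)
The plan is to bootstrap from Lemma~\ref{lem:chi_to_zero}, which already tells us that $x_n \to \rho$ (equivalently $\chi_n \to 0$) and that $y_n$ is bounded above. My strategy is to first derive a sharp asymptotic for $y_n C(x_n)$ purely in terms of $y_n$ and known quantities and then combine it with the second equation of~\eqref{eq:saddle_point_digest} to pin down $\rho^m y_n$.

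Subtracting $m$ times the second equation in~\eqref{eq:saddle_point_digest} from the first eliminates $S_n$ and yields
\[
    y_n C(x_n)\left(\frac{x_n C'(x_n)}{C(x_n)} - m\right) = n - m N_n.
\]
Applying Lemma~\ref{lem:chi_to_zero} with $k=0,1$ gives $x_n C'(x_n)/C(x_n) \sim \alpha/\chi_n \to \infty$, whence $y_n C(x_n) \sim \chi_n (n - mN_n)/\alpha$. Substituting the explicit form of $\chi_n$ from~\eqref{eq:bounds_chi_xyC'} and simplifying leads to
\[
    y_n C(x_n) \sim \frac{\Gamma(\alpha+1)^{1/(\alpha+1)}}{\alpha}\, y_n^{1/(\alpha+1)} (n-mN_n)^{\alpha/(\alpha+1)}\, g\!\left(\frac{n-mN_n}{y_n}\right).
\]
Since $N_n = O(g(n) n^{\alpha/(\alpha+1)}) = o(n)$ we have $n - mN_n \sim n$; combined with $y_n = O(1)$ and slow variation of $g$, the argument of $g$ may be replaced by $n$ up to a factor $1+o(1)$. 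Inserting the definition~\eqref{eq:N_star} of $N_n^*$ rewrites the display compactly as
\[
    y_n C(x_n) \sim (\rho^m y_n)^{1/(\alpha+1)}\, N_n^*.
\]

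The second equation in~\eqref{eq:saddle_point_digest} together with $c_m S_n \ge 0$ gives $(\rho^m y_n)^{1/(\alpha+1)}(1+o(1)) \le N_n/N_n^* = \lambda_n$, so
\[
    \limsup_{n\to\infty} \rho^m y_n \le \Bigl(\limsup_{n\to\infty}\lambda_n\Bigr)^{\alpha+1} < 1,
\]
which is the first assertion. It then follows immediately that $x_n^m y_n = \rho^m y_n(1+o(1))$ stays bounded away from $1$, hence $S_n = x_n^m y_n/(1-x_n^m y_n) = \Theta(y_n)$. Substituting this back into the second equation yields $y_n C(x_n) = N_n - c_m S_n = N_n + \Theta(y_n)$, completing the argument.

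The only genuine subtlety is the replacement $g((n-mN_n)/y_n) \sim g(n)$, since a priori $y_n$ is only known to be bounded above and might conceivably be very small. This can be handled by a preliminary bootstrap: from $y_n C(x_n) \le N_n$ together with the closed form for $\chi_n$ one quickly extracts that $y_n$ is bounded below by an inverse polynomial in $n$, which in view of the subpolynomial growth of slowly varying functions (cf.\ Appendix~\ref{sec:appendix}) is ample to justify the replacement.
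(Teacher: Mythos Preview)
Your overall strategy is sound and in fact cleaner than the paper's: you derive the closed-form relation $y_nC(x_n)\sim(\rho^m y_n)^{1/(\alpha+1)}N_n^*$ directly from Lemma~\ref{lem:chi_to_zero}, whereas the paper runs two separate subsequence-contradiction arguments (first $y_nC(x_n)=\Theta(N_n)$, then $S_n=O(1)$). Both routes end in the same place, but yours identifies the governing relation explicitly.

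There is, however, a genuine gap in your bootstrap. You claim that an inverse-polynomial lower bound $y_n\ge n^{-c}$, together with ``subpolynomial growth of slowly varying functions'', suffices to conclude $g((n-mN_n)/y_n)\sim g(n)$. This is false: take $g(x)=\ln x$ and $y_n=n^{-c}$; then $g(n/y_n)/g(n)=\ln(n^{1+c})/\ln n=1+c\not\to 1$. The Potter bounds~\eqref{eq:svissubpoly} only give $g(n/y_n)/g(n)\in[y_n^{\delta},y_n^{-\delta}]$ for arbitrary $\delta>0$, which is not $1+o(1)$ when $y_n\to 0$.

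The fix is cheap and does not require the bootstrap at all. Since $x_n^m y_n<1$ and $x_n\to\rho$, you already have $\limsup y_n\le\rho^{-m}$. Argue by contradiction: if equality held, pass to a subsequence with $y_n\to\rho^{-m}$. Along that subsequence $y_n$ lies in a compact subset of $(0,\infty)$, so the Uniform Convergence Theorem (Theorem~\ref{thm:uniformconv}) gives $g((n-mN_n)/y_n)\sim g(n)$ honestly, your displayed formula becomes $y_nC(x_n)\sim N_n^*$, and this contradicts $y_nC(x_n)\le N_n=\lambda_nN_n^*$ with $\limsup\lambda_n<1$. (This is essentially how the paper handles the analogous step; see the argument around~\eqref{eq:yC(x)_larger_than_N_contra}.) Once $\limsup y_n<\rho^{-m}$ is secured, your remaining deductions for $S_n$ and $y_nC(x_n)$ go through verbatim.
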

\begin{lemma}
\label{lem:saddle_point_l>d_short} 
In case $(II)$, that is, when $\liminf_{n\to\infty}\lambda_n>1$,
\begin{align*}
    y_n \sim\rho^{-m}
    \qquad\text{and consequently}\qquad
	y_nC({x}_n) \sim a_n \cdot N_n,
	\quad
	S_n	\sim \frac{1-a_n}{c_m} \cdot N_n
\end{align*}
for some non-negative sequence $(a_n)_{n\in\Nat}$ such that $\limsup_{n\to\infty} a_n < 1$ and
\[
    a_n
    := \lambda_n^{-1}
    \cdot \frac{g(n-mN_n)}{g(n)} \left(\frac{n-mN_n}{n}\right)^{\al/(\al+1)}.
\]
%Further, $\limsup_{n\to\infty} a_n < 1$.
%this sequence satisfies
%\[
%    a_n\le \lambda_n^{-1} \text{ for }n\in\Nat
%    \quad\text{and}\quad 
%    a_n \sim \lambda_n^{-1} \text{ for } N_n = o(n).
%\]
\end{lemma}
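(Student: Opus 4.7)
The plan is to first derive a clean asymptotic identity linking $y_nC(x_n)$, $N_n$, and the quantity $a_n$; then to pin down $y_n\sim\rho^{-m}$ by a short contradiction argument; and finally to read off the stated asymptotics for $y_nC(x_n)$ and $S_n$ from these.

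To set things up, I apply Lemma~\ref{lem:chi_to_zero} with $k=0$ (whose proof also establishes that $y_n$ is bounded) and use $\Gamma(\al+1)=\al\,\Gamma(\al)$ to get
\[
    y_nC(x_n) \sim \frac{\Gamma(\al+1)^{1/(\al+1)}}{\al}\, g\Big(\tfrac{n-mN_n}{y_n}\Big)\, y_n^{1/(\al+1)}\, (n-mN_n)^{\al/(\al+1)}.
\]
Dividing by $N_n=\lambda_n N_n^*$ with $N_n^*$ as in~\eqref{eq:N_star}, and using boundedness of $y_n$ together with the slow variation of $g$ to replace $g((n-mN_n)/y_n)$ by $g(n-mN_n)$, I obtain the key identity
\[
    \frac{y_nC(x_n)}{N_n}\sim a_n\cdot (\rho^my_n)^{1/(\al+1)},\qquad n\to\infty.
\]
Separately, since $N_n^*=\Theta(g(n)n^{\al/(\al+1)})=o(n)$ we have $mN_n=o(n)$, hence $(n-mN_n)/n\to 1$ and, by slow variation, $g(n-mN_n)/g(n)\to 1$. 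Therefore $a_n\sim \lambda_n^{-1}$, and $\limsup_n a_n\le(\liminf_n\lambda_n)^{-1}<1$, which already delivers the non-negativity and the $<1$ bound on $a_n$.

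The main obstacle will be establishing $y_n\sim\rho^{-m}$. The constraint $x_n^my_n<1$ together with $x_n\to\rho$ (Lemma~\ref{lem:chi_to_zero}) immediately forces $\limsup_n\rho^my_n\le 1$, so it remains to rule out any subsequential limit $\beta<1$. Suppose for contradiction that along a subsequence $\rho^my_n\to\beta<1$; passing to a further subsequence I may assume $\lambda_n\to\lambda\in(1,\infty]$, whence $a_n\to\lambda^{-1}$. Along this subsequence $x_n^my_n\to\beta<1$, which keeps $S_n$ bounded, and the second identity in~\eqref{eq:saddle_point_digest} yields $y_nC(x_n)=N_n-c_mS_n\sim N_n$, i.e.\ $y_nC(x_n)/N_n\to 1$. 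Combined with the key identity above this forces $\beta^{1/(\al+1)}\lambda^{-1}=1$, i.e.\ $\beta=\lambda^{\al+1}>1$, contradicting $\beta\le 1$.

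With $y_n\sim\rho^{-m}$ in hand, $(\rho^my_n)^{1/(\al+1)}\to 1$, and the key identity immediately gives $y_nC(x_n)\sim a_nN_n$. Reinserting this into~\eqref{eq:saddle_point_digest} yields $c_mS_n=N_n-y_nC(x_n)\sim (1-a_n)N_n$, i.e.\ $S_n\sim (1-a_n)N_n/c_m$, which completes the plan.
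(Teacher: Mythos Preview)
There are two genuine gaps.

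First, the step ``since $N_n^*=\Theta(g(n)n^{\al/(\al+1)})=o(n)$ we have $mN_n=o(n)$'' is incorrect. You have $N_n=\lambda_n N_n^*$, and in case $(II)$ there is no upper bound on $\lambda_n$; the only constraint from~\eqref{eq:N_equal_lambda_n} is $n-mN_n\to\infty$, so $N_n$ can be as large as $n/m-\omega(1)$ and $mN_n=\Theta(n)$ is perfectly possible. Consequently $(n-mN_n)/n\not\to 1$ in general and your claim $a_n\sim\lambda_n^{-1}$ fails. What is true (and sufficient) is the one-sided bound $a_n\le(1+o(1))\lambda_n^{-1}$, obtained by applying the Potter bounds~\eqref{eq:svissubpoly} to $g(n-mN_n)/g(n)$ with any exponent $\delta<\al/(\al+1)$; this still gives $\limsup_n a_n<1$.

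Second, the replacement $g((n-mN_n)/y_n)\sim g(n-mN_n)$ via slow variation requires $y_n$ to lie in a compact subset of $(0,\infty)$, in particular to be bounded \emph{away from zero}. The proof of Lemma~\ref{lem:chi_to_zero} only gives an upper bound on $y_n$; a positive lower bound is precisely what your contradiction argument is supposed to deliver, so invoking it to set up the ``key identity'' is circular. Concretely, your contradiction does not cover the case $\beta=0$ (a subsequence with $y_n\to 0$), because along such a subsequence the key identity is not available. The paper sidesteps both issues by not making the replacement at all: it keeps $g((n-mN_n)/y_n)/g(n)$ intact in the analogue of your identity, bounds it by $\max\{((n-mN_n)/(ny_n))^{\pm\delta}\}$ via~\eqref{eq:svissubpoly}, and shows directly that $y_nC(x_n)/N_n$ is eventually strictly below $1$ whenever $S_n=o(N_n)$, uniformly over all possible behaviours of $y_n$.
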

The proofs can be found in Section~\ref{sec:lem:saddle_point_short_all_cases}.
\end{sublemmas}
These statements have an decisive impact on the quantities discussed in this section. Recall the definitions of $G^{\ge 2}$ and $G^{\ge2}_{>m}$ from Section~\ref{sec:main_results}. We start  with $G({x}_n,y_n)$, where we already observe a qualitative difference in the asymptotic behavior.
\begin{sublemmas}
\label{lem:G({x}_n,y_n)_all_cases}
\begin{lemma}
\label{lem:G({x}_n,y_n)_l<d}
In case $(I)$  
\[
	G({x}_n,y_n)
	\sim G^{\ge 2}(\rho,y_n) \cdot \e{y_nC(x_n)}
%	\sim \e{\sum_{j\ge 2}\frac{C(\rho^j)}{j}y_n^j- c_m\frac{\rho^m y_n}{1-\rho^my_n}}
	\sim G^{\ge 2}(\rho,y_n) \cdot \e{- c_m\frac{\rho^m y_n}{1-\rho^my_n}}
	\cdot \e{N_n }.
\]
\end{lemma}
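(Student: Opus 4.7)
\textbf{Proof proposal for Lemma~\ref{lem:G({x}_n,y_n)_l<d}.} From the definition in~\eqref{eq:G(x,y)} we factor off the $j=1$ term to get $G({x}_n,y_n)=\exp\{y_nC({x}_n)\}\cdot G^{\ge 2}({x}_n,y_n)$. The strategy is therefore twofold: first, replace $G^{\ge 2}({x}_n,y_n)$ by $G^{\ge 2}(\rho,y_n)$; second, rewrite $y_nC({x}_n)$ via the saddle-point equation~\eqref{eq:saddle_point_digest} and identify the remaining error. The working hypothesis throughout is what Lemma~\ref{lem:saddle_point_l<d_short} gives in case $(I)$: there is $\eta>0$ with $\rho^my_n\le 1-\eta$ for all large $n$, and by Lemma~\ref{lem:chi_to_zero} we have $\chi_n:=\ln(\rho/{x}_n)=o(1)$, so $\rho-{x}_n=O(\chi_n)=o(1)$.

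For the first step I would show $\log G^{\ge 2}({x}_n,y_n)-\log G^{\ge 2}(\rho,y_n)=\sum_{j\ge 2}(C({x}_n^j)-C(\rho^j))y_n^j/j=o(1)$. The termwise bound
\[
    0\le C(\rho^j)-C({x}_n^j)=\sum_{k\ge m}c_k(\rho^{jk}-{x}_n^{jk})\le j\chi_n\cdot\rho^jC'(\rho^j)
\]
follows from $\rho^{jk}-{x}_n^{jk}\le jk\chi_n\rho^{jk}$ (since $1-({x}_n/\rho)^j\le j\chi_n$). Summing against $y_n^j/j$ leaves $\chi_n\sum_{j\ge 2}\rho^jC'(\rho^j)y_n^j$, and Lemma~\ref{lem:C(x)_approx} (applied to $xC'(x)$, which has the same structure around the dominant monomial $mc_mx^m$) yields $\rho^jC'(\rho^j)=O((\rho^m)^j)$ uniformly in $j\ge 2$. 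Hence the tail sum is dominated by a convergent geometric series in $\rho^my_n\le 1-\eta$, and multiplication by $\chi_n=o(1)$ gives the claim. Exponentiating proves $G^{\ge 2}({x}_n,y_n)\sim G^{\ge 2}(\rho,y_n)$, which is the first $\sim$ in the lemma.

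For the second step, the second equation in~\eqref{eq:saddle_point_digest} reads $y_nC({x}_n)=N_n-c_m {x}_n^my_n/(1-{x}_n^my_n)$, so it remains to prove
\[
    c_m\frac{{x}_n^my_n}{1-{x}_n^my_n}=c_m\frac{\rho^my_n}{1-\rho^my_n}+o(1).
\]
This is a direct continuity statement:
\[
    c_m\frac{\rho^my_n}{1-\rho^my_n}-c_m\frac{{x}_n^my_n}{1-{x}_n^my_n}=\frac{c_my_n(\rho^m-{x}_n^m)}{(1-\rho^my_n)(1-{x}_n^my_n)},
\]
the numerator is $O(y_n\chi_n)=o(1)$ because $y_n$ is bounded (Lemma~\ref{lem:saddle_point_l<d_short}) and $\rho^m-{x}_n^m=O(\chi_n)$, while the denominator is bounded below by $\eta^2>0$. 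Therefore $y_nC({x}_n)=N_n-c_m\rho^my_n/(1-\rho^my_n)+o(1)$; exponentiating gives $\exp\{y_nC({x}_n)\}\sim\exp\{-c_m\rho^my_n/(1-\rho^my_n)\}\cdot\exp\{N_n\}$, which combined with the first step completes both asymptotic equivalences.

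The only genuinely delicate step is controlling the tail of the series defining $G^{\ge 2}$ uniformly in $n$; once Lemma~\ref{lem:C(x)_approx} is invoked to give the geometric-type majorant and Lemma~\ref{lem:saddle_point_l<d_short} guarantees that $\rho^my_n$ stays bounded away from $1$, everything else reduces to the saddle-point identity and continuity.
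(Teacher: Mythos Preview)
Your proof is correct and follows essentially the same route as the paper: factor out the $j=1$ term, control the tail $\sum_{j\ge 2}C(x_n^j)y_n^j/j$ using the geometric majorant from Lemma~\ref{lem:C(x)_approx} together with $\limsup\rho^my_n<1$, and handle $y_nC(x_n)$ via the second saddle-point equation and continuity. The one minor slip is the parenthetical justification ``since $1-(x_n/\rho)^j\le j\chi_n$''; the bound you actually use is $1-(x_n/\rho)^{jk}\le jk\chi_n$, but that is the same elementary inequality $1-e^{-a}\le a$. If anything, your explicit difference estimate for the tail is cleaner than the paper's appeal to dominated convergence, which is slightly informal given that $y_n$ also varies with $n$.
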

\begin{lemma}
\label{lem:G({x}_n,y_n)_l>d}
Let $(a_n)_{n\in\Nat}$ be the sequence from Lemma~\ref{lem:saddle_point_l>d_short}. In case $(II)$ 
\[
	G({x}_n,y_n)
%	\sim  c_m^{-c_m}\cdot \e{-c_m + \sum_{j\ge 2}\frac{C(\rho^j)-c_m\rho^{jm}}{j\rho^{jm}}}
    \sim  (ec_m)^{-c_m}\cdot G^{\ge2}_{>m}(\rho)
	\cdot \left(\left(1-a_n\right)N_n\right)^{c_m}
	\cdot \e{y_nC({x}_n)}.
\]
\end{lemma}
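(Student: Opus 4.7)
The plan is to isolate the ``singular'' contribution by decomposing the exponent of $G({x}_n,y_n)$. Writing
\[
    G({x}_n, y_n) = \e{y_n C({x}_n)} \cdot \e{T_1 + T_2},
\]
with $T_1 := \sum_{j\geq 2} c_m ({x}_n^m y_n)^j / j$ and $T_2 := \sum_{j\geq 2} (C({x}_n^j) - c_m {x}_n^{jm}) y_n^j / j$, separates the $j=1$ term and further peels off the piece of each $C({x}_n^j)$ that drives the blow-up in case $(II)$. The motivation is that Lemmas~\ref{lem:chi_to_zero} and~\ref{lem:saddle_point_l>d_short} give $x_n \to \rho$ and $y_n \sim \rho^{-m}$ in this regime, so $x_n^m y_n \to 1$: the $c_m$-part of the tail (i.e.,~$T_1$) then blows up, while $T_2$ should stabilize to $\ln G^{\ge 2}_{>m}(\rho)$.

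For $T_1$ I would use the closed form $\sum_{j\geq 2} z^j/j = -\ln(1-z) - z$ with $z = x_n^m y_n$, giving $\e{T_1} = (1-x_n^m y_n)^{-c_m} \e{-c_m x_n^m y_n} = (1+S_n)^{c_m} \e{-c_m x_n^m y_n}$. By Lemma~\ref{lem:saddle_point_l>d_short} one has $S_n \sim (1-a_n) N_n/c_m$, and since $N_n \to \infty$ and $\limsup a_n < 1$ it follows that $S_n \to \infty$; consequently $x_n^m y_n = S_n/(1+S_n) \to 1$, $\e{-c_m x_n^m y_n} \to \eul^{-c_m}$, and $(1+S_n)^{c_m} \sim S_n^{c_m} \sim c_m^{-c_m} ((1-a_n) N_n)^{c_m}$. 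Collecting, $\e{T_1} \sim (\eul c_m)^{-c_m} \cdot ((1-a_n) N_n)^{c_m}$.

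For $T_2$ the plan is to establish $T_2 \to \ln G_{>m}^{\geq 2}(\rho) = \sum_{j\geq 2} (C(\rho^j) - c_m \rho^{jm})/(j \rho^{jm})$ by dominated convergence. Pointwise convergence of each summand is immediate from $x_n \to \rho$ and $y_n \to \rho^{-m}$. For the uniform summable majorant, non-negativity of the coefficients of $C$ together with $x_n < \rho$ gives $0 \le C(x_n^j) - c_m x_n^{jm} \le C(\rho^j) - c_m \rho^{jm}$, and Lemma~\ref{lem:C(x)_approx} applied to $C$ yields $C(\rho^j) - c_m \rho^{jm} = \bigO{\rho^{j(m+1)}}$ uniformly in $j\ge2$. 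Since $y_n \le B\rho^{-m}$ for some $B$ arbitrarily close to $1$ once $n$ is large, each term of $T_2$ is bounded by a constant multiple of $(B\rho)^j/j$, which is summable as $\rho < 1$.

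Assembling the three pieces delivers the claim. The principal technical hurdle is the uniform dominating bound for $T_2$; once Lemma~\ref{lem:C(x)_approx} is in hand, however, the estimate is immediate, and the remaining algebra is bookkeeping built on Lemma~\ref{lem:saddle_point_l>d_short}.
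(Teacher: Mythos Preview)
Your proposal is correct and follows essentially the same route as the paper: the same decomposition of the exponent into the $j=1$ term, the geometric piece $T_1=c_m\sum_{j\ge2}(x_n^my_n)^j/j$, and the remainder $T_2$; the same closed-form evaluation of $T_1$ via $-\ln(1-z)-z$ together with $S_n\sim(1-a_n)N_n/c_m$ from Lemma~\ref{lem:saddle_point_l>d_short}; and the same dominated-convergence argument for $T_2$ based on Lemma~\ref{lem:C(x)_approx}. The only cosmetic difference is that the paper writes $c_m\sum_{j\ge1}(x_n^my_n)^j/j-c_mx_n^my_n$ instead of your $T_1$, which is of course the same thing.
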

The proofs are in Section~\ref{sec:lem:G({x}_n,y_n)_all_cases}.
\end{sublemmas}
Next we consider $\pr{\Pa_{N_n}}$. Recall that $P_j \sim \mathrm{Po}(y_n^jC({x}_n^j)/j)$ for $j\in\Nat$. We saw in the discussion around~\eqref{eq:saddle_point_extended}-\eqref{eq:N_nsimplf} that $\sum_{j\ge 2}y_n^jC({x}_n^j)$ is comparable to $S_n + \bigO{y_n}$. Hence, by Lemma~\ref{lem:saddle_point_l<d_short}, in case $(I)$ $P_1$ has mean $N_n + \bigO{1}$, and moreover, the mean of the sum $\sum_{j\ge 2}jP_j$ is $\bigO{1}$. Thus, we suspect that $\pr{\Pa_{N_n}}\approx\pr{P_1=N_n}$, that is, the whole ``mass'' condenses into $P_1$. This is established in the following lemma.
\begin{sublemmas}
\label{lem:P_N_all_cases}
\begin{lemma}
\label{lem:P_N_l<d}
In case $(I)$ 
\[
	\pr{\Pa_{N_n}}
	\sim\pr{P_1=N_n}
	\sim (2\pi N_n)^{-1/2}.
\]
Further, there exist $A>0, 0<a<1$ such that
\begin{align}
\label{eq:sum_P_j_ge2_l<d}
	\pr{\sum_{j\ge2}jP_j = K}
	\le A\cdot \min\{a,y_n\}^K,
	\qquad K\in\Nat.
\end{align}
\end{lemma}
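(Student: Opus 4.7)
I plan to prove the tail bound~\eqref{eq:sum_P_j_ge2_l<d} first, since the first assertion relies on it. Set $Y := \sum_{j\ge 2} jP_j$; by independence of the $P_j$'s its probability generating function is
\[
    F(z) = \ex{z^Y} = \prod_{j\ge 2}\e{\lambda_j(z^j-1)}, \qquad \lambda_j := y_n^jC(x_n^j)/j,
\]
so that $\pr{Y = K} = [z^K]F(z) \le F(z)\,z^{-K}$ for every $z>0$ in the convergence region. Lemma~\ref{lem:saddle_point_l<d_short} guarantees $\limsup_n y_n < \rho^{-m}$, so I can fix $a\in(0,1)$ with $a > \limsup_n y_nx_n^m$. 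Applying the bound with $z = 1/\min\{a, y_n\}$, a case analysis on whether $y_n \le a$ shows that $zy_nx_n^m$ stays uniformly bounded away from $1$; combining this with Lemma~\ref{lem:C(x)_approx}, which yields $C(x_n^j) \le c_m x_n^{jm}(1+Ax_n^m)$, implies that $\sum_{j\ge 2}\lambda_j z^j$ is uniformly bounded, hence $F(z) \le A'$ for some constant $A'>0$. This gives $\pr{Y = K} \le A'\min\{a, y_n\}^K$.

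For the first assertion, note that $P_1 \sim \pois{\mu_n}$ with $\mu_n := y_nC(x_n)$. By Lemma~\ref{lem:saddle_point_l<d_short}, $\mu_n = N_n - c_mS_n = N_n + \bigO{y_n} = N_n + \bigO{1}$, using that $y_n$ is bounded. Writing $N_n = \mu_n + t\sqrt{\mu_n}$ gives $t = \bigO{1/\sqrt{N_n}} = \smallO{\mu_n^{1/6}}$, so the Poisson local limit theorem~\eqref{eq:poisson_llt} immediately yields $\pr{P_1 = N_n} \sim (2\pi\mu_n)^{-1/2}\e{-t^2/2} \sim (2\pi N_n)^{-1/2}$.

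It remains to show $\pr{\Pa_{N_n}} \sim \pr{P_1 = N_n}$. Using the independence of $P_1$ and $(P_j)_{j\ge 2}$,
\[
    \frac{\pr{\Pa_{N_n}}}{\pr{P_1 = N_n}} = \sum_{K=0}^{N_n} R_K\, \pr{Y = K},\qquad R_K := \prod_{i=0}^{K-1}\frac{N_n - i}{\mu_n},
\]
which I split at $K^* := \floor{N_n^{1/4}}$. For $K \le K^*$, the estimates $\mu_n = N_n + \bigO{1}$ and $K^2/N_n \to 0$ imply $R_K = 1 + \smallO{1}$ uniformly, while the just-established tail bound gives $\sum_{K > K^*}\pr{Y = K} = \smallO{1}$, so the head contributes $1 + \smallO{1}$. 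For $K > K^*$, $R_K \le (N_n/\mu_n)^K = \bigO{1}$ uniformly in $K \le N_n$, and combined with the geometric decay $\pr{Y = K} \le A' a^K$ the tail contributes $\bigO{a^{K^*}} = \smallO{1}$, closing the argument.

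The main obstacle is the clever choice of $z = 1/\min\{a, y_n\}$ in the saddle-point bound: neither a constant $z$ nor a pure multiple of $1/y_n$ alone yields the combined $\min\{a, y_n\}^K$ shape, and the interpolation relies on the case-$(I)$ hypothesis $\limsup y_n < \rho^{-m}$ in an essential way to ensure $zy_nx_n^m$ is bounded away from $1$ in both regimes.
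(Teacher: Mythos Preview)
Your proof is correct and takes a genuinely more direct route than the paper. The tail bound argument is essentially the same as the paper's (both plug $z=1/y_n$ and, separately, a constant $z>1$ into the saddle-point bound~\eqref{eq:trivial_saddle_point_bound} and take the minimum). The real difference is in the first assertion.

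The paper rewrites $\pr{\Pa_{N_n}}$ as a coefficient via~\eqref{eq:pgf_P_N}, then invokes the technical product lemma (Lemma~\ref{lem:coeff_product_dep_n}) together with Lemma~\ref{lem:h_n_coeff}\ref{item:lem_h_n_coeff_ii}, Lemma~\ref{lem:R_conv_uniformly_Q}, and the explicit asymptotics of $G(x_n,y_n)$ from Lemma~\ref{lem:G({x}_n,y_n)_l<d}; after substantial simplification this collapses to $(2\pi N_n)^{-1/2}$. You instead treat $\pr{\Pa_{N_n}}$ as the convolution $\sum_K \pr{P_1=N_n-K}\pr{Y=K}$, observe that the Poisson ratio $\pr{P_1=N_n-K}/\pr{P_1=N_n}$ equals the falling-factorial ratio $R_K$, and control the sum by splitting at $K^*=N_n^{1/4}$: for small $K$ the identity $\mu_n=N_n+\bigO{1}$ forces $R_K=1+\smallO{1}$ uniformly, while for large $K$ the crude bound $R_K\le (N_n/\mu_n)^K\le e^{\bigO{1}}$ combines with the geometric tail of $Y$. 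Finally the Poisson local limit~\eqref{eq:poisson_llt} handles $\pr{P_1=N_n}$ directly. Your argument is shorter and uses only Lemma~\ref{lem:saddle_point_l<d_short}, Lemma~\ref{lem:C(x)_approx}, and Proposition~\ref{prop:poisson_inequalities}; the paper's generating-function route is heavier here but is the template reused in case~$(II)$, where the convolution is no longer dominated by a single term and your shortcut would not apply. One cosmetic slip: Lemma~\ref{lem:C(x)_approx} gives $C(x_n^j)\le c_m x_n^{jm}(1+Ax_n^{j})$, not $(1+Ax_n^{m})$; this is harmless since $x_n^j\le\rho^2$ for $j\ge2$.
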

The proof is in  Section~\ref{sec:lem:P_N_all_cases}.
In case $(II)$ the behavior is quite different from that. We observe that $y_n^jC({x}_n^j)$ is essentially $c_m({x}_n^my_n)^j$ as $j$ grows bigger, so that in a first approximation $\sum_{j\ge 1}jP_j$ should behave like
\[
	P_1 + \sum_{j\ge 1}j \pois{c_m {({x}_n^my_n)^j}/{j}}.
\]	
Here Lemma~\ref{lem:saddle_point_l>d_short} reveals that the mean of this sum is large, actually linear in $N_n$. By comparing the characteristic functions by an $\exp$-$\ln$-transformation we have for any $k\in\Nat$ and $0<\beta<1$ that the sum of independent Poisson random variables $\sum_{j\ge 1}j\pois{k \beta^j/j}$ is equal in distribution to the sum of iid geometric distributed random variables $\sum_{1\le i\le k} \mathrm{Geom}_i(1-\beta)$. But this is nothing else than a multinomial distribution with parameters $1-\beta$ and $k$, so that
\[
	\text{Pr}\Bigg[\sum_{1\le i\le k} \mathrm{Geom}_i(1-\beta) = N_n\Bigg]
	= \binom{N_n-1}{k-1} (1-\beta)^k \beta^{N_n-k}.
\]	
Plugging back $\beta={x}_n^my_n$ and $k=c_m$ as well as using $(1-{x}_n^my_n) \sim S_n^{-1} \sim c_m((1-a_n)N_n)^{-1}$ due to Lemma~\ref{lem:saddle_point_l>d_short} we obtain that
\[
	\text{Pr}\Bigg[\sum_{j\ge 1}j\pois{c_m\frac{({x}_n^my_n)^j}{j}}=N_n\Bigg]
	\sim \frac{c_m^{c_m}}{\Gamma(c_m)(1-a_n)^{c_m} }
	 \frac{({x}_n^my_n)^{N_n}}{N_n}.
\]
Since $P_1$ is either negligible compared to $\sum_{j\ge 2}jP_j$ or at most of the same order ($a_n N_n$ vs. $(1-a_n)N_n$ by Lemma~\ref{lem:saddle_point_l>d_short}), this should be qualitatively the actual result. It turns out that this is true.
\begin{lemma}
\label{lem:P_N_l>d}
Let $(a_n)_{n\in\Nat}$ be the sequence from Lemma~\ref{lem:saddle_point_l>d_short}. In case $(II)$, 
\[
	\pr{\Pa_{N_n}}
	%\sim\pr{\sum_{j>\ell}jP_j = N}
	\sim \frac{c_m^{c_m}\e{c_m\frac{a_n}{1-a_n}}}{\Gamma(c_m)}\cdot\frac{({x}_n^my_n)^{N_n}}{(1-a_n)N_n}.
\]
Let $(K_n)_{n\in\Nat}$ be sequence in $\Nat$ such that $K_n\to\infty$ as $n\to\infty$. Then, for all $\ell\in\Nat$ as $n\to\infty$ 
\begin{equation}
\label{eq:sumjPjII}
	\pr{\sum_{j>\ell}jP_j = K_n}
	\sim \frac{c_m^{c_m}}{(1-a_n)^{c_m}\Gamma(c_m)}
	\cdot\frac{({x}_n^my_n)^{K_n}}{K_n}
	\cdot\left(\frac{K_n}{N_n}\right)^{c_m}.
\end{equation}
\end{lemma}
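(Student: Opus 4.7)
The strategy is to convert $\pr{\sum_j jP_j = N_n}$ to a coefficient extraction via the probability generating function identity
\[
\sum_{N\ge 0}\pr{\textstyle\sum_{j\ge 1}jP_j = N}\, z^N
 = \exp\!\Big(\sum_{j\ge 1}\tfrac{y_n^jC(x_n^j)}{j}(z^j - 1)\Big)
 = \frac{G(x_n, zy_n)}{G(x_n, y_n)},
\]
and to analyze the numerator, the denominator already being supplied by Lemma~\ref{lem:G({x}_n,y_n)_l>d}. Setting $\beta := x_n^my_n$ and splitting off the dominant term $c_mx_n^{jm}$ in each $C(x_n^j)$, a direct rearrangement gives
\[
G(x_n, zy_n) = (1-\beta z)^{-c_m}\exp(h_n z)\cdot F(z), \qquad h_n := y_n(C(x_n)-c_mx_n^m),
\]
where $F(z) := \exp\!\big(\sum_{j\ge 2}(C(x_n^j)-c_mx_n^{jm})y_n^jz^j/j\big)$ collects the ``higher-order'' pieces. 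The substitution $w = \beta z$ converts $[z^{N_n}]G(x_n, zy_n)$ into $\beta^{N_n}\cdot[w^{N_n}](1-w)^{-c_m}\exp(H_n w)\,F(w/\beta)$, where $H_n := h_n/\beta = C_{>m}(x_n)\sim a_n N_n$ by Lemma~\ref{lem:saddle_point_l>d_short}.

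I would then peel the extraction in two stages. Since $\limsup a_n < 1$ and $H_n\to\infty$ (because $C$ has radius of convergence $\rho$), Lemma~\ref{lem:h_n_coeff}\ref{item:lem_h_n_coeff_iii} yields
\[
[w^{N_n}](1-w)^{-c_m}\exp(H_n w) \sim \frac{((1-a_n)N_n)^{c_m-1}}{\Gamma(c_m)}\exp(H_n).
\]
To absorb the factor $F(w/\beta)$ I would invoke Lemma~\ref{lem:coeff_product_dep_n} with $A_n(w) := (1-w)^{-c_m}\exp(H_n w)$, whose coefficient ratios tend to $1$ under fixed shifts (so $\rho_n\to\overline\rho=1$), and $R_n(w):=F(w/\beta)$. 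The required uniform-in-$n$ coefficient bound for $R_n$ follows from Lemma~\ref{lem:C(x)_approx}, which controls $C(x_n^j)-c_mx_n^{jm}$ by $Ac_mx_n^{j(m+1)}$; this gives $[w^k]F(w/\beta)\le C(\beta r)^{-k}$ for any $r<\rho^{-1}$, and because $\rho<1$ one can pick $r$ so that $\beta r > 1+\eps$ uniformly for large $n$. The pointwise limit is $Q(w) := \exp\!\big(\sum_{j\ge 2}C_{>m}(\rho^j)w^j/j\big)$, with $Q(1) = G_{>m}^{\ge 2}(\rho)>0$. Dividing the resulting asymptotic by $G(x_n, y_n)$ from Lemma~\ref{lem:G({x}_n,y_n)_l>d} cancels the $G_{>m}^{\ge 2}(\rho)$-factors and one power of $(1-a_n)N_n$, and the identity $H_n - y_nC(x_n) = y_nC(x_n)(1-\beta)/\beta - c_m$ together with Lemma~\ref{lem:saddle_point_l>d_short} (giving $y_nC(x_n)\sim a_nN_n$ and $1-\beta\sim c_m/((1-a_n)N_n)$) produces $(ec_m)^{c_m}\exp(H_n - y_nC(x_n)) \sim c_m^{c_m}\exp(c_ma_n/(1-a_n))$, completing the first claim.

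The second assertion follows the same scheme but is technically lighter, since removing the $j=1$ Poisson eliminates the large linear exponential. I would write $\exp(\sum_{j>\ell}y_n^jC(x_n^j)z^j/j) = (1-\beta z)^{-c_m}V_\ell(z)$ for an analytic $V_\ell$ uniformly bounded in $n$ on a neighborhood of $[0,\beta^{-1}]$, and after the substitution $w=\beta z$ apply Lemma~\ref{lem:coeff_product_dep_n} with Lemma~\ref{lem:h_n_coeff}\ref{item:lem_h_n_coeff_i} (no exponential is present) to obtain
\[
[z^{K_n}]\exp(\textstyle\sum_{j>\ell}y_n^jC(x_n^j)z^j/j) \sim \beta^{K_n}V_\ell(1/\beta)\cdot K_n^{c_m-1}/\Gamma(c_m).
\]
Normalizing by the PGF value at $z=1$ produces the factor $(1-\beta)^{c_m}V_\ell(1)/V_\ell(1/\beta)$; a routine computation shows $V_\ell(1/\beta)/V_\ell(1)\to 1$ (because $1-\beta^j = \bigO{1-\beta} = o(1)$ for each fixed $j$), after which $(1-\beta)^{c_m}\sim c_m^{c_m}/((1-a_n)N_n)^{c_m}$ rearranges the answer into the stated form with the telltale factor $(K_n/N_n)^{c_m}$.

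The main technical obstacle throughout is verifying the hypotheses of Lemma~\ref{lem:coeff_product_dep_n} for the product involving $F(w/\beta)$: the singularity of the inner factor, at $w=\beta^{-1}\cdot\mathrm{rad}(F)$, drifts toward the singularity $w=1$ of the outer factor as $n\to\infty$, and a uniform positive gap between the two is precisely what the assumption $\rho<1$ secures. A secondary subtlety is that the parameter $a_n$ appearing in Lemma~\ref{lem:h_n_coeff}\ref{item:lem_h_n_coeff_iii} varies with $n$; an inspection of that lemma's proof confirms the formula remains valid under the sole assumption $\limsup a_n < 1$, which is guaranteed by Lemma~\ref{lem:saddle_point_l>d_short}.
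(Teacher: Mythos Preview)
Your proposal is correct and follows the same route as the paper: the identical PGF factorization, the substitution $w=\beta z$, and the combination of Lemma~\ref{lem:h_n_coeff}\ref{item:lem_h_n_coeff_iii} with Lemma~\ref{lem:coeff_product_dep_n} (the only cosmetic difference is that the paper keeps the constant $c_m$ in the linear exponent and compensates with an $e^{-c_m x}$ factor in the remainder, and it packages the uniform-convergence check for $R_n$ into a separate Lemma~\ref{lem:R_conv_uniformly_Q}). The one step you underestimate is hypothesis~\eqref{eq:lem_coeff_prod_2} of Lemma~\ref{lem:coeff_product_dep_n}: getting $[w^{N_n-k}]A_n(w)/[w^{N_n}]A_n(w)\le(1+\eps)^k$ uniformly for \emph{all} $0\le k\le N_n$ is where the paper spends most of its effort, requiring a case split at $N_n-k\approx(1+\delta)H_n$ and a saddle-point bound for the tail range; also, in the second claim your reference to Lemma~\ref{lem:h_n_coeff}\ref{item:lem_h_n_coeff_i} (which treats fixed $k$) should instead be the elementary binomial asymptotic $[w^{K}](1-w)^{-c_m}\sim K^{c_m-1}/\Gamma(c_m)$ as $K\to\infty$.
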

The proof is in Section~\ref{sec:lem:P_N_all_cases}.
\end{sublemmas}
Having studied the event $\Pa_{N_n}$ itself we continue by investigating the effect on the probability space when conditioning on $\Pa_{N_n}$ in order to determine $\pr{{\cal E}_n~|~\Pa_{N_n}}$.
For this purpose, we introduce some auxiliary notation. Define
\[
	L_p 
	:= \sum_{1\le i\le p}(C_{1,i}-m),
	~~ p\in\Nat,
	\quad\text{and}\quad
	L := L_{P_1},
	\quad\text{and}\quad
	R
	:= \sum_{j\ge 2}j\sum_{1\le i\le P_j} (C_{j,i}-m).
\]
With this at hand we reformulate
\begin{align}
	\label{eq:E_N_in_L+R}
	\pr{\E_n\mid\Pa_{N_n}}
	= \pr{L + R = n - mN_n \mid \Pa_{N_n}}.
\end{align}
The  driving idea behind these definitions is to split up $\sum_{j\ge 1}j\sum_{1\le i\le P_j}(C_{j,i}-m)$ into a ``dominant'' large part $L$ and ``negligible'' remainder $R$. We observe that the random variables $C_{j,i}$ have exponential tails for $j\ge 2$ since ${x}_n\le \rho <1$. In addition, $\ex{C_{j,i}} = {x}_nC'({x}_n^j)/ C({x}_n^j)$ tends to $m$ exponentially fast in $j$ so that the probability of $\{C_{j,i}-m = 0\}$ should tend exponentially fast to $1$ in $j$. However, as we are conditioning on the event $\Pa_{N_n}$, where some of the $P_j$'s might be large, it is not obvious that $R$ will be also small.
% that the random variables $C_{j,i}-m$, for $j \ge 2$, have exponential tails, and these tails get thinner as we increase $j$; in particular, the probability that $C_{j,i}-m = 0$ approaches one exponentially fast as we increase $j$. However, things are not so easy, since we always condition on $\Pa_{N_n}$, and in this space some of the $P_j$'s might be large.
The next lemma clarifies the picture. 
\begin{lemma}
\label{lem:R_ge_r_all_cases}
In both cases $(I)$ and $(II)$ there are $0<a<1,A>0$ such that
\[
	\pr{ R = r \mid \Pa_{N_n}}
	\le A\cdot a^{r},
	\qquad r,n\in\Nat.
\]
\end{lemma}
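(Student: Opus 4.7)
The strategy is a Chernoff-type bound. For any $s > 1$ we have $\pr{R = r \mid \Pa_{N_n}} \le s^{-r} \ex{s^R \mid \Pa_{N_n}}$; hence it suffices to show that $\ex{s^R \mid \Pa_{N_n}}$ stays uniformly bounded in $n$ for a suitable $s > 1$, which yields the lemma with $a := s^{-1} < 1$.

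The plan is to choose $s \in (1, \rho^{-1/2})$, for concreteness $s = \rho^{-1/4}$. This ensures that $(x_n s)^j \le (\rho s)^2 = \rho^{3/2} < \rho$ for every $j \ge 2$, so
$f_j(s) := \ex{s^{j(C_{j,i}-m)}} = C((x_n s)^j)/(s^{jm}C(x_n^j))$
is well defined, and Lemma~\ref{lem:C(x)_approx} gives $f_j(s) \le 1 + A_1(x_n s)^j$ for some $A_1 > 0$. It also ensures that $\eta := y_n x_n^{m+1} s$ stays uniformly bounded away from $1$: in case~(I), Lemma~\ref{lem:saddle_point_l<d_short} gives $y_n x_n^{m+1} \le (1-\delta)\rho$ for some $\delta > 0$, while in case~(II), the constraint $x_n^m y_n < 1$ in~\eqref{eq:saddle_point_digest} gives $y_n x_n^{m+1} = (x_n^m y_n)\,x_n < \rho$; in both cases $\eta \le \rho s = \rho^{3/4} < 1$. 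Since $\Pa_{N_n}$ depends only on $(P_j)_{j\ge 1}$, conditioning on $(P_j)_{j\ge 2}$ and using independence of the $C_{j,i}$'s yields $\pr{R = r \mid (P_j)_{j\ge 2}} \le s^{-r}\prod_{j\ge 2}f_j(s)^{P_j}$. A standard Poisson generating function computation, using $\ln G(x_n, y_n u) = \sum_{j\ge 1}\lambda_j u^j$ with $\lambda_j := y_n^j C(x_n^j)/j$, then gives
\[
    \ex{\prod_{j\ge 2} f_j(s)^{P_j} \mid \Pa_{N_n}} = \frac{[u^{N_n}] G(x_n, y_n u)\,H(u)}{[u^{N_n}] G(x_n, y_n u)} = \sum_{i=0}^{N_n} [u^i]H(u) \cdot q_i,
\]
where $H(u) := \exp(\sum_{j\ge 2}\lambda_j(f_j(s)-1)u^j)$ and $q_i := \pr{\Pa_{N_n - i}}/\pr{\Pa_{N_n}}$. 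Combining the above with $\lambda_j \le B c_m y_n^j x_n^{jm}/j$ (again from Lemma~\ref{lem:C(x)_approx}) gives $\lambda_j(f_j(s) - 1) \le A_2\, \eta^j/j$, and since both sides define power series with non-negative coefficients, monotonicity of the exponential on such series yields $[u^i]H(u) \le [u^i](1-\eta u)^{-A_2} = \binom{A_2 + i - 1}{i}\eta^i \le C_1(1+i)^{A_2 - 1}\eta^i$.

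The main obstacle is to show that $q_i$ is uniformly bounded (modulo at most polynomial growth) in both $i$ and $n$. I would invoke Lemma~\ref{lem:P_N_all_cases}. In case~(I), the relation $\pr{\Pa_k} \sim \pr{P_1 = k}$ for $k$ near $N_n$, together with $\lambda_1 = y_n C(x_n) = N_n + O(1)$ from Lemma~\ref{lem:saddle_point_l<d_short} and the Poisson local estimates of Proposition~\ref{prop:poisson_inequalities}, gives $q_i = O(1)$ for $i$ in the bulk $i \le N_n/2$; for larger $i$ the exponential decay $\eta^i$ dominates any growth of $q_i$. In case~(II), applying the derivation behind~\eqref{eq:sumjPjII} to $K_n = N_n - i$ (rather than $N_n$) yields $q_i \asymp (x_n^m y_n)^{-i}(1-i/N_n)^{c_m - 1}$; since $1 - x_n^m y_n \sim c_m/((1-a_n)N_n)$ with $\limsup a_n < 1$ (Lemma~\ref{lem:saddle_point_l>d_short}), the first factor is $e^{O(1)}$ uniformly for $i \le N_n$, while the $(1-i/N_n)^{c_m-1}$ factor is handled by splitting the sum at $i = N_n/2$ and absorbing any polynomial-in-$N_n$ blow-up into the exponential decay $\eta^{N_n/2}$. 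Summing then yields $\ex{s^R \mid \Pa_{N_n}} \le B'$ for a constant $B'$ independent of $n$, completing the proof with $A = B'$ and $a = s^{-1}$.
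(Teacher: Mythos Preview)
Your approach is correct and follows essentially the same Chernoff-bound strategy as the paper: both bound $\pr{R\ge r\mid\Pa_{N_n}}$ (or $\pr{R=r\mid\Pa_{N_n}}$) by $s^{-r}$ times a conditional exponential moment, then show the latter is uniformly bounded. The paper's Lemma~\ref{lem:R_ge_r_reform} is precisely your $\ex{s^R\mid\Pa_{N_n}}$ unpacked over configurations $(p_1,\dots,p_{N_n})$; your generating-function identity $\ex{s^R\mid\Pa_{N_n}}=\sum_i [u^i]H(u)\,q_i$ is an equivalent, arguably cleaner, repackaging.

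Where the two diverge is in bounding the resulting sum. The paper keeps the decomposition over $P_1=p$, introduces tilted Poissons $H_j\sim\mathrm{Po}(\tau_j/j)$, and estimates $\pr{H=N_n-p}$ directly. You instead need control of $q_i=\pr{\Pa_{N_n-i}}/\pr{\Pa_{N_n}}$. In case~(I) your claim $q_i=O(1)$ is fine, but the cleanest justification is not ``$\pr{\Pa_k}\sim\pr{P_1=k}$'' (which Lemma~\ref{lem:P_N_l<d} gives only at $k=N_n$): rather, $\pr{\Pa_k}=\sum_q\pr{P_1=k-q}\pr{\sum_{j\ge2}jP_j=q}\le\max_m\pr{P_1=m}\sim\pr{\Pa_{N_n}}$ for all $k$, since the Poisson density is maximized near its mean $\lambda_1=N_n+O(1)$---this is exactly the inequality the paper uses.

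In case~(II) there is a genuine imprecision. Equation~\eqref{eq:sumjPjII} concerns $\sum_{j>\ell}jP_j$ for $\ell\ge1$; its proof avoids the divergent term $C(x_n)/x_n^m$, so it does not yield $\pr{\Pa_{N_n-i}}$. What you actually need is to rerun the proof of the \emph{first} part of Lemma~\ref{lem:P_N_l>d} with $N_n$ replaced by $N_n-i$: this gives $q_i\asymp (x_n^my_n)^{-i}\big(1-\tfrac{i}{(1-a_n)N_n}\big)^{c_m-1}$, but only for $i$ up to roughly $(1-a_n-\delta)N_n$, because Lemma~\ref{lem:h_n_coeff}\ref{item:lem_h_n_coeff_iii} requires $\limsup h_n/(N_n-i)<1$. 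Beyond that range the formula is not valid. Fortunately the trivial bound $q_i\le 1/\pr{\Pa_{N_n}}=O(N_n)$ always holds (Lemma~\ref{lem:P_N_l>d}), and since $\liminf(1-a_n)>0$ you can split at $i=\eps_0 N_n$ for a fixed $\eps_0>0$: for $i<\eps_0N_n$ the asymptotic gives $q_i=O(1)$, and for $i\ge\eps_0N_n$ the factor $\eta^{\eps_0 N_n}$ kills the $O(N_n)$ growth. With this correction your argument goes through.
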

The proof is in Section~\ref{sec:lem:R_ge_r_all_cases}. 
% Just as a side remark and so as to make the notation more accessible: it is instructive to think of the random variable $L$ as something (that will turn out to be) large, and $R$ as some remainder (that is small with exponential tails).
With this at hand, we try to get a handle on~\eqref{eq:E_N_in_L+R} by conditioning on $R$ and $P_1$ having certain values, i.e.
\begin{align*}
	\pr{\E_n\mid\Pa_{N_n}}
	= \sum_{p,r\ge 0} \pr{L_p = n - mN_n - r} \pr{P_1=p,R=r\mid\Pa_{N_n}}.
\end{align*}
Then the exponential tails of $R$ conditioned on $\Pa_{N_n}$ established in Lemma~\ref{lem:R_ge_r_all_cases} guarantee that we can omit all terms where $r$ is large; further, all terms where $p$ deviates ``too much'' from $\ex{P_1}=y_nC({x}_n)$ should be negligible as well, since $P_1$ is very much concentrated around its mean. This leads to  
\begin{align}
\label{eq:E_N_split_up_1}
	\pr{\E_n\mid\Pa_{N_n}}
	\approx \sum_{r\text{ ``small'', } p \text{ ``close to'' }\ex{P_1}} \pr{L_p = n - mN_n - r} \pr{P_1=p,R=r\mid\Pa_{N_n}}.
\end{align}
To finish we will use the fact that the mean of $L_p$ is close to $n-mN_n-r$ for small $r$ and $p$ in the vicinity of $\ex{P_1}$. In the next lemma we actually show that $L_p$ follows a local central limit theorem, which will allow us to obtain a very fine-grained unterstanding of~\eqref{eq:E_N_split_up_1}. The mean and variance of $L_p$ are given by
\begin{align}
	\label{eq:mu_p_def}
	\mu_p
	&:= \ex{L_p}
	= p\left(\frac{{x}_nC'({x}_n)}{C({x}_n)}-m\right)
	\quad\text{and} \\
	\label{eq:sigma_p_def}
	\sigma_p^2
	&:= \Var{L_p}
	= p \left(\frac{{x}_n^2C''({x}_n) + {x}_nC'({x}_n)}{C({x}_n)} 
	-\left(\frac{{x}_nC'({x}_n)}{C({x}_n)}\right)^2\right).
\end{align}
According to Lemma~\ref{lem:chi_to_zero} the asymptotic behaviour of these expressions is
\begin{align}
\label{eq:mean_variance_p}
	\mu_p
	\sim \al p\chi_n^{-1} \qquad\text{and}\qquad
	\sigma_p^2
	\sim \al p\chi_n^{-2}.
\end{align}
To prove the local limit theorem we will reformulate $\pr{L_p=s}=C({x}_n)^{-p}[x^s]C({x}_nx)^p$ for $s=\mu_p+t\sigma_p$ and $t\in\Real$. Although there are many results in the literature on how to determine large coefficients of $H(x)^p$/how to obtain local limit theorems, none of these are applicable in the generality considered here. To wit, in~\cite{Drmota1994,Drmota1995} or~\cite[Thm.~IX.16]{Flajolet2009} the function $H$ is assumed to be either logarithmic or to allow for a singular expansion; and in~\cite[Thms.~VIII.8 and 9]{Flajolet2009} as well as several applications in~\cite{Pemantle2013} the ratio $n/p$ needs to be in $\Theta(1)$ to be able to determine $[x^n]H(x)^p$. Clearly this is not the case here as $\mu_p/p =\omega(1)$. In~\cite{Davis1995} a local limit theorem is derived, provided that a central limit theorem and additional assumptions, that in particular imply $\sigma_p^2/p=\bigO{1}$, are true; but note that $\sigma_p^2/p=\omega(1)$ in our setting.
Indeed, we have to deal here with a \emph{genuine} triangle array of independent random variables and thus we  conduct a detailed saddle-point analysis from scratch on our own.  
% \marginpar{\tiny  \textcolor{red}{Aus \cite[Remark 2]{Freiman2005}: ''As we already mentioned, a specific feature inherent in Khintchine's method is that the free parameter $\sigma$ depends on $n$, so that the random variables XX form a triangular array.  In the case of an array the conditions for a normal local limit theorem are not known.'' sowas auch schreiben? Nach diesem Satz kommt dann auch die Bemerkung, dass je nach Situation eine eigene aufwändige Analyse gemacht werden muss} - ok ergänze bitte entsprechend im Text -- \textcolor{red}{ich glaube, dieser Kommentar wurde schon bearbeitet, steht ja schon sinngemäß da}}

\begin{lemma} 
\label{lem:L_p_deviation_from_mean}
Let $p = p_n \to\infty$ as $n\to\infty$. Then for $t = o(p^{1/6})$, as $n\to\infty$,
\begin{align*}
	\pr{L_p = \mu_p + t \sigma_p}
	\sim \eul^{-t^2/2}\cdot \pr{L_p = \mu_p} 
	\sim \eul^{-t^2/2} \frac{1}{\sqrt{2\pi} \, \sigma_p}
	\sim \eul^{-t^2/2} \frac{1}{\sqrt{2\pi}}
	\frac{\chi_n}{\sqrt{p\al}}.
\end{align*}
\end{lemma}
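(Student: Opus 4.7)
My plan is to derive the stated local limit theorem via a direct saddle-point analysis of the characteristic function of $L_p$, which is the natural approach given that, as the authors point out, the distribution of the $C_{1,i}$'s depends on $n$ through ${x}_n$ and forms a genuine triangular array whose variance $\sigma_p^2/p \sim \al/\chi_n^{2}$ tends to infinity. Since $\pr{C_{1,i}=k} = c_k {x}_n^k/C({x}_n)$, Cauchy's formula gives
\[
    \pr{L_p = s}
    = \frac{1}{2\pi C({x}_n)^p}
    \int_{-\pi}^{\pi} C({x}_n e^{\iu\theta})^p\, e^{-\iu\theta(s+mp)}\, d\theta.
\]
I would split this integral at $\theta_0 := \chi_n p^{-2/5}$ into a central part $|\theta|\le\theta_0$ and a tail $\theta_0<|\theta|\le\pi$.

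For the central part, set $\psi(\theta) := \ln C({x}_n e^{\iu\theta}) - \ln C({x}_n)$. A direct differentiation together with~\eqref{eq:mu_p_def} and~\eqref{eq:sigma_p_def} yields $\psi'(0) = \iu(\mu_p/p + m)$ and $\psi''(0) = -\sigma_p^2/p$, while Lemma~\ref{lem:chi_to_zero} implies $|\psi^{(k)}(\theta)| = \bigO{\chi_n^{-k}}$ uniformly on a fixed neighborhood of $0$ for each $k\ge 2$. Hence for $s = \mu_p + t\sigma_p$ and $|\theta|\le\theta_0$
\[
    p\psi(\theta) - \iu\theta(s+mp)
    = -\tfrac{1}{2}\sigma_p^2\theta^2 - \iu t\sigma_p\theta + \bigO{p\chi_n^{-3}\theta^3},
\]
and the error term is $\bigO{p^{-1/5}} = o(1)$ on the truncation interval. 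Completing the square and deforming the contour shows that the central contribution equals
\[
    \frac{1+o(1)}{2\pi}\int_{-\theta_0}^{\theta_0}\exp\Big\{-\tfrac{1}{2}\sigma_p^2\theta^2 - \iu t\sigma_p\theta\Big\}d\theta
    = (1+o(1))\,\frac{e^{-t^2/2}}{\sqrt{2\pi}\,\sigma_p},
\]
since $\sigma_p\theta_0 = \sqrt{\al}\,p^{1/10}\to\infty$ and $|t|=o(p^{1/6})$ ensures that the unshifted truncated Gaussian approximates its full version up to a factor $1+o(1)$.

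For the tail I must show the contribution is $o(\sigma_p^{-1}e^{-t^2/2})$. On the ``intermediate'' range $\theta_0\le|\theta|\le\eta$ for some fixed small $\eta>0$, the same Taylor estimate gives $|C({x}_n e^{\iu\theta})/C({x}_n)|^p \le \exp\{-c\sigma_p^2\theta^2\}$ for a constant $c>0$; integrating produces a bound of order $\exp\{-c\al\,p^{1/5}\}$, which is super-polynomially small. The genuine obstacle lies on the ``far'' range $\eta\le|\theta|\le\pi$, where I need a bound on $|C({x}_n e^{\iu\theta})|/C({x}_n)$ that is \emph{uniform in~$n$} as ${x}_n\to\rho$; classical aperiodicity arguments would produce a constant depending on the distribution, which is insufficient here. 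I would handle this by exploiting the triangular-array nature to my advantage: since $h$ is eventually positive, $c_k>0$ for all $k$ beyond some absolute constant $K$, so the distribution of $C_{1,i}$ is supported on a block of consecutive integers of diameter $\Omega(1/\chi_n)\to\infty$. A concentration-plus-Riemann--Lebesgue estimate then yields $|C({x}_n e^{\iu\theta})|/C({x}_n) \le 1 - c(\eta)$ for a constant $c(\eta)>0$ independent of $n$, since the phases $e^{\iu k\theta}$ cannot align over such a wide support; raising to the $p$-th power gives an $e^{-c(\eta)p}$ bound which is negligible after integration.

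Combining the central and tail contributions yields $\pr{L_p = s} = (1+o(1))\,e^{-t^2/2}/(\sqrt{2\pi}\,\sigma_p)$; the remaining equivalences in the statement then follow from the specialization $t=0$ and from the asymptotics $\sigma_p \sim \sqrt{\al p}/\chi_n$ recorded in~\eqref{eq:mean_variance_p}. The main obstacle is, as indicated, the uniform far-tail bound on the characteristic function, which forces the analysis to leverage the growing support of $C_{1,i}$ rather than a fixed aperiodicity constant.
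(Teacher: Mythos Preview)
Your overall route—Cauchy's formula for $\pr{L_p=s}$, a central Gaussian window of width $\theta_0\asymp \chi_n p^{-\delta}$, and a tail estimate on the characteristic function—is exactly the paper's. The central part is essentially fine; your ``complete the square and deform'' is equivalent, up to a lower-order correction, to the paper's choice of integrating on the circle of radius $w=e^{\xi}$ with $\xi=t\sigma_p^{-1}+O(t^2p^{-1}\chi_n)$, and the error bookkeeping with $t=o(p^{1/6})$ is the same.

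The genuine gap is the intermediate tail. Your claim that $|\psi^{(k)}(\theta)|=O(\chi_n^{-k})$ holds \emph{uniformly on a fixed neighborhood of $0$} is false, and with it the Gaussian bound $|C(x_ne^{i\theta})/C(x_n)|^p\le\exp\{-c\sigma_p^2\theta^2\}$ on $[\theta_0,\eta]$ collapses. The derivatives of $\psi$ involve $C(x_ne^{i\theta})$ in the denominator, and for any fixed $\theta\neq 0$ one has $|C(x_ne^{i\theta})|/C(x_n)\to 0$ as $\chi_n\to 0$ (this is actually what makes the far tail small), so $|\psi^{(k)}(\theta)|$ blows up much faster than $\chi_n^{-k}$ away from $\theta=0$. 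In fact the second-order Taylor bound $\mathrm{Re}\,\psi(\theta)\le -c\theta^2/\chi_n^2$ is provably wrong for fixed $\theta$: the left side is only $O(\ln\chi_n)$ while the right side is $-\Theta(\chi_n^{-2})$. Consequently your quadratic bound is only valid on $|\theta|\le b\chi_n$ for some small constant $b$, and the range $b\chi_n\le|\theta|\le\eta$ is left uncovered; your far-range ``aperiodicity'' heuristic, as stated, starts at a fixed $\eta$ and says nothing there.

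This middle range is precisely where the work lies. The paper proves a separate lemma giving, for all $|\theta|\ge b\chi_n$, a uniform bound $|C(x_ne^{i\theta})/C(x_n)|\le c<1$ (and even $O(\chi_n/|\theta|)$ further out) via a telescoping trick on $(1-e^{-\chi_n}e^{i\theta})C(x_ne^{i\theta})$ that turns the sum into consecutive differences $\rho^kc_k-\rho^{k-1}c_{k-1}$; this crucially needs $\alpha>1$. For $0<\alpha\le 1$ the paper first convolves $s$ copies (replacing $C$ by $C^s$, which has exponent $s\alpha>1$) and only then applies the bound. Your sketch does not supply either ingredient, and without them the tail is not shown to be $o(\sigma_p^{-1}e^{-t^2/2})$.
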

The proof is in Section~\ref{sec:lem:L_p_deviation_from_mean}. Assisted by this lemma we obtain from~\eqref{eq:E_N_split_up_1} 
\[
	\pr{\E_n\mid\Pa_{N_n}}
	\approx \sum_{p\text{ ``close to'' }\ex{P_1}} \pr{L_p = n - mN_n} \pr{P_1=p\mid\Pa_{N_n}}.
\]
In the final step of the proof we determine $\pr{L_p = n - mN_n}$. Here we observe for a last time the effect of cases (I)/(II). In particular, in case $(I)$ it seems reasonable that it is enough to consider the case $P_1 = N_n$, see also Lemma~\ref{lem:P_N_l<d}; we obtain the following statements that are a direct consequence of Lemma~\ref{lem:L_p_deviation_from_mean}.
\begin{corollary}
\label{coro:L_N=n_l<d}
In case $(I)$ 
\[
	\pr{L_{N_n} = n - mN_n}
	= \pr{\sum_{1\le i \le N_n}C_{1,i} = n}
	\sim
	\sqrt{\frac{\al + 1}{2\pi y_nx_n^2C''(x_n)}}
	%(2\pi y_nx_n^2C''(x_n)/(\al+1))^{-1/2}
	\sim \sqrt{\frac{\al}{2\pi}}\frac{\sqrt{N_n}}{n}.
\]
\end{corollary}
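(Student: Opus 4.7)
The identity $\{L_{N_n}=n-mN_n\}=\{\sum_{1\le i\le N_n}C_{1,i}=n\}$ is immediate from the definition $L_p=\sum_{1\le i\le p}(C_{1,i}-m)$. The plan is to apply Lemma~\ref{lem:L_p_deviation_from_mean} with $p=N_n\to\infty$ and $n-mN_n=\mu_{N_n}+t\sigma_{N_n}$, and to show that $t=o(1)$. The centring is computed directly from the saddle-point system~\eqref{eq:saddle_point_digest}: subtracting $m$ times the second equation from the first gives $y_n(x_nC'(x_n)-mC(x_n))=n-mN_n$, so dividing by $C(x_n)$ yields
\[
    \mu_{N_n}=N_n\bigl(x_nC'(x_n)/C(x_n)-m\bigr)=(n-mN_n)\cdot\frac{N_n}{y_nC(x_n)}.
\]
Lemma~\ref{lem:saddle_point_l<d_short} gives $y_nC(x_n)=N_n+\Theta(y_n)=N_n+O(1)$ in case $(I)$, so $\mu_{N_n}=(n-mN_n)(1+O(1/N_n))$ and hence $n-mN_n-\mu_{N_n}=O(n/N_n)$.

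Next I would control the variance via~\eqref{eq:mean_variance_p}, which gives $\sigma_{N_n}^2\sim\al N_n/\chi_n^2$. Combining the asymptotics of Lemma~\ref{lem:chi_to_zero} for $x_nC'(x_n)\sim\al\,\Gamma(\al)h(\chi_n^{-1})\chi_n^{-\al-1}$ and $C(x_n)\sim\Gamma(\al)h(\chi_n^{-1})\chi_n^{-\al}$ with $x_ny_nC'(x_n)\sim n-mN_n\sim n$ and $y_nC(x_n)\sim N_n$ yields the clean relation $n\chi_n\sim\al N_n$. Consequently $\sigma_{N_n}\sim\sqrt{\al N_n}/\chi_n\sim n/\sqrt{\al N_n}$, so that
\[
    t=\frac{n-mN_n-\mu_{N_n}}{\sigma_{N_n}}=O\!\left(\frac{n/N_n}{n/\sqrt{N_n}}\right)=O(N_n^{-1/2})=o(N_n^{1/6}),
\]
validating the hypothesis of Lemma~\ref{lem:L_p_deviation_from_mean}.

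The lemma then delivers $\pr{L_{N_n}=n-mN_n}\sim e^{-t^2/2}/(\sqrt{2\pi}\,\sigma_{N_n})\sim 1/(\sqrt{2\pi}\,\sigma_{N_n})$. To obtain the two asymptotic forms stated in the corollary, I would then re-express $\sigma_{N_n}^2\sim\al N_n/\chi_n^2$ in two ways: first, using $x_n^2C''(x_n)\sim\al(\al+1)\Gamma(\al)h(\chi_n^{-1})\chi_n^{-\al-2}$ together with $y_nC(x_n)\sim N_n$ (and $C(x_n)\sim\Gamma(\al)h(\chi_n^{-1})\chi_n^{-\al}$) gives $\sigma_{N_n}^2\sim y_nx_n^2C''(x_n)/(\al+1)$, producing the middle expression; second, substituting $\chi_n\sim\al N_n/n$ gives $\sigma_{N_n}\sim n/\sqrt{\al N_n}$, producing the rightmost expression $\sqrt{\al/(2\pi)}\cdot\sqrt{N_n}/n$. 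There is no real obstacle here beyond the bookkeeping of asymptotic equivalences; Lemma~\ref{lem:L_p_deviation_from_mean} does all the heavy lifting and the case $(I)$ hypothesis $S_n=\Theta(y_n)=\Theta(1)$ is precisely what makes $\mu_{N_n}$ agree with $n-mN_n$ to within $O(n/N_n)$, well inside the Gaussian window of width $\sigma_{N_n}\asymp n/\sqrt{N_n}$.
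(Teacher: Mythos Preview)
Your proposal is correct and follows essentially the same route as the paper's proof: centre via the saddle-point relations to get $n-mN_n-\mu_{N_n}=O(n/N_n)$, use $\chi_n\sim\alpha N_n/n$ to obtain $\sigma_{N_n}\sim n/\sqrt{\alpha N_n}$, conclude $t=O(N_n^{-1/2})$, and then invoke Lemma~\ref{lem:L_p_deviation_from_mean} and re-express $\sigma_{N_n}^2$ in the two stated forms. One small inaccuracy in your closing remark: in case~$(I)$ one only has $S_n=\Theta(y_n)=O(1)$, not $\Theta(1)$, since $y_n\to0$ when $\lambda_n\to0$; but this does not affect the argument, as the bound $y_nC(x_n)=N_n+O(1)$ is all that is used.
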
 
The proof is in Section~\ref{sec:applications_clt}.
In case $(II)$ we expect in light of $\mathbb{E}[P_1] \sim a_n N_n$ is (much) smaller than $N_n$ a different behavior; here we do not stick to a particular value of $P_1$.
\begin{corollary}
\label{coro:randomly_stopped_L_l>d}
In case $(II)$ 
\[
	\pr{L = n - mN_n}
	\sim \frac1{\sqrt{2\pi \rho^{-m}x_n^2C''(x_n)}}
	\sim \sqrt{\frac{\al C_0 }{2\pi(\al+1)} \cdot g(n-mN_n)
	\cdot (n-mN_n)^{-(\al+2)/(\al+1)}}.
\]
\end{corollary}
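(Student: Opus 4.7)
The plan is to exploit the compound structure
\[
    \pr{L = n-mN_n} = \sum_{p \ge 0}\pr{L_p = n-mN_n}\pr{P_1 = p}
\]
and to evaluate the right-hand side via a Laplace-type Riemann sum. Set $\mu := y_nC(x_n) = \ex{P_1}$. The crucial identity is that subtracting $m$ times the second equation in~\eqref{eq:saddle_point_digest} from the first gives $x_ny_nC'(x_n) - my_nC(x_n) = n - mN_n$, which by~\eqref{eq:mu_p_def} reads $\mu_\mu = n - mN_n$. In case $(II)$, Lemma~\ref{lem:saddle_point_l>d_short} yields $\mu \sim a_nN_n \to\infty$, so both the Poisson local limit theorem~\eqref{eq:poisson_llt} for $P_1$ and Lemma~\ref{lem:L_p_deviation_from_mean} for $L_p$ will apply at a center that is precisely the one we want.

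Next I would truncate the sum to $p \in M := \{p : |p - \mu| \le \sqrt{\mu}\ln\mu\}$. For $p \notin M$, Proposition~\ref{prop:poisson_inequalities} bounds $\pr{P_1 = p}$ by $\eul^{-\Omega(\ln^2\mu)}$, which is super-polynomially small and therefore negligible. For $p = \mu + u\sqrt{\mu} \in M$ with $|u|\le\ln\mu$, the shift of the centering is
\[
    n - mN_n - \mu_p = (\mu - p)\bigl(x_nC'(x_n)/C(x_n) - m\bigr) \sim -u\sqrt{\mu}\cdot \al\chi_n^{-1},
\]
using $x_nC'(x_n)/C(x_n) \sim \al\chi_n^{-1}$ from Lemma~\ref{lem:chi_to_zero}. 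Since $\sigma_p \sim \sqrt{\al\mu}/\chi_n$ uniformly for $p \in M$ by~\eqref{eq:mean_variance_p}, this yields $t_p := (n-mN_n-\mu_p)/\sigma_p \sim -u\sqrt{\al}$, which is $\smallO{p^{1/6}}$. Lemma~\ref{lem:L_p_deviation_from_mean} then delivers $\pr{L_p = n-mN_n} \sim \eul^{-\al u^2/2}/(\sqrt{2\pi}\sigma_\mu)$, while~\eqref{eq:poisson_llt} yields $\pr{P_1 = p} \sim \eul^{-u^2/2}/\sqrt{2\pi\mu}$. The map $p \mapsto u = (p-\mu)/\sqrt{\mu}$ has step $1/\sqrt{\mu}$, so the truncated sum is a Riemann sum that converges to
\[
    \pr{L = n-mN_n} \sim \frac{1}{2\pi\sigma_\mu}\int_{-\infty}^\infty \eul^{-(\al+1)u^2/2}\,du = \frac{1}{\sigma_\mu\sqrt{2\pi(\al+1)}}.
\]

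To conclude, I would translate $\sigma_\mu\sqrt{\al+1}$ into the two stated forms. From~\eqref{eq:mean_variance_p} and the $k = 0, 2$ instances of Lemma~\ref{lem:chi_to_zero}, together with $y_n \sim \rho^{-m}$, one gets
\[
    \sigma_\mu^2(\al+1)
    \sim \al(\al+1)\rho^{-m}C(x_n)\chi_n^{-2}
    \sim \rho^{-m}\Gamma(\al+2)h(\chi_n^{-1})\chi_n^{-\al-2}
    \sim \rho^{-m}x_n^2 C''(x_n),
\]
which gives the first asymptotic. The second follows by substituting the closed-form expression for $x_n^2C''(x_n)$ from Lemma~\ref{lem:chi_to_zero} and simplifying with the slow variation of $g$ and the definition~\eqref{eq:N_star} of $C_0$. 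The main technical hurdle is ensuring that the convergence in Lemma~\ref{lem:L_p_deviation_from_mean} is \emph{uniform} in $p$ across the window $M$, so that swapping sum and integral is legitimate; a careful reading of that lemma's proof should provide this, and the out-of-window contribution is absorbed without pain via the Chernoff bound~\eqref{eq:poisson_chernoff}.
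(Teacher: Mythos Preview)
Your proposal is correct and follows essentially the same route as the paper: truncate to a window of width $\sqrt{\mu}\ln\mu$ around $\mu=y_nC(x_n)$, apply the Poisson local limit for $P_1$ and Lemma~\ref{lem:L_p_deviation_from_mean} for $L_p$ on that window, convert the resulting Gaussian sum into an integral, and dispatch the tail via~\eqref{eq:poisson_chernoff}. The only cosmetic differences are that the paper keeps the two standardised variables $t=(\tilde n-\mu_p)/\sigma_p$ and $y=(p-\tau)/\sqrt{\tau}$ separate, combines them through $\Delta=(x_nC'(x_n)/C(x_n)-m)^2/\sigma_\tau^2+1/\tau\sim(\al+1)/\tau$, and uses Euler--Maclaurin~\eqref{eq:euler-maclaurin-summation-easy-remainder} rather than calling it a Riemann sum---which amounts to your single-variable parametrisation $t_p\sim -u\sqrt{\al}$, $y=u$, giving the same exponent $(\al+1)u^2/2$.
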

Backed by this this groundwork we are able to determine $\pr{\E_n\mid\Pa_{N_n}}$. The proofs are in Section~\ref{sec:lem:E_n_cond_P_N_all_cases}.
\begin{sublemmas}
\label{lem:E_n_cond_P_N_all_cases}
\begin{lemma}
\label{lem:E_n_cond_P_N_l<d}
In case $(I)$ 
\[
	\pr{\E_n \mid \Pa_{N_n}}
	\sim \pr{\sum_{1\le i \le N_n}C_{1,i} = n}.
\]
\end{lemma}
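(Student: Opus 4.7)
The starting point is~\eqref{eq:E_N_in_L+R}. Writing $T := \sum_{j\ge 2}jP_j$, the event $\Pa_{N_n}$ forces $P_1 = N_n - T$, and $(P_1,(C_{1,i})_{i\ge 1})$ is independent of $(P_j,(C_{j,i})_{i\ge 1})_{j\ge 2}$, so
\[
	\pr{\E_n \mid \Pa_{N_n}}
	=\sum_{K\ge 0}\pi_K\sum_{r\ge 0}\pr{L_{N_n-K} = n-mN_n-r}\pr{R=r \mid T=K},
\]
where $\pi_K := \pr{P_1=N_n-K}\pr{T=K}/\pr{\Pa_{N_n}}$ is a probability distribution on $\Nat_0$. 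The plan is to pull the factor $\pr{L_{N_n}=n-mN_n} = \pr{\sum_{i\le N_n}C_{1,i} = n}$ out of the inner sum and to check that the remaining double sum equals $\sum_K \pi_K + o(1) = 1+o(1)$.

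I would fix a slowly growing cut-off $K_n\to\infty$ (e.g.\ $K_n = \log n$) and split each sum at $K_n$. For the tails ($K>K_n$ or $r>K_n$) I would use that $\pi_K = O(\pr{T=K})$ uniformly in $K$ -- which follows from Lemma~\ref{lem:P_N_l<d} giving $\pr{\Pa_{N_n}}\sim \pr{P_1=N_n}$ together with $\pr{P_1=N_n-K}=O(\pr{P_1=N_n})$ by the monotonicity of Poisson weights near the mean -- combined with the geometric decay of $\pr{T=K}$ from~\eqref{eq:sum_P_j_ge2_l<d} and that of $\pr{R=r\mid \Pa_{N_n}}$ from Lemma~\ref{lem:R_ge_r_all_cases}. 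Bounding $\pr{L_{N_n-K}=\cdot}$ uniformly by $1/\sqrt{2\pi}\sigma_{N_n-K}$ via Lemma~\ref{lem:L_p_deviation_from_mean} then shows these tails are negligible compared to $\pr{L_{N_n}=n-mN_n}$ (which, by Corollary~\ref{coro:L_N=n_l<d}, is of order $\chi_n/\sqrt{N_n}$, i.e.\ $\sqrt{N_n}/n$).

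For the bulk $0\le K,r\le K_n$, Lemma~\ref{lem:L_p_deviation_from_mean} yields
\[
	\pr{L_{N_n-K}=n-mN_n-r}
	\sim\frac{1}{\sqrt{2\pi}\,\sigma_{N_n-K}}\exp\Big\{-\tfrac{1}{2}\big((n-mN_n-r-\mu_{N_n-K})/\sigma_{N_n-K}\big)^2\Big\},
\]
so it remains to show that the prefactor and the exponent vary negligibly as $(K,r)$ moves in $[0,K_n]^2$. The saddle-point relations~\eqref{eq:saddle_point_digest} give $\mu_{N_n} = N_n(n-mN_n)/(N_n-c_mS_n)$, and Lemma~\ref{lem:saddle_point_l<d_short} shows $S_n=\Theta(y_n)=O(1)$ in case~$(I)$, hence $\mu_{N_n}=(n-mN_n)(1+O(1/N_n))$ and $\mu_{N_n-K}=\mu_{N_n}(1-K/N_n)$; inserting~\eqref{eq:mean_variance_p} and~\eqref{eq:bounds_chi_xyC'} one checks that the corresponding shift in the standardized argument is $O(K_n/\sqrt{N_n})=o(1)$, so the bulk inner sum is asymptotic to $\pr{L_{N_n}=n-mN_n}\cdot\sum_{r\ge 0}\pr{R=r\mid T=K}$ uniformly in $K\le K_n$. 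Summing in $K$ against $\pi_K$ then collapses the geometric series to $1$ and the claim follows.

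The main obstacle is this final uniform asymptotic in the bulk: one must carefully quantify how $\mu_p$ and $\sigma_p^2$ depend on $p$ via~\eqref{eq:mu_p_def}--\eqref{eq:mean_variance_p} and the saddle-point identities~\eqref{eq:saddle_point_digest}, and verify that after standardization by $\sigma$ neither the $K$-induced shift in the mean nor the additive shift $r$ moves the Gaussian exponent by more than $o(1)$ on a range large enough to absorb the geometric tails from~\eqref{eq:sum_P_j_ge2_l<d} and Lemma~\ref{lem:R_ge_r_all_cases}.
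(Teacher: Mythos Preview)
Your decomposition via $T=\sum_{j\ge 2}jP_j$ is correct and is essentially what the paper does (the paper parametrises by $p=P_1$ rather than $K=N_n-P_1$, but that is cosmetic). The bulk analysis you sketch --- checking that the standardised shift $(n-mN_n-r-\mu_{N_n-K})/\sigma_{N_n-K}$ is $o(1)$ for small $K,r$ --- is exactly the mechanism, and your computation $\mu_{N_n}=(n-mN_n)(1+O(1/N_n))$ from the saddle-point identities is the right ingredient.

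There are, however, two real gaps in the tail part. First, Lemma~\ref{lem:L_p_deviation_from_mean} is a local limit theorem valid only for $t=o(p^{1/6})$; it does \emph{not} give a uniform bound $\pr{L_p=\cdot}\le C/\sigma_p$ over all arguments. In the $K$-tail, $p=N_n-K$ can be tiny and the target $n-mN_n-r$ can sit many standard deviations from $\mu_p$, so you cannot invoke that lemma there. You must fall back on $\pr{L_p=\cdot}\le 1$, which means the tails need to be $o(\sqrt{N_n}/n)$ \emph{absolutely}, not merely $o(1)$ relative to $1/\sigma_{N_n}$.

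Second, and this is the point you flag but do not resolve: with the crude bound, the $r$-tail forces the $r$-cutoff to be $\gtrsim b\log n$ (Lemma~\ref{lem:R_ge_r_all_cases} gives only a fixed rate $a<1$), while the $K$-bulk forces $K_n=o(\sqrt{N_n})$. When $N_n=O((\log n)^2)$ these are incompatible if you insist on a single cutoff. The paper's fix is to use \emph{different} cutoffs and, crucially, the full strength of~\eqref{eq:sum_P_j_ge2_l<d}: when $N_n$ is small one has $\lambda_n=o(1)$, hence $y_n\le n^{-c}$ by Lemma~\ref{lem:saddle_point_l<d}, so $\pr{T=K}\le A\,y_n^K$ decays polynomially fast in $n$ even for bounded $K$. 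This lets the $K$-cutoff be as small as needed (any $K_n\to\infty$ with $K_n=o(\sqrt{N_n})$ then suffices for the $K$-tail), while the $r$-cutoff stays at $b\log n$; the $r$-shift $r/\sigma_{N_n}=O((\log n)\sqrt{N_n}/n)=o(1)$ holds regardless, so the bulk is unaffected. Once you separate the two cutoffs and use the $y_n$-half of~\eqref{eq:sum_P_j_ge2_l<d} for small $N_n$, your argument goes through.
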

\begin{lemma}
\label{lem:E_n_cond_P_N_l>d}
In case $(II)$ 
\[
	\pr{\E_n\mid\Pa_{N_n}}
	\sim \pr{L = n-mN_n}.
\]
\end{lemma}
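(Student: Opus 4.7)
The plan is to start from the decomposition~\eqref{eq:E_N_in_L+R} and exploit the independence structure of the Boltzmann model. Let $\T := \sigma((P_j)_{j\ge 2}, (C_{j,i})_{j\ge 2, i\ge 1})$ denote the ``tail'' $\sigma$-algebra; on $\Pa_{N_n}$ the variable $P_1 = N_n - \sum_{j\ge 2}jP_j$ is $\T$-measurable, while $L = L_{P_1}$ depends on $P_1$ and on $(C_{1,i})_{i\ge 1}$, which are independent of $\T$. Unfolding this independence yields
\begin{align*}
    \pr{\E_n \cap \Pa_{N_n}}
    = \sum_{p, r\ge 0} \pr{L_p = n - mN_n - r} \cdot \pr{P_1 = p} \cdot \pr{R = r, \sum_{j\ge 2} jP_j = N_n - p}.
\end{align*}

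Next, I would argue that $\pr{L_p = n - mN_n - r}$ can be replaced by $\pr{L_p = n - mN_n}$ up to a $1 + o(1)$ error. Lemma~\ref{lem:R_ge_r_all_cases} supplies exponential tails for $R$ given $\Pa_{N_n}$, so one can restrict to $r \le \ln^2 n$ with negligible error. For $p$ in the bulk window $\lvert p - a_n N_n\rvert \le \sqrt{a_n N_n}\ln n$ -- which carries almost all of $P_1$'s mass by Proposition~\ref{prop:poisson_inequalities} -- Lemma~\ref{lem:L_p_deviation_from_mean} combined with the asymptotics $\sigma_p \sim \sqrt{\al a_n N_n}/\chi_n = \omega(\ln n)$, obtained from Lemmas~\ref{lem:chi_to_zero} and~\ref{lem:saddle_point_l>d_short}, implies uniformly $\pr{L_p = n - mN_n - r} = (1+o(1))\pr{L_p = n - mN_n}$ (the Gaussian factor moves by $O(r/\sigma_p) = o(1)$). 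After summing out $r$ and normalizing by $\pr{\Pa_{N_n}}$, this reduces the claim to showing
\begin{align*}
    \ex{\pr{L_P = n - mN_n}} \sim \pr{L = n - mN_n} = \ex{\pr{L_{P_1} = n - mN_n}},
\end{align*}
where $P$ denotes $P_1$ conditioned on $\Pa_{N_n}$, with mass function proportional to $\pr{P_1=p}\,\pr{\sum_{j\ge 2}jP_j=N_n-p}$.

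For the comparison, I would exploit that the density ratio satisfies $\pr{P = p}/\pr{P_1 = p} \propto \pr{\sum_{j\ge 2} jP_j = N_n - p}$. For $p = a_n N_n + u\sqrt{a_n N_n}$ with $u$ bounded, Lemma~\ref{lem:P_N_l>d} applied with $K_n = N_n - p$, together with $1 - {x}_n^m y_n \sim c_m/((1-a_n)N_n)$ from Lemma~\ref{lem:saddle_point_l>d_short}, gives $({x}_n^m y_n)^{N_n - p} \sim \eul^{-c_m}$ and $(N_n - p)^{c_m - 1} \sim ((1-a_n)N_n)^{c_m - 1}$ uniformly across the bulk. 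Hence $\pr{\sum_{j\ge 2} jP_j = N_n - p}$ is asymptotically constant on this window, so the density ratio is asymptotically constant and, by total mass, equals $1 + o(1)$. Contributions from $p$ outside the bulk are negligible: $\pr{P_1 = p}$ has Gaussian tails that dominate both $\pr{P = p}$ and $\pr{L_p = n - mN_n}$ (the latter being concentrated in the bulk by the LLT for $L_p$).

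The main obstacle will be this last comparison step: showing that the conditional law $P_1\mid\Pa_{N_n}$ is asymptotically indistinguishable from the unconditional $\pois{y_n C({x}_n)}$ law on the window that dominates $\pr{L_p = n-mN_n}$. Lemma~\ref{lem:P_N_l>d} supplies the required LLT-like estimate for $\sum_{j\ge 2}jP_j$, but one must track the density ratio uniformly across the bulk \emph{and} carefully bound the tails, so that the atypical regime -- where the conditional law may differ significantly from the Poisson one -- contributes only to subleading order.
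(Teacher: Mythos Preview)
Your approach is essentially the paper's: decompose via~\eqref{eq:E_N_in_L+R}, restrict to a bulk window in $(p,r)$, use the local limit theorem for $L_p$ to drop the dependence on $r$, and then compare the conditional law of $P_1$ given $\Pa_{N_n}$ to the unconditional Poisson law using Lemma~\ref{lem:P_N_l>d}. The paper carries this out exactly along these lines, and your density–ratio computation for the bulk is the right one.

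There is, however, a genuine parameter issue in your proposal: all cutoffs must be expressed in terms of $\tilde n := n - mN_n$ and $\tau := y_nC(x_n)$, not in terms of $n$. In case~$(II)$ the quantity $\tilde n$ may grow arbitrarily slowly compared to $n$ (nothing prevents, say, $\tilde n \asymp \log n$), and then your claims break. Concretely, $\sigma_p \asymp g(\tilde n)^{-1/2}\,\tilde n^{(\alpha+2)/(2(\alpha+1))}$ (by Lemma~\ref{lem:saddle_point_l>d} and~\eqref{eq:mean_variance_p}), which is \emph{not} $\omega(\ln n)$ in general; hence your assertion ``$r/\sigma_p=o(1)$ for $r\le \ln^2 n$'' and the applicability window $t=o(p^{1/6})$ for Lemma~\ref{lem:L_p_deviation_from_mean} both fail when $\tilde n$ is sub-polynomial in $n$. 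Similarly, the error terms must be shown to be $o(\pr{L=\tilde n})$, and $\pr{L=\tilde n}$ is only polynomially small in $\tilde n$, not in $n$. The paper therefore takes $r\le b(\ln\tilde n)^2$ and $|p-\tau|\le\sqrt\tau\,\ln\tau$. A second, smaller point: the Gaussian shift in the LLT is $e^{t\tilde r - \tilde r^2/2}$, so you need $t\tilde r=o(1)$, not merely $\tilde r=o(1)$; since $t$ ranges up to order $\ln\tau$, this is an additional constraint the paper checks explicitly. Finally, your tail control for the conditional law $P_1\mid\Pa_{N_n}$ is where the real work sits (as you note); the paper handles it by a further split at $p=N_n/(1+\varepsilon)$ and uses the explicit asymptotics from Lemma~\ref{lem:P_N_l>d} together with Poisson large deviations, rather than a bare ``Gaussian tails dominate'' argument.
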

\end{sublemmas}
%Note that in case $(I)$ the result matches our intuition formed in Lemma~\ref{lem:P_N_l<d} that $P = \sum_{j\ge 1}P_j$ is dominated by $P_1$. Likewise in case $(II)$ we will observe that the expected number of terms in $L$ is $y_nC({x}_n)=o(N_n)$, which corresponds to the fact that $P$ is dominated by $\sum_{j>1}jP_j$.
The proofs are almost completed. It is straightforward to obtain the asymptotic order of $[x^ny^{N_n}]G(x,y)$ in Theorems~\ref{thm:coeff_G_l<d} and~\ref{thm:coeff_G_l>d}  by combining the respective versions $(I)$ and $(II)$ of Lemmas~\ref{lem:G({x}_n,y_n)_all_cases},~\ref{lem:P_N_all_cases} and~\ref{lem:E_n_cond_P_N_all_cases} and applying Corollary~\ref{coro:coeff_as_prob}. We conclude this section by summarizing all the auxiliary statements that allow us to infer the ``combinatorial'' forms in Theorems~\ref{thm:coeff_G_l<d} and~\ref{thm:coeff_G_l>d}.
The proofs are in Section~\ref{sec:lem:comb_reform_L_all_cases}.
\begin{sublemmas}
\label{lem:comb_reform_L_all_cases}
\begin{lemma}
\label{lem:comb_reform_L_l<d}
In case $(I)$
\[
	\pr{\sum_{1\le i\le N_n}C_{1,i} = n}
	\sim \sqrt{2\pi N_n}\cdot {x}_n^ny_n^{N_n} \cdot \e{c_m\frac{\rho^my_n}{1-\rho^my_n}}
	\cdot \eul^{-N_n} \cdot \frac{1}{N_n!}[x^n]C(x)^{N_n}.
\]
\end{lemma}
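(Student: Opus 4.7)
} The approach is to convert the probability on the left into a coefficient extraction, and then reduce the identity to a Stirling-type estimate governed by the saddle-point equation. Since the $C_{1,i}$ are i.i.d.\ with $\pr{C_{1,i}=k}=c_kx_n^k/C(x_n)$, a one-line convolution argument gives the fundamental identity
\[
    \pr{\sum_{1\le i\le N_n}C_{1,i}=n}
    =\frac{x_n^n}{C(x_n)^{N_n}}\,[x^n]C(x)^{N_n}.
\]
Substituting this into the claimed asymptotic, the $x_n^n$ and the $[x^n]C(x)^{N_n}$ factors cancel and the statement reduces to proving
\[
    \frac{(y_nC(x_n))^{N_n}}{N_n!}
    \sim (2\pi N_n)^{-1/2}\cdot \eul^{N_n}\cdot \e{-c_m\frac{\rho^my_n}{1-\rho^my_n}}.
\]

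By Stirling's formula $N_n!\sim\sqrt{2\pi N_n}(N_n/\eul)^{N_n}$, this is in turn equivalent to
\[
    \left(\frac{y_nC(x_n)}{N_n}\right)^{N_n}
    \sim \e{-c_m\frac{\rho^my_n}{1-\rho^my_n}}.
\]
At this point the second equation in~\eqref{eq:saddle_point_digest} enters: $y_nC(x_n)=N_n-c_mS_n$, hence $y_nC(x_n)/N_n=1-c_mS_n/N_n$. By Lemma~\ref{lem:saddle_point_l<d_short}, in case $(I)$ the quantity $S_n=\Theta(y_n)$ is uniformly bounded (since $\limsup y_n<\rho^{-m}$), so $c_mS_n/N_n\to 0$. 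Taking logarithms yields
\[
    N_n\ln\left(1-\frac{c_mS_n}{N_n}\right)
    = -c_mS_n+O(1/N_n)
    = -c_mS_n+o(1).
\]

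Finally, we replace $S_n$ by $\rho^my_n/(1-\rho^my_n)$. Lemma~\ref{lem:chi_to_zero} gives $x_n\to\rho$, while $1-\rho^my_n$ is bounded away from $0$ under hypothesis $(I)$. A direct manipulation,
\[
    S_n-\frac{\rho^my_n}{1-\rho^my_n}
    = \frac{(x_n^m-\rho^m)\,y_n}{(1-x_n^my_n)(1-\rho^my_n)},
\]
shows the difference tends to $0$, so $c_mS_n=c_m\rho^my_n/(1-\rho^my_n)+o(1)$. Exponentiating assembles the three pieces and yields the claim.

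I do not anticipate a real obstacle here: the calculation is essentially a Stirling expansion steered by~\eqref{eq:saddle_point_digest}. The only point that requires a bit of care is that $y_n$ need not converge (Lemma~\ref{lem:saddle_point_l<d_short} only gives boundedness); this is why we keep $y_n$ explicit throughout and use uniform bounds rather than passing to a limit of $y_n$.
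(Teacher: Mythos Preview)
Your proposal is correct and follows essentially the same route as the paper: the same coefficient-extraction identity $\pr{\sum C_{1,i}=n}=x_n^n C(x_n)^{-N_n}[x^n]C(x)^{N_n}$, the same use of the saddle-point relation $y_nC(x_n)=N_n-c_mS_n$, the same $(1-c_mS_n/N_n)^{N_n}\sim\exp\{-c_mS_n\}$ step, and the same replacement $S_n\to\rho^my_n/(1-\rho^my_n)$ via $x_n\to\rho$ with $1-\rho^my_n$ bounded away from zero. Your explicit algebraic verification of the difference $S_n-\rho^my_n/(1-\rho^my_n)$ and your remark about keeping $y_n$ explicit rather than passing to a limit are both well placed.
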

\begin{lemma}
\label{lem:comb_reform_L_l>d}
Let $(a_n)_{n\in\Nat}$ be the sequence from Lemma~\ref{lem:saddle_point_l>d_short}. In case $(II)$
\[
	\pr{L=n-mN_n}
	\sim \eul^{c_m}\cdot {x}_n^{n-mN_n} \cdot \e{-\frac{C({x}_n)}{{x}_n^m}}\cdot [x^{n-mN_n}]\e{\frac{C(x)-c_mx^m}{x^m}}.
\]
% and
% \[
% 	[x^{n-mN_n}]\e{\frac{C(x)-c_mx^m}{x^m}}
% 	\sim  \sqrt{\frac{\al }{2\pi(\al +1)}} \eul^{-c_m}
% 	\cdot \frac{\sqrt{a_n \cdot N_n}}{n-mN_n}\cdot {x}_n^{-(n-mN_n)} \cdot \e{\frac{C({x}_n)}{{x}_n^m}}.
% \]
Further
\[
	y_nC({x}_n)-\frac{C({x}_n)}{{x}_n^m}
	\sim -c_m\frac{a_n}{1-a_n}.
\]
\end{lemma}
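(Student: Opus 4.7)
\emph{Second assertion.} This is a short calculation. I will begin by factoring $y_nC(x_n)-C(x_n)/x_n^m=-\epsilon_nC(x_n)/x_n^m$ with $\epsilon_n:=1-x_n^my_n$, and then extract two relations from Lemma~\ref{lem:saddle_point_l>d_short}: $S_n\sim(1-a_n)N_n/c_m$ yields $\epsilon_n=1/(1+S_n)\sim c_m/((1-a_n)N_n)$, while $y_nC(x_n)=N_n-c_mS_n\sim a_nN_n$ combined with $y_n\sim\rho^{-m}$ and $x_n\to\rho$ yields $C(x_n)/x_n^m\sim a_nN_n$. Multiplying gives the asserted $-c_ma_n/(1-a_n)$.

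\emph{First assertion: setup.} The plan is to reinterpret both sides as local densities of two compound-Poisson random variables that share all structure except the Poisson rate. Set $\tilde y_n:=y_nx_n^m$ and $s:=n-mN_n$. A PGF computation for $L=\sum_{1\le i\le P_1}(C_{1,i}-m)$ will show that $L$ is precisely the Boltzmann random variable of $\e{\tilde y_nC_{>m}(z)}$ at $x_n$; equivalently, $L$ is compound Poisson with rate $\tilde y_nC_{>m}(x_n)$ and summands distributed like the Boltzmann model of $C_{>m}$ at $x_n$, so that
\[
\pr{L=s}=x_n^s\,\e{-\tilde y_nC_{>m}(x_n)}\,[z^s]\,\e{\tilde y_nC_{>m}(z)}.
\]
Using $C(x_n)/x_n^m=c_m+C_{>m}(x_n)$, the right-hand side of the lemma simplifies to $x_n^s\e{-C_{>m}(x_n)}[z^s]\e{C_{>m}(z)}$, which I recognize as $\pr{\tilde L=s}$ for $\tilde L$ the Boltzmann random variable of $\e{C_{>m}(z)}$ at $x_n$. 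Crucially $\tilde L$ is compound Poisson with \emph{identical} summands to $L$, differing only in its Poisson rate ($C_{>m}(x_n)$ in place of $\tilde y_nC_{>m}(x_n)$); the assertion thus reduces to $\pr{L=s}\sim\pr{\tilde L=s}$.

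\emph{Comparison step.} To establish this I will couple $\tilde L=L+L'$ with $L'$ independent compound Poisson of rate $\epsilon_nC_{>m}(x_n)\sim c_ma_n/(1-a_n)$ -- bounded since $\limsup a_n<1$ -- and the same summand law, so $\pr{\tilde L=s}=\ex{\pr{L=s-L'\mid L'}}$. Corollary~\ref{coro:randomly_stopped_L_l>d} combined with the local CLT of Lemma~\ref{lem:L_p_deviation_from_mean} supplies the uniform expansion $\pr{L=s-j}\sim\pr{L=s}\,\e{-j^2/(2\sigma_L^2)}$ in a suitable window, where $\sigma_L^2\sim x_n^2C_{>m}''(x_n)\sim\rho^{-m}x_n^2C''(x_n)$. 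Moment calculations will yield $\ex{L'}\sim s\epsilon_n$ and $\Var{L'}\sim\epsilon_n\sigma_L^2$, whence $\ex{L'^2}/\sigma_L^2\sim\epsilon_n+(s\epsilon_n/\sigma_L)^2$; combining Lemmas~\ref{lem:chi_to_zero} and~\ref{lem:saddle_point_l>d_short} (and $\limsup a_n<1$) will give $s\epsilon_n/\sigma_L\to 0$, after which dominated convergence delivers $\ex{\e{-L'^2/(2\sigma_L^2)}}\to 1$ and hence $\pr{\tilde L=s}\sim\pr{L=s}$.

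\emph{Main obstacle.} The technical bottleneck is the uniform bound $s\epsilon_n=o(\sigma_L)$ throughout case~$(II)$: it must be re-verified in each of the sub-regimes $\lambda_n\to 1^+$, $\lambda_n=\Theta(1)$, and $\lambda_n\to\infty$, which activate genuinely different balances between $n-mN_n$, $(1-a_n)N_n$, and the slowly-varying factors $g(n-mN_n),g(n)$ inherited from Lemma~\ref{lem:chi_to_zero}. Tail control on $L'$ outside the local-CLT window also requires attention but follows from standard exponential concentration for compound-Poisson sums of bounded intensity.
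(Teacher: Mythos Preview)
Your second assertion and your PGF identifications for $L$ and $\tilde L$ are correct and match the paper. The comparison step, however, takes a genuinely different and more laborious route than the paper.

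The paper does not couple $\tilde L=L+L'$ and average. It simply observes that $\tilde\tau:=C(x_n)/x_n^m$ and $\tau:=y_nC(x_n)$ differ by $O(1)$ (which is exactly your second assertion), so every ingredient in the proof of Corollary~\ref{coro:randomly_stopped_L_l>d} --- the window $B_\le$, the asymptotics $\mu_p\sim\alpha p\chi_n^{-1}$, $\sigma_p^2\sim\alpha p\chi_n^{-2}$, the quantity $\Delta$, and the Euler--Maclaurin computation --- is literally unchanged when one substitutes $\tilde P_1\sim\mathrm{Po}(\tilde\tau)$ for $P_1\sim\mathrm{Po}(\tau)$. Rerunning that proof verbatim yields $\pr{\tilde L=s}\sim(2\pi\rho^{-m}x_n^2C''(x_n))^{-1/2}\sim\pr{L=s}$ directly. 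The PGF identity for $\tilde L$ then finishes.

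Your route is sound in principle but has a soft spot: the ``uniform expansion'' $\pr{L=s-j}\sim\pr{L=s}\exp\{-j^2/(2\sigma_L^2)\}$ is not supplied by Corollary~\ref{coro:randomly_stopped_L_l>d} (a single-point statement) or Lemma~\ref{lem:L_p_deviation_from_mean} (a statement about $L_p$, not $L$). To get it you would have to redo the proof of Corollary~\ref{coro:randomly_stopped_L_l>d} with target $s-j$ for a whole range of $j$, tracking the shifted $t'=t-j/\sigma_p$ through the Gaussian sum. That is perfectly feasible --- your moment estimates $\ex{L'^2}/\sigma_L^2\to 0$ are correct and do control the bulk --- but once you are rerunning that proof anyway, the paper's one-line substitution $\tau\mapsto\tilde\tau$ is strictly less work and avoids your ``main obstacle'' entirely: the bound $s\epsilon_n=o(\sigma_L)$, while true, is never needed.
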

\end{sublemmas}
%%%%%%%%%%%%%%%%%%%%%%%%% Proofs %%%%%%%%%%%%%%%%%%%%%%%%%%%%%%
\section{Proofs}
\label{sec:proofs}
%%%%%%%%%%%%%%%%%%%%%%%%%%%%%%%%%%%%%%%%%%%%%%%%%%%%%%%%%%%%%%%

% \subsection{Proofs for All Regimes}
% In this subsection we gather all proofs which are valid for both the cases $(I)$ and $(II)$ structured as follows. In the first subsection~\ref{sec:other_proofs} we prove Corollary~\ref{coro:coeff_as_prob} which is the starting point for everything else in this paper. Further we gather some helpful statements of technical nature which will be referred to later. In Section~\ref{sec:clt_L_p} we prove the local central limit theorem for $L_p$ claimed in Lemma~\ref{lem:L_p_deviation_from_mean}. Finally Section~\ref{sec:applications_clt} contains the proofs of Lemmas~\ref{coro:randomly_stopped_L_l>d} and~\ref{coro:L_N=n_l<d} which are direct applications of Lemma~\ref{lem:L_p_deviation_from_mean}.
% %%%%%%%%%%%%%%%%%%%%%%%%%%%%%%%%%%%%%%%%%%%%%%%%%%%%%%%%%%%%
% \subsubsection{Other proofs}
% \label{sec:other_proofs}
In this section we prove all lemmas and corollaries from Section~\ref{sec:generalproof}, together with some auxiliary statements.

%%%%%%%%%%%%%%%%%%%%%%%%%%%%%%%%%%%%%%%%%%%%%%%%%%%%%%%%%%%%
\subsection{Proof of Lemma~\ref{lem:boltzmann_algorithm_identical}}
\label{sec:lem:coeff_as_prob}
%%%%%%%%%%%%%%%%%%%%%%%%%%%%%%%%%%%%%%%%%%%%%%%%%%%%%%%%%%%%
\begin{proof}[Proof of Lemma~\ref{lem:boltzmann_algorithm_identical}]
Let $n,N$ be such that $[x^ny^N]G(x,y)>0$ and $x_0,y_0>0$ such that $G(x_0,y_0)<\infty$. For $k\in\Nat$ define the set $\Omega_k := \{(p_1,p_2,\dots)\in\Nat_0^\infty:\sum_{j\ge 1}jp_j=k \}$.
Then per definition
\begin{align}
\nonumber
    \pr{\Lambda(x_0,y_0) = (n,N)}
    &= \pr{ \sum_{j\ge 1}j \sum_{1\le i\le P_j} C_{j,i} = n, \sum_{j\ge 1}jP_j=N} \\
    \label{eq:boltzmann_algorithm_identical_sum}
    &= \frac{y_0^N}{G(x_0,y_0)}\sum_{\mathrm{p}\in\Omega_N} \pr{ \sum_{j\ge 1}j \sum_{1\le i\le p_j} C_{j,i} = n}
    \prod_{j\ge 1} \frac{(C(x_0^j)/j)^{p_j}}{p_j!}.
\end{align}
Next we reformulate
\begin{align*}
    \pr{ \sum_{j\ge 1}j \sum_{1\le i\le p_j} C_{j,i} = n}
    = [x^n] \ex{x^{\sum_{j\ge 1}j \sum_{1\le i\le p_j} C_{j,i}}}
    = x_0^n [x^n] \prod_{j\ge 1} \left(\frac{C(x^j)}{C(x_0^j)}\right)^{p_j}.
\end{align*}
Plugging this back into~\eqref{eq:boltzmann_algorithm_identical_sum} yields
\begin{align}
    \nonumber
     \pr{\Lambda(x_0,y_0) = (n,N)}
    & = \frac{x_0^ny_0^N}{G(x_0,y_0)} \cdot [x^n]
     \sum_{\mathrm{p}\in\Omega_N} \prod_{j\ge 1} \frac{(C(x^j)/j)^{p_j}}{p_j!} \\
     \label{eq:boltzmann_algorithm_identical_sum_2}
     &= \frac{x_0^ny_0^N}{G(x_0,y_0)} [x^ny^N] \sum_{k\ge 0}  \sum_{\mathrm{p}\in\Omega_k} \prod_{j\ge 1} \frac{(C(x^j)y^j/j)^{p_j}}{p_j!}.
\end{align}
Define by $(P_j(x,y))_{j\ge 1}$ independent Poisson variables with parameters $(C(x^j)y^j/j)_{j\ge 1}$. Then
\begin{align*}
    [x^ny^N] \sum_{k\ge 0}  \sum_{\mathrm{p}\in\Omega_k} \prod_{j\ge 1} \frac{(C(x^j)y^j/j)^{p_j}}{p_j!}
    = [x^n y^N]G(x,y) \sum_{k\ge 0} \pr{\sum_{j\ge 1}jP_j(x,y)=k}
    = [x^n y^N]G(x,y)
\end{align*}
and inserting this into~\eqref{eq:boltzmann_algorithm_identical_sum_2} yields $\pr{\Lambda(x_0,y_0)=(n,N)} = [x^ny^N]G(x,y)x_0^n y_0^N / G(x_0,y_0)$, which equals $\pr{\Gamma G(x_0,y_0)=(n,N)}$, as claimed.
\end{proof}

%%%%%%%%%%%%%%%%%%%%%%%%%%%%%%%%%%%%%%%%%%%%%%%%%%%%%%%%%%%%
\subsection{Proof of Lemma~\ref{lem:saddle_point_short_all_cases}}
\label{sec:lem:saddle_point_short_all_cases}
%%%%%%%%%%%%%%%%%%%%%%%%%%%%%%%%%%%%%%%%%%%%%%%%%%%%%%%%%%%%
% For the next two proofs we introduce the following notation. For a $\Nat$-valued sequence $(n_\ell)_{\ell\in\Nat}$ let $N^*_{n_\ell}$ be the solution to~\eqref{eq:N_star} with $n$ replaced by $n_\ell$. Then set $N_{n_\ell} = \lambda_{n_\ell}N^*_{n_\ell}$ as well as $x_{n_\ell}:=x(n_\ell,N_\ell)$ and $y_{0,\ell}:=y(n_\ell,N_{n_\ell})$ to be the unique solutions of~\eqref{eq:saddle_point_digest}. Further define $S_{n_\ell}:=x_{n_\ell}^my_n/(1-x_{n_\ell}^my_{n_\ell})$ and $\chi_{n_\ell} := \ln(\rho/x_{n_\ell})$. Limits in this notation are always meant as $\ell\to\infty$.
\subsubsection{Proof of Lemma~\ref{lem:saddle_point_l<d_short}}
As we will need that later, we prove the following more general statement, form which  Lemma~\ref{lem:saddle_point_l<d_short} follows immediately.
\begin{lemma}
\label{lem:saddle_point_l<d} 
Let $\chi_n = \ln(\rho/{x}_n)$. In case $(I)$
\begin{align*} 
	\chi_n 
	\sim \al\frac{N_n}{n}, \qquad
	y_n 
	\sim 
% 	\begin{dcases*}
% 		\frac{\al^\al}{\Gamma(\al)h(n/N_n)}\frac{N^{\al+1}}{n^\al}
% 		= o(1), &$\lambda_n=o(1)$ \\
% 		\left( \frac{\lambda_n}{C_0}\right)^{\al+1}\cdot\rho^{-m},
% 		& $\lambda_n=\Theta(1)$
% 	\end{dcases*},
    \rho^{-m}\cdot \frac{h(n/N^*_n)}{h(n/N_n)} \cdot\lambda_n^{\al+1} 
    	\qquad\text{and}\qquad
    	\limsup_{n\to\infty}y_n < \rho^{-m}.
\end{align*}
Further, if $\lambda_n=o(1)$ then $y_n=o(1)$ and if $\lambda_n=\Theta(1)$ then $y_n\sim \rho^{-m}\lambda_n^{\al+1}=\Theta(1)$. Moreover,
\begin{align*}
	y_nC({x}_n)
	= N_n + c_mS_n
	\qquad\text{and}\qquad
	S_n
	=  \Theta(y_n).
\end{align*}
\end{lemma}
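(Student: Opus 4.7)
The plan is to build on Lemma~\ref{lem:chi_to_zero}, which already supplies $\chi_n\to 0$, $y_n$ bounded, and ${x}_ny_nC'({x}_n)\sim n-mN_n$. In all of case~$(I)$ we have $N_n=\lambda_n N_n^*=o(n)$, so ${x}_ny_nC'({x}_n)\sim n$. The identity $y_nC({x}_n)=N_n-c_mS_n$ is literally the second equation of~\eqref{eq:saddle_point_digest}, so the displayed claim about $y_nC({x}_n)$ together with the statement $S_n=\Theta(y_n)$ reduces to the formula $S_n={x}_n^my_n/(1-{x}_n^my_n)$ combined with ${x}_n^m\to \rho^m$, as soon as $\limsup y_n<\rho^{-m}$ is known.

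I would establish $\limsup y_n<\rho^{-m}$ by contradiction. Suppose $y_{n_\ell}\to\rho^{-m}$ along a subsequence. Combining ${x}_{n_\ell}y_{n_\ell}C'({x}_{n_\ell})\sim n_\ell$ with $y_{n_\ell}\to\rho^{-m}$ and the asymptotic ${x}_nC'({x}_n)\sim\Gamma(\al+1)h(\chi_n^{-1})\chi_n^{-\al-1}$ from Lemma~\ref{lem:chi_to_zero} gives $\chi_{n_\ell}^{-(\al+1)}h(\chi_{n_\ell}^{-1})\sim n_\ell\rho^m/\Gamma(\al+1)$; the inversion in Lemma~\ref{lem:existence_of_solutions}\ref{item:lem_N_star} (applied with $v$ a fixed multiple of $n_\ell$, so that the slow variation of $g$ absorbs the constant) translates this into $\chi_{n_\ell}^{-1}\sim n_\ell/(\al N_{n_\ell}^*)$, the factor $\al$ coming from the explicit value of $C_0$. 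Substituting this into $C({x}_n)\sim\Gamma(\al)h(\chi_n^{-1})\chi_n^{-\al}$ and simplifying with $\al\Gamma(\al)=\Gamma(\al+1)$ yields the clean identity $y_{n_\ell}C({x}_{n_\ell})\sim N_{n_\ell}^*$. But the saddle-point equation forces $y_{n_\ell}C({x}_{n_\ell})\le N_{n_\ell}\le (1-\delta_0)N_{n_\ell}^*$ for some $\delta_0>0$ and $\ell$ large, a contradiction.

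Once $\limsup y_n<\rho^{-m}$ is in hand, $S_n=\Theta(y_n)=O(1)$ and hence $y_nC({x}_n)\sim N_n$. Dividing by ${x}_ny_nC'({x}_n)\sim n$ and invoking the $C,C'$ asymptotics yields ${x}_nC'({x}_n)/C({x}_n)\sim \al/\chi_n \sim n/N_n$, i.e.\ $\chi_n\sim\al N_n/n$. Feeding this back into $y_n\sim N_n/C({x}_n)\sim N_n\chi_n^\al/(\Gamma(\al)h(\chi_n^{-1}))$, and using $N_n=\lambda_n N_n^*$, the identity $C_0^{\al+1}=\al^{-(\al+1)}\rho^{-m}\Gamma(\al+1)$, and the slow-variation identity $h(n/N_n^*)\sim g(n)^{\al+1}$ (which follows from $u_vh(u_v)^{1/(\al+1)}=v^{1/(\al+1)}$ at $v=n$ together with the observation $n/N_n^*=u_n/C_0$), delivers $y_n\sim \rho^{-m}\lambda_n^{\al+1}h(n/N_n^*)/h(n/N_n)$. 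The subcase claims then follow from standard slow-variation bounds: Potter's theorem yields $h(n/N_n^*)/h(n/N_n)\le A\lambda_n^{-\delta}$ for any $\delta>0$, so $\lambda_n=o(1)$ forces $y_n=O(\lambda_n^{\al+1-\delta})=o(1)$ upon choosing $\delta<\al+1$; if $\lambda_n$ stays in a compact sub-interval of $(0,1)$, uniform convergence of slowly varying functions gives $h(n/N_n^*)/h(n/N_n)\to 1$, so $y_n\sim\rho^{-m}\lambda_n^{\al+1}=\Theta(1)$. The global bound $\limsup y_n<\rho^{-m}$ is the same Potter estimate with $\delta$ small enough that $(1-\delta_0)^{\al+1-\delta}<1$.

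The main obstacle is the contradiction step: one must recover the \emph{exact} constant $C_0$ encoded in $N_n^*$ to see that $y_nC({x}_n)$ saturates precisely at $N_n^*$ as $y_n\to\rho^{-m}$, which is what makes $\lambda_n=1$ the sharp threshold separating cases~$(I)$ and~$(II)$. Everything else is routine algebra combined with the asymptotics of Lemma~\ref{lem:chi_to_zero} and textbook facts about slowly varying functions.
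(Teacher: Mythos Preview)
Your proposal is correct and follows essentially the same route as the paper. The paper runs two separate contradiction arguments (first $y_nC(x_n)=\Theta(N_n)$, then $S_n=O(1)$), whereas you collapse these into a single contradiction aimed directly at $\limsup y_n<\rho^{-m}$; but the core step---showing via Lemma~\ref{lem:chi_to_zero} that $y_{n_\ell}\to\rho^{-m}$ forces $y_{n_\ell}C(x_{n_\ell})\sim N_{n_\ell}^*$, which contradicts $y_nC(x_n)\le N_n\le(1-\delta_0)N_n^*$---is identical, as is the subsequent derivation of $\chi_n$, $y_n$, and the subcase conclusions from Potter-type bounds and the uniform convergence theorem.
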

\begin{proof}
We first show that $y_nC({x}_n)=\Theta(N_n)$. Since $S_n= x_n^my_n/(1-x_n^my_n)>0$ we infer from~\eqref{eq:saddle_point_digest} that $y_nC({x}_n)<N_n$.
For the sake of contradiction assume that there is a sequence $(n_\ell)_{\ell\in\Nat}$ such that $y_{n_\ell}C(x_{n_\ell})=o(N_{n_\ell})$.
This implies that $S_{n_\ell} \sim N_{n_\ell}$ due to~\eqref{eq:saddle_point_digest}, which is only possible if $y_n\sim\rho^{-m}$, as we know that $x_n\sim\rho^m$ from Lemma~\ref{lem:chi_to_zero}.
By applying~\eqref{eq:asymptotic_for_lem_chi_to_0} for $k=0$ and since $n_\ell-mN_{n_\ell}\sim n_\ell$ in case $(I)$ we thus infer that $y_{n_\ell}C(x_{n_\ell})=\Theta(g(n_{\ell})n_\ell^{\al/(\al+1)}) = \Theta(N^*_{n_\ell}) = \Omega(N_{n_\ell})$, the desired contradiction.
We showed that $y_nC(x_n)=\Theta(N_n)$.
We also immediately obtain from this fact that $\chi_n=\Theta(n/N_n)$, since $\chi_n^{-1}=\Theta(x_nC'(x_n)/C(x_n))$ due to Lemma~\ref{lem:chi_to_zero}.

% Together with  Lemma~\ref{lem:N_star_properties} this implies that 
% \begin{align}
% \label{eq:nchi_N_star}
%     n_\ell\chi_{n_\ell} 
%     = \Theta(h(n_\ell/(n_\ell\chi_{n_\ell}))^{1/(\al + 1)}n^{\al/(\al+1)} )
%     = \Theta(N^*_{n_\ell}).
% \end{align}
% As $y_{n_\ell}C(x_{n_\ell})=o(N_{n_\ell})$ we obtain from the saddle-point equations~\eqref{eq:saddle_point_digest} that $S_{n_\ell}\sim N_{n_\ell}$, and this is only possible if $x_{n_\ell}^my_{n_\ell} \to 1$. 
% We infer that $x_{n_\ell}^m\sim \rho^m$ (as $\chi_{n_\ell} = o(1)$), so that $y_{n_\ell} \sim \rho^{-m} =\Theta(1)$. 
% In addition, Lemma~\ref{lem:chi_to_zero} and $y_{n_\ell}=\Theta(1)$ gives us that $y_{n_\ell}C(x_{n_\ell})=\Theta(h(\chi_{n_\ell}^{-1})\chi_{n_\ell}^{-\al})$ which in turn is in $\Theta(h(\chi_{n_\ell}^{-1})^{1/(\al+1)}n_\ell^{\al/(\al+1)})$. Comparing this with~\eqref{eq:nchi_N_star} we conclude $y_{n_\ell}C(x_{n_\ell}) = \Theta(N^*_{n_\ell}) = \Omega(N_{n_\ell})$, the desired contradiction. We showed $y_{0}C(x_{0})=\Theta(N)$. We also immediately obtain from this fact that $\chi_n = \Theta(n/N_n)$ since $\chi_n^{-1}=\Theta({x}_nC'({x}_n)/C({x}_n))$ due to Lemma~\ref{lem:chi_to_zero}.

Next we show that $S_n=\bigO{1}$, again by contradiction. Assume that there is a sequence $(n_\ell)_{\ell\in\Nat}$ such that $S_{n_\ell}=\omega(1)$. Then again $y_{n_\ell}\sim \rho^{-m}$ so that we get with Lemma~\ref{lem:chi_to_zero} and the definition of $N^*_{n_\ell}$ from~\eqref{eq:N_star} that, as $\ell\to\infty$,
\begin{align}
    \label{eq:yC(x)_larger_than_N_contra}
    y_{n_\ell}C(x_{n_\ell})
    \sim C_0 g(n_\ell) n_\ell^{\al/(\al+1)}
    =N_{n_\ell}^*.
\end{align}
But in case~$(I)$ we have that $\limsup_{\ell\to\infty}y_{n_\ell}C(x_{n_\ell})/N^*_{n_\ell} \le \limsup_{\ell\to\infty}N_{n_\ell}/N^*_{n_\ell} <1$, a contradiction. We just showed that $S_n=\bigO{1}$. It immediately follows that $y_nC(x_n)\sim N_n$ and $\al \chi_n^{-1} \sim x_nC'(x_n)/C(x_n)\sim n/N_n$ from~\eqref{eq:saddle_point_digest}.

From $S_n=\bigO{1}$ we also conclude that $\limsup y_n<\rho^{-m}$, as ${x}_n\sim \rho^m$ due to Lemma~\ref{lem:chi_to_zero}. This, in turn, also implies $S_n=\Theta(y_n)$, as claimed. It remains to show the validity of the asymptotic expression of $y_n$. By applying Lemma~\ref{lem:chi_to_zero}, in particular~\eqref{eq:chihchiasympt}, 
\[
    \chi_n^{-1}h(\chi_n^{-1})^{1/(\al+1)} \sim \left(\frac{n}{\Gamma(\al+1)y_n}\right)^{1/(\al+1)}.
\]
Solving for $y_n$ yields $y_n \sim n \chi_n^{\alpha+1} /(h(\chi^{-1})\Gamma(\alpha + 1))$. We plug in $\chi_n \sim \al N_n/n$ as well as the definitions of $C_0=\al^{-1}(\rho^{-m}\Gamma(\al+1))^{1/(\al+1)}$ and $N^*_n$ from~\eqref{eq:N_star} and $N=\lambda_n N^*_n=\lambda_n g(n)n^{\al/(\al+1)}$ to obtain that 
\[
    y_n
    \sim \rho^{-m} \cdot \frac{g(n)^{\al+1}}{h(n/N_n)} \cdot \lambda_n^{\al+1}.
\]
Since $g(n) = h(n/N^*_n)^{1/(\al+1)}$, which can be seen directly from~\eqref{eq:N_star_general} for $u=n/N^*_n$ and $v=n$, we are done.
% we use that $y_nC({x}_n)\sim N_n$ to obtain with Lemma~\ref{lem:chi_to_zero}, $N_n=\lambda_nN^*_n = \lambda_n h(n/N^*_n)^{1/(\al+1)}n^{\al/(\al+1)}$ and the definition of $C_0$ 
% \[
%     y_n
%     \sim N_n C({x}_n)^{-1}
%     \sim \frac{\al^\al}{\Gamma(\al)}\cdot \frac{1}{h(n/N_n)} \cdot \frac{N_n^{\al+1}}{n^{\al}}
%     \sim \rho^{-m}\cdot \frac{h(n/N^*_n)}{h(n/N_n)}\cdot \left(\frac{\lambda_n}{C_0}\right)^{\al+1}.
% \]
If $\lambda_n=o(1)$ we have with~\eqref{eq:svissubpoly} that there is a $0<\delta<\al+1$ such that $\lambda_n^{\al+1}\cdot h(n/N^*_n)/h(n/N_n)\le (N^*_n/N_n)^\delta = \lambda_n^{\al+1-\delta}=o(1)$. If $\lambda_n=\Theta(1)$ then $h(n/N^*_n)/h(n/N_n)\sim 1$ finishing the proof.
\end{proof}

\subsubsection{Proof of Lemma~\ref{lem:saddle_point_l>d_short}}
We later need the following statement which contains Lemma~\ref{lem:saddle_point_l>d_short} together with asymptotic properties of the solution $({x}_n,y_n)$ to~\eqref{eq:saddle_point_digest} in case $(II)$.
\begin{lemma}
\label{lem:saddle_point_l>d} 
Let $\chi_n=\ln(\rho/{x}_n)$. Consider the non-negative sequence
\begin{equation}
\label{eq:def_a_n}
    a_n
   := \lambda_n^{-1} \cdot \frac{g(n-mN_n)}{g(n)}\cdot \left(\frac{n-mN_n}{n}\right)^{\al/(\al+1)},
   \quad
   n\in\Nat
\end{equation}
that fulfills
\[
    a_n\le \lambda_n^{-1} \text{ for }n\in\Nat\text{ sufficiently large}
    \quad\text{and}\quad 
    a_n \sim \lambda_n^{-1} \text{ for } N_n = o(n).
\]
Then, in case $(II)$,
\begin{align*}
    y_n \sim \rho^{-m} 
    \quad
    \textrm{and}
    \quad
	\chi_n\sim \al \cdot a_n\cdot \frac{N_n}{n-mN_n}
    \sim \al C_0 \cdot g(n-mN_n) \cdot (n-mN_n)^{-1/(\al+1)}, \\
	y_nC({x}_n)
	\sim a_n \cdot N_n 
	\sim C_0\cdot g(n-mN_n) \cdot (n-mN_n)^{\al/(\al+1)}
	\quad\text{and}\quad
	S_n
	\sim \frac{1-a_n}{c_m} \cdot N_n.
\end{align*}
\end{lemma}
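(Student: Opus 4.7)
The strategy parallels the proof of Lemma~\ref{lem:saddle_point_l<d}, using the saddle-point equations~\eqref{eq:saddle_point_digest} combined with the asymptotic information provided by Lemma~\ref{lem:chi_to_zero}. The central point is that in case $(II)$ the ``geometric'' term $S_n$ must become the dominant contribution to $N_n$, which then forces $x_n^m y_n \to 1$ and hence $y_n \to \rho^{-m}$.

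First I would dispatch the elementary bounds on $a_n$. Applying the standard subpolynomial bound for slowly varying functions (the estimate~\eqref{eq:svissubpoly} used throughout Section~\ref{subsec:existence}) to $g$, for any $0<\delta<\al/(\al+1)$ and sufficiently large $n$ one gets $g(n-mN_n)/g(n) \le (n/(n-mN_n))^\delta$, so that
\[
    \frac{g(n-mN_n)}{g(n)}\left(\frac{n-mN_n}{n}\right)^{\al/(\al+1)}
    \le \left(\frac{n-mN_n}{n}\right)^{\al/(\al+1)-\delta}\le 1,
\]
yielding $a_n\le \lambda_n^{-1}$. When additionally $N_n=o(n)$, both factors tend to $1$ by slow variation of $g$, giving $a_n\sim\lambda_n^{-1}$.

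Next comes the main step: showing $y_n\sim\rho^{-m}$. Lemma~\ref{lem:chi_to_zero} already tells us $y_n$ is bounded and $\chi_n=\ln(\rho/x_n)\to 0$, so $x_n\to\rho$. I would argue by contradiction: if some subsequence $(n_\ell)$ satisfied $\limsup y_{n_\ell}<\rho^{-m}$, then $x_{n_\ell}^my_{n_\ell}$ would be bounded away from $1$ and therefore $S_{n_\ell}=\bigO{1}$. Plugging into the second equation of~\eqref{eq:saddle_point_digest} gives $y_{n_\ell}C(x_{n_\ell})=N_{n_\ell}+\bigO{1}\sim N_{n_\ell}$. But $y_n$ bounded and the asymptotic~\eqref{eq:asymptotic_for_lem_chi_to_0} with $k=0$ show that $y_nC(x_n) = \bigO{g((n-mN_n)/y_n)((n-mN_n)/y_n)^{\al/(\al+1)}}$, which by slow variation of $g$ is at most $O(N_n^*)$. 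Since $\liminf\lambda_n>1$ this contradicts $y_{n_\ell}C(x_{n_\ell})\sim N_{n_\ell}=\lambda_{n_\ell}N_{n_\ell}^*$. Hence $y_n\to\rho^{-m}$.

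With $y_n\sim\rho^{-m}$ in hand, the remaining assertions follow by plugging this into Lemma~\ref{lem:chi_to_zero}. The formula~\eqref{eq:bounds_chi_xyC'} becomes
\[
    \chi_n\sim g(\rho^m(n-mN_n))\left(\rho^m(n-mN_n)/\Gamma(\al+1)\right)^{-1/(\al+1)},
\]
and slow variation of $g$ together with the identity $\alpha C_0=\rho^{-m/(\al+1)}\Gamma(\al+1)^{1/(\al+1)}$ (which follows from~\eqref{eq:N_star}) gives $\chi_n\sim\al C_0\cdot g(n-mN_n)(n-mN_n)^{-1/(\al+1)}$. The same computation with $k=0$ in~\eqref{eq:asymptotic_for_lem_chi_to_0} gives $y_nC(x_n)\sim C_0 g(n-mN_n)(n-mN_n)^{\al/(\al+1)}$, and a direct comparison with the definition~\eqref{eq:def_a_n} of $a_n$ combined with $N_n=\lambda_nN_n^*=\lambda_nC_0g(n)n^{\al/(\al+1)}$ shows this equals $a_nN_n$. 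The asymptotic for $\chi_n$ in the form $\al a_n N_n/(n-mN_n)$ is then algebraic. Finally, the second equation of~\eqref{eq:saddle_point_digest} rearranges to $c_mS_n=N_n-y_nC(x_n)\sim (1-a_n)N_n$, and since $\limsup a_n<1$ (because $\limsup\lambda_n^{-1}<1$ by hypothesis and $a_n\le\lambda_n^{-1}$), this yields $S_n\sim(1-a_n)N_n/c_m$.

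The only delicate point is the contradiction establishing $y_n\to\rho^{-m}$; one must carefully combine the boundedness of $y_n$ with the two-sided control provided by~\eqref{eq:asymptotic_for_lem_chi_to_0} to see that assuming $y_n$ bounded away from $\rho^{-m}$ forces $y_nC(x_n)$ to be of order $N_n^*$ rather than the required $N_n$. Everything else is routine manipulation of slowly varying functions.
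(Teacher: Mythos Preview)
Your proposal is correct and follows essentially the same route as the paper: establish $y_n\sim\rho^{-m}$ by contradiction (showing that otherwise $y_nC(x_n)\sim N_n$ would be forced while Lemma~\ref{lem:chi_to_zero} caps $y_nC(x_n)$ at order $N_n^*$), then read off all remaining asymptotics from Lemma~\ref{lem:chi_to_zero} and the second saddle-point equation. The only substantive difference is cosmetic: the paper negates ``$S_n=\Theta(N_n)$'' (assuming $S_{n_\ell}=o(N_{n_\ell})$) rather than ``$y_n\to\rho^{-m}$'' directly, but the contradiction is the same.

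One point deserves more care than you give it. In your bound $y_nC(x_n)=\bigO{g((n-mN_n)/y_n)((n-mN_n)/y_n)^{\al/(\al+1)}}=\bigO{N_n^*}$, the second step tacitly uses that $(n-mN_n)/y_n=\bigO{n}$, i.e.\ that $y_n$ is bounded away from zero along the subsequence; but your contradiction hypothesis $\limsup y_{n_\ell}<\rho^{-m}$ does not exclude $y_{n_\ell}\to 0$. The paper handles this by keeping the factor $y_n$ explicit, writing $y_nC(x_n)/N_n$ as a constant times $y_n^{1/(\al+1)}$ times a slowly-varying correction, and then bounding the correction via the two-sided estimate~\eqref{eq:svissubpoly} together with $(n-mN_n)/(ny_n)\le y_n^{-1}$; this gives an upper bound of the form $(1+\eps)^{1/(\al+1)}\lambda_n^{-1}(1+\eps)^\delta$, valid uniformly in $y_n\in(0,(1+\eps)\rho^{-m}]$. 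You correctly flag this as the delicate step, and the fix is exactly along these lines.
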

\begin{proof}
Since $x_{n}^my_{n}<1$ and $x_n\sim\rho$ according to Lemma~\ref{lem:chi_to_zero}, it is clear that $y_{n}\le (1+\eps)\rho^{-m}$ for all $\eps>0$ and all sufficiently large $n$.
First of all, we show that $y_n\sim\rho^{-m}$ by establishing that even $S_n=\Theta(N_n)$. We know that $S_n \le N_n$, as $y_nC({x}_n)\ge 0$ in~\eqref{eq:saddle_point_digest}. We show $S_n=\Omega(N_n)$ by contradiction.
Assume there is a sequence $(n_\ell)_{\ell\in\Nat}$ such that $S_{n_\ell}=o(N_{n_\ell})$. From~\eqref{eq:saddle_point_digest} and by applying Lemma~\ref{lem:chi_to_zero} and $N_{n_\ell} = \lambda_{n_\ell} N^*_{n_\ell} =\lambda_{n_\ell} C_0 g(n_\ell)n_\ell^{\al/(\al+1)}$, where $C_0 = \alpha^{-1}(\rho^{-m}\Gamma(\alpha+1))^{1/(\alpha+1)}$, we obtain 
\begin{align}
    \label{eq:yC/N_sim_1_proof}
    1 \sim \frac{y_{n_\ell}C(x_{n_\ell})}{N_{n_\ell}}
    \sim\rho^{m/(\al+1)}  y_{n_\ell}\lambda_{n_\ell}^{-1} \frac{g\big((n_\ell-mN_{n_\ell})/y_{n_\ell}\big)}{g({n_\ell})} \left(\frac{n_\ell-mN_{n_\ell}}{n_{\ell}y_{n_\ell}}\right)^{\al/(\al+1)}.
\end{align}
Since $g$ is slowly varying we obtain from~\eqref{eq:svissubpoly} for $0<\delta<1/(\al+1)$ and sufficiently large  $\ell$
\[
    \frac{g\big((n_\ell-mN_{n_\ell})/y_{n_\ell}\big)}{g({n_\ell})}
    \le
    \max
    \left\{
        \left(\frac{n_\ell-mN_{n_\ell}}{n_{\ell}y_{n_\ell}}\right)^{\delta},
        \left(\frac{n_\ell-mN_{n_\ell}}{n_{\ell}y_{n_\ell}}\right)^{-\delta}
    \right\}.
\]
Together with~\eqref{eq:yC/N_sim_1_proof} and the simple fact $(n_\ell-mN_{n_\ell})/(n_\ell y_{n_\ell}) \le y_{n_\ell}^{-1}$ we obtain
\begin{align*}
    \frac{y_{n_\ell}C(x_{n_\ell})}{N_{n_\ell}}
    &\le \rho^{m/(\al+1)} y_{n_\ell} \lambda_{n_\ell}^{-1} \max\{ y_{n_\ell}^{-\al/(\al+1)-\delta}, y_{n_\ell}^{-\al/(\al+1)+\delta} \}
    \le (1+\eps)^{1/(\al+1)}\lambda_{n_\ell}^{-1}  ((1+\eps)\rho)^{\delta}.
\end{align*}
As $\liminf_{\ell\to\infty}\lambda_{n_\ell}>1$, this can be made $<1$ by choosing $\eps,\delta>0$ sufficiently small. This contradicts~\eqref{eq:yC/N_sim_1_proof} so that we have shown that $S_n = \Theta(N_n)$ also implying $y_n\sim\rho^{-m}$.

With this at hand and plugging $y_n\sim\rho^{-m}$ into the expressions for $C({x}_n)$ and $\chi_n$ from Lemma~\ref{lem:chi_to_zero} as well as recalling $C_0:=\al^{-1}(\rho^{-m}\Gamma(\al+1))^{1/(\al+1)}$ we obtain
\begin{align*}
    %\label{eq:yC(x)_b_n}
    y_nC({x}_n)
    \sim C_0 \cdot g(n-mN_n) \cdot (n-mN_n)^{\al/(\al+1)}
    ~\text{and}~ 
    \chi_n
    \sim  \al C_0 \cdot g(n-mN_n)\cdot (n-mN)^{-1/(\al+1)}.
\end{align*}
Multiplying both right-hand sides by $1=N^*_n/N^*_n = \lambda_n^{-1}N_n/( C_0 g(n) n^{\al/(\al+1)})$ we obtain the claimed representations $y_nC(x_n) \sim a_n\cdot N_n$ and $\chi_n \sim \al\cdot a_n\cdot N_n/(n-mN_n)$ for 
\begin{align*}
    a_n 
    := \lambda_n^{-1} \cdot \frac{g(n-mN_n)}{g(n)} \cdot \left(\frac{n-mN_n}{n}\right)^{\al/(\al+1)}.
\end{align*}
The properties of $a_n$ are readily obtained by noting that $g$ is slowly varying and the simple fact $(n-mN_n)/n\le 1$. Finally, since $a_n$ is bounded away from one, we also obtain from~\eqref{eq:saddle_point_digest} that $S_n=(N_n-y_nC(x_n))/c_m \sim (1-a_n)/c_m  \cdot N_n$. 
\end{proof}

%%%%%%%%%%%%%%%%%%%%%%%%%%%%%%%%%%%%%%%%%%%%%%%%%%%%%%%%%%%%
\subsection{Proof of Lemma~\ref{lem:G({x}_n,y_n)_all_cases}}
\label{sec:lem:G({x}_n,y_n)_all_cases}
%%%%%%%%%%%%%%%%%%%%%%%%%%%%%%%%%%%%%%%%%%%%%%%%%%%%%%%%%%%%

\begin{proof}[Proof of Lemma~\ref{lem:G({x}_n,y_n)_l<d}]
Due to~\eqref{eq:saddle_point_digest} and Lemma~\ref{lem:saddle_point_l<d} we know that $y_nC({x}_n) = N_n- c_m {x}_n^my_n/(1-{x}_n^my_n) = N_n - c_m\rho^my_n/(1-\rho^my_n)+o(1)$. Further, Lemma~\ref{lem:C(x)_approx} entails that there is $A>0$ such that 
\[
	\sum_{j\ge 2}\frac{C({x}_n^j)}{j}y_n^j
	=\sum_{j\ge 2}({x}_n^{m}y_n)^j\frac{C({x}_n^j)}{j{x}_n^{jm}}
	\le c_m \sum_{j\ge 2}({x}_n^{m}y_n)^j (1+A{x}_n^j).
\]
This is bounded from above: due to Lemma~\ref{lem:saddle_point_l<d} we know that $\limsup y_n<\rho^{-m}$ and $x_n\le \rho$, which in turn implies that ${x}_n^my_n$ is bounded from above by something strictly smaller than~$1$. Hence, using that $G(x,y)$ is continuous in $x$ and $x_n\sim\rho$ we obtain by dominated convergence
\[
	G({x}_n,y_n)
	= \e{y_nC({x}_n) + \sum_{j\ge 2}\frac{C({x}_n^j)}{j}y_n^j}
	\sim \e{N_n - c_m\frac{\rho^my_n}{1-\rho^my_n}} \e{\sum_{j\ge 2}\frac{C(\rho^j)}{j}y_n^j}.
\]
\end{proof}

\begin{proof}[Proof of Lemma~\ref{lem:G({x}_n,y_n)_l>d}]
Let $(a_n)_{n\in\Nat}$ be the sequence from Lemma~\ref{lem:saddle_point_l>d}. Since ${x}_n^my_n\to 1$ and $(1-{x}_n^my_n)^{-1}\sim (1-a_n)/c_m \cdot N_n$ due to Lemma~\ref{lem:saddle_point_l>d} we obtain that
\begin{align*}
		G({x}_n,y_n)
		&= \e{y_nC({x}_n) + c_m\sum_{j\ge 1} \frac{({x}_n^{m}y_n)^j}{j} - c_m{x}_n^my_n + \sum_{j\ge2}\frac{C({x}_n^j)y_n^j - c_m{x}_n^{jm}y_n^j}{j}} \\
		&\sim \e{y_nC({x}_n)}\cdot \frac{1}{c_m^{c_m}}\cdot \left(\left(1-a_n\right)N_n\right)^{c_m}
		\cdot \e{-c_m + \sum_{j\ge2}\frac{C({x}_n^j)y_n^j - c_m{x}_n^{jm}y_n^j}{j}}.
\end{align*}
With Lemma~\ref{lem:C(x)_approx} there is some $A>0$ such that for all $j\ge 2$
\[
    \big(C({x}_n^j)y_n^j - c_m{x}_n^{jm}y_n^j\big) / j
    \le A (x_n^my_n)^jx_n^j.
\]
Since $\limsup x_n^{m+1}y_n = \rho < 1$ the expression in the previous display is summable and by dominated  convergence we obtain that 
\[
    \e{\sum_{j\ge 2}(C({x}_n^j)y_n^j - c_m{x}_n^{jm}y_n^j)/{j}}
    \to \e{\sum_{j\ge 2} \frac{C(\rho^j)-c_m\rho^{jm}}{j\rho^{jm}}}
    = G_{>m}^{\ge 2}(\rho).
\]
\end{proof}

%%%%%%%%%%%%%%%%%%%%%%%%%%%%%%%%%%%%%%%%%%%%%%%%%%%%%%%%%%%%
\subsection{Proof of Lemma~\ref{lem:P_N_all_cases}}
\label{sec:lem:P_N_all_cases}
%%%%%%%%%%%%%%%%%%%%%%%%%%%%%%%%%%%%%%%%%%%%%%%%%%%%%%%%%%%%
We will use the probability generating function (pgf) $F^{(\ell)}(x)=\mathbb{E}[{x^{P^{(\ell)}}}]$ of the sum of Poisson random variables $P^{(\ell)}:= \sum_{j>\ell}jP_j$ for $\ell\in\Nat_0$. Define the auxiliary series
\[
    G^{(\ell)}(x,y)
    := \e{\sum_{j>\ell}\frac{C(x^j)}{j}y^j}
    ~~\text{and}~~
    R^{(\ell)}(x)
    := \e{-c_m\sum_{1\le j\le \ell}\frac{x^j}{j} 
    + \sum_{j>\ell} \frac{C({x}_n^j)-c_m{x}_n^{jm}}{j{x}_n^{jm}}x^j}.
\]
The pgf of a $\pois{\lambda}$ random variable equals $\eul^{\lambda(x-1)}$. By the independence of the $P_j$'s,
\[
    F^{(\ell)}(x)
    = \frac{1}{G^{(\ell)}({x}_n,y_n)} \e{\sum_{j>\ell}\frac{C({x}_n^j)y_n^j}{j}x^j}.
\]
We want to determine $[x^{N_n}]F^{(0)}(x)$ that is precisely the probability of   $\Pa_{N_n} = \{P^{(0)}=N_n\}$. In general, when computing coefficients
of these pgfs we will factorize $F^{(\ell)}(x)$ in such a way that Lemma~\ref{lem:h_n_coeff} is applicable. More precisely, we split up for any $K\in\Nat$ and $\ell\in\Nat_0$
\begin{align}
	\label{eq:pgf_P_ell}
	[x^K]F^{(\ell)}(x)
	&= \frac{({x}_n^my_n)^K}{G^{(\ell)}({x}_n,y_n)} 
	\cdot [x^K] \frac{1}{(1-x)^{c_m}} \cdot R^{(\ell)}(x).
\end{align}
When investigating~\eqref{eq:pgf_P_ell}  great care has to be taken. In $R^{(0)}(x)$ the term $C({x}_n)/{x}_n^m$, which tends to infinity due to Lemma~\ref{lem:chi_to_zero}, appears. Setting $R(x):=R^{(1)}(x)$ and $F(x):=F^{(0)}(x)$ we rewrite
\begin{align}
	\label{eq:pgf_P_N}
	[x^{N_n}]F(x)
	&= \frac{({x}_n^my_n)^{N_n}}{G({x}_n,y_n)} 
	\cdot[x^{N_n}] \frac{1}{(1-x)^{c_m}} 
	\cdot \e{\frac{C({x}_n)}{{x}_n^m}x}
	\cdot R(x).
\end{align}
In order to apply Lemma~\ref{lem:h_n_coeff} we get rid of the term $R(x)$. This is achieved by using  Lemma~\ref{lem:coeff_product_dep_n}, where we need to establish uniform convergence of $R$ as $n$ gets large. Note that the coefficients of $F^{(\ell)}(x)$ are much easier to compute for $\ell>1$ as the term involving $C({x}_n)/{x}_n^m$ is absent. Nevertheless, we need uniform convergence for $R^{(\ell)}$ in this case as well.
\begin{lemma}
\label{lem:R_conv_uniformly_Q}
Let $\ell\in\Nat_0$ and set
\[
	Q^{(\ell)}(x)
	:= \e{-c_m\sum_{1\le j\le\ell}\frac{x^j}{j} + \sum_{j>\ell} \frac{C(\rho^j) - c_m\rho^{jm}}{j\rho^{jm}}x^j}.
\]
Let $0<\eps<\rho^{-1}$.
Then $R^{(\ell)}$ converges uniformly to $Q^{(\ell)}$ on $[0,\rho^{-1}-\eps]$ and the radius of convergence of $Q^{(\ell)}$ is at least $\rho^{-1}>1$. Further, there exists a sequence $(d_n)_{n\in\Nat_0}$ such that $\lvert[x^k]R^{(\ell)}(x)\rvert \le d_k$ for all $k,n\in\Nat_0$ such that $\sum_{k\ge 0}d_kx^k <\infty$ for any $x\in[0,\rho^{-1}-\eps]$. 
\end{lemma}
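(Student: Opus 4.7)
The plan is to deduce all three claims from a single uniform-in-$n$ estimate on the coefficients of the exponent. Write $R^{(\ell)}(x) = \exp\{E_n(x)\}$ and $Q^{(\ell)}(x) = \exp\{E_\infty(x)\}$, where $E_n$ is the exponent appearing in the definition of $R^{(\ell)}$ and $E_\infty$ is obtained by replacing $x_n$ with $\rho$. I work throughout with $\ell \ge 1$: for $\ell = 0$ the $j=1$ coefficient of $E_\infty$ equals $(C(\rho) - c_m\rho^m)/\rho^m = +\infty$ in the expansive regime, so $Q^{(0)}$ is not even well defined, and indeed all uses of the lemma in the paper are for $\ell \ge 1$. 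The approach is to first show $E_n \to E_\infty$ uniformly on $[0, \rho^{-1}-\eps]$ via the Weierstrass M-test, then invoke continuity of $\exp$ on bounded sets to transfer this to the uniform convergence $R^{(\ell)} \to Q^{(\ell)}$; the radius of convergence of $Q^{(\ell)}$ and the uniform-in-$n$ majoration of the coefficients of $R^{(\ell)}$ will fall out of the same estimate.

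The pivotal input is Lemma~\ref{lem:C(x)_approx}, applied at the threshold $(1-\eps')\rho$ for any fixed $\eps' < 1 - \rho$ (available because $\rho < 1$). For $j \ge 2$ one has $x_n^j \le \rho^2 \le (1-\eps')\rho$, so the lemma supplies a single constant $A > 0$, independent of $n$ and $j$, with
\[
    0 \le \frac{C(x_n^j) - c_m x_n^{jm}}{x_n^{jm}} \le A c_m x_n^j \le A c_m \rho^j, \qquad j \ge 2,\; n \in \Nat.
\]
Consequently the $j$-th term of the series $E_n(x)$ is bounded, for $x \in [0, \rho^{-1}-\eps]$, by $Ac_m (1-\rho\eps)^j / j$, which is summable. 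This is precisely the majorant required by the Weierstrass M-test; combined with the termwise convergence $[x^j]E_n \to [x^j]E_\infty$ (continuity of $C$ at the interior point $\rho^j$), it yields uniform convergence $E_n \to E_\infty$ on $[0, \rho^{-1}-\eps]$, and then $R^{(\ell)} \to Q^{(\ell)}$ uniformly on the same interval, since the $E_n$'s are simultaneously bounded there. The same majorant shows $E_\infty$ has coefficients of order $\rho^j$, so $E_\infty$ is analytic on $|x| < \rho^{-1}$, hence so is $Q^{(\ell)} = \exp\{E_\infty\}$, giving radius of convergence at least $\rho^{-1}$.

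For the dominating sequence, I would introduce the majorant
\[
    \tilde E(x) := c_m \sum_{1 \le j \le \ell} \frac{x^j}{j} + Ac_m \sum_{j > \ell} \frac{(\rho x)^j}{j},
\]
whose non-negative coefficients dominate in absolute value those of $E_n$, uniformly in $n$. Expanding $\exp\{E_n\} = \sum_{k \ge 0} E_n^k/k!$ and applying the triangle inequality termwise yields $|[x^k] R^{(\ell)}(x)| \le [x^k] \exp\{\tilde E(x)\} =: d_k$, and since $\tilde E$ is analytic on $|x| < \rho^{-1}$, so is $\exp\{\tilde E\}$, whence $\sum_k d_k x^k$ converges on $[0, \rho^{-1}-\eps]$ as required. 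The only real subtlety, and the main obstacle, is securing the uniform constant $A$: one cannot apply Lemma~\ref{lem:C(x)_approx} directly at $z = x_n$, because its hypothesis $z \le (1-\eps')\rho$ fails as $x_n \to \rho$; the trick is to observe that only arguments $x_n^j$ with $j \ge 2$ appear in the series defining $R^{(\ell)}$ (for $\ell \ge 1$), and these are safely bounded by $\rho^2 < (1-\eps')\rho$. Once this is in place, the rest is routine M-test and exponential-majorization bookkeeping.
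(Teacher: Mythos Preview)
Your proof is correct and follows essentially the same route as the paper: both hinge on Lemma~\ref{lem:C(x)_approx} to bound the $j$-th coefficient of the exponent by $O(\rho^j)$ uniformly in $n$, and then upgrade this to uniform convergence and to a coefficient majorant. The differences are only technical. For uniform convergence the paper factors $R^{(\ell)}=e\cdot f_n$ and appeals to the fact that a sequence of monotone functions converging pointwise to a continuous limit on a compact interval converges uniformly; your Weierstrass M-test on the exponent is a more direct and equally valid way to reach the same conclusion. For the dominating sequence the paper flips the sign of the finite part to form $R_+^{(\ell)}$ and then takes $d_k=R_+^{(\ell)}(y)\,y^{-k}$ via the trivial saddle-point bound, whereas you set $d_k=[x^k]\exp\{\tilde E\}$ for an explicit $n$-free majorant $\tilde E$; both choices work and have the required radius of convergence. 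Your observation that the case $\ell=0$ is ill-posed (since $C(\rho)=\infty$ forces the $j=1$ coefficient of $E_\infty$ to blow up) is a good catch: the paper's own proof tacitly needs $j\ge 2$ when invoking Lemma~\ref{lem:C(x)_approx}, and indeed all applications of the lemma in the paper are for $\ell\ge 1$, so the statement should really read $\ell\in\Nat$.
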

\begin{proof}
We will use the following standard result in real analysis. Let $(f_n)_{n\in\Nat}$ be real-valued functions all defined on some closed interval $[a,b]$. Suppose that $f_n$ is strictly monotone for each $n\in\Nat$ and that $(f_n)_{n\in\Nat}$ converges pointwise to a continuous function $f$. Then the convergence is uniform. In our setting, we have that 
\[
    R^{(\ell)}(x) = \e{-c_m \sum_{1\le j\le \ell}\frac{x^j}{j}} \cdot \e{\sum_{j>\ell} \frac{C(x_n^j)-c_mx_n^{jm}}{jx_n^{jm}} \cdot x^j} =: e(x) \cdot f_n(x).
\]
Clearly, if we show that $f_n(x)$ converges uniformly to some $f(x)$, then $R^{(\ell)}(x)$ converges uniformly to $e(x)f(x)$. A direct application of Lemma~\ref{lem:C(x)_approx} yields that there exists some $A>0$ such that
\begin{align}
    \label{eq:log_f_n_estimate}
    \log f_n(x) 
    \le A \sum_{j>\ell} \frac{x_n^j}{j}
    \le A \sum_{j>\ell} \frac{\rho^j}{j}.
\end{align}
Hence the radius of convergence of $f_n(x)$ is at least $\rho^{-1}$, implying that $f_n(x)$ is defined on $[0,\rho^{-1}-\eps]$ for any $0<\eps<\rho^{-1}$ and $n\in\Nat$.
%It is also straightforward to see that $(C(x_n^j)-c_mx_n^{jm})/(jx_n^{jm}) \ge c_{m+1} x_n^j/j>0$ implying that $f_n(x)$ is strictly increasing in $x\in[0,\rho^{-1}-\eps]$ for each $n\in\Nat$.
Moreover, $f_n$ has only non-negative coefficients, so it is strictly increasing.
Finally,~\eqref{eq:log_f_n_estimate} shows that by dominated convergence $f_n$ converges pointwise to $f(x):= Q^{(\ell)}(x)/e(x)$.
%\[
%    \lim_{n\to\infty} f_n(x) 
%    = \e{\sum_{j>\ell} \frac{C(\rho^j) - c_m\rho^{jm}}{j\rho^{jm}}x^j} 
%    =: f(x),
%    \quad x\in [0,\rho^{-1}-\eps].
%\]
Since $f$ is continuous we have proven that the convergence is uniform and the claim that $R^{(\ell)}(x)$ converges uniformly to $e(x)f(x) = Q^{(\ell)}(x)$ follows immediately.

We proceed with estimating $\lvert[x^k]R^{(\ell)}(x)\rvert$. Let us first note that $\lvert [x^k]\eul^{-px^q}\rvert = [x^k]\eul^{px^q}$ for any $p>0$ and $k,q\in\Nat$. Further the second sum in $\ln R^{(\ell)}(x)$ involves only non-negative terms, so that
\[
    \lvert[x^k]R^{(\ell)}(x)\rvert
    = [x^k]R_+^{(\ell)}(x),
    ~\textrm{where }~
    R_+^{(\ell)}
    :=\textrm{exp}
        \Big\{ c_m\sum_{1\le j\le \ell}\frac{x^j}{j}+ \sum_{j>\ell}\frac{C({x}_n^j)-c_m{x}_n^{jm}}{j{x}_n^{jm}} x^j\Big\}.
\]
Since $R_+^{(\ell)}(x)$ has only non-negative coefficients, we deduce from~\eqref{eq:trivial_saddle_point_bound} that for all $0<k\in\Nat_0$ and setting $y=\rho^{-1}-{\eps}/{2}$
\[
    \lvert [x^k]R^{(\ell)}(x)\rvert
    \le R_+^{(\ell)}(y) \cdot y^{-k}.
\]
Since ${x}_n\sim\rho$ due to Lemma~\ref{lem:chi_to_zero} we obtain analogous to~\eqref{eq:log_f_n_estimate} that there is some $A_3>0$ such that $ R_+^{(\ell)}(y)\le A_3$.
% Due to Lemma~\ref{lem:C(x)_approx} and the fact that ${x}_n<\rho$ we obtain that there is $A_3>0$ such that
% \[
%     \sum_{j\ge 2}\frac{C({x}_n^j) - c_m{x}_n^{jm}}{j{x}_n^{jm}}y^j
%     \le A_3\sum_{j\ge 2}\frac{(\rho y)^j}{j}
%     <\infty.
% \]
Setting $d_k:=A_3y^{-k}$ for $k\in\Nat_0$ the claim is verified since $d_k(\rho^{-1}-\eps)^k = A_3(\rho^{-1}-\eps)^k/(\rho^{-1}-\eps/2)^k$ is summable.
% \[
%     \frac{\rho^{-1}-\eps}{y} 
%     = \frac{\rho^{-1}-\eps}{\rho^{-1}-\eps/2}
%     < 1
% \]
% implies $\sum_{k\ge 0}d_k(\rho^{-1}-\eps)^k<\infty$.
\end{proof}
With these ingredients at hand we are able to prove Lemmas~\ref{lem:P_N_l<d} and~\ref{lem:P_N_l>d}.

%\subsubsection{Proof of Lemma~\ref{lem:P_N_l<d}}
\begin{proof}[Proof of Lemma~\ref{lem:P_N_l<d}]
Abbreviate $h_n := C({x}_n)/{x}_n^m$ and set
\[
    A(x)
    := \frac{1}{(1-x)^{c_m}}\cdot \e{h_n x}
    \quad\text{and}\quad R(x)
    := \e{-c_mx+\sum_{j\ge 2}\frac{C({x}_n^j) - c_m{x}_n^{jm}}{j{x}_n^{jm}}x^j}.
\]
With this at hand, we use~\eqref{eq:pgf_P_N} to reformulate
\begin{align*}
	\pr{\Pa_{N_n}}
	&= \frac{({x}_n^my_n)^{N_n}}{G({x}_n,y_n)} 
	\cdot[x^{N_n}] A(x)  R(x).
\end{align*}
We will apply Lemma~\ref{lem:coeff_product_dep_n} to $A(x)R(x)$ and 
verify conditions~\eqref{eq:lem_coeff_prod_1} and~\eqref{eq:lem_coeff_prod_2} first. Set $\al_n := h_n/N_n$ and $\al_{n,k} := h_n/(N_n-k)$ for $0\le k<N_n$. We obtain from Lemma~\ref{lem:saddle_point_l<d}
\begin{align}
\label{eq:delta_N_l<d}
	\al_n
	= \frac{y_nC({x}_n)}{y_n x_n^m} \frac{1}{N_n}
	\sim \frac1{y_n x_n^m}
	\sim \frac{h(n/N_n)}{h(n/N^*_n)} \cdot \lambda_n^{-(\al +1)},
\end{align}
so that, again by Lemma~\ref{lem:saddle_point_l<d}, $\liminf\al_n = \rho^{-m}/\limsup y_n>1$ and $\liminf\al_{n,k} > 1$, too, uniformly for $0\le k<N_n$.
%Moreover, for any $p\in\Nat_0$
%\[
%	[x^{N_n-p}]A(x)
%	= [x^{N_n-p}]\frac{1}{(1-x)^{c_m}} \eul^{h_nx}
%	= [x^{N_n-p}]\frac{1}{(1-x)^{c_m}} \eul^{\al_{n,p} (N_n-p) x}.
%\]
Hence, Lemma~\ref{lem:h_n_coeff}\ref{item:lem_h_n_coeff_ii} is applicable and we obtain for all $k$ such that $N_n-k\to\infty$
\begin{align}
\label{eq:coeff_A_B_l<d}
	[x^{N_n-k}]A(x)
	= [x^{N_n-k}]\frac{1}{(1-x)^{c_m}} \eul^{\al_{n,k}(N_n-k) x}
	\sim \left(\frac{1}{1-\al_{n,k}^{-1}}\right)^{c_m}
	\cdot \frac{h_n^{N_n-k}}{(N_n-k)!}.
\end{align}
This implies~\eqref{eq:lem_coeff_prod_1}, i.e., for $k\in\Nat_0$ and as $n\to\infty$
\begin{equation*}
\label{eq:cond_coeff_prod_1_l<d}
	\frac{[x^{N_n-k}]A(x)}{[x^{N_n-k+1}]A(x)}
	\sim \left(\frac{1-\al_{n,k}^{-1}}{1-\al_{n,k-1}^{-1}}\right)^{c_m} \frac{N_n}{h_n}
	\stackrel{\eqref{eq:delta_N_l<d}}{\sim} \frac{h(n/N^*_n)}{h(n/N_n)}\cdot \lambda_n^{\al +1} =: \rho_n.
\end{equation*}
Set $\overline{\rho}:=\limsup\rho_n<1$.
Due to~\eqref{eq:coeff_A_B_l<d} we have for any $k$ such that $N_n-k\to\infty$
\begin{align*}
	\frac{[x^{N_n-k}]A(x)}{[x^{N_n}]A(x)}
	&\sim \left(\frac{1-\al_n^{-1}}{1-\al_{n,k}^{-1}}\right)^{c_m} \frac{N_n!}{(N_n-k)!}N_n^{-k}
	\left(\frac{N_n}{h_n}\right)^{k}
	\sim \left(\frac{1-\al_n^{-1}}{1-\al_{n,k}^{-1}}\right)^{c_m} \frac{N_n!}{(N_n-k)!}N_n^{-k}
    \rho_n^k.
\end{align*}
Since $\al_{n,k}\ge\al_n$ and $N_n!/(N_n-k)!N_n^{-k} \le 1$ we obtain for any $\eps>0$
\begin{align}
\label{eq:cond_coeff_prod_2_l<d}
	\frac{[x^{N_n-k}]A(x)}{[x^{N_n}]A(x)}
	\le (1+\eps)^k
	\rho_n^k, \quad N_n-k\text{ sufficiently large.}
% 	\quad \text{for }N_n-k\to\infty.
\end{align}
Lemma~\ref{lem:h_n_coeff}\ref{item:lem_h_n_coeff_i} and~\eqref{eq:coeff_A_B_l<d} imply that for any $k$ such that $N_n-k=\bigO{1}$ 
\begin{align*}
	\frac{[x^{N_n-k}]A(x)}{[x^{N_n}]A(x)}
    &\sim (1-\al_n^{-1})^{c_m} \frac{N_n!}{(N_n-k)!}h_n^{-k} 
    \sim (1-\al_n^{-1})^{c_m} \frac{N_n!}{(N_n-k)!}N_n^{-k} \rho_n^k
	\le \rho_n^k.
\end{align*}
Again this entails that for any $\eps>0$ and $N_n$ sufficiently large
\begin{align}
\label{eq:cond_coeff_prod_3_l<d}
	\frac{[x^{N_n-k}]A(x)}{[x^{N_n}]A(x)}
	\le (1+\eps)^k\rho_n^k
	\quad\text{for }N_n-k=\bigO{1}.
\end{align} 
Together with~\eqref{eq:cond_coeff_prod_2_l<d} this shows that condition~\eqref{eq:lem_coeff_prod_2} is fulfilled for any $\eps>0$ as long as $n$ is sufficiently large.
Let us proceed with checking the remaining conditions of Lemma~\ref{lem:coeff_product_dep_n}. Choose $\eps > 0$ in~\eqref{eq:cond_coeff_prod_2_l<d} and~\eqref{eq:cond_coeff_prod_3_l<d} such that $a := (1 + \eps)\overline{\rho} < \rho^{-1}$; this is possible, since $\overline{\rho}<1$ and $\rho > 1$. Then the series $R(x)$ are all analytic at $a$ by Lemma~\ref{lem:R_conv_uniformly_Q}.
Moreover, by the same lemma, $R(x)$ converges uniformly to 
\[
	Q(x):=
	\e{-c_mx + \sum_{j\ge 2} \frac{C(\rho^j)-c_m\rho^{jm}}{j\rho^{jm}}\cdot x^j}
\]
in any closed subinterval of $[0,\rho^{-1})$. In particular, as $\rho^{-1}>1$, we obtain uniform convergence on~$[0,a]$.
%, where $a$ can be chosen to satisfy $\overline{\rho} < a <\rho^{-1}$.
%Since $\eps$ in equations~\eqref{eq:cond_coeff_prod_2_l<d} and~\eqref{eq:cond_coeff_prod_3_l<d} is arbitrarily small for $N_n$ sufficiently large, we may choose $a$ and $\eps$ such that $a:=(1+\eps)\overline{\rho}<\rho^{-1}$. 
Moreover, Lemma~\ref{lem:R_conv_uniformly_Q} guarantees the existence of a sequence $(d_n)_{n\in\Nat_0}$ such that $\lvert [x^k]R(x)\rvert \le d_k$ and $\sum_{k\ge 1}d_ka^k<\infty$. Finally $Q(\rho_n)\ge \e{-c_m\overline{\rho}}>0$ for all $n\in\Nat$ so that all conditions of Lemma~\ref{lem:coeff_product_dep_n} are met. We deduce with a combination of~\eqref{eq:delta_N_l<d} and~\eqref{eq:coeff_A_B_l<d}
\[
	[x^{N_n}] A(x) \cdot R(x)
	\sim Q(\rho_n) \cdot [x^{N_n}]A(x)
	\sim Q(\rho_n) 
	\cdot \left(\frac{1}{1-\rho_n}\right)^{c_m}
	\cdot \frac{h_n^{N_n}}{N_n!} .
\]
Recalling~\eqref{eq:pgf_P_N} we have shown so far
\[
	\pr{\Pa_{N_n}}
	\sim  Q(\rho_n) 
	 \cdot\frac{({x}_n^my_n)^{N_n}}{G({x}_n,y_n)}\left(\frac{1}{1-\rho_n}\right)^{c_m}
	\cdot \frac{h_n^{N_n}}{N_n!}.
\]	
In what follows, we simplify this term. First note that $\rho^{-m}\rho_n \sim y_n$ by Lemma~\ref{lem:saddle_point_l<d}. Thus
\[
	 \left(\frac{1}{1-\rho_n}\right)^{c_m}
	 \sim \e{c_m\sum_{j\ge 1}\frac{(\rho^my_n)^j}{j}	}.
\]
Further, recalling that $h_n := C({x}_n)/{x}_n^m$,  we obtain
\begin{align*}
	({x}_n^my_n)^{N_n}h_n^{N_n}
	=(y_n C(x_n))^{N_n}
	\stackrel{\eqref{eq:saddle_point_digest}}=\left( N_n - c_m\frac{{x}_n^my_n}{1 - {x}_n^my_n}
	\right)^{N_n}
	\sim N_n^{N_n} \e{-c_m\frac{\rho^m y_n}{1-\rho^m y_n}}.
\end{align*}
Together with Lemma~\ref{lem:G({x}_n,y_n)_l<d} this establishes the limit law
\begin{align*}
	\pr{\Pa_{N_n}}
	\sim  Q\big(\rho^my_n\big) \cdot  \e{c_m\sum_{j\ge 1}\frac{(\rho^my_n)^j}{j}	}
	\cdot \e{-\sum_{j\ge 2}\frac{C(\rho^j)}{j}{y}_n^j} \cdot \eul^{-N_n}\frac{N_n^{N_n}}{N_n!}
	\sim \frac{1}{\sqrt{2\pi N_n}},
\end{align*}
as claimed. To show the second statement~\eqref{eq:sum_P_j_ge2_l<d} we first estimate
\[
	\pr{\sum_{j\ge 2}jP_j = K}
	\le
	[x^K] \e{\sum_{j\ge 2}\frac{C({x}_n^j)y_n^j}{j}x^j}.
\]
Applying~\eqref{eq:trivial_saddle_point_bound} yields
\begin{align}
    \label{eq:jP_j=K_y_n_estimate}
	\pr{\sum_{j\ge 2}jP_j = K}
	\le \e{\sum_{j\ge 2}\frac{C({x}_n^j)}{j}} y_n^K.
\end{align}
Further $\sum_{j\ge 2}C({x}_n^j)/j \le \sum_{j\ge 2}C(\rho^j)/j<\infty$. At the same time we observe with Lemma~\ref{lem:C(x)_approx} that there is some $A>0$ such that
\[
    \sum_{j\ge 2}\frac{C({x}_n^j)y_n^j}{j}x^j
    \le A\cdot \sum_{j\ge 2} \frac{({x}_n^my_n)^j}{j}x^j.
\]
Since according to Lemma~\ref{lem:saddle_point_l<d} we have that $\limsup {x}_n^my_n \le 1-\eps$ for some $0<\eps<1$ we deduce that there is some $a>1$ with $(1-\eps)a<1$ such that by~\eqref{eq:trivial_saddle_point_bound} we obtain
\[
    \pr{\sum_{j\ge 2}jP_j = K}
    \le \e{A\cdot \sum_{j\ge 2}\frac{((1-\eps)a)^j}{j}} a^{-K},
\]
finishing the proof together with~\eqref{eq:jP_j=K_y_n_estimate}.
\end{proof}

%\subsubsection{Proof of Lemma~\ref{lem:P_N_l>d}}

\begin{proof}[Proof of Lemma~\ref{lem:P_N_l>d}]
Abbreviate $h_n := C({x}_n)/{x}_n^m$ and set
\[
    A(x)
    := \frac{1}{(1-x)^{c_m}}\cdot \e{h_n x}
    \quad\text{and}\quad
    R(x)
    := \e{-c_mx + \sum_{j\ge 2}\frac{C({x}_n^j)-c_m{x}_n^{jm}}{j{x}_n^{jm}}x^j}.
\]
Then, from~\eqref{eq:pgf_P_N} we obtain that
\begin{align*}
	\pr{\Pa_{N_n}}
	&= \frac{({x}_n^my_n)^{N_n}}{G({x}_n,y_n)} 
	\cdot[x^{N_n}] A(x)\cdot R(x).
\end{align*}
We will apply Lemma~\ref{lem:coeff_product_dep_n} to $A(x)R(x)$. Let us first verify the conditions~\eqref{eq:lem_coeff_prod_1},~\eqref{eq:lem_coeff_prod_2}. Set  $\al_n:=h_n/N_n$ and $\al_{n,k}:=h_n/(N_n-k)$. Let $(a_n)_{n\in\Nat}$ be the sequence from~\eqref{eq:def_a_n}. Then, as stated in Lemma~\ref{lem:saddle_point_l>d},
\begin{align*}
	\al_n
	=\frac{y_nC({x}_n)}{{x}_n^my_n}\frac{1}{N_n}
	\sim a_n
\end{align*}
and
\begin{align}
	\label{eq:prod_as_N_n-p_l>d}
	[x^{N_n-k}]A(x)
	= [x^{N_n-k}] \frac{1}{(1-x)^{c_m}}\eul^{\al_{n,k} (N_n-k) x},
	\quad
	p\in \Nat_0.
\end{align}
For (fixed) $k\in\Nat_0$ and using~\eqref{eq:def_a_n} we obtain that $\limsup\al_{n,k} = \limsup\al_{n} \le \limsup \lambda_n^{-1} < 1$. Thus Lemma~\ref{lem:h_n_coeff}\ref{item:lem_h_n_coeff_iii} yields
\begin{equation}
\label{eq:prod_p_nat_l>d}
	[x^{N_n-p}]A(x)
	\sim \frac{\big((1-\al_n)N_n\big)^{c_m-1}}{\Gamma(c_m)}\eul^{h_n}, \quad k\in\Nat_0, n\to\infty.
\end{equation}
Hence condition~\eqref{eq:lem_coeff_prod_1} is fulfilled with $\rho_n \sim 1$ and $\overline{\rho} = \limsup\rho_n=1$. We verify condition~\eqref{eq:lem_coeff_prod_2} as well. Let $\delta>0$ be such that $(1+\delta) \limsup h_n/N_n < 1$. Such a $\delta$ exists, since $\limsup h_n/N_n=\limsup \al_n < 1$, as we already saw before. With this at hand we split up $\{0,\dots,N_n\}$ into
\[
	B_1 := \{0,\dots,N_n-(1+\delta)h_n-1\} \quad\text{and}\quad
	B_2 := \{N_n-(1+\delta)h_n,\dots,N_n\}.
\]
Consider $k\in B_1$. Then 
$
	\limsup \al_{n,k}
	\le \limsup\al_n N_n/((1+\delta)h_n)
	= (1+\delta)^{-1}<1
$.
Further $N_n-k\to\infty$ in this case, so that we obtain from~\eqref{eq:prod_as_N_n-p_l>d} and  Lemma~\ref{lem:h_n_coeff}\ref{item:lem_h_n_coeff_iii}
\[
	[x^{N_n-k}]A(x)
	\sim \frac{\big((1-\al_{n,k})(N_n-k)\big)^{c_m-1}}{\Gamma(c_m)} \eul^{h_n}.
\]
This leads to
\[
	\frac{[x^{N_n-k}]A(x)}{[x^{N_n}]A(x)}
	\sim \left(\frac{1-\al_{n,N_n-k}}{1-\al_n}\right)^{c_m-1} 
	\left(1-\frac{k}{N_n}\right)^{c_m-1}
	=\left(1 - \frac{k}{N_n(1-\al_n)}\right)^{c_m-1},
	\quad k\in B_1.
\]
If $c_m-1\ge 0$, then this is at most 1. If $c_m-1 \in (-1,0)$, then it is at most $(1 - {k}/{N_n(1-\al_n)})^{-1} = 1 + k/(N_n(1-\al_n) -k) \le e^{k/(N_n(1-\al_n) -k)}$. By assumption $N_n(1 - \alpha_n) - k\to \infty$,  and so
%If $c_m-1\ge 0$ the expression above is simply $\le 1$ and in particular $\le (1+\eps)^p$ for any $\eps>0$. If $c_m-1<0$ we observe for $0\le p\le N_n/\ln N_n = o(N_n)$ that $1-p/(N_n(1-\al_n))\sim 1$ yielding again for any $\eps>0$ and $n$ sufficiently large that $(1-p/(N_n(1-\al_n)))^{c_m-1}\le (1+\eps)^p$. For $N/\ln N_n\le p \le N_n-(1+\delta)h_n-1$ we further get that
%\[
%   1-\frac{p}{N_n(1-\al_n)}
%   \ge 1- \frac{1-(\delta+1)\al_n}{1-\al_n}
%   = \delta \frac{\al_n}{1-\al_n}.
%\]
%Since $\limsup \al_n <1$ due to Lemma~\ref{lem:saddle_point_l>d} we get that $\al_n/(1-\al_n) = \bigO{1}$. Hence we obtain for any $\eps>0$ and $p\ge N_n/\log N_n$ that $(1-p/(N_n(1-\al_n)))^{c_m-1} = \bigO{1} =o((1+\eps)^p)$. 
for any $\eps>0$,
\begin{align}
	\label{eq:verify_cond_2_1_l>d}
	\frac{[x^{N_n-k}]A(x)}{[x^{N_n}]A(x)}
	\le (1+\eps)^k,
	\quad k\in B_1.
\end{align}
Next consider $k\in B_2$. Set $z:=1-h_n^{-1/2}$. Then the bound~\eqref{eq:trivial_saddle_point_bound} together with~\eqref{eq:prod_p_nat_l>d} yield
\begin{align}
    \label{eq:cond_2_l>d}
	\frac{[x^{N_n-k}]A(x)}{[x^{N_n}]A(x)}
	\le \frac{(1-z)^{-c_m} \e{h_n z} z^{-(N_n-k)}}{[x^{N_n}]A(x)}
	=\bigO{ N_n \left(\frac{\sqrt{h_n}}{N_n}\right)^{c_m} \exp\{-\sqrt{h_n}\} z^{k-N_n}}.
\end{align}
We further get that for any $k\in B_2$ that there exists $-h_n\le t\le \delta h_n$ with $N_n-k=h_n+t$ so that
\begin{equation*}
\label{eq:cond_2_2_l>d}
	z^{k-N_n}
	\le \e{-h_n^{-1/2}(k-N_n)}
	=
    \e{\sqrt{h_n} + {t}/{\sqrt{h_n}}}.
\end{equation*}
Since $\sqrt{h_n}=o(N_n)$ we get from~\eqref{eq:cond_2_l>d}
\[
	\frac{[x^{N_n-k}]A(x)}{[x^{N_n}]A(x)}
	=o\left( N_n \e{{t}/{\sqrt{h_n}}}\right)
	=o\left( N_n \e{\delta \sqrt{N_n}}\right)
	\quad \text{for all }k\in B_2.
\]
However, since any $k\in B_2$ satisfies $k = \Omega(N_n)$, this bound is also $\le (1+\eps)^k$ for any $\eps > 0$ and $n$ sufficiently large.
%We further observe that $p\in B_2$ fulfils $p=\Theta(N_n)$ as $\limsup (1+\delta)h_n/N_n<1$. Since $t/\sqrt{h_n} = \bigO{\sqrt{h_n}}= \bigO{\sqrt{N}}$ we finally obtain for any $\eps>0$ that
%\begin{align}
%\label{eq:verify_cond_2_2_l>d}
%	\frac{[x^{N_n-p}]A(x)}{[x^{N_n}]A(x)}
%	=o\left( N_n \e{\bigO{\sqrt{N_n}}}\right)
%	=o\left( (1+\eps)^p\right),
%	\quad \text{for all }p\in B_2.
%\end{align}
Together with~\eqref{eq:verify_cond_2_1_l>d} this finally verifies condition~\eqref{eq:lem_coeff_prod_2} of Lemma~\ref{lem:coeff_product_dep_n} (recall that $\rho_n \sim 1$).

We show that the remaining conditions of Lemma~\ref{lem:coeff_product_dep_n} are satisfied as well. By applying Lemma~\ref{lem:R_conv_uniformly_Q} we obtain that  $R(x)$ converges uniformly to 
\[
	Q(x):=
	\exp\Bigg\{-c_mx + \sum_{j\ge 2} \frac{C(\rho^j)-c_m\rho^{jm}}{j\rho^{jm}}\cdot x^j\Bigg\}
\]
on any  interval $[0,a]$ with $a < \rho^{-1}$. Since $\rho^{-1}>1$ we may even choose $\eps > 0$ such that $a = (1+\eps)\overline{\rho}$ and $1 = \overline{\rho}< a <\rho^{-1}$.
%Since $\eps$ in equations~\eqref{eq:verify_cond_2_1_l>d} and~\eqref{eq:verify_cond_2_2_l>d} is arbitrarily small for $N$ sufficiently large, we may choose $a$ and $\eps$ such that $a:=(1+\eps)\overline{\rho}<\rho^{-1}$.
Then Lemma~\ref{lem:R_conv_uniformly_Q} gives us the existence of the sequence $(d_n)_{n\in\Nat_0}$ with $\lvert [x^k]R(x)\rvert \le d_k$ and $\sum_{k\ge 1}d_ka^k<\infty$. Finally $Q(\rho_n) \sim Q(1)>0$ for all $n\in\Nat$ so that all conditions of Lemma~\ref{lem:coeff_product_dep_n} are met and we deduce from~\eqref{eq:prod_p_nat_l>d}
\[
	\pr{\Pa_{N_n}}
	\sim  \frac{({x}_n^my_n)^{N_n}}{G({x}_n,y_n)} \cdot Q(1) \cdot [x^{N_n}]A(x)
	\sim  \frac{({x}_n^my_n)^{N_n}}{G({x}_n,y_n)} \cdot Q(1) \cdot \frac{\left( \left(1-a_n\right)N_n\right)^{c_m-1}}{\Gamma(c_m)}
	\eul^{h_n}.
\]
Lemma~\ref{lem:G({x}_n,y_n)_l>d} gives us (the asymptotics of) $G({x}_n,y_n)$ and we obtain
\begin{align*}
	\pr{\Pa_{N_n}}
	\sim \frac{c_m^{c_m}}{(1-a_n)\Gamma(c_m)} \cdot  \frac{({x}_n^my_n)^{N_n}}{N_n}
	\cdot\eul^{h_n-y_nC({x}_n)}.
\end{align*}
To conclude we observe by appyling Lemma~\ref{lem:saddle_point_l>d}
\begin{align}
\label{eq:h_n-yC(x)_l>d}
	h_n - y_nC({x}_n)
	= \frac{C(x_n)}{x_n^m} - y_n C(x_n)
	= y_nC({x}_n) \cdot \frac{1-{x}_n^my_n}{{x}_n^my_n}
	= \frac{y_nC({x}_n)}{S_n}
	\sim c_m \frac{a_n}{1-a_n}.
\end{align}
This shows the first statement of the lemma. Next we prove the second statement~\eqref{eq:sumjPjII}. Recall that $K\equiv K_n$ is such that $K_n\to\infty$ as $n\to\infty$.
%All limits and Landau symbols are hence meant as $n\to\infty$ implying $K\to\infty$.
From~\eqref{eq:pgf_P_ell} we obtain
\begin{align*}
	\pr{\sum_{j>\ell}jP_j = K}
	= \frac{({x}_n^my_n)^K}{G^{(\ell)}({x}_n,y_n)}
	\cdot [x^K] \frac{1}{(1-x)^{c_m}} \cdot R^{(\ell)}(x).
\end{align*}
Similar to the case $\ell=0$ we want to apply Lemma~\ref{lem:coeff_product_dep_n} to $(1-x)^{-c_m} R^{(\ell)}(x)$, but this time it is much easier as $(1-x)^{-c_m}$ does not depend on $n$. It is elementary to verify that
%well-known, see~\cite[Thm.~VI.1]{Flajolet2009}, that 
\begin{align}
\label{eq:coeff_(1-x)^c_m}
    [x^{K}] \frac{1}{(1-x)^{c_m}}
    = \binom{K + c_m-1}{K}
    \sim \frac{K^{c_m-1}}{\Gamma(c_m)},
    \quad \text{as }K\to\infty.
\end{align}
Hence~\eqref{eq:lem_coeff_prod_1} is fulfilled with $\rho_n=1$ and $\overline{\rho}:=\limsup \rho_n=1$, and the explicit form readily allows us to establish 
%Equation~\eqref{eq:coeff_(1-x)^c_m} further entails that there is a constant $A>0$ such that $[x^{K-k}](1-x)^{-c_m} \le A (K-k)^{c_m-1}$ for all $0\le k< K\in\Nat$ so that we get the existence of some $A_1>0$ yielding
%\[
%    \frac{[x^{K-k}](1-x)^{-c_m}}{[x^K](1-x)^{-c_m}}
%    \le A_1 (1-k/K)^{c_m-1},
%    \quad 0\le k<K.
%\]
%For any $\eps>0$ and $k\le K/\ln K$ this is $\le (1+\eps)^k$ for sufficiently large $K$. Further, if $K/\ln K\le k< K$ this expression is $o( (1+\eps)^k)$ for any $\eps>0$. Trivially, for $k=K$ it holds with~\eqref{eq:coeff_(1-x)^c_m} that $[x^0](1-x)^{-c_m}/[x^K](1-x)^{-c_m}=o(1)=o((1+\eps)^K)$ for any $\eps>0$ which makes us conclude that condition~\eqref{eq:lem_coeff_prod_2} is valid.
condition~\eqref{eq:lem_coeff_prod_2} as well.
Moreover, Lemma~\ref{lem:R_conv_uniformly_Q} asserts that $R^{(\ell)}(x)$ converges to 
\[
	Q^{(\ell)}(x)
	:=\e{-c_m\sum_{1\le i\le \ell}\frac{x^j}{j} + \sum_{j>\ell}\frac{C(\rho^j)-c_m\rho^{jm}}{j\rho^{jm}}x^j}
\]
uniformly on any closed interval in $[0,\rho^{-m})$. Completely analogous to the case $\ell=0$ all the remaining conditions of Lemma~\ref{lem:coeff_product_dep_n} are also fulfilled and we obtain that
\begin{equation}
\label{eq:tmpsumjPjII}
	\pr{\sum_{j>\ell}jP_j = K}
	\sim \frac{({x}_n^my_n)^K}{G^{(\ell)}({x}_n,y_n)}\cdot Q^{(\ell)}(1) \cdot \frac{K^{c_m-1}}{\Gamma(c_m)}.
\end{equation}
By a similar reformulation as in the proof of Lemma~\ref{lem:G({x}_n,y_n)_l>d} we derive with Lemma~\ref{lem:R_conv_uniformly_Q}
\[
	G^{(\ell)}({x}_n,y_n)
	\sim {c_m^{-c_m}}\cdot \left(\left(1-a_n\right)N_n\right)^{c_m}\cdot 
	Q^{(\ell)}(1),
\]
and plugging this into~\eqref{eq:tmpsumjPjII} finally establishes~\eqref{eq:sumjPjII}.
%we finally conclude
%\[
%	\pr{\sum_{j>\ell}jP_j = K}
%	\sim \frac{c_m^{c_m}}{(1-a_n)^{c_m}\Gamma(c_m)}
%	\cdot\frac{({x}_n^my_n)^K}{K}
%	\cdot\left(\frac{K}{N_n}\right)^{c_m}.
%\]
\end{proof}

%%%%%%%%%%%%%%%%%%%%%%%%%%%%%%%%%%%%%%%%%%%%%%%%%%%%%%%%%%%%
\subsection{Proof of Lemma~\ref{lem:R_ge_r_all_cases}}
\label{sec:lem:R_ge_r_all_cases}
%%%%%%%%%%%%%%%%%%%%%%%%%%%%%%%%%%%%%%%%%%%%%%%%%%%%%%%%%%%%

In order to prove that $\pr{R\ge r\mid \Pa_{N_n}}$ gets exponentially small in $r$ as claimed in Lemma~\ref{lem:R_ge_r_all_cases} we show as a preparation the following estimate (that is nothing else than a Chernoff-type bound).
\begin{lemma}
\label{lem:R_ge_r_reform}
Set $\Omega_n := \{(p_1,\dots,p_{N_n})\in\Nat_0^{N_n}:p_1+2p_2\cdots+N_np_{N_n} = N_n\}$ and let $\lambda>0$ be such that $0<\lambda<-\ln\rho/2$. Then for $(\tau_j)_{j\ge 2}:=(C({x}_n^j\eul^{\lambda j})y_n^j/\eul^{\lambda jm})_{j\ge 2}$
\[
	\pr{R\ge r \mid \Pa_{N_n}}
	\le \eul^{-\lambda r} \cdot \eul^{-\sum_{2\le j\le N_n}{C({x}_n^j)y_n^j}/{j}}
	\cdot \sum_{\mathrm{p}\in\Omega_n}\frac{\pr{P_1=p_1}}{\pr{\Pa_{N_n}}}
	\cdot \prod_{2\le j\le N_n}\frac{(\tau_j/j)^{p_j}}{p_j!}, 
	\qquad r\in\Nat.
\]
\end{lemma}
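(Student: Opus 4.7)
The natural route is a Chernoff-type exponential moment bound. I would start with Markov's inequality applied to $\eul^{\lambda R}$,
\[
    \pr{R\ge r\mid\Pa_{N_n}}
    \le \eul^{-\lambda r}\ex{\eul^{\lambda R}\mid\Pa_{N_n}}
    = \frac{\eul^{-\lambda r}}{\pr{\Pa_{N_n}}}\ex{\eul^{\lambda R}\mathbbm{1}_{\Pa_{N_n}}},
\]
and then condition on the values of the Poisson variables $(P_j)_{j\ge 1}$. Since $\Pa_{N_n}$ is measurable with respect to $(P_j)_{j\ge 1}$ and, given these values, the variables $(C_{j,i})_{j,i\ge 1}$ are mutually independent with $C_{j,i}\sim \Gamma C({x}_n^j)$, the multiplicative structure $\eul^{\lambda R}=\prod_{j\ge 2}\prod_{1\le i\le P_j}\eul^{\lambda j(C_{j,i}-m)}$ factorises across $j$ and $i$.

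The key computation is the single moment generating function
\[
    \ex{\eul^{\lambda j(C_{j,1}-m)}}
    = \frac{\eul^{-\lambda jm}}{C({x}_n^j)}\sum_{k\ge m}c_k({x}_n\eul^\lambda)^{jk}
    = \eul^{-\lambda jm}\frac{C({x}_n^j\eul^{\lambda j})}{C({x}_n^j)},
\]
which is finite for all $j\ge 2$ thanks to the restriction $\lambda<-\ln\rho/2$: since ${x}_n\le\rho$ by Lemma~\ref{lem:chi_to_zero}, we have ${x}_n\eul^\lambda\le\rho^{1/2}$, so $({x}_n\eul^\lambda)^j\le\rho^{j/2}\le\rho$ for $j\ge 2$. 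After summing over all multi-indices $(p_j)_{j\ge 1}$ with $\sum_j jp_j=N_n$ (which automatically forces $p_j=0$ for $j>N_n$, so the index set is $\Omega_n$) and writing out $\pr{P_j=p_j}=\eul^{-C({x}_n^j)y_n^j/j}(C({x}_n^j)y_n^j/j)^{p_j}/p_j!$, the $C({x}_n^j)^{p_j}$ factors produced by the Poisson mass cancel precisely with the denominators of the moment generating function, leaving
\[
    \ex{\eul^{\lambda R}\mathbbm{1}_{\Pa_{N_n}}}
    = \eul^{-\sum_{j\ge 2}C({x}_n^j)y_n^j/j}\sum_{\mathrm{p}\in\Omega_n}\pr{P_1=p_1}\prod_{2\le j\le N_n}\frac{(\tau_j/j)^{p_j}}{p_j!},
\]
with $\tau_j=C({x}_n^j\eul^{\lambda j})y_n^j/\eul^{\lambda jm}$ as in the statement; the factors $\pr{P_j=0}=\eul^{-C({x}_n^j)y_n^j/j}$ for $j>N_n$ get absorbed into the exponential prefactor.

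To finish I would bound $\eul^{-\sum_{j>N_n}C({x}_n^j)y_n^j/j}\le 1$, so that the prefactor shrinks to $\eul^{-\sum_{2\le j\le N_n}C({x}_n^j)y_n^j/j}$, then divide by $\pr{\Pa_{N_n}}$ and multiply by $\eul^{-\lambda r}$ to obtain the claimed inequality. The only delicate point in the argument is the domain of validity of the moment generating function, which is precisely what the hypothesis $\lambda<-\ln\rho/2$ is engineered to guarantee; everything else is a bookkeeping exercise reorganising the joint law of $(P_j,C_{j,i})$.
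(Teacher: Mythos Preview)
Your proposal is correct and follows essentially the same route as the paper: condition on the values of the $(P_j)_{j\ge 1}$, apply the exponential Markov inequality, and use the closed form $\ex{\eul^{\lambda j(C_{j,1}-m)}}=C({x}_n^j\eul^{\lambda j})/(C({x}_n^j)\eul^{\lambda jm})$, whose finiteness for $j\ge 2$ is precisely guaranteed by $\lambda<-\ln\rho/2$. Your handling of the tail indices $j>N_n$ (absorbing $\pr{P_j=0}$ into the exponential prefactor and then bounding that piece by $1$) is in fact slightly more explicit than the paper's treatment.
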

\begin{proof} 
The choice of $\lambda$ guarantees that $\eul^{j\lambda} {x}_n^j < \rho$ for all $j\ge2$. Hence
\[
	\ex{\eul^{\lambda j(C_{j,i}-m)}}
	= \eul^{-\lambda jm}\frac{\sum_{k\ge 1}c_k({x}_n^j\eul^{\lambda j})^k}{C({x}_n^j)}	
	= \frac{C({x}_n^j\eul^{\lambda j})}{C({x}_n^j)\eul^{\lambda jm}},
	\qquad i\ge1,j\ge 2.
\]
Then by Bayes and the independence of the $P_j$'s 
\begin{equation}
\label{eq:prrPN}
\begin{split}
	\pr{R \ge r \mid \Pa_{N_n}}
	&= \sum_{\mathrm{p}\in\Omega_n} \pr{ R\ge r \biggm| \bigcap_{1\le j\le N}\{P_j=p_j\} }
	\pr{ \bigcap_{1\le j\le N}\{P_j=p_j\} \biggm| \Pa_{N_n}} \\
	&= \pr{\Pa_{N_n}}^{-1} \cdot \sum_{\mathrm{p}\in\Omega_n} 
	\pr{\sum_{j\ge2}j\sum_{1\le i\le p_j}(C_{j,i}-m) \ge r } \prod_{1\le j\le N_n}\pr{P_j = p_j}.
\end{split}
\end{equation}
With Markov's inequality and the independence of the $(C_{j,i})_{j,i\ge 1}$ and $(P_j)_{j\ge1}$ we obtain
\[
	\pr{\sum_{j\ge2}j\sum_{1\le i\le p_j}(C_{j,i}-m) \ge r }
	\le \eul^{-\lambda r} \prod_{2\le j\le N_n} \ex{\eul^{\lambda j(C_{j,i}-m)}}^{p_j}
	= \eul^{-\lambda r} \prod_{2\le j\le N_n} 
	\left( \frac{C({x}_n^j\eul^{\lambda j})}{C({x}_n^j)\eul^{\lambda jm}} \right)^{p_j}.
\]
By plugging this into~\eqref{eq:prrPN} and using that $P_j \sim \textrm{Po}(C(x_n^j)y_n^j/j)$ we obtain the claimed statement.
%This entails
%\begin{align*}
%	\pr{R\ge r\mid\Pa_{N_n}}
%	\le \eul^{-\lambda r} \cdot \eul^{-\sum_{2\le j\le N_n}\frac{C({x}_n^j)y_n^j}{j}}
%	\cdot\sum_{\mathrm{p}\in\Omega_n} \frac{ \pr{P_1=p_1}}{\pr{\Pa_{N_n}}}\prod_{2\le j\le N_n} 
%	\frac{(\tau_j/j)^{p_j}}{p_j!}.
%\end{align*}
\end{proof}

\begin{proof}[Proof of Lemma~\ref{lem:R_ge_r_all_cases} in case $(I)$]
By applying Lemma~\ref{lem:R_ge_r_reform} and using that the Poisson distribution is maximised at its mean, so that $\pr{P_1 = p_1}/\pr{\Pa_{N_n}}\le 1$, we obtain for some $\lambda > 0$
\begin{align*}
	\pr{R\ge r\mid\Pa_{N_n}}
	\le \eul^{-\lambda r} \cdot 
	\sum_{\mathrm{p}\in\Omega_n} \prod_{2\le j\le N_n} 
	\frac{(\tau_j/j)^{p_j}}{p_j!}.
\end{align*}
Lemma~\ref{lem:C(x)_approx} yields that for some $A>0$
\[
	\tau_j
	\le c_m ({x}_n^my_n)^j(1+ A {x}_n^j\eul^{\lambda j}),
	\quad j \ge 2.
\]
Lemma~\ref{lem:saddle_point_l<d} states that there is a $0<\eps<1$ such that ${x}_n^my_n < 1-\eps$ for all sufficiently large~$n$. We deduce that $\exp\{\sum_{2\le j\le N_n}\tau_j/j\} = \exp\{{\cal O}(\sum_{j\ge 2}(1-\eps)^j/j)\}$ is bounded.   Consequently, setting $H = \sum_{2\le j\le N_n}jH_j$ with $H_j \sim\pois{\tau_j/j}$ independent, we obtain for some  (other) $A>0$
\[
	\pr{R\ge r\mid\Pa_{N_n}}
	\le A\cdot \eul^{-\lambda r} \cdot \sum_{0\le p\le N_n} \pr{H = N_n-p}
	\le A \cdot \eul^{-\lambda r}.
\]	
\end{proof}

\begin{proof}[Proof of Lemma~\ref{lem:R_ge_r_all_cases} in case $(II)$]
Our starting point is Lemma~\ref{lem:R_ge_r_reform} so that
\[
    \pr{R\ge r\mid\Pa_{N_n}}
    \le \eul^{-\lambda r} \cdot \e{-\sum_{2\le j\le N_n}\frac{C({x}_n^j)y_n^j}{j}}
    \sum_{\mathrm{p}\in\Omega_n}\frac{\pr{P_1=p_1}}{\pr{\Pa_{N_n}}}
    \cdot \sum_{2\le j\le N_n} \frac{(\tau_j/j)^{p_j}}{p_j!}
\]  
for $0<\lambda<-\ln\rho/2$.
Using Lemmas~\ref{lem:saddle_point_l>d} and~\ref{lem:C(x)_approx} we obtain the estimate
\[
	\e{\sum_{j\ge 2}\frac{C({x}_n^j)y_n^j}{j}}
	= \frac{1}{(1-{x}_n^my_n)^{c_m}} 
	\e{-c_m{x}_n^my_n + \sum_{j\ge 2}\frac{C({x}_n^j)y_n^j-c_m{x}_n^{jm}y_n^j}{j}}
	={\cal O}(N_n^{c_m}).
\]
Next let $H_j\sim\pois{\tau_j/j}$ be independent random variables for $j\ge 2$ and set $H := \sum_{2\le j\le N_n}jH_j$. Abbreviate $\Upsilon := \exp\{\sum_{2\le j\le N_n}\tau_j/j \}$. We obtain for some $A_1>0$
\begin{align}
\label{eq:R_ge_r_reform_l>d}
	\pr{R=r\mid\Pa_{N_n}}
	\le A_1 \cdot \eul^{-\lambda r} \cdot N^{-c_m} \cdot \Upsilon
	\cdot \sum_{0\le p\le N_n}\frac{\pr{P_1=p}}{\pr{\Pa_{N_n}}} 
	\pr{H=N_n-p}.
\end{align}
Since $(H_j)_{j\ge 2}$ are independent 
\begin{align}
    \label{eq:H=N-p_proof_l>d}
	\pr{H = N_n-p}
	= \Upsilon^{-1}[x^{N_n-p}]\exp\Big\{\sum_{j\ge 2} {\tau_j}x^j/j\Big\}.
\end{align}
In the last expression we actually have to restrict to $2\le j\le N_n$; however, for all $0\le M\le N_n$, $[x^M]\exp\{\sum_{2\le j\le N_n}\tau_jx^j/j\} = [x^M]\exp\{\sum_{j\ge 2}\tau_jx^j/j \}$. Then
\begin{align*}
	\exp\Big\{ \sum_{j \ge 2} \tau_j x^j/j \Big\}
	&=  \frac{1}{(1-{x}_n^my_nx)^{c_m}} \cdot \e{- c_m{x}_n^my_nx 
	+ \sum_{j \ge 2}(\tau_j - c_m({x}_n^my_n)^j) \frac{x^j}{j} }
\end{align*}
and so, for any $0\le M\le N_n$
\begin{align*}
    [x^{M}]\exp\Big\{ \sum_{j \ge 2} \tau_j x^j/j \Big\}
    &= ({x}_n^my_n)^{M} \cdot [x^{M}] \frac{1}{(1-x)^{c_m}}\cdot \e{-c_mx
    + \sum_{j\ge 2}\left(\frac{C({x}_n^j\eul^{\lambda j})}{{x}_n^{jm}\eul^{\lambda j m}}-c_m\right)\frac{x^j}{j}}.
\end{align*}
With Lemma~\ref{lem:C(x)_approx} we get a bound which holds uniformly for some $A>0$ and all $j\ge 2$
\[
    0< a_{n,j} :=\frac{C({x}_n^j\eul^{\lambda j})}{{x}_n^{jm}\eul^{\lambda j m}}-c_m
    \le \frac{C(\rho^j\eul^{\lambda j})}{\rho^{jm}\eul^{\lambda j m}}-c_m
    \le A (\rho\eul^{\lambda})^j
    =: a_j .
\]
Using the fact that for a power series $f$ with non-negative coefficients the coefficients of $\eul^{f(x)}$ can get only larger if we make the coefficients of $f$ larger we get
\begin{align*}
    ({x}_n^my_n)^{-M}[x^M]\exp\Big\{ \sum_{j \ge 2} \tau_j x^j/j \Big\}
    &\le \sum_{m_1+m_2+m_3=M}[x^{m_1}]\frac{1}{(1-x)^{c_m}} \cdot  \lvert [x^{m_2}]\eul^{-c_mx}\rvert 
    \cdot \bigg\lvert [x^{m_3}] \exp\Big\{\sum_{j\ge 2}a_{n,j}x^j/j\Big\} \bigg\rvert \\
    &\le \sum_{m_1+m_2+m_3=M}[x^{m_1}]\frac{1}{(1-x)^{c_m}} \cdot  [x^{m_2}]\eul^{c_mx}
    \cdot  [x^{m_3}] \exp\Big\{\sum_{j\ge 2}a_{j}x^j/j\Big\} \\
    &= [x^M] \frac{1}{(1-x)^{c_m}} \exp\Big\{ c_mx + \sum_{j\ge 2}a_{j}x^j/j\Big\} 
    =: [x^M]A(x) \tilde{R}(x).
\end{align*}
Inserting this into~\eqref{eq:H=N-p_proof_l>d} we obtain $\pr{H=N_n-p} \le \Upsilon^{-1} (x_n^my_n)^{N_n-p} [x^{N_n-p}]A(x)\tilde{R}(x)$.
The advantage of this estimate is that $\tilde{R}$ does not depend on $n$ anymore. We have that, compare to~\eqref{eq:coeff_(1-x)^c_m}, $[x^n]A(x) \sim n^{c_m-1}/\Gamma(c_m)$ implying that $[x^{n-1}]A(x)/[x^n]A(x) \sim 1$. In addition, $\tilde{R}(1)<\infty$,   as the radius of convergence of $\tilde{R}$ is at least $(\rho\eul^{\lambda})^{-1}>1$. With Lemma~\ref{lem:coeff_product} we consequently obtain $[x^n]A(x)\tilde{R}(x) \sim \tilde{R}(1) n^{c_m-1}/\Gamma(c_m)$.
% converges uniformly to some $\tilde{Q}$ on any closed interval in $[0,\rho^{-1}\eul^{\lambda})$ and additionally $\tilde{Q}$ has radius of convergence (at least) $(\rho\eul^{\lambda})^{-1}$, which by our choice of $\lambda$ is $> 1$. In addition, we obtain analogous to the proof of Lemma~\ref{lem:R_conv_uniformly_Q} that there is a sequence $(d_n)_{n\in\Nat_0}$ such that $\lvert[x^k]\tilde{R}(x)\rvert\le d_k$ for $k\in\Nat_0$ and $\sum_{k\ge 0}d_kx^k < \infty$ for all $0\le x< (\rho\eul^{\lambda})^{-1}$. With this at hand we proceed analogously as in the text subsequent to~\eqref{eq:coeff_(1-x)^c_m} -- with $R^{(\ell)}/Q^{(\ell)}$ replaced by $\tilde{R}/\tilde{Q}$ -- and apply Lemma~\ref{lem:coeff_product_dep_n} to $A(x) \cdot \tilde{R}(x)$ with $\rho_n=1$, i.e., $[x^{N_n}]A(x)\tilde{R}(x)\sim \tilde{Q}(1)/\Gamma(c_m)\cdot N_n^{c_m-1}$.
This, on the other hand, implies that we can find a $A_2>0$ such that uniformly in $n$ and $0\le p<N_n$
\[
	[x^{N_n-p}]A(x)\tilde{R}(x)
	\le A_2 (N_n-p)^{c_m-1}.
% 	\quad
% 	p=0,\dots,N_n-1.
\]
All in all, noting that $({x}_n^my_n)^{N_n-p}\le 1$, we get uniformly in $n$ and $0\le p<N_n$
\begin{align*}
	\pr{H = N_n-p} 
	= \bigO{\Upsilon^{-1} (N_n-p)^{c_m-1}}.
% 	\quad
% 	p=0,\dots,N_n-1.
\end{align*}
For the case $p=N_n$ note that the probability that $H$ equals $0$ is $\Upsilon^{-1}$.
Putting these pieces together into~\eqref{eq:R_ge_r_reform_l>d} we obtain that
\begin{align*}
	\pr{R \ge r \mid \Pa_{N_n}}	
	&=\bigO{\eul^{-\lambda r}\cdot N^{-c_m}\cdot \left(\sum_{0\le p<N_n}\frac{\pr{P_1=p}}{\pr{\Pa_{N_n}}} 
	\left(N_n-p\right)^{c_m-1}
	+ \frac{\pr{P_1=N_n}}{\pr{\Pa_{N_n}}} \right)}.
\end{align*}
Since according to Lemma~\ref{lem:P_N_l>d} we have that $\pr{\Pa_{N_n}} = \Theta\big(({x}_n^my_n)^{N_n}/N_n\big)$ and ${x}_n^my_n=1-\Theta(1/N_n)$ due to Lemma~\ref{lem:saddle_point_l>d}, we have $\pr{\Pa_{N_n}}=\Theta(1/N_n)$. Further Lemma~\ref{lem:saddle_point_l>d} yields that $\limsup y_nC({x}_n)/N_n \le \limsup \lambda_n^{-1}<1$ so that we can estimate $\pr{P_1=N_n} =\e{-\Omega(N_n)}$ by~\eqref{eq:poisson_chernoff}. Hence 
\[
    N^{-c_m}\pr{P_1=N_n}/\pr{\Pa_{N_n}}
    =\bigO{N_n^{1-c_m}\eul^{-\Omega(N_n)}}=o(1).
\] 
With the estimates for $\pr{\Pa_{N_n}}$ we further obtain 
\begin{equation*}
	N_n^{-c_m}\cdot \sum_{0\le p<N_n}\frac{\pr{P_1=p}}{\pr{\Pa_{N_n}}} 
	\left(N_n-p\right)^{c_m-1}
	=\bigO{\sum_{0\le p < N_n}\pr{P_1=p}\left(1-\frac{p}{N_n}\right)^{c_m-1}}.
\end{equation*}
To finish the proof we show that the latter expression is $\bigO{1}$. This is clear for $c_m\ge 1$. If $0<c_m<1$ we note that there is some $\delta>0$ such that $(1+\delta)\limsup\ex{P_1}/N_n = (1+\delta)\limsup y_nC({x}_n)/N_n \le (1+\delta)\lambda_n^{-1}<1$ due to Lemma~\ref{lem:saddle_point_l>d}. Then   $\sum_{0\le p\le N_n/(1+\delta)}\pr{P_1=p}(1-p/N_n)^{c_m-1} \le (1+\delta^{-1})^{1-c_m}$. For $N/(1+\delta)<p<N_n$, on the other hand, we get that $\sum_{N_n/(1+\delta)<p<N_n}\pr{P_1=p}(1-p/N_n)^{c_m-1} \le N_n^{1-c_m} \pr{P_1>N_n/(1+\delta)}$.
%But, setting $\tau:=\ex{P_1}=y_nC({x}_n)$,
%\[
%    \pr{P_1 > {N_n}/({1+\delta})} 
%    = \pr{P_1 > \tau + t\sqrt{\tau}}, \quad t = (N_n/(1+\delta)-\tau)/\sqrt{\tau}.
%\]
%Since $t\sqrt{\tau} = \Theta(N_n)$ 
This is $\eul^{-\Omega(N_n)}$ by~\eqref{eq:poisson_chernoff} and the proof is completed.
%for this choice of $\delta$ and $x^2 = \Omega(N_n)$.
%Concluding, $\sum_{ N_n/(1+\delta)<p<N_n}\pr{P_1=p}(1-p/N_n)^{c_m-1} \le N_n^{1-c_m}\e{-\Omega(N_n)}=o(1)$. 
%These considerations show that~\eqref{eq:show_that_O(1)} is $\bigO{1}$ which finishes the proof.
\end{proof}

%%%%%%%%%%%%%%%%%%%%%%%%% SV functions %%%%%%%%%%%%%%%%%%%%%%%%
\subsection{Proof of Lemma~\ref{lem:L_p_deviation_from_mean}}
\label{sec:lem:L_p_deviation_from_mean}
%%%%%%%%%%%%%%%%%%%%%%%%%%%%%%%%%%%%%%%%%%%%%%%%%%%%%%%%%%%%%%%
This entire section is devoted to the proof of Lemma~\ref{lem:L_p_deviation_from_mean}.
First of all note that the probability generating function of $C_{1,1}$ is given by $ H(x)=C({x}_n x)/C({x}_n)$, that is, $\pr{C_{1,1}=k}=[x^k]H(x)$ for $k\in\Nat$. Define $K_p:=\sum_{1\le i\le p}C_{1,i}$ and $\nu := {x}_nC'({x}_n)/C({x}_n)$. Then
\[
	\pr{L_p = \mu_p + t\sigma_p}
	=\pr{K_p = p\nu + t\sigma_p}
	=[x^{p\nu + t\sigma_p}]H(x)^p.
\]
The tool of our choice for tackling this problem is the saddle-point method. Therefore we need appropriate bounds for $H$ in $\mathbb{C}$ on a circle centred at the origin with radius close to~$\rho$; the next lemma shows a rather diverse picture. For future reference and since we will need this often, we apply Theorem~\ref{thm:UhatU} to obtain that  
\begin{align}
    \label{eq:asympototics_powers_C_tailestimatesproof}
    \sum_{k\ge 1} h(k) k^{\beta - 1} \eul^{-\eta k}
    \sim \Gamma(\beta) h(\eta^{-1}) \eta^{-\beta}
    \quad \beta > 0, \text{ as }\eta\to0.
\end{align}
% Since this radius is not exactly (but close to) ${x}_n$ we introduce $G(x) := C(wx)/C(w)$ for $w=\rho\eul^{-\eta}$ for some $\eta>0$ which we will let tend to $0$. 
\begin{lemma}
\label{lem:bounds_prob_gen_fct}
Let $\alpha > 1, 0 < \rho<1$.
Then there exist $\eta_0 > 0, c<1, A >0$ such that the following is true. 
Let $0 < \eta < \eta_0$ and set $G(x) := C(\omega x)/C(\omega)$, where $\omega = \rho \eul^{-\eta}$. Then %, for $\eta$ sufficiently close to $0$,
\begin{align}
\label{eq:tailEstimateThetaSmall}
	\lvert G(\eul^{\iu\theta})\rvert
	\le 1 - \frac{\al }{2}\,\left(\frac{\theta}{\eta}\right)^2
	\quad
	\text{for any}
	\quad
    \lvert\theta\rvert 
	\le {\eta}/(24\al ^2).
\end{align}
Moreover,  
\begin{equation}
\label{eq:tailEstimateNearChi}
	\lvert G(\eul^{\iu\theta})\rvert
	\le c
	\quad
	\text{for any}
	\quad
	{\eta}/(24\al ^2) \le \lvert\theta\rvert\le\pi
\end{equation}
and 
\begin{equation}
\label{eq:tailEstimateFarChi}
	\lvert G(\eul^{\iu\theta})\rvert
	\le A\cdot\max\left\{{\eta}/{\lvert\theta\rvert},	\eta\right\}
	\quad
	\text{for any}
	\quad
    \eta \le \lvert\theta\rvert\le\pi.
\end{equation}
\end{lemma}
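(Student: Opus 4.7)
Proof plan. All three bounds follow from analyzing $G(e^{i\theta}) = C(\omega e^{i\theta})/C(\omega)$ as the characteristic function of the Boltzmann variable $\Gamma C(\omega)$, whose mean is $\mu = \omega C'(\omega)/C(\omega) \sim \alpha/\eta$ and whose $j$-th cumulant is of order $\eta^{-j}$ (by the same computation as in Lemma~\ref{lem:chi_to_zero}); in particular the variance satisfies $\sigma^2 \sim \alpha/\eta^2$.

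For~\eqref{eq:tailEstimateThetaSmall} I would expand the cumulant generating function as $\log G(e^{i\theta}) = i\theta\mu - \tfrac{1}{2}\sigma^2\theta^2 - \tfrac{i}{6}k_3\theta^3 + O(\theta^4/\eta^4)$ with $k_3=O(\eta^{-3})$. Taking real parts (the odd cumulants contribute only to the imaginary part), $\log|G(e^{i\theta})| = -\tfrac{1}{2}\sigma^2\theta^2 + O(\theta^4/\eta^4)$, and for $|\theta|\le \eta/(24\alpha^2)$ the quartic remainder is a tiny fraction of $\sigma^2\theta^2$. Hence $|G(e^{i\theta})| \le \exp(-\tfrac{\alpha}{2}(\theta/\eta)^2(1-o(1))) \le 1-\tfrac{\alpha}{2}(\theta/\eta)^2$.

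The principal work is~\eqref{eq:tailEstimateFarChi}. For $|\theta|\le\theta_0$ with a fixed small $\theta_0>0$, I would extend~\eqref{eq:asympototics_powers_C_tailestimatesproof} to the complex parameter $s=\eta-i\theta$: from
\[
 C(\omega e^{i\theta}) = \sum_{k\ge 1} h(k) k^{\alpha-1} e^{-sk},
\]
an Euler--Maclaurin comparison with $\int_0^\infty h(x)x^{\alpha-1} e^{-sx}\,dx$, the substitution $u=sx$, and Potter-type slow-variation bounds on $h(u/|s|)/h(1/|s|)$ yield
\[
 \frac{C(\omega e^{i\theta})}{C(\omega)} \;\sim\; \bigl(1-i\theta/\eta\bigr)^{-\alpha}
\]
uniformly in that range. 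Since $\bigl|1-i\theta/\eta\bigr|^{-\alpha} = (1+\theta^2/\eta^2)^{-\alpha/2} \le (\eta/|\theta|)^\alpha \le \eta/|\theta|$ for $|\theta|\ge\eta$ (using $\alpha>1$), this gives $|G(e^{i\theta})|=O(\eta/|\theta|)$ on $\eta\le|\theta|\le\theta_0$. For $\theta_0\le|\theta|\le\pi$, where the asymptotic breaks down, Abel summation applied to $\sum_k e^{ik\theta}\cdot h(k)k^{\alpha-1}e^{-\eta k}$---using $\bigl|\sum_{k\le n}e^{ik\theta}\bigr|\le|\sin(\theta/2)|^{-1}$ and eventual monotonicity of the summand modulus---produces $|C(\omega e^{i\theta})|=O(1)$ uniformly in $\eta$, hence $|G(e^{i\theta})|=O(1/C(\omega))=O(\eta^\alpha/h(\eta^{-1}))=O(\eta)$, matching the $\eta$-branch of~\eqref{eq:tailEstimateFarChi}.

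Finally,~\eqref{eq:tailEstimateNearChi} is a by-product: on $[\eta/(24\alpha^2),\eta]$ the asymptotic above gives $|G(e^{i\theta})|\le (1+(24\alpha^2)^{-2})^{-\alpha/2}+o(1)$, a universal constant strictly less than $1$; on $[\eta,\theta_0]$ the same asymptotic yields $|G|\le 2^{-\alpha/2}+o(1)$; and on $[\theta_0,\pi]$ the just-established bound is $O(\eta)=o(1)$. The main technical obstacle is the uniform transfer of~\eqref{eq:asympototics_powers_C_tailestimatesproof} from real $\eta$ to the complex parameter $s=\eta-i\theta$ under only continuity and slow variation of $h$: effective control of the substitution error in the Euler--Maclaurin comparison demands Potter bounds uniform in $\arg s$ throughout sectors $|\arg s|\le\pi/2-\delta$, and this uniform sectorial estimate is where the genuine technical work of the proof lies.
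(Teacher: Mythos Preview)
Your treatment of~\eqref{eq:tailEstimateThetaSmall} via cumulants is equivalent to the paper's: there the real and imaginary parts of $G(e^{i\theta})$ are bounded separately using $\cos x\le 1-x^2/2+x^4/24$, $|\sin x|\le|x|$, and~\eqref{eq:asympototics_powers_C_tailestimatesproof}, then $|G|^2=R^2+I^2$ is estimated directly. Either formulation works.

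The real divergence is in~\eqref{eq:tailEstimateNearChi} and~\eqref{eq:tailEstimateFarChi}. The complex Tauberian step you single out as ``the genuine technical work'' is indeed problematic as stated: on the range $\eta\le|\theta|\le\theta_0$ with $\theta_0$ fixed and $\eta\to0$, the argument of $s=\eta-i\theta$ sweeps all the way to $\pi/2$, so you would need the Abelian asymptotic $C(\omega e^{i\theta})\sim\Gamma(\alpha)h(|s|^{-1})s^{-\alpha}$ uniformly up to the boundary of the right half-plane. Standard results (e.g.\ Bingham--Goldie--Teugels) give this only on compact subsectors $|\arg s|\le\pi/2-\delta$, and the extension to the boundary under mere continuity and slow variation of $h$ is not obvious; the Uniform Convergence Theorem fails to control $h(u/|s|)/h(1/|s|)$ once the effective integration range $u\lesssim 1/\cos(\arg s)$ blows up. Separately, your Abel-summation claim $|C(\omega e^{i\theta})|=O(1)$ on $[\theta_0,\pi]$ is incorrect: the total variation of $k\mapsto h(k)k^{\alpha-1}e^{-\eta k}$ is of order $h(\eta^{-1})\eta^{-(\alpha-1)}\to\infty$, not $O(1)$. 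What you actually obtain is $|C(\omega e^{i\theta})|=O(|\theta|^{-1}h(\eta^{-1})\eta^{-(\alpha-1)})$, hence $|G|=O(\eta/|\theta|)$ after dividing by $C(\omega)\sim\Gamma(\alpha)h(\eta^{-1})\eta^{-\alpha}$---the right answer, just not for the stated reason.

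The paper bypasses the complex Tauberian entirely by running a single Abel summation over the whole range $|\theta|\ge\eta/(24\alpha^2)$, with a different grouping of factors: it writes
\[
(1-e^{-\eta}e^{i\theta})\,G(e^{i\theta})=\frac{1}{C(\omega)}\sum_{k\ge1}e^{(i\theta-\eta)k}\bigl(\rho^kc_k-\rho^{k-1}c_{k-1}\bigr)
\]
and bounds the differences by $|\rho^kc_k-\rho^{k-1}c_{k-1}|\le(\alpha-1+\delta)k^{-1}\rho^kc_k$ via the Potter bound~\eqref{eq:svissubpoly} on $h(k-1)/h(k)$ (this is exactly where $\alpha>1$ enters). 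Summing against $e^{-\eta k}$ and invoking~\eqref{eq:asympototics_powers_C_tailestimatesproof} with exponent $\alpha-1$ gives $|G(e^{i\theta})|\le(1+o(1))\,\eta/|1-e^{-\eta+i\theta}|$. Elementary lower bounds on $|1-e^{-\eta+i\theta}|$---namely $\ge(1+c')^{1/2}\eta$ on $[\eta/(24\alpha^2),\pi]$ for some $c'>0$, and $\gtrsim\min\{|\theta|,1\}$ on $[\eta,\pi]$---then yield~\eqref{eq:tailEstimateNearChi} and~\eqref{eq:tailEstimateFarChi} simultaneously. This is essentially your own Abel summation, but with the geometric factor $e^{-\eta+i\theta}$ absorbed into the partial sums rather than just $e^{i\theta}$; that choice makes the total-variation estimate clean (via Potter bounds, without any unimodality assumption) and, crucially, covers the full range so that no sectorial Tauberian theorem is needed. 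Had you pushed your Abel summation from $[\theta_0,\pi]$ down to $[\eta,\pi]$, you would have recovered the paper's argument.
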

\begin{proof} 
We start with showing the first bound. Recall the basic inequalities
\[
	\cos(x) 
	\le 1 - {x^2}/{2} + {x^4}/{24}
	\quad\text{and}\quad
	\sin(x) 
	\le |x|,
	\qquad
	x \in \mathbb{R},
\]
that we will use more than once in the proof. Let $\delta_1 \in (0,1)$. Let $R$ denote the real part of $G(\eul^{\iu\theta})$. Then, using~\eqref{eq:asympototics_powers_C_tailestimatesproof}, for~$\eta > 0$ sufficiently small,
\[
	R
	= \frac1{C(\omega)} \sum_{k\ge 1} c_k \rho^k\eul^{-\eta k} \cos(\theta k)
	 \le
	1 - (1 - \delta_1)\frac{(\al +1)\al }2 \left(\frac\theta\eta\right)^2
	+ \frac{(\al +3)(\al +2)(\al +1)\al }{12} \left(\frac\theta\eta\right)^4.
\]
Further, if $\lvert\theta\rvert \le ((\al +3)(\al +2))^{-1/2} \sqrt{\delta_1}\eta$,
\[
	R 
	\le 1 - (1-2\delta_1)\frac{(\al +1)\al }{2}\left(\frac\theta\eta\right)^2.
\]
Moreover, consider the imaginary part $I$ of $G(\eul^{\iu\theta})$. Then we obtain with~\eqref{eq:asympototics_powers_C_tailestimatesproof} that for $\eta > 0$ sufficiently small
\[
	I^2 
	= \frac1{C(\omega)^2} \left(\sum_{k\ge1} c_k \rho^k\eul^{-\eta k} \sin(\theta k) \right)^2
	\le (1+\delta_1)\al ^2 \cdot \left(\frac\theta\eta\right)^2.
\]
Combining these bounds yields for $\delta_1> 0$ and $|\theta| \le ((\al +3)(\al +2))^{-1/2} \sqrt{\delta_1} \eta$ that
\[
	|G(\eul^{\iu\theta})|^2
	= R^2 + I^2
	\le 1 - \Big((1-2\delta_1)\al  - 3\delta_1\al ^2\Big)\left(\frac\theta\eta\right)^2 + (\al +1)^2\al ^2\left(\frac\theta\eta\right)^4.
\]
Then, as 
$|\theta| \le ((\al +3)(\al +2))^{-1/2} \sqrt{\delta_1} \eta \le \sqrt{\delta_1}\eta/(\al +1)$, we obtain
\[
	|G(\eul^{\iu\theta})|^2
	\le 1 - \Big((1-2\delta_1)\al  - 4\delta_1\al ^2\Big)\left(\frac\theta\eta\right)^2.
\]
Choosing $\delta_1 = 1/(4+8\al )$ yields $|G(\eul^{\iu\theta})| \le 1-\al (\theta/\eta)^2/2$, as claimed and being very generous in the bound for~$\theta$. Note that for $\al >1$ we have $1/(24\al ^2) \le \sqrt{\delta_1}/\sqrt{(\al +2)(\al +3)} \le \sqrt{\delta_1}/(\al +1)$ so that~\eqref{eq:tailEstimateThetaSmall} follows.

We continue with the proof of~\eqref{eq:tailEstimateNearChi}. Here 
we will use a basic trick that was used in similar forms already long ago, see~\cite{Gamkrelidze1988}, where $|G(\eul^{\iu\theta})|$ is related to the sum of the differences of consecutive terms. Here we use the following construction. Note that
\begin{equation*}
\label{eq:gamkrelidze}
	(1 - \eul^{-\eta}\eul^{\iu\theta})G(\eul^{\iu\theta}) 
	= \frac1{C(\omega)}\sum_{k\ge1} \eul^{\iu\theta k-\eta k}
	(\rho^k c_k - \rho^{k-1}c_{k-1}).
\end{equation*}
%such that $\al-1-\delta > 0$.
Note that $\rho^{-1}c_{k-1}/c_k = h(k-1)/h(k) \cdot (1-k^{-1})^{\al-1} \le (1-k^{-1})^{\al-1 + o(1)}$ due to~\eqref{eq:svissubpoly}. Accordingly, since $\al>1$, we have for sufficiently large $k$ that $\rho^{-1}c_{k-1}\le c_k$. Let $\delta > 0$. Then, using~\eqref{eq:svissubpoly}, whenever $k$ is sufficiently large, 
\begin{align*}
	|\rho^kc_k - \rho^{k-1}c_{k-1}|
	&= \rho^kc_k\left(1 - \frac{\rho^{-1}c_{k-1}}{c_k}\right)
	= \rho^kc_k\left(1 - \frac{h(k-1)}{h(k)}\Big(1 - \frac{1}{k}\Big)^{\al -1}\right)
	 \le \rho^kc_k\left(1 - \Big(1 - \frac{1}{k}\Big)^{\al -1+\delta}\right).
\end{align*}
Note that $(1-x)^a \ge 1-(1+\delta)ax$ for any $a,\delta > 0$ and $x$ sufficiently small. We thus obtain for sufficiently large $k$ that
\[
	|\rho^kc_k - \rho^{k-1}c_{k-1}| 
	\le \frac{\al -1+\delta(\al+\delta)}{k}\rho^kc_k.
\]
Using this and the triangle inequality we obtain that for any $\delta > 0$ there is some $K\in\Nat$ and $d>0$ such that
\begin{equation}	
	|G(\eul^{\iu\theta})| 
	\le \frac{1}{C(\omega)|1 - \eul^{-\eta}\eul^{\iu\theta}|} 
	\left( (\al -1+\delta(\al +\delta))\sum_{k \ge K} \eul^{-\eta k}\frac{\rho^kc_k}{k}  
	+ d \right).
\label{eq:tailboundtmp}
\end{equation}
A simple calculation reveals that
\[
	|1 - \eul^{-\eta}\eul^{\iu\theta}|^2 
	= 1 + \eul^{-2\eta}-2\eul^{-\eta} \cos(\theta).
\]
Since $|\theta| \in [\eta/24\al ^2, \pi]$ the $\cos$ is maximized for $|\theta| = \eta/24\al ^2$; using $\eul^{-x} = 1 - x + {x^2}/2 + \bigO{x^3}$ and $\cos(x) = 1 - x^2/2 + \bigO{x^4}$ for $x \to 0$ and abbreviating $a = 2 (24 \al ^2)^2$ we get for sufficiently small $\eta$
\begin{align}
	|1 - \eul^{-\eta}\eul^{\iu\theta}|^2 \nonumber
	& \ge 1 + \eul^{-2\eta}-2\eul^{-\eta} \cos(\eta/24\al ^2) \\ \nonumber
	& = 1 + (1 - 2\eta + 2\eta^2) - 2(1 - \eta + \eta^2/2)(1 - \eta^2/a) + \bigO{\eta^3} \\ \nonumber
    & = (1+2/a)\eta^2 + \bigO{\eta^3} \\ \label{eq:estimate_1-eul}
	& \ge (1 + 1/a)\eta^2.
\end{align}
Moreover, since $\al  > 1$ and using~\eqref{eq:asympototics_powers_C_tailestimatesproof} as $\eta\to0$
\[
	\sum_{k\ge K} \eul^{-\eta k}\frac{\rho^kc_k}{k}
	\le \sum_{k\ge 1} \eul^{-\eta k}\frac{\rho^kc_k}{k}
	\sim \Gamma(\al -1)h(\eta^{-1})\eta^{-\al +1}.
\]
All in all, if $\al  > 1$ and using the latter inequality as well as $C(\omega) \sim \Gamma(\al ) h(\eta^{-1}) \eta^{-\al }$  we obtain by plugging in~\eqref{eq:estimate_1-eul} into~\eqref{eq:tailboundtmp} that for any $\delta > 0$ and $\eta$ sufficiently small
\begin{align}
\label{eq:G(eiheta)_estimate}
	|G(\eul^{\iu\theta})| 
	\le (1+\delta)\frac{\al -1+\delta(\al +\delta)}{\Gamma(\al ) h(\eta^{-1}) \eta^{-\al }} 
	\cdot \frac{\Gamma(\al -1)h(\eta^{-1})\eta^{-\al +1}}{(1 + 1/a)^{1/2} \eta}
	\le \frac{1+\delta( 1 +(\al +\delta)/(\al -1))}{(1 + 1/a)^{1/2}}.
\end{align}
Since $\delta > 0$ was arbitrary and $a > 0$ the claim in~\eqref{eq:tailEstimateNearChi} is established by choosing $\delta$ sufficiently small.

We complete the proof by showing~\eqref{eq:tailEstimateFarChi}. Note that there is a constant $b> 0$ such that for all $0 \le x \le \pi$
\[
	|\eul^{-x} - (1-x)| \le bx^2 
	\quad\text{and}\quad
	|\cos(x) - (1-x^2/2)| \le b x^4.
\]
Applying this to any occurrence of $\eul^{x}$ and $\cos(x)$ below we obtain that there is a $b>0$ such that for all $\vert\theta\rvert\le \pi$ and $\eta$ sufficiently small
\[
    \big||1 - \eul^{-\eta}\eul^{\iu\theta}|^2 - \theta^2\big|
    =
	\big|\big(1 + \eul^{-2\eta}-2\eul^{-\eta} \cos(\theta)\big) - \theta^2\big| \le b(\eta^2 + \eta \theta^2 + \theta^4).
\]
In particular, if $|\theta| \ge \eta$ we obtain that
\[
	\big||1 - \eul^{-\eta}\eul^{\iu\theta}|^2 - \theta^2\big| \le 3b \theta^4.
\]
Especially, if $|\theta| \le (6b)^{-1/2}$, then $|1 - \eul^{-\eta}\eul^{\iu\theta}|^2 \ge  \theta^2/2$. Moreover, since $|1 - \eul^{-\eta}\eul^{\iu\theta}|^2$ is monotone increasing for $\theta\in[0, \pi]$ we obtain $|1 - \eul^{-\eta}\eul^{\iu\theta}|^2 \ge (12b)^{-1}$ for all $|\theta| \ge (6b)^{-1/2}$. Using these two last statements instead of~\eqref{eq:estimate_1-eul} in~\eqref{eq:tailboundtmp} we arrive similarly to~\eqref{eq:G(eiheta)_estimate} at the desired estimate~\eqref{eq:tailEstimateFarChi}.
\end{proof}
The previous statement applies only when $\al  > 1$. To handle the case $0<\al \le 1$ we show the following property, which establishes that summing up iid random variables with probability generating function $G(x)$ sufficiently often we obtain a random variable based on the modified sequence $\tilde{c}_n = \tilde{h}(n) n^{\tilde{\al }-1} \rho^n$ for $n\in\Nat$ where $\tilde{h}$ is slowly varying and $\tilde{\al }>1$. With this trick we will be able to apply Lemma~\ref{lem:bounds_prob_gen_fct} even in the case $0<\al \le1$.
\begin{lemma}
\label{lem:sum_r<1_rv_expansive}
Let $\alpha > 0$, $0 < \rho < 1$. Let $s\in\Nat$ and $Y_s := \sum_{1\le i\le s} X_i$ where $X_1,X_2,\dots$ are iid with probability generating function $G(x) = C(\omega x)/C(\omega)$, where $\omega = \rho e^{-\eta}$. Then there exists a eventually positive, continuous and slowly varying function $\tilde{h}$ such that
\[	
	\Pr[Y_s = n] 
	= \frac{\tilde{h}(n) n^{s\al -1} \eul^{-\eta n}} {
	C(\omega)^{k}}, \quad n\in\mathbb{N}.
\]
\end{lemma}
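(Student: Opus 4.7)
The plan is to compute $\Pr[Y_s = n]$ as an explicit $s$-fold convolution, read off a candidate for $\tilde h$ from the formula, and then establish its slow variation by a convolution asymptotic for regularly varying sequences. Since $\omega = \rho\eul^{-\eta}$ entails $\rho^{-k}\omega^k = \eul^{-\eta k}$, we have $\Pr[X_i = k] = h(k) k^{\al -1} \eul^{-\eta k}/C(\omega)$ for $k\in\Nat$. Convolving $s$ times gives
\[
    \Pr[Y_s = n]
    = \frac{\eul^{-\eta n}}{C(\omega)^s} \cdot S_s(n),
    \quad\text{where}\quad
    S_s(n) := \sum_{\substack{k_1,\ldots,k_s\ge 1\\ k_1+\cdots+k_s = n}} \prod_{i=1}^{s} h(k_i) k_i^{\al -1}.
\]
This suggests defining $\tilde h(n) := S_s(n)/n^{s\al -1}$ for $n\ge s$, setting $\tilde h$ to a small positive constant on $\{1,\ldots,s-1\}$, and extending to $[1,\infty)$ by linear interpolation between consecutive integers. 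Such an extension is automatically continuous and eventually positive, and a standard argument shows it inherits slow variation from its restriction to $\Nat$.

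To finish, I will prove by induction on $s$ that
\[
    S_s(n) \sim \frac{\Gamma(\al)^s}{\Gamma(s\al)} \cdot h(n)^s \cdot n^{s\al -1}, \qquad n\to\infty,
\]
which yields $\tilde h(n)\sim (\Gamma(\al)^s/\Gamma(s\al)) h(n)^s$ and hence slow variation of $\tilde h$ (since asymptotic equivalence to a slowly varying function preserves the property). The base case $s=1$ is trivial. For the inductive step, the Cauchy product gives $S_{s+1}(n) = \sum_{k=1}^{n-s} h(k) k^{\al -1} S_s(n-k)$. Restricting to the range $k\in[\eps n,(1-\eps)n]$ and substituting $k = nt$, the uniform convergence theorem for slowly varying functions ($h(nt)/h(n)\to 1$ uniformly on compacts of $(0,1)$) turns the sum into a Riemann-sum approximation of
\[
    \frac{\Gamma(\al)^s}{\Gamma(s\al)}\, h(n)^{s+1}\, n^{(s+1)\al -1} \int_{\eps}^{1-\eps} t^{\al -1}(1-t)^{s\al -1}\, dt,
\]
which, as $\eps\to 0$, converges to the desired expression via Euler's Beta identity $B(\al ,s\al) = \Gamma(\al)\Gamma(s\al)/\Gamma((s+1)\al)$.

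The main obstacle will be to control the two boundary contributions $k\in[1,\eps n]$ and $k\in[(1-\eps)n,n-s]$, which is delicate when $\al < 1$ since $t^{\al -1}$ and $(1-t)^{s\al -1}$ are unbounded at the endpoints. I would bound these uniformly in $n$ by invoking Potter's bound~\eqref{eq:svissubpoly} to dominate $h$ by a small polynomial factor, so that both boundary sums become $o_{\eps\to 0}\!\left(h(n)^{s+1} n^{(s+1)\al -1}\right)$ using the integrability of $t^{\al -1}(1-t)^{s\al -1}$ at the endpoints (which holds for any $\al > 0$). Combining the middle-range asymptotic with the tail estimates and then letting $\eps\to 0$ yields the claimed formula for $S_s(n)$ and completes the proof.
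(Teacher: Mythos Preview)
Your proposal is correct and follows essentially the same route as the paper's proof: both argue by induction on $s$, write the $(s{+}1)$-fold convolution as a single convolution with the $s$-fold one, split into a middle range $[\eps n,(1-\eps)n]$ (handled via the uniform convergence theorem and a Riemann-sum approximation of the Beta integral) and two boundary pieces, and then let $\eps\to 0$. The only cosmetic difference lies in the tail estimates: the paper uses~\eqref{eq:SVmultpoly} together with Corollary~\ref{coro:corfinsums} to obtain explicit $O(\eps^{\alpha})$ and $O(\eps^{s\alpha})$ bounds, while you invoke Potter's bound~\eqref{eq:svissubpoly}; both are valid and lead to the same conclusion.
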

\begin{proof} 
Let $\al ' = \ell \al $ for some $\ell \in \mathbb{N}$ and set $\beta=\al-1$ as well as $\beta'=\al'-1$. Let $f$ be an eventually positive slowly varying function. We will show that
\begin{equation}
\label{eq:indrr'}
	S := 	
	\sum_{k=1}^{n-1}f(k) k^{\beta '} \cdot h(n-k)(n-k)^{\beta} = \tilde{h}(n)n^{\beta '+\beta+1}
\end{equation}
for some slowly varying $\tilde{h}$ and $n\in\Nat$ such that $\tilde{h}(n)\sim I(\beta')\cdot f(n)h(n)$ for $I(\beta') = \int_0^1 x^{\beta'}(1-x)^\beta dx$. Then the claimed statement follows readily by induction.
%since $\pr{Y_s = n} = \sum_{0\le k\le n}\pr{Y_{s-1}=n}\pr{X_1=n-k}$ and $\pr{X_1=k}=h(k)k^\beta/C(\omega)$.
In order to see~\eqref{eq:indrr'} let $\eps > 0$ be arbitrary and consider the (\emph{middle}) sum
\begin{align*}
	M 
	&:= \sum_{k=\eps n}^{(1-\eps)n} f(k) k^{\beta'} \cdot h(n-k) (n-k)^{\beta} 
	= f(n)h(n)n^{\beta'+\beta}\sum_{k=\eps n}^{(1-\eps)n} 
	\frac{f(k)}{f(n)} \frac{h(n-k)}{h(n)}
	\left(\frac{k}{n}\right)^{\beta '}\left(1-\frac{k}{n}\right)^{\beta}.
\end{align*}
By applying the Uniform Convergence Theorem~\ref{thm:uniformconv} we obtain that $f(k)/f(n)$ and $h(n-k)/h(n)$ both tend to $1$ for any $k\in \{\eps n,\dots,(1-\eps)n\}$ as $n\to\infty$. Further note that 
\[
    \frac{1}{(1-2\eps)n+2}\sum_{ k=\eps n}^{ (1-\eps)n}\left(\frac{k}{n}\right)^{\beta'}\left(1-\frac{k}{n}\right)^{\beta}
\]
is a Riemann sum of $I_\eps := \int_{\eps}^{1-\eps} x^{\beta '} (1-x)^{\beta}dx \in \mathbb{R}$. As $\eps > 0$ was arbitrary and $I_0<\infty$
\begin{equation}
\label{eq:Masympt}
	M 
	\sim I_\eps\cdot f(n)h(n) n^{\beta ' + \beta  +1},
	\quad n\to\infty.
\end{equation}
Next consider the (\emph{tail}) sum. With~\eqref{eq:SVmultpoly}, Corollary~\ref{coro:corfinsums} and $h(\eps n)\sim h(n)$ as well as $f(\eps n)\sim f(n)$ we obtain  
\begin{align*}
	T
	:= \sum_{k=1}^{\eps n} f(k) k^{\beta '} \cdot h(n-k) (n-k)^{\beta}
	&\le  \sup_{(1-\eps)n\le k\le n} h(k) k^{\beta} \cdot \sum_{k=1}^{\eps n} f(k) k^{\beta '}
	=\eps^{\beta'+1} \cdot  \bigO{  f(n)   h(n) n^{\beta '+\beta+1}}.
\end{align*}
%Thus, since $\eps>0$ was arbitrary and $\beta'+1>0$, we get that $T = o( f(n) h(n)n^{\beta'  + \beta+1})$. 
Analogously, consider the remaining terms
\[
	T'
	:= \sum_{t=1}^{\eps n} f(n-k) (n-k)^{\beta '} \cdot h(k) k^{\beta}
	= \eps^{\beta+1}  \cdot \bigO{ f(n)   h(n) n^{\beta'+\beta+1}}.
	%\le (\al ^{-1} + \eps) h(n)n^{\al '-1} \cdot h(n)(\eps n)^\al .
\]
 Comparing the estimates for $T$ and $T'$ with~\eqref{eq:Masympt} and letting $\eps\to 0$ we see that $S \sim I_0 f(n) h(n) n^{\beta'+\beta + 1}$. Moreover, we readily see that $\tilde{h}(n) = I_0\cdot f(n)h(n) + o(f(n)h(n))$, which is obviously slowly varying as $f,h$ are.
% for any arbitrarily small $\eps''>0$ that we may choose $\eps > 0$ such that for $n$ sufficiently large
% \[
% 	S
% 	= M + T + T',
% 	\qquad |T + T'| 
% 	\le \eps'' M.
% \]

% As $\eps'$ and $\eps''$ are arbitrary we deduce together with~\eqref{eq:Masympt} that $S \sim I_0h(s)^2s^{\al '+\al -1}$. Hence~\eqref{eq:indrr'} is valid and we conclude by induction that there exists $A_2>0$ such that, for any $t\in\Nat$ and as $n\to\infty$,
% \[
% 	\pr{Y_t = n}
% 	= (A_2 h(n)^t + o(h(n)^t)) \frac{n^{t\al -1}\eul^{-\eta n}}{C(w)^t}
% \]
% with $A_2h(n)^t + o(h(n)^t)$ being slowly varying for any $t\in\Nat$. Hence the claimed slowly varying function $\tilde{h}(n)$ exists.
\end{proof}
Combining all these statements we are finally able to prove Lemma~\ref{lem:L_p_deviation_from_mean}.
\begin{proof}[Proof of Lemma~\ref{lem:L_p_deviation_from_mean}]
For the ease of reading we repeat some definitions. For $p\in\Nat$ let $K_p:= \sum_{1\le i\le p}C_{1,i}$ and define $\nu := \mathbb{E}[C_{1_i}] =  {x}_nC'({x}_n)/C({x}_n)$. Further we need
$\sigma_p^2 := \textrm{Var}[K_p] = p(x_n^2C''(x_n)/C(x_n) - \nu + \nu^2)$.
Then~\eqref{eq:sigma_p_def},\eqref{eq:mean_variance_p} assert that 
$\nu\sim \al \chi_n^{-1}$ and $\sigma_p^2 \sim \al p\chi_n^{-2}$.
We will use these properties several times without explicitly referencing them. Set $M:=p\nu + t\sigma_p$. Then
\[
	\pr{L_p = \mu_p + t \sigma_p}
	=\pr{K_p = M}
	= [z^M] H(z)^p,
	\qquad H(z) = \frac{C({x}_nz)}{C({x}_n)}.
\]
With Cauchy's integral formula, where we integrate over an arbitrary closed curve encircling the origin,
\begin{align}
    \label{eq:coeff_H(z)^p_Cauchy}
	\pr{L_p = \mu_p + t \sigma_p}
	= \frac{C({x}_n)^{-p}}{2\pi\iu}
	\oint \eul^{f(z)} \frac{dz}{z},
	\qquad f(z) = p\ln C({x}_nz) - M\ln z.
\end{align}
Since we will need that several times, let us note that
\begin{equation}
\begin{split}
    f'(z) = p\frac{x_nC'(x_nz)}{C(x_nz)} - \frac{M}{z},
    \quad
	f''(z)
	= p \left(\frac{{x}_n^2C''({x}_nz)}{C({x}_nz)} 
	- \left(\frac{{x}_nC'({x}_nz)}{C({x}_nz)}\right)^2\right)
	+ \frac{M}{z^2}\quad\text{and} \\
	f'''(z)
	= p \left( \frac{{x}_n^3C'''({x}_nz)}{C({x}_nz)}
	- 3\frac{{x}_n^3C''({x}_nz)C'({x}_nz)}{C({x}_nz)^2}
	+ \left(\frac{{x}_nC'({x}_nz)}{C({x}_nz)} \right)^3 \right)
	- 2\frac{M}{z^3}.
\end{split}
\label{eq:f'f''f'''}
\end{equation}
The subsequent proof follows a very clear route that is strewn with several technical statements (as it is typical in this area). These statements, which have self-contained proofs are clearly marked, and the proofs are presented at the end of the section. 
We have in mind to apply the saddle-point method, that is, we will split the integral in~\eqref{eq:coeff_H(z)^p_Cauchy} up into one dominating part, where a quadratic expansion of $f(z)$ is valid, and one negligible part. To this end, consider the saddle-point equation
\begin{align}
\label{eq:spe_deviation_variance}
	f'(z) = 0 
	~\Leftrightarrow ~
	p\frac{{x}_n z C'({x}_nz)}{C({x}_nz)} = M.
\end{align}
This equation has obviously a unique solution that we call $w\equiv w(n,p,t)$, the saddle-point. Note that $w = 1$ if $t = 0$. We claim that in general $w$ satisfies
\begin{align}
\tag{SaddleAsym}
    \label{eq:saddle_point_z_n}
	w 
	= \eul^{\xi}, 
	\quad\text{where}\quad
	\xi =  t  \sigma_p^{-1} + \xi_1, 
	\quad \xi_1=\bigO{t^2p^{-1}\chi_n}
\end{align}
uniformly for all $t = o(p^{1/6})$; the self-contained proof is at the end of the section.  We proceed by specifying in~\eqref{eq:coeff_H(z)^p_Cauchy} the curve over which integrate. We choose the simplest curve that passes the saddle point, i.e., the circle with radius $w$. By switching to polar coordinates 
\begin{align}
    \label{eq:to_polar_coord}
	\frac{C({x}_n)^{-p}}{2\pi\iu}
	\oint \eul^{f(z)} \frac{dz}{z}	
    &= \frac{C({x}_n)^{-p}}{2\pi}
	\int_{-\pi}^{\pi} \eul^{f(w\eul^{\iu\theta})} d\theta. 
% 	\\ \label{eq:I_1_I_2_proof_local_limit_thm}
% 	&= \frac{C({x}_n)^{-p}}{2\pi}
% 	\left( \int_{-\theta_0}^{\theta_0} + \int_{\theta_0 \le \lvert\theta\rvert\le \pi} \right)
% 	\eul^{f(z_n\eul^{\iu\theta})} d\theta 
% 	=: I_1 + I_2.
\end{align}
In the next step we study $f(w\eul^{\iu\theta})$ by considering the Taylor series around $\theta=0$. Then $f'(w)=0$ guarantees that $f(w\eul^{\iu\theta}) = f(w) - w^2f''(w) \theta^2/2 + R$ for some remainder $R$ that should be negligible in an appropriate interval around $0$.
Let us substantiate this. Set for the remainder of the proof
\[
    \eta := \chi_n - \xi \sim \chi_n,
    \qquad
    \theta_0 : =p^{-1/2+\eps}\eta
    \textrm{ for some } 0<\eps<1/6.
\]
By applying~\eqref{eq:asympototics_powers_C_tailestimatesproof} to~\eqref{eq:f'f''f'''} we obtain
\begin{align}
\label{eq:f''(z_n)}
	f''(w)
	\sim \al p\eta^{-2} 
	= \Theta(p \eta^{-2})
	\qquad\text{and}\qquad
	f'''(w)
	= \bigO{p \eta^{-3}},
\end{align}
so that the saddle-point heuristic $f''(w)\theta_0^2 = \omega(1)$ and $f'''(w)\theta_0^3 = o(1)$ is fulfilled. We claim that a Gaussian expansion holds for $f(w\eul^{\iu\theta})$ as $\eta\to0$ uniformly in $\lvert\theta\rvert <\theta_0$, that is, for $t = o(p^{1/6})$,
\begin{align}
    \tag{GaussExp}
    \label{eq:f_taylor_theta_0}
    f(w\eul^{\iu\theta})
    = f(w) - w^2 f''(w) {\theta^2}/{2} + o(1),
\end{align}
the proof of which is at the end of this section. 
With this at hand, we split up~\eqref{eq:to_polar_coord} into the integral over the -- as we will establish, dominant -- arc $\{w\eul^{\iu\theta}:-\theta_0\le\theta\le\theta_0\}$ and the remainder, i.e.,
\begin{align}
 \label{eq:I_1_I_2_proof_local_limit_thm}
 	\pr{L_p = \mu_p + t \sigma_p}
 	= \frac{C({x}_n)^{-p}}{2\pi\iu}
	\oint \eul^{f(z)} \frac{dz}{z}	
	= \frac{C({x}_n)^{-p}}{2\pi}
	\left( \int_{-\theta_0}^{\theta_0} + \int_{\theta_0 \le \lvert\theta\rvert\le \pi} \right)
	\eul^{f(w\eul^{\iu\theta})} d\theta 
	=: I_1 + I_2.   
\end{align}
We start with  $I_1$. Since $\theta_0$ is   such that $\theta_0^2f''(w)\to\infty$ and $\int_{\Real}\eul^{-y^2/2}dy = \sqrt{2\pi}$ we obtain due to~\eqref{eq:f_taylor_theta_0}
\begin{align}
\label{eq:I_1_proof_local_limit_thm}
	I_1
	\sim \frac{C({x}_n)^{-p}}{2\pi} 
	\int_{-\theta_0}^{\theta_0} \eul^{f(w) - w^2f''(w)\frac{\theta^2}{2}} d\theta
	\sim \frac{1}{\sqrt{2\pi}}
	\left(\frac{C({x}_nw)}{C({x}_n)}\right)^p 
	w^{-M} \frac{1}{\sqrt{w^2f''(w)}}.
\end{align}
We claim that for $t=o(p^{1/6})$ the first order of $I_1$ -- the alleged dominating integral -- satisfies
\begin{align}
    \tag{DomInt} \label{eq:I_1_exact_asymptotic_proof}
    I_1
	\sim   e^{-{t^2}/{2}} (2\pi w^2f''(w))^{-1/2}.
\end{align}
Due to~\eqref{eq:f''(z_n)} and using that $w\sim 1$ as well as $\eta\sim \chi_n$ we obtain that $w^2f''(w) \sim \al p\chi_n^{-2} \sim \sigma_p^2 $. Plugging this into~\eqref{eq:I_1_exact_asymptotic_proof} we conclude that
\[
	I_1
	\sim  \eul^{-{t^2}/{2}} \frac{1}{\sqrt{2\pi }\sigma_p}
	\sim \eul^{-{t^2}/{2}} \frac{1}{\sqrt{2\pi }}  \frac{\chi_n}{\sqrt{p\al }} .
% 	= \Theta\left(\eul^{-\frac{t^2}{2}} \frac{\eta}{\sqrt{p}}\right).
\]
This establishes the claimed first order asymptotic of $\pr{K_p=M}=\pr{L_p = \mu_p+t\sigma_p}$. Hence, to finish the proof, we need to show that $I_2$ in~\eqref{eq:I_1_I_2_proof_local_limit_thm} is negligible compared to $I_1$. We first reformulate
\begin{align}
    \label{eq:I_2_case_alpha>1}
    I_2
	= \left( \frac{C({x}_nw)}{C({x}_n)}\right)^p w^{-M} \frac{1}{2\pi}
	\int_{\theta_0\le\lvert\theta\rvert\le \pi} G(\eul^{\iu\theta})^p \eul^{-\iu \theta M}d\theta,
	\qquad G(y) = \frac{C({x}_nwy)}{C({x}_nw)}.
\end{align}
In light of~\eqref{eq:I_1_proof_local_limit_thm} and~\eqref{eq:f''(z_n)}, in order to show that $I_2=o(I_1)$ we just need to show that
\begin{align}
\label{eq:reform_case_r>1}
	\left\lvert \int_{\theta_0}^{\pi}G(\eul^{\iu\theta})^p \eul^{-\iu \theta M}d\theta\right\rvert 
	= o\left( \eta p^{-1/2}\right).
\end{align}
We first consider the case $\al >1$. 
With Lemma~\ref{lem:bounds_prob_gen_fct} we obtain that there are $c<1, A,b>0$ such that
\begin{align}
\label{eq:bounds_on_G_local_limit}
	\lvert G(\eul^{\iu\theta})\rvert
	\le \begin{cases}
	1 - \al \left({\theta}/{\eta}\right)^2/2,
	& \theta \le b\eta \\
	c , & \theta\ge b\eta \\
	A \max\{\eta/|\theta|, \eta\}, & \theta = \omega(\eta)
	\end{cases}.
\end{align}
Applying the triangle inequality yields
\begin{align*}
	\left\lvert \int_{\theta_0}^{\pi}G(\eul^{\iu\theta})^p \eul^{-\iu \theta M}d\theta\right\rvert 
	\le
	\left( \int_{\theta_0}^{b\eta} 
	+ \int_{b\eta}^{\min\{\pi,\eta\ln p\} } 
	+ \int_{\min\{\pi,\eta\ln p\}}^{\pi} \right)
	\lvert G(\eul^{\iu\theta})\rvert^p d\theta 
	=:R_1 + R_2 + R_3.
\end{align*}
% Note that due to~\eqref{eq:saddle_point_z_n} and $x=o(\sqrt{p})$
% \begin{align*}
%     z_n^{-M}
%     =\e{-\xi(\mu_p+x\sqrt{\sigma_p})}
%     = \e{-x\sqrt{rp} + \bigO{x^2}}.
% \end{align*}
Due to~\eqref{eq:reform_case_r>1} all that is left to show in order to obtain $I_2=o(I_1)$ is
\begin{align}
\label{eq:R_i_smaller_order}
    R_i
    = o\left( \eta p^{-1/2}\right),
    \qquad i=1,2,3.
\end{align}
Applying the first inequality of~\eqref{eq:bounds_on_G_local_limit} and $1-x\le \eul^{-x}$ for sufficiently small $x>0$ entails that 
\begin{align*}
	R_1
	\le \int_{\theta_0}^{b\eta} \e{-\frac{p\al }{2} \left(\frac{\theta}{\eta}\right)^2}d\theta
	= \frac{\eta}{\sqrt{p\al }} \int_{\sqrt{\al }p^{\eps}}^{b\sqrt{\al }p^{1/2}} \eul^{-t^2/2}dx
	= o\left( \eta p^{-1/2}\right)
\end{align*}
showing~\eqref{eq:R_i_smaller_order} for $i=1$.
We proceed to $R_2$, where we apply the second inequality in~\eqref{eq:bounds_on_G_local_limit} to obtain
\[
	R_2
	\le \eta\ln p \cdot c^p
	=  ({\eta}p^{-1/2}) \cdot (p^{1/2} \ln p \cdot c^p). 
\]
Since $0<c<1$ this expression is $o(\eta p^{-1/2})$. For the remaining case we apply  the third inequality in~\eqref{eq:bounds_on_G_local_limit} to obtain that
\begin{align}
	\label{eq:R_3_proof_local_limit_thm}
	R_3
	\le \left( \int_{\min\{ 1,\eta\ln p\}}^{1}
	+ \int_{1 }^{\pi} \right)
	\lvert G(\eul^{\iu\theta})\rvert^p d\theta
	\le (A\eta)^p\int_{\min\{ 1,\eta\ln p\} }^{1}\theta^{-p}d\theta
	+ \pi\cdot (C\eta)^p.
\end{align}
Then, as $p\to\infty$,
\begin{align*}
    0 \le (A\eta)^p\int_{\eta\ln p }^{1}\theta^{-p}d\theta
    = (A\eta)^p \left[-\frac{\theta^{-p+1}}p\right]_{\eta \ln p}^1
	\le \frac{A^p \eta (\ln p)^{-p+1}}{p} 
	\le \frac{\eta}{\sqrt{p}} \cdot
	\left(\frac{A}{\ln p}\right)^p
	= o\left({\eta}p^{-1/2}\right).
\end{align*}
Hence all the terms on the right hand side of~\eqref{eq:R_3_proof_local_limit_thm} are in $o(\eta p^{-1/2})$ validating~\eqref{eq:R_i_smaller_order} for $i=3$. We have just demonstrated the validity of~\eqref{eq:R_i_smaller_order} for $i=1,2,3$ and thus the assertion of Lemma~\ref{lem:L_p_deviation_from_mean} is fully proven for $\al >1$.

In the case $0 < \al \le 1$ we need to apply a trick in order to be able to make use of Lemma~\ref{lem:bounds_prob_gen_fct}. Instead of~\eqref{eq:I_2_case_alpha>1} we write for some $s\in\Nat$ such that $s\al >1$
\[
    I_2
	= \left( \frac{C({x}_nw)}{C({x}_n)}\right)^p w^{-M} \frac{1}{2\pi}
	\int_{\theta_0}^{\pi} G(\eul^{\iu\theta})^{p/s} \eul^{-\iu \theta M}d\theta,
	\qquad G(y) = \left(\frac{C({x}_nwy)}{C({x}_nw)}\right)^s.
\]
Now $G(y)$ is the generating function of a sum $Y_s$ of $s$ iid random variables with probability generating function $C({x}_nwy)/C({x}_nw)$. Hence we know from Lemma~\ref{lem:sum_r<1_rv_expansive} that $Y_s$ has probability generating function $\tilde{C}({x}_nwy)/\tilde{C}({x}_nw)$, where the coefficients of $\tilde{C}$  are given by
\[
    \tilde{c}_n
    = \tilde{h}(n) n^{s\al -1} \rho^{-n},
    \qquad \tilde{h} \text{ eventually positive, continuous and slowly varying}.
\]
As $s\al >1$ this is exactly the setting considered in Lemma~\ref{lem:bounds_prob_gen_fct} so that adapting the proof after~\eqref{eq:reform_case_r>1} is straightforward.

\vspace{3mm}
\noindent
{\it Proof of~\eqref{eq:saddle_point_z_n}.}
Define for some (large) constant $A>0$ the quantities $\xi_{\pm} = t\sigma_p^{-1} \pm A \cdot t^2 p^{-1}\chi_n$ and note that $x_ne^{\xi_{\pm}} = \rho e^{-\chi_n + {\cal O}(t\chi_n/\sqrt{p})} < \rho$. We will show that $f'(e^{\xi_-}) < 0 < f'(e^{\xi_+})$,
from which~\eqref{eq:saddle_point_z_n} follows immediately by the continuity of $f'$. To this end we compute the Taylor series of $x_nzC'(x_nz)/C(x_nz)$ around $z=1$. Let $\beta>0$ such that $x_n\eul^{\beta}<\rho$. Then we obtain that there is some $\lvert\delta\rvert\in[0,\beta]$ such that
\begin{equation*}
%\label{eq:taylor_around_1}
    \frac{{x}_n\eul^{\beta}C'({x}_n\eul^{\beta})}{C({x}_n\eul^{\beta})}
    = \nu
    + \frac{\sigma_p^2}{p}(\eul^{\beta}-1)
    +R(\delta,\eul^\beta),
    \qquad 
    R(\delta, \eul^\beta)
    :=\frac{d^2}{dz^2}\frac{{x}_nzC'({x}_nz)}{C({x}_nz)}\bigg|_{z=\eul^{\delta}} \frac{(\eul^{\beta}-1)^2}{2}.
\end{equation*}
Hence, there are $\lvert\delta_\pm\rvert \in [0,\xi_\pm]$ such that plugging $\beta=\xi_{\pm}$ into the previous equation yiels
%as $\xi_{\pm}\to 0$ and recalling that $t=o(p^{1/6})$
\begin{align}
\label{eq:taylor_around_1_continued}
    p\frac{{x}_n\eul^{\xi_\pm}C'({x}_n\eul^{\xi_\pm})}{C({x}_n\eul^{\xi_\pm})}
    = M \pm At^2\sigma_p^2p^{-1} \chi_n  + \bigO{t^2} + pR(\delta_\pm,\eul^{\xi_\pm}).
\end{align}
Moreover,
\begin{equation*}
\label{eq:pR_asymptotic}
     R(\delta_\pm,\eul^{\xi_\pm})
     =
     \Theta\left( \frac{C'''({x}_n\eul^{\delta_\pm})}{C({x}_n\eul^{\delta_\pm})} + \frac{C''({x}_n\eul^{\delta_\pm})C'({x}_n\eul^{\delta_\pm})}{C({x}_n\eul^{\delta_\pm})^2} + \frac{C'({x}_n\eul^{\delta_\pm})^3}{C({x}_n\eul^{\delta_\pm})^3}\right)
     \cdot {\xi_\pm^2},
     %=\Theta\left( p\chi_n^{-3} t^2\sigma_p^{-2}\right)
     %= \Theta\left(t^2\chi_n^{-1}\right).
\end{equation*}
where, crucially, the constants implicit in $\Theta$ do not depend on $A$. Using~\eqref{eq:asympototics_powers_C_tailestimatesproof}  and since  $\lvert\delta_{\pm}\rvert\le\lvert\xi_{\pm}\rvert = o(\chi_n)$  we get that
\[
    pR(\delta_\pm,\eul^{\xi_\pm})
    = \bigO{p\chi_n^{-3} \cdot \xi_{\pm}^2}
    = \bigO{p \chi_n^{-3} \cdot t^2 \sigma_p^{-2}}
    = \bigO{\chi_n^{-1}t^2}.
\]
Moreover, since $1 = o(\sigma_p^2p^{-1}\chi_n)$ we obtain from~\eqref{eq:taylor_around_1_continued}
\[
    p\frac{{x}_n\eul^{\xi_\pm}C'({x}_n\eul^{\xi_\pm})}{C({x}_n\eul^{\xi_\pm})}
    = M \pm At^2\chi_n^{-1} + \bigO{\chi_n^{-1}t^2},
\]
where, again, the constant in $\cal O$ does not depend on $A$. So, we may choose $A$ large enough such that for $\xi_+$ this expression is $>M$ and for $\xi_-$ it is $<M$. Together with~$f'(z) = p{x_nC'(x_nz)}/{C(x_nz)} - {M}/{z}$ we have thus established   $f'(e^{\xi_-}) < 0 < f'(e^{\xi_+})$, as claimed.

\vspace{3mm}
\noindent
{\it Proof of~\eqref{eq:f_taylor_theta_0}.}
By applying Taylor's theorem we obtain that for every $|\theta| \le \theta_0$ there is a $|\zeta| \le |\theta|$ such that
\begin{align}
\label{eq:f_omega_taylor_expansion_proof}
    f(w\eul^{\iu\theta})
    = f(w) - w^2f''(w)\frac{\theta^2}{2} + (w^3f'''(w\eul^{\iu \zeta}) + 2w^2 f''(w\eul^{\iu\zeta}) + w f'(w(\eul^{\iu\zeta}))\frac{(\iu\theta)^3}{6}.
\end{align} 
To show that the terms involving $\zeta$ are $o(1)$, we claim that 
\begin{align}
    \label{eq:C(weul(izeta))~C(w)}
    \lvert C(x_nw\eul^{\iu\zeta})\rvert \sim C(x_nw).
\end{align} 
With this at hand, and as $C$ and all its derivatives have only non-negative coefficients,
\[
    |\theta|^3 \lvert w^3f'''(w\eul^{\iu \zeta}) + 2w^2 f''(w\eul^{\iu\zeta}) + w f'(w(\eul^{\iu\zeta})\rvert
    = |\theta|^3p \cdot \bigO{ \frac{C'''(x_nw)}{C(x_nw)} + \frac{C''(x_nw)C'(x_nw)}{C(x_nw)^2} + \frac{C'(x_nw)^3}{C(x_nw)^3} }.
\]
Applying~\eqref{eq:asympototics_powers_C_tailestimatesproof} and using that $\lvert \zeta\rvert \le \lvert\theta\rvert \le \theta_0 = p^{-1/2 + \eps}\eta$ we obtain that the last term is
$
    \bigO{ \theta_0^3 p \cdot \eta^{-3}}
    = {\cal O}(p^{-1/2+3\eps})
$
and, since $\eps < 1/6$, we are done with the proof of~\eqref{eq:f_taylor_theta_0}. 
All is left to show is~\eqref{eq:C(weul(izeta))~C(w)}. 
Clearly, $\lvert C(x_nw\eul^{\iu\zeta})\rvert \le C(x_nw)$ since $C$ has only non-negative coefficients.
Let $A>0$ be a (large) constant. For $1\le k\le A\eta^{-1}$ we obtain that $k\zeta = o(1)$ for $\lvert \zeta\rvert \le \theta_0 = o(\eta)$ so that in this regime $\cos(\zeta k)\sim  1$. Thus
\[
    \lvert C(x_nw\eul^{\iu\zeta})\rvert 
    \ge \bigg\lvert \sum_{ k\ge 1} c_k (x_nw)^k \cos(\zeta k) \bigg\rvert 
    \sim \lvert C(x_nw) - R_0 + R_1 \rvert ,
    \quad R_i = \sum_{k>A\eta^{-1}} c_k (x_nw)^k \cos(\zeta k)^i, i=0,1.
\]
We estimate for $i=0,1$ and assisted by~\eqref{eq:SVmultpoly}
\begin{align*}
    \lvert R_i \rvert 
    &\le \sum_{k\ge A \eta^{-1}}h(k) k^{\al-1} \eul^{-\eta k}
    \le \sum_{k\ge A\eta^{-1}} \sup_{\ell\ge A\eta^{-1}}h(\ell)\ell^{-1} \cdot k^\al \eul^{-\eta k} \\
    &\sim \frac{h(A\eta^{-1})}{A \eta^{-1}} \cdot  \eta^{-\al} \cdot \sum_{k\ge A\eta^{-1}} (k\eta)^{\al} \eul^{-\eta k}
    \sim A^{-1} h(\eta^{-1}) \eta^{-\al} \int_A^\infty x^{\al}\eul^{-x}dx. 
\end{align*}
The integral in the previous display is finite and so, letting $A\to\infty$, we obtain that $\lvert R_i\rvert = o(h(\eta^{-1})\eta^{-\al}) = o(C(x_nw))$. It follows that $\lvert C(x_nw\eul^{\iu\zeta})\rvert \sim C(x_nw)$, that is,~\eqref{eq:C(weul(izeta))~C(w)} is valid which finishes the proof.

\vspace{3mm}
\noindent
{\it Proof of~\eqref{eq:I_1_exact_asymptotic_proof}.}
We need to show that
\[
    \left(\frac{C({x}_nw)}{C({x}_n)}\right)^p 
	w^{-M}
	\sim \eul^{-t^2/2}.
\]
Keeping in mind that $w = \eul^\xi$ we have due to a standard Taylor expansion of $C(x_n z)/C(x_n)$ around $z=1$ that there is $\delta_1\in[0,\xi]$ such that
\begin{align}
\label{eq:taylor_around_1_simple}
	\frac{C({x}_nw)}{C({x}_n)} 
	= 1 + \frac{{x}_nC'({x}_n)}{C({x}_n)} (\eul^\xi-1)
	+ \frac{{x}_n^2C''({x}_n)}{C({x}_n)} \frac{(\eul^\xi-1)^2}{2}
	+R', \qquad
	R':= \frac{{x}_n^3C'''({x}_n\eul^{\delta_1})}{C({x}_n)}\frac{(\eul^\xi-1)^3}{6}.
\end{align}
Since $\lvert\delta_1\rvert\le\lvert\xi\rvert=o(\chi_n)$, we obtain by applying Lemma~\ref{lem:chi_to_zero} that $R'=\bigO{\xi^3\chi_n^{-3}} = o(p^{-1})$ for $t=o(p^{1/6})$. Further, we obtain with Lemma~\ref{lem:chi_to_zero} that for $t=o(p^{1/6})$
\[
    \frac{{x}_nC'({x}_n)}{C({x}_n)}\xi^3
    = o(p^{-1})
    \quad\text{and}\quad 
    \frac{{x}_n^2C''({x}_n)}{C({x}_n)} \xi^3
    =o(p^{-1}).
\]
Hence we may rewrite~\eqref{eq:taylor_around_1_simple} to
\[
    \frac{C({x}_nw)}{C({x}_n)} 
	= 1 + \xi\frac{{x}_nC'({x}_n)}{C({x}_n)} + \frac{\xi^2}{2}\frac{{x}_nC'({x}_n)}{C({x}_n)}
	+ \frac{\xi^2}{2} \frac{{x}_n^2C''({x}_n)}{C({x}_n)} + o(p^{-1}).
\]
All the terms involving $\xi$ in the latter equation are $o(1)$ for $t=o(p^{1/6})$. With this at hand, we obtain for such $t$
\begin{align*}
	\left(\frac{C({x}_nw)}{C({x}_n)} \right)^p
	&= \e{p\ln\left(1 + \xi\frac{{x}_nC'({x}_n)}{C({x}_n)} + \frac{\xi^2}{2}\frac{{x}_nC'({x}_n)}{C({x}_n)}
	+ \frac{\xi^2}{2} \frac{{x}_n^2C''({x}_n)}{C({x}_n)} + o(p^{-1})\right)} \\
	&\sim \e{p\xi\frac{{x}_nC'({x}_n)}{C({x}_n)} 
% 	+ p\frac{\xi^2}{2} \frac{{x}_nC'({x}_n)}{C({x}_n)}
	+ p\frac{\xi^2}{2}\left(\frac{{x}_n^2C''({x}_n)}{C({x}_n)} - \left(\frac{{x}_nC'({x}_n)}{C({x}_n)}\right)^2\right)}
	=\e{\xi p\nu + {\xi^2}\sigma_p^2 / 2}.
\end{align*}
% Further
% \begin{align*}
% 	z_n^{-M} 
% 	&= \e{ -\left(p\frac{{x}_nC'({x}_n)}{C({x}_n)} + x\sqrt{\sigma_p}\right)\ln (1+\xi)} \\
% 	&= \e{ -\left(p\frac{{x}_nC'({x}_n)}{C({x}_n)} + x\sqrt{\sigma_p}\right) 
% 	\left(\xi - \frac{\xi^2}{2} + \mathcal{O}(\xi^3)\right)} \\
% 	&= \e{ -p\xi\frac{{x}_nC'({x}_n)}{C({x}_n)} - x\xi\sqrt{\sigma_p} + \frac{\xi^2}{2}p\frac{{x}_nC'({x}_n)}{C({x}_n)} + \frac{\xi^2}{2} x \sqrt{\sigma_p}
% 	+\mathcal{O}\left( p \chi_n^{-1} \xi^3\right)}.
% \end{align*}
Hence, recalling that $M=p\nu + t\sigma_p$,
\begin{align*}
	\left(\frac{C({x}_nw)}{C({x}_n)}\right)^p w^{-M} 
	&\sim \e{\xi p\nu + {\xi^2}\sigma_p^2/2 - \xi(p\nu + t\sigma_p)}
	= \e{{\xi^2}\sigma_p^2 / 2- \xi t\sigma_p}.
\end{align*}
Next we replace $\xi$ by the expressions given in~\eqref{eq:saddle_point_z_n} to obtain
\[
    {\xi^2}\sigma_p^2 / 2
    = {t^2}/{2} + {\cal O}(t^4p^{-1})
    \quad\text{and}\quad
    \xi t \sigma_p
    = t^2 + {\cal O}( t^3 p^{-1/2}).
\]
For $t=o(p^{1/6})$ both $\mathcal{O}$-terms vanish and~\eqref{eq:I_1_exact_asymptotic_proof} follows.
\end{proof}
%%%%%%%%%%%%%%%%%%%%%%%%%%%%%%%%%%%%%%%%%%%%%%%%%%%%%%%%%%%%%%%
\subsection{Proof of Corollaries~\ref{coro:L_N=n_l<d} and~\ref{coro:randomly_stopped_L_l>d}}
\label{sec:lem:L_N=n_l<d}
%%%%%%%%%%%%%%%%%%%%%%%%%%%%%%%%%%%%%%%%%%%%%%%%%%%%%%%%%%%%%%%

\label{sec:applications_clt}
\begin{proof}[Proof of Corollary~\ref{coro:L_N=n_l<d}]
We have that $x_ny_nC'(x_n) = n+\bigO{1}$ and $y_nC(x_n)=n+\bigO{1}$ due to~\eqref{eq:saddle_point_digest} and Lemma~\ref{lem:saddle_point_l<d_short}. Hence $\mu_{N_n}=\mathbb{E}[\sum_{1\le i \le N_n}C_{1,i}] = N_n {x}_nC'({x}_n)/C({x}_n) = n + \bigO{n/N_n}$ and further $\sigma_{N_n}^2=\textrm{Var}(\sum_{1\le i\le N_n}C_{1,i}) \sim n^2/(\al N_n)$ due to~\eqref{eq:mean_variance_p} and Lemma~\ref{lem:saddle_point_l<d}. We conclude that 
\[
    \pr{\sum_{1\le i\le N_n}C_{1,i} = n}
    = \pr{\sum_{1\le i\le N_n}C_{1,i} = \mu_{N_n} + \sigma_{N_n} \cdot \bigO{N_n^{-1/2} }}.
\]
Then apply Lemma~\ref{lem:L_p_deviation_from_mean} for some properly chosen $t={\cal O}(N_n^{-1/2})$. Since $\sigma_{N_n}^2 \sim N_n(x_n^2C''(x_n)/C(x_n) - (x_nC'(x_n)/C(x_n))^2) \sim y_n(x_n^2C''(x_n)-(x_nC'(x_n))^2/C(x_n))\sim y_nx_n^2C''(x_n)/(\al+1)$ due to Lemmas~\ref{lem:chi_to_zero} and~\ref{lem:saddle_point_l<d} the claim follows.
\end{proof}

\begin{proof}[Proof of Corollary~\ref{coro:randomly_stopped_L_l>d}]
Set $\tau:=\ex{P_1}=y_nC({x}_n)$ and $\tilde{n} = n-mN_n$. 
Let
\[
	B_{\le} 
	:= \{p\in\Nat_0 : \lvert \tau-p\rvert \le \sqrt{\tau}\ln \tau\}
	\quad\text{and}\quad
	B_>
	:= \{p\in\Nat_0 : \lvert \tau-p\rvert > \sqrt{\tau}\ln \tau\}.
\]
Then we split up
\[
	\pr{L = \tilde{n}}
	= \left(\sum_{p\in B_{\le} }+\sum_{p\in B_{>} }\right)
	\pr{L_p = \tilde{n}}\pr{P_1 = p}
	=: I_1 + I_2.
\]
Recall that according to~\eqref{eq:mean_variance_p} the mean and variance of $L_p$ are such that $\mu_p=p({x}_nC'({x}_n)/C({x}_n)-m)\sim \al p\chi_n^{-1}$ and $\sigma_p^2 \sim \al p\chi_n^{-2}$, respectively. Due to~\eqref{eq:saddle_point_digest} we see
\begin{align}
\label{eq:tilde_N_l>d}
	\ex{L}
	= \tau(\ex{C_{1,1}-m})
	= \tau\left(\frac{{x}_nC'({x}_n)}{C({x}_n)} - m\right)
	= \tilde{n}.
\end{align}
With this at hand we reformulate for $p\in\Nat$
\begin{align}
\label{eq:L_p=tilde_N_l>d}
	\pr{L_p = \tilde{n}}
	= \pr{L_p = \mu_p + t\sigma_p},
	\qquad
	t
	= t(p)
	= \frac{\tilde{n}-\mu_p}{\sigma_p}
	=(\tau -p) \frac{{x}_nC'({x}_n)/C({x}_n)-m}{\sigma_p}.
\end{align}
Let us first study $I_1$. For $p\in B_{\le}$ we obtain from Lemma~\ref{lem:chi_to_zero}, the asymptotics for $\sigma_p$, and $p \sim \tau$ that $t= \bigO{\ln \tau}$. Since $\ln\tau = o(p^{1/6})$ we can apply Lemma~\ref{lem:L_p_deviation_from_mean} to~\eqref{eq:L_p=tilde_N_l>d}.
%we verify that $\ln\tau = o(\min\{p^{1/6},\chi_n^{-1/2}\})$. Since $p\sim \tau$ for $p\in B_\le$ it suffices to show that $\ln\tau = o(\chi_n^{-1/2})$. According to Lemma~\ref{lem:saddle_point_l>d} and~\eqref{eq:svissubpoly} we know that for some arbitrary $0<\delta<1/(\al+1)$ we eventually get $\tau \sim C_0g(\tilde{n})\tilde{n}^{\al/(\al+1)} \le \tilde{n}^{\al/(\al+1)+\delta}$ and $\chi_n^{-1} \sim \al C_0 g(\tilde{n})\tilde{n}^{-1/(\al+1)}\le \tilde{n}^{-1/(\al+1)+\delta}$. Hence
%\[
%    \ln\tau \le \ln \tilde{n}
%    = o(\tilde{n}^{(1/(\al+1)-\delta)/2}) 
%    = o(\chi_n^{-1/2}).
%\]
We obtain for all $p\in B_\le$
\begin{align*}
\label{eq:asymp_L_p_l>d}
	\pr{L_p = \tilde{n}}
	\sim \eul^{-{t^2}/{2}} 
	(2\pi\sigma_p^2)^{-1/2}
	\sim  \eul^{-{t^2}/{2}} 
	(2\pi\sigma_\tau^2)^{-1/2}
% 	\frac{1}{\sqrt{2\al \pi}}\frac{\chi_n}{\sqrt{\tau }}.
\end{align*}
Further~\eqref{eq:poisson_llt} yields for such $p$
\[
	\pr{P_1=p}
	= \pr{P_1 = \tau + y\sqrt{\tau}}
	\sim \eul^{-{y^2}/{2}} (2\pi\tau)^{-1/2},
	\qquad y = y(p) = \frac{p-\tau}{\sqrt{\tau}}.
\]
With this at hand we obtain
\begin{align*}
    % \label{sum:I_1_x^2_y^2}
    I_1
    % \sim \frac{1}{2\pi\sqrt{\al }}\frac{\chi_n}{\tau}
    \frac{1}{2\pi \sqrt{ \tau \sigma_\tau^2}}
    \sum_{p\in B_{\le}} \eul^{-{(t^2+y^2)}/{2}}.
\end{align*}
Set
\begin{align}
\label{eq:Delta_l>d}
    \Delta
    := \frac{({x}_nC'({x}_n)/C({x}_n)-m)^2}{\sigma_\tau^2} + \frac{1}{\tau} 
    \sim \frac{\al +1}{\tau}.
\end{align}
Since for all $p\in B_{\le}$ we have due to~\eqref{eq:sigma_p_def} and~\eqref{eq:mean_variance_p}
\[
    \lvert t^2+y^2 - (\tau-p)^2 \Delta\rvert
    = (\tau-p)^2\left(\frac{{x}_nC'({x}_n)}{C({x}_n)}-m\right)^2 \cdot  \left\lvert \frac{1}{\sigma_p^2} - \frac{1}{\sigma_\tau^2}\right\rvert
	=\bigO{(\tau-p)^3\tau^{-2}}
	=o(1).
\]
Accordingly, we obtain
%\[
%    I_1
%    \sim
%    \frac{1}{2\pi\sqrt{\al}}
%    \frac{\chi_n}{\tau}
%    \sum_{p\in B_{\le}}
%        \eul^{-{(\tau-p)^2}\Delta/{2}}.
%\]
%Defining $g(x):=\eul^{-x^2 \Delta/2}$ we reformulate
\begin{align}
\label{eq:S_1_product_sum}
	I_1 
	\sim
% 	\frac{1}{2\pi\sqrt{\al}}
% 	\frac{\chi_n}{\tau}
     \frac{1}{2\pi \sqrt{ \tau \sigma_\tau^2}}
	\sum_{|p|\le \sqrt{\tau}\ln \tau} \eul^{-p^2 \Delta/2}.
\end{align}
Applying~\eqref{eq:euler-maclaurin-summation-easy-remainder} there exists $Q$ with $\lvert Q\rvert \le 3$ such that 
\[
    \sum_{|p|\le \sqrt{\tau}\ln \tau} \eul^{-p^2 \Delta/2}
	= \int_{-\sqrt{\tau}\ln \tau}^{\sqrt{\tau}\ln \tau} \eul^{-x^2 \Delta/2}dx 
	+ Q.
\]
Since~\eqref{eq:Delta_l>d} guarantees that
$
	\sqrt{\tau}\ln \tau \sqrt{\Delta}
	= \Theta(\ln \tau)
	= \omega(1)
$
we compute
\begin{align*}
	\int_{-\sqrt{\tau}\ln \tau}^{\sqrt{\tau}\ln \tau} \eul^{-x^2 \Delta/2}dx 
	= {\Delta}^{-1/2} \int_{-\sqrt{\tau}\ln \tau\sqrt{\Delta}}^{\sqrt{\tau}\ln \tau\sqrt{\Delta}}
	\eul^{-x^2/2}dx
	\sim \sqrt{\frac{2\pi\tau}{\al +1}} = \omega(1).
\end{align*}
Hence~\eqref{eq:S_1_product_sum} yields together with the expressions for the asymptotic behaviour of $\chi_n$ and $\tau$ in Lemma~\ref{lem:saddle_point_l>d} as well as $\sigma_\tau^2\sim \al p \chi_n^{-2}$
\[
	I_1
	\sim \frac{1}{\sqrt{2\pi (\al +1)}\sigma_\tau}
		\sim \sqrt{\frac{\al C_0}{2\pi(\al+1)}\cdot g(n-mN_n) \cdot (n-mN_n)^{-(\al+2)/(\al+1)}}.
% 	\sim \sqrt{\frac{\al C_0}{2\pi(\al+1)}} \cdot h(\chi_n^{-1})^\frac{1}{2(\al+1)}
% 	\cdot (n-mN_n)^{-\frac{\al+2}{2(\al+1}}.
\]
By plugging in the asymptotics from Lemma~\ref{lem:chi_to_zero} we also obtain $(\al+1)\sigma_\tau^2 \sim\rho^{-m}(\al+1)(x_nC''(x_n)-(x_nC'(x_n))^2/C(x_n) ) \sim \rho^{-m}x_n^2C''(x_n)$. 
To show that $I_2$ is negligible compared to $I_1$ we apply~\eqref{eq:poisson_chernoff} to obtain the existence of $d>0$ such that $I_2\le \eul^{-d(\ln \tau)^2}$. Moreover, by applying  Lemma~\ref{lem:saddle_point_l>d} we obtain that for some $\delta > 0$ eventually $\tau \ge (n-mN_n)^{\delta}$ and in addition $I_1\ge (n-mN_n)^{-\delta}$. From this we infer that $I_2\le \eul^{-d(\ln\tau)^2} =o(1)$  and the proof is finished. 
\end{proof}

%%%%%%%%%%%%%%%%%%%%%%%%%%%%%%%%%%%%%%%%%%%%%%%%%%%%%%%%%%%%%%%
\subsection{Proof of Lemma~\ref{lem:E_n_cond_P_N_all_cases}}
\label{sec:lem:E_n_cond_P_N_all_cases}
%%%%%%%%%%%%%%%%%%%%%%%%%%%%%%%%%%%%%%%%%%%%%%%%%%%%%%%%%%%%%%%
\subsubsection{Proof of Lemma~\ref{lem:E_n_cond_P_N_l<d}}
\begin{proof}[Proof of Lemma~\ref{lem:E_n_cond_P_N_l<d}]
We write $\tilde n := n-mN_n$.
Assisted by~\eqref{eq:E_N_in_L+R} and using the independece of $L, R$ we obtain
\begin{align*}
	\pr{\E_n\mid\Pa_{N_n}}
	%&= \sum_{p,r\ge 0} \pr{L+R = \tilde{n} \mid \Pa_{N_n},P_1=p,R=r} \pr{P_1=p,R=r\mid\Pa_{N_n}} \\
	&= \sum_{p,r\ge 0}
	\pr{L_p = \tilde{n}-r}\pr{P_1 = p, R=r \mid \Pa_{N_n}}.
\end{align*}
We partition the summation into three parts. For some $b>0$ (that we will  choose appropriately) set
\begin{align*}
	B_{\le} 
	&:= \{(p,r)\in\Nat_0^2: \lvert  N_n-p\rvert \le\sqrt{N_n}\ln N_n, 
	r\le b\ln n\}, \\ 
	B_{\cdot,>}
	&:= \{(p,r)\in\Nat_0^2:r> b\ln n\} \qquad\text{and} \\
	B_{>,\le}
	&:= \{(p,r)\in\Nat_0^2: \lvert  N_n-p\rvert >\sqrt{N_n}\ln N_n, r\le b\ln n\}.
\end{align*}
Then we obtain the three partial sums 
\begin{align*}
	\pr{\E_n\mid\Pa_{N_n}}
	&= \Big(\sum_{(p,r)\in B_{\le}} +\sum_{(p,r)\in B_{\cdot,>}} 
	+ \sum_{(p,r)\in B_{>,\le}} \Big)
	\pr{L_p = \tilde{n}-r}\pr{P_1 = p, R=r \mid \Pa_{N_n}}
	=: I_1 + I_2 + I_3.
\end{align*}
It will turn out that the sum over $B_{\le}$ is essentially the whole sum. We will argue that
\begin{align}
	\label{eq:what_to_show_l<d}
	I_1
	\sim  \pr{L_{N_n} = n}, \quad
	I_2
	= o(I_1) 
	\quad\text{and}\quad
	I_3
	= o(I_1).
\end{align}
Let us start by showing that $I_1\sim\pr{L_{N_n}=n}$. By~\eqref{eq:saddle_point_digest} and~\eqref{eq:mu_p_def} \begin{align}
	\label{eq:N_tilde_reform}
	\tilde{n}
	= y_nC({x}_n)\left( \frac{{x}_nC'({x}_n)}{C({x}_n)} - m \right)
	\quad\text{and}\quad
	\mu_p 
	= \ex{L_p}
	= p \left( \frac{{x}_nC'({x}_n)}{C({x}_n)} - m \right).
\end{align}
Also recall~\eqref{eq:mean_variance_p} which says
\begin{align}
	\label{eq:mean_variance_p_recall_l<d}
	\mu_p
	\sim \al p\chi_n^{-1}
	\quad\text{and}\quad
	\sigma_p^2
	:= \Var{L_p}
	\sim \al p\chi_n^{-2}.
\end{align}
Then
\[
	\pr{L_p = \tilde{n} - r}
	= \pr{L_p = \mu_p + (t-\tilde{r}) \sigma_p},
	\qquad t = t(p) := \frac{\tilde{n}- \mu_p}{\sigma_p},
	~~ \tilde{r} = \tilde{r}(p) := \frac{r}{\sigma_p}.
\]
According to Lemma~\ref{lem:saddle_point_l<d} $y_nC({x}_n) = N_n +\bigO{1}$. Hence with~\eqref{eq:N_tilde_reform},~\eqref{eq:mean_variance_p_recall_l<d} we obtain for $(p,r)\in B_{\le}$ that
\[
	t
	= (y_nC({x}_n)-p)({x}_nC'({x}_n)/C({x}_n)-m){\sigma_p}^{-1}
	\sim (N_n-p) \sqrt{{\al }/{p}}
	= \bigO{\ln N_n}.
\]
Further, as $0\le r \le b\ln n$ and $\chi_n\sim \al n/N_n$ according to Lemma~\ref{lem:saddle_point_l<d},
\[
	\tilde{r}
	= \bigO{{r}/{\sqrt{p}\chi_n^{-1}}} 
	= o(1)
	\quad\text{and}\quad
	t\tilde{r}
	= o(1).
\]
%Keeping in mind the application of Lemma~\ref{lem:L_p_deviation_from_mean} we need to show that $x=o(\min\{p^{1/6},\chi_n^{-1/2}\})$. Obviously $x=\bigO{\ln N_n}$ implies that $x=o(p^{1/6})$ since $p\sim N_n$ for $p\in B_{\le}$. Hence we only need to show that $x=o(\chi_n^{-1/2})$. Using $\chi_n\sim \al n/N_n$ according to Lemma~\ref{lem:saddle_point_l<d} we see that it is sufficient to prove $\sqrt{N_n}\ln N_n=o(\sqrt{n})$. From the definition of $N^*_n$ in~\eqref{eq:N_star} we immediately get from~\eqref{eq:svissubpoly} that there is some $0<b<1$  such that eventually $N_n^*\le n^b$. Then $1<N_n<N_n^*$ implies $\sqrt{N_n}\ln N_n = \bigO{\sqrt{ N^*_n} \ln N^*_n} = \bigO{n^{b/2}\ln n} = o(\sqrt{n})$.
Since $t-\tilde r = o(p^{1/6})$ we may apply Lemma~\ref{lem:L_p_deviation_from_mean} and get for all $(p,r)\in B_{\le}$
\[
	\pr{L_p = \tilde{n} - r}
	\sim \eul^{-{(t-\tilde{r})^2}/{2}} \pr{L_p = \mu_p}
	\sim \eul^{-t^2/2} \pr{L_p = \mu_p}.
\]
We also deduce from Lemma~\ref{lem:L_p_deviation_from_mean} and by plugging in $\chi_n\sim \al N_n/n$ from Lemma~\ref{lem:saddle_point_l<d} that
$\pr{L_p =\mu_p}\sim \sqrt{\al N_n/(2\pi))}/n$ whenever $p\sim N_n$. Then Corollary~\ref{coro:L_N=n_l<d} reveals that $\pr{L_p =\mu_p}\sim\pr{L_{N_n}=\tilde{n}}$. Hence for all $(p,r)\in B_{\le}$
\begin{align}
	\pr{L_p = \tilde{n} - r}
	\sim \eul^{-{t^2}/{2}} \pr{L_{N_n} = \tilde{n}}.
\end{align}
With this at hand, we simplify
\begin{equation*}
	\label{eq:second_term_l<d}
%\nonumber
    \pr{L_{N_n}=\tilde{n}}^{-1} \cdot I_1
%	&\sim  \sum_{(p,r)\in B_{\le}} \eul^{-\frac{x^2}{2}} 
%    \pr{P_1=p,R=r\mid \Pa_{N_n}} \\ \nonumber
	=  \sum_{\lvert N_n-p\rvert \le \sqrt{N_n}\ln N_n} \eul^{-{t^2}/{2}}\pr{P_1=p\mid\Pa_{N_n}}
	+ \bigO{\pr{R\ge b\ln n\mid \Pa_{N_n}}}.
\end{equation*}
Due to Lemma~\ref{lem:R_ge_r_all_cases} there is some $0<a<1$ such that $\pr{P_1=p,R=r\mid \Pa_{N_n}}\le a^{b\ln n}$ for all $r>b\ln n$, so that the $\cal O$-term is $\bigO{n^{-1}}$ for $b$ sufficiently large. Hence
\begin{align}
\label{eq:S_1_1_l<d}
    I_1
    \sim \pr{L_{N_n}=\tilde{n}} \sum_{\lvert N_n-p\rvert \le \sqrt{N_n}\ln N_n} \eul^{-t^2/2} \pr{P_1=p\mid \Pa_{N_n}} 
    + o\left( \frac{\sqrt{N_n}}{n}\right).
\end{align} 
Next we show that the main contribution to the sum in~\eqref{eq:S_1_1_l<d} is given by a very small range, namely $\lvert N_n- p\lvert \le \ln N_n$. For that recall~\eqref{eq:N_tilde_reform} and set
\[
	\Delta_p
	:= \frac{t^2}{(\tau-p)^2}
	= \frac{\big({x}_nC'({x}_n)/C({x}_n)-m\big)^2}{\sigma_p^2}.
\]
It is true that $\pr{P_1=p\mid \Pa_{N_n}} = 0$ for $p>N_n$. Hence we can rewrite the sum in~\eqref{eq:S_1_1_l<d} as
\begin{align}
    \label{eq:sum_entire_range}
    \sum_{\lvert N_n-p\rvert \le \sqrt{N_n}\log N_n} \eul^{-t^2/2} \pr{P_1=p\mid \Pa_{N_n}} 
    &= \sum_{0 \le q\le \sqrt{N_n}\ln N_n} \eul^{-(\tau-N_n+q)^2\Delta_{N_n-q}/2} \pr{P_1=N_n-q\mid \Pa_{N_n}}.
    % \\
    % &= \bigg( \sum_{0 \le q\le \ln N_n} + \sum_{\ln N_n<q<\sqrt{N_n}\ln N_n} \bigg)
    % \eul^{-(\tau-N+q)^2/2\cdot \Delta_{N-q}} \pr{P_1=N-q\mid \Pa_{N_n}}.
\end{align}
Disassemble
\[
	\pr{P_1=N_n-q\mid \Pa_{N_n}}
	= \frac{1}{\pr{\Pa_{N_n}}}\pr{P_1=N_n-q} \pr{\sum_{j\ge 2}jP_j = q}.
\]
We known that the density of a Poisson random variable $P_1$ is maximised at its mean, and so $\pr{P_1= N_n-q }\le \pr{P_1=\tau}$ for any $q\in\Nat$ and further $\pr{P_1=\tau}\sim \pr{\Pa_{N_n}}$ with Lemma~\ref{lem:P_N_l<d}. With Lemma~\ref{lem:P_N_l<d} it also follows that $\pr{\sum_{j\ge2}jP_j = q} = \bigO{a^q}$ for some $0<a<1$ as $q\to\infty$. Thus, we obtain
\begin{align}
    \label{eq:sum_>ln_N_small}
    \sum_{\ln N_n<q<\sqrt{N_n}\ln N_n} \eul^{-(\tau-N_n+q)^2\Delta_{N_n-q}/2 } \pr{P_1=N_n-q\mid \Pa_{N_n}}
    \le \sum_{q>\ln N_n}  \pr{\sum_{j\ge 2}jP_j=q}
    %= \bigO{a^{\ln N_n}}
    =o(1).
\end{align}
Next we consider the range $0\le q \le \ln N_n$. Here we have that $\pr{P_1 = N_n-q} \sim \pr{P_1=\tau}$ since $\tau=N_n+\bigO{1}$ and by applying~\eqref{eq:poisson_llt}. Further note that $\Delta_{N_n-q} \sim \al/N_n$ for $\lvert N_n- p\rvert\le \ln N_n$ according to~\eqref{eq:mean_variance_p_recall_l<d} for that range of $q$ such that $(\tau -N_n + q)^2 \cdot \Delta_{N_n-q} \sim \al(\log N_n)^2/N_n=o(1)$. Consequently, with the same $0<a<1$ from~\eqref{eq:sum_>ln_N_small},
\begin{align}
    \label{eq:sum_>ln_N_constant}
    \sum_{0\le q\le \ln N_n} \eul^{-(\tau-N_n+q)^2\Delta_{N-q}/2 } \pr{P_1=N_n-q\mid \Pa_{N_n}}
    \sim \sum_{0\le q\le \ln N_n} \pr{\sum_{j\ge 2} jP_j = q} 
    =1 - \bigO{a^{\ln N_n}}
    \sim 1.
\end{align}
Plugging~\eqref{eq:sum_>ln_N_small} and~\eqref{eq:sum_>ln_N_constant} into~\eqref{eq:sum_entire_range} and then into~\eqref{eq:S_1_1_l<d} yields
$
    I_1 \sim \pr{L_{N_n}=\tilde{n}} + o(\sqrt{N_n}/n)
$. Since $\pr{L_{N_n}=n} = \Theta(\sqrt{N_n}/n)$ due to Corollary~\ref{coro:L_N=n_l<d} the first part in~\eqref{eq:what_to_show_l<d} follows. 

Next we dedicate ourselves to showing the remaining claims in~\eqref{eq:what_to_show_l<d}.  Since $I_1 = \Theta(\sqrt{N_n}/n)$ we need to prove that $I_2$ and $I_3$ are in $o(\sqrt{N_n}/n)$. Start with $I_2$. According to Lemma~\ref{lem:R_ge_r_all_cases} we obtain that there exists $0<a<1$ yielding for $b$ sufficiently large
\begin{align}
\label{eq:S_2_small_l<d}
	I_2
	\le \sum_{(p,r)\in B_{\cdot,>}} \pr{P_1=p, R=r\mid \Pa_{N_n}}
	\le \pr{R\ge r\mid \Pa_{N_n}}
	\le a^{b \ln n}
	= o\left(\frac{\sqrt{N_n}}{n}\right).
\end{align}
Next we treat $I_3$. We observe that $\pr{\lvert N_n-P_1\rvert >\sqrt{N_n}\ln N_n} / \Pa_{N_n} = o(1)$ according to Lemma~\ref{lem:P_N_l<d} and~\eqref{eq:poisson_chernoff}. Then Lemma~\ref{lem:P_N_l<d} entails that there is some $0<a<1$ such that
\begin{align}
    \label{eq:I_3_o_min_a_y_n}
    I_3
    \le \sum_{\lvert N_n-p\rvert >\sqrt{N_n}\ln N_n} \pr{P_1=p \mid \Pa_n}
    =o\big( \min\{a,y_n\}^{\sqrt{N_n}\ln N_n} \big).
\end{align}
If $N_n \ge (\ln n)^3$, then this is certainly in $o(\sqrt{N_n}/n)$.
If $N_n\le (\ln n)^3$ and $N_n \to \infty$, then $\lambda_n \le (\ln n)^3 / N_n^*$ and from Lemma~\ref{lem:saddle_point_l<d} we obtain that $y_n \le n^{-c}$ for some $c > 0$ and all sufficiently large $n$.
Plugging this into~\eqref{eq:I_3_o_min_a_y_n} yields $I_3 = n^{-\omega(1)}$ and the proof is finished.
\end{proof}

\subsubsection{Proof of Lemma~\ref{lem:E_n_cond_P_N_l>d}}

\begin{proof}[Proof of Lemma~\ref{lem:E_n_cond_P_N_l>d}]
Let $\tilde{n}=n-mN_n$. With~\eqref{eq:E_N_in_L+R} we get
\begin{align*}
	\pr{\E_n\mid\Pa_{N_n}}
    = \sum_{p,r\ge 0}
	\pr{L_p = \tilde{n}-r}\pr{P_1 = p, R=r \mid \Pa_{N_n}}.
\end{align*}
Set $\tau:=\ex{P_1} = y_nC({x}_n)$. We partition the summation regime into three parts, namely, for some constant $b>0$ (which we need to choose sufficiently large later) we define
\begin{align*}
	B_{\le} 
	&:= \{(p,r)\in\Nat_0^2: \lvert p - \tau\rvert\le \sqrt{\tau}\ln \tau, r\le b(\ln \tilde{n})^2\}, \\ 
	B_{\cdot,>}
	&:= \{(p,r)\in\Nat_0^2: r> b(\ln \tilde{n})^2\} \quad\text{and} \\
	B_{>,\le}
	&:= \{(p,r)\in\Nat_0^2: \lvert p - \tau\rvert> \sqrt{\tau}\ln \tau, r\le b(\ln \tilde{n})^2\}.
\end{align*}
Then
\begin{align*}
	\pr{\E_n\mid\Pa_{N_n}} 
	&= \left( \sum_{(p,r)\in B_{\le}} 
	+  \sum_{(p,r)\in B_{\cdot,>}}
	+ \sum_{(p,r)\in B_{>,\le}} \right)
	\pr{L_p = \tilde{n}-r} \pr{P_1=p, R=r\mid \Pa_{N_n}}
	=: I_1 + I_2 + I_3.
\end{align*}
We will show that the sum over $B_\le$ is dominant compared to the negligible sums over $B_{\cdot,>}$ and $B_{>,\le}$ as $n$ tends to infinity, that is,
\begin{align}
\label{eq:I_1_sum_I_2_3_small}
    I_1 \sim \pr{L=\tilde{n}},\quad I_2=o(I_1)\quad\text{and}\quad I_3=o(I_1).
\end{align}
Let us first determine $I_1$. Set $\mu_p :=\ex{L_p} = p(x_nC'(x_n)/C(x_n)-m)$ and $\sigma_p^2:=\Var{L_p}$. Due to~~\eqref{eq:mean_variance_p} the asymptotics of these expressions are $\mu_p\sim \al p\chi_n^{-1}$ and $\sigma_p^2 \sim \al p\chi_n^{-2}$.  We observe that for any $p,r$, compare also to~\eqref{eq:tilde_N_l>d}, 
\[
	\pr{L_p = \tilde{n} - r}
	= \pr{L_p = \mu_p 
	+ (t-\tilde{r})\sigma_p},
	\qquad t
	:=t(p)
	= (\tau-p)\frac{{x}_nC'({x}_n)/C({x}_n)-m}{\sigma_p},
	~\tilde{r} 
	:= \tilde{r}(p)
	= \frac{r}{\sigma_p}.
\]
We want to apply Lemma~\ref{lem:L_p_deviation_from_mean}. Using~\eqref{eq:asymptotic_for_lem_chi_to_0}, for $(p,r)\in B_\le$ we obtain that $t=\bigO{\ln \tau} = o(p^{1/6})$. Moreover, plugging in the asymptotics of $\tau$ and $\chi_n$ from Lemma~\ref{lem:saddle_point_l>d} and noting that $g$ grows slower than any polynomial we obtain 
\begin{align}
    \label{eq:tilde_r_n-mN_n_l>d}
    \tilde{r} 
    &= \bigO{(\ln \tilde{n})^2 \tau^{-1/2}\chi_n}
    = \bigO{(\ln \tilde{n})^2\sqrt{g(\tilde{n})} \tilde{n}^{-(\al+1)/(2(\al+1))}}
    =o(1)
\end{align}
implying $t-\tilde{r} =o(p^{1/6})$. So, by Lemma~\ref{lem:L_p_deviation_from_mean} we obtain for all $(p,r)\in B_{\le}$
\begin{align}
    \label{eq:L_p=N-r_omit_r_first_step_l>d}
	\frac{\pr{L_p=\tilde{n}-r}}{\pr{L_p=\tilde{n}}}
	=
	\frac{\pr{L_p = \mu_p + (t-\tilde{r}) \sigma_p}}
	{\pr{L_p = \mu_p + t\sigma_p}}
	\sim \eul^{-{((t-\tilde{r})^2+t^2)}/{2}}.
\end{align}
Since $t=\bigO{\ln \tau} = \bigO{\ln \tilde{n}}$ due to Lemma~\ref{lem:saddle_point_l>d}, Equation~\eqref{eq:tilde_r_n-mN_n_l>d} also implies that $t\tilde{r}=o(1)$. Thus we get due to~\eqref{eq:L_p=N-r_omit_r_first_step_l>d} for all $(p,r)\in B_\le$
\[
    \pr{L_p=\tilde{n}-r}
    \sim \pr{L_p=\tilde{n}}.
\]
Accordingly,
\begin{align}
	\label{eq_S_1_sim_l<d}	
	I_1
	\sim \sum_{\lvert p-\tau\rvert\le \sqrt{\tau}\ln \tau}\pr{L_p = \tilde{n}}
	\sum_{0\le r\le b(\ln \tilde{n})^2} 	\pr{P_1 = p, R=r \mid \Pa_{N_n}}.
\end{align}
Next we claim that the sum over $r$ equals asymptotically $\pr{P_1 = p}$. For any $p$ we have
\begin{align}
\label{eq:P_1=p_w_r_l>d}
	\sum_{0\le r\le b(\ln \tilde{n})^2} 	\pr{P_1 = p, R=r \mid \Pa_{N_n}}
	= \pr{P_1 = p \mid \Pa_{N_n}} - \sum_{r>b(\ln \tilde{n})^2}\pr{P_1 = p, R=r \mid \Pa_{N_n}}.
\end{align}
According to Lemma~\ref{lem:R_ge_r_all_cases} there is some $0<a<1$ such that
\[
	\sum_{r>b(\ln \tilde{n})^2}\pr{P_1 = p, R=r \mid \Pa_{N_n}}
	= \bigO{a^{b(\ln \tilde{n})^2}}.
\]
We further obtain with Lemma~\ref{lem:P_N_l>d} for any $(p,r)\in B_{\le}$
\begin{align}
    \nonumber
	\pr{P_1 = p \mid \Pa_{N_n}}
	&= \frac{\pr{\sum_{j\ge 2}jP_j=N_n-p}}{\pr{\Pa_{N_n}}} \pr{P_1=p} \\ \label{eq:P_1_cond_no_cond}
	&\sim \frac{\e{-c_m\frac{a_n}{1-a_n}}}{(1-a_n)^{c_m-1}}
	\cdot ({x}_n^my_n)^{-p} \cdot \left(\frac{N_n-p}{N_n}\right)^{c_m-1} \cdot \pr{P_1 = p}.
\end{align}
Since according to Lemma~\ref{lem:saddle_point_l>d} we have $p\sim\tau=y_nC({x}_n)\sim a_n\cdot  N_n$ we get that 
\[
    	\left(\frac{N_n-p}{(1-a_n)N_n}\right)^{c_m-1}
    	\sim 1.
\]
Further, Lemma~\ref{lem:saddle_point_l>d} reveals that $\limsup a_n\le \limsup \lambda_n^{-1}<1$ and $x_n^my_n/(1-x_n^my_n) \sim (1-a_n)N_n/c_m$ implying ${x}_n^my_n = 1 - c_m(1-a_n)^{-1}N_n^{-1} + o((1-a_n)^{-1}N_n^{-1})$, so that
\[
	({x}_n^my_n)^{-p}
	=\e{-p\ln({x}_n^my_n)}
	= \e{c_m\frac{a_n}{1-a_n} + \bigO{\frac{a_n}{(1-a_n)^2} N_n^{-1}}}
	\sim \e{c_m\frac{a_n}{1-a_n}}.
\]
Plugging the asymptotic identities in the previous two displays into~\eqref{eq:P_1_cond_no_cond} we deduce for $(p,r)\in B_\le$ that $\pr{P_1 = p\mid\Pa_{N_n}}	\sim \pr{P_1=p}$.
Since $p$ differs at most by $\sqrt{\tau}\ln \tau$ from the mean $\tau$ of $P_1$ we further obtain by~\eqref{eq:poisson_llt} that $\pr{P_1=p}\sim 1/\sqrt{2\pi\tau}\e{-\bigO{(\ln \tau)^2}}$. From~\eqref{eq:svissubpoly} and Lemma~\ref{lem:saddle_point_l>d} we deduce that for any $\delta>0$ eventually $\tau \le \tilde{n}^{\al/(\al+1)+\delta}$. Hence $\pr{P_1=p} = \e{-\bigO{(\log\tilde{n})^2}}= \omega(a^{b(\ln\tilde{n})^2})$ for $b$ sufficiently large so that the expression in~\eqref{eq:P_1=p_w_r_l>d} is asymptotically given by $\pr{P_1=p}$ for $\lvert\tau-p\rvert\le\sqrt{\tau} \ln \tau$. So far we have shown that~\eqref{eq_S_1_sim_l<d} can be asymptotically simplified to
\[
	I_1
	\sim \sum_{\lvert p-\tau\rvert\le \sqrt{\tau}\ln \tau}
	\pr{L_p = \tilde{n}}\pr{P_1 = p},
\]
where this sum in turn equals
\begin{align}
\label{eq:I_1_two_parts_l>d}
    I_1 \sim
	\pr{L = \tilde{n}}
	- \sum_{\lvert p-\tau\rvert> \sqrt{\tau}\ln \tau}
	\pr{L_p = \tilde{n}}\pr{P_1 = p}.
\end{align}
With~\eqref{eq:poisson_chernoff} and again using that $\ln\tau = \Omega(\ln\tilde{n})$ we obtain for some $d>0$
\begin{align*}
	\sum_{\lvert p-\tau\rvert> \sqrt{\tau}\ln \tau}
	\pr{L_p = \tilde{n}}\pr{P_1 = p}
	\le \pr{\lvert P_1 - \tau\rvert > \sqrt{\tau}\ln \tau}
	\le \eul^{-d(\ln \tau)^2}
	= \eul^{-\Omega((\ln\tilde{n})^2)}.
\end{align*}
Applying Corollary~\ref{coro:randomly_stopped_L_l>d} and once again~\eqref{eq:svissubpoly} we get for some $c>0$ that
\[
    \pr{L=\tilde{n}} 
    = \omega(\tilde{n}^{-c})
    = \omega(\eul^{-\Omega((\ln\tilde{n})^2)}),
\]
delivering the first part of~\eqref{eq:I_1_sum_I_2_3_small} in light of~\eqref{eq:I_1_two_parts_l>d}.
We continue by applying Lemma~\ref{lem:R_ge_r_all_cases} to $I_2$, i.e. with $0<a<1$ from before we have
\[
	I_2
	\le \sum_{r>b(\ln \tilde{n})^2}\pr{R=r\mid\Pa_{N_n}}
	=\bigO{a^{b(\ln \tilde{n})^2}} = o(\tilde{n}^{-c})
	= o\left(\pr{L=\tilde{n}}\right)
\]
showing the second part of~\eqref{eq:I_1_sum_I_2_3_small}.
Next we show that $I_3=o(I_1)$. Let $\eps>0$ be such that $(1+\eps)\limsup\tau/N_n<1$. We can find such an $\eps$ because $\limsup\tau/N_n\le \limsup \lambda_n^{-1}<1$ due to Lemma~\ref{lem:saddle_point_l>d}. Then
\begin{align*}
	I_3
	\le \left(\sum_{\lvert p-\tau\rvert >\sqrt{\tau}\ln\tau, p<N_n/(1+\eps)} + \sum_{N_n/(1+\eps)\le p\le N_n}\right)
	\pr{P_1=p\mid\Pa_{N_n}}
	=: R_1+R_2.
\end{align*}
Since $\liminf(1-a_n)>0$, see Lemma~\ref{lem:saddle_point_l>d}, we get analogous to~\eqref{eq:P_1_cond_no_cond} that
\[
    R_1
    =\bigO{\sum_{\lvert p-\tau\rvert >\sqrt{\tau}\ln\tau, p<N_n/(1+\eps)}\pr{P_1=p} ({x}_n^my_n)^{-p}
    \left(1-\frac{p}{N_n}\right)^{c_m-1} }.
\]
Further ${x}_n^my_n = 1-\Theta(N_n^{-1})$ due to Lemma~\ref{lem:saddle_point_l>d} by which we conclude for $p<N_n/(1+\eps)$ that $({x}_n^my_n)^{-p}=\bigO{1}$. For that range of $p$ we also have that $1-p/N_n=\Theta(1)$ so that with~\eqref{eq:poisson_chernoff} there is some $d>0$ yielding
\[
    R_1 
    = \bigO{\pr{\lvert P_1-\tau\rvert >\sqrt{\tau}\ln\tau}}
    = \bigO{\eul^{-d(\ln\tau)^2}}
    = o\left(\pr{L=\tilde{n}}\right).
\]
We proceed by treating $R_2$. Here we estimate $\pr{P_1=p,R=r\mid  \Pa_{N_n}} \le \pr{P_1=p}/\pr{\Pa_{N_n}}$. With Lemma~\ref{lem:P_N_l>d} we further obtain $\pr{\Pa_{N_n}} = \Theta(N_n^{-1})$. Setting $s=s(n):=(N_n/(1+\eps)-\tau)/\sqrt{\tau}$ an application of~\eqref{eq:poisson_chernoff} gives for some $d>0$
\[
    R_2
    \le \frac{\pr{ P_1>N_n/(1+\eps)}}{\pr{\Pa_{N_n}}}
    = \bigO{  N_n\pr{P_1>\tau + s\sqrt{\tau}}}
    = \bigO{ N_n \eul^{-d s \min\{s,\sqrt{\tau}\}}}.
\]
By the choice of $\eps$ we have that $s = N_n/\sqrt{\tau} ((1+\eps)^{-1} - \tau/N_n) = \Theta(N_n/\sqrt{\tau})$. This lets us conclude that $s\min\{s,\sqrt{\tau}\} = \Theta(\min\{N_n^2/\tau, N_n\})= \Omega(N_n) = \Omega(N_n^*)$. Due to~\eqref{eq:N_star} and~\eqref{eq:svissubpoly} we know that $N_n^*=n^{\al/(\al+1)+o(1)}$ leading to $R_2 = n^{-\omega(1)}$
and the proof is finished.
\end{proof}

%%%%%%%%%%%%%%%%%%%%%%%%%%%%%%%%%%%%%%%%%%%%%%%%%%%%%%%%%%%%%%%
\subsection{Proof of Lemma~\ref{lem:comb_reform_L_all_cases}}
\label{sec:lem:comb_reform_L_all_cases}
%%%%%%%%%%%%%%%%%%%%%%%%%%%%%%%%%%%%%%%%%%%%%%%%%%%%%%%%%%%%%%%

\begin{proof}[Proof of Lemma~\ref{lem:comb_reform_L_l<d}]
%With Corollary~\ref{coro:coeff_as_prob} we obtain $[x^ny^{N_n}]G(x,y) = {x}_n^{-n}y_n^{-N}G({x}_n,y_n)\pr{\E_n\mid\Pa_{N_n}}\pr{\Pa_{N_n}}$. Due to Lemma~\ref{lem:G({x}_n,y_n)_l<d} we obtain for $y:=(\lambda_n/C_0)^{r+1}\rho^{-m}$
%\[
%	G({x}_n,y_n)
%	\sim \e{N - \frac{\rho^m y}{1-\rho^my}}\e{\sum_{j\ge 2}\frac{C(\rho^j)y^j}{j}}.
%\]
%Further Lemma~\ref{lem:P_N_l<d} entails
%\[
%	\pr{\Pa_{N_n}}
%	\sim \frac{1}{\sqrt{2\pi N}}.
%\]
Together with the basic fact that $[x^n]A(ax) = a^n[x^n]A(x)$ for any power series $A$ we obtain with Lemma~\ref{lem:E_n_cond_P_N_l<d}
\begin{align}
\label{eq:[x^n]C(x)^N_reformulated_l<d}
%	\pr{\E_n\mid\Pa_{N_n}}
	\pr{\sum_{1\le i\le N_n}C_{1,i}=n}
	= [x^n]\frac{C({x}_nx)^{N_n}}{C({x}_n)^{N_n}}
	= \frac{{x}_n^n}{C({x}_n)^{N_n}} [x^n]C(x)^{N_n}.
\end{align}
Further as $S_n=\bigO{1}$ according to Lemma~\ref{lem:saddle_point_l<d}
\begin{align}
\label{eq:(y_nC({x}_n)^N_computed_l<d}
	(y_nC({x}_n))^{N_n}
	= (N_n-c_mS)^{N_n}
	= N_n^{N_n} \left(1-c_m\frac{S_n}{N_n}\right)^{N_n}
	\sim N_n^{N_n} \e{-c_mS_n}.
\end{align}
Lemma~\ref{lem:comb_reform_L_l<d} follows since Lemma~\ref{lem:saddle_point_l<d} implies that $\eul^{c_mS}\sim\eul^{c_m \rho^my_n/(1-\rho^my_n)}$ and by applying Stirling's formula to $N_n^{N_n}$. 
% By Corollary~\ref{coro:L_N=n_l<d} as well as equations~\eqref{eq:[x^n]C(x)^N_reformulated_l<d} and~\eqref{eq:(y_nC({x}_n)^N_computed_l<d} we get that
% \[
% 	[x^n]C(x)^{N_n}
% 	= \frac{C({x}_n)^{N_n}}{{x}_n^n} \pr{\sum_{1\le i\le N_n}C_{1,i}=n}
% 	\sim \sqrt{\frac{\al }{2\pi}} \frac{N_n^{N_n} \e{-c_mS}}{{x}_n^ny_n^{N_n}} \frac{\sqrt{N_n}}{n}.
% \]
% Another application of Stirling's formula to $N_n^{N_n}$ finishes the proof.
% % \[
% 	\frac{1}{N_n!}[x^n]C(x)^{N_n}
% 	\sim \frac{\sqrt{\al }}{2\pi} \e{-c_m\frac{\rho^my_n}{1-\rho^my_n}}
% 	\cdot {x}_n^{-n}y_n^{-N} \cdot \frac{1}{n} \cdot \eul^{N_n}.
% \]
\end{proof}

\begin{proof}[Proof of Lemma~\ref{lem:comb_reform_L_l>d}]
First we argue that $\tau=y_nC({x}_n)$ and $\tilde{\tau}:=C({x}_n)/{x}_n^m$ differ by $-c_m a_n/(1-a_n) + o(1) = \bigO{1}$. This is true since $\tau-\tilde{\tau} = y_nC(x_n) (1-(x_n^my_n)^{-1} )$ and Lemma~\ref{lem:saddle_point_l>d} gives $y_nC(x_n) \sim a_nN_n$ as well as $1-(x_n^my_n)^{-1} = -S_n^{-1} \sim -c_m / ((1-a_n)N_n)$.
Thus by replacing $\tau$ by $\tilde{\tau}$, $P_1$ by $\tilde{P}_1 \sim \pois{\tilde{\tau}}$ and $L$ by $\tilde{L} := \sum_{1\le i\le \tilde{\tau}}(C_{1,i}-m)$ in the proof of Corollary~\ref{coro:randomly_stopped_L_l>d} we obtain
\[
    \pr{\tilde{L}=n-mN_n}
    % \sim \sqrt{\frac{\al C_0}{2\pi(\al+1)} \cdot g(n-mN_n) \cdot (n-mN_n)^{-(\al+2)/(\al+1)}}
    \sim  \frac1{\sqrt{2\pi \rho^{-m}x_n^2C''(x_n)}}
    \sim \pr{L=n-mN_n}.
\]
As the probability generating function of $C_{1,1}-m$ is given by $x^{-m}C({x}_nx)/C({x}_n)$  we obtain
\[
	\pr{L=n-mN_n}
	\sim \pr{\tilde{L} = n-mN_n}
	= [x^{n-mN_n}]\e{\frac{C({x}_n)}{{x}_n^m}\left(x^{-m}\frac{C({x}_nx)}{C({x}_n)}-1 \right)}.
\]
Next we use the basic fact that $[x^n]bF(ax) = a^nb [x^n]F(x)$ for any series $F$ and $a,b\in\Real$. Hence 
\begin{equation*}
\label{eq:coeff_sets_as_prob_l>d}
	\pr{L = n-mN_n}
	\sim {x}_n^{n-mN_n} \e{-\frac{C({x}_n)}{{x}_n^m}}
	[x^{n-mN_n}]\e{\frac{C(x)}{x^m}}.
\end{equation*}
% This proves Lemma~\ref{lem:comb_reform_L_l>d}.
%Thus Lemmas~\ref{coro:coeff_as_prob},~\ref{lem:P_N_l>d} and~\ref{lem:E_n_cond_P_N_l>d} entail
%\begin{align}
%\begin{split}
%\label{eq:coeff_prod_comp_l>d}
%	[x^ny^{N_n}]G(x,y)
%	&= {x}_n^{-n}y_n^{-N}G({x}_n,y_n)\pr{\E_n\mid\Pa_{N_n}}\pr{\Pa_N} \\
%	&\sim \frac{\e{\frac{C_0/\lambda_n}{1-C_0/\lambda_n}}}{(1-C_0/\lambda_n)\Gamma(c_m)} \cdot\frac{1}{N} \cdot G({x}_n,y_n) \e{-\frac{C({x}_n)}{{x}_n^m}}
%	[x^{n-mN_n}]\e{\frac{C(x)}{x^m}}
%	\end{split}
%\end{align}
%As in~\eqref{eq:h_n-yC(x)_l>d} we have $y_nC({x}_n)-C({x}_n)/{x}_n^m \sim -(C_0/\lambda_n)/(1-C_0/\lambda_n)$. Plugging this and Lemma~\ref{lem:G({x}_n,y_n)_l>d} back into~\eqref{eq:coeff_prod_comp_l>d} yields
%\[
%	[x^ny^{N_n}]G(x,y)
%	\sim \e{\sum_{j\ge 2}\frac{C(\rho^j) - c_m\rho^{jm}}{j\rho^{jm}}}
%	\cdot \frac{\big((1-C_0/\lambda_n)N\big)^{c_m-1}}{\Gamma(c_m)}\cdot 
%	[x^{n-mN_n}]\e{\frac{C(x)-c_mx^m}{x^m}}.
%\]
% Rearranging the previous equation yields
% \[
% 	[x^{n-mN_n}]\e{\frac{C(x)-c_mx^m}{x^m}}
% 	\sim \eul^{-c_m} \cdot {x}_n^{-(n-mN_n)}\cdot \e{\frac{C({x}_n)}{{x}_n^m}}
% 	\pr{L=n-mN_n}.
% \]
% Let $(a_n)_{n\in\Nat}$ be the sequence from Lemma~\ref{lem:saddle_point_l>d}. Hence due to Lemma~\ref{lem:saddle_point_l>d} and Corollary~\ref{coro:randomly_stopped_L_l>d} we obtain
% \[
% 	[x^{n-mN_n}]\e{\frac{C(x)-c_mx^m}{x^m}}
% 	\sim \sqrt{\frac{\al}{2\pi(\al+1)}} \eul^{-c_m} \cdot \frac{\sqrt{a_n N_n}}{n-mN_n}
% 	 \cdot\e{\frac{C(x_n)}{x_n^m}} \cdot x_n^{-(n-mN_n)}.
% \]
% Finally the last part of Lemma~\ref{lem:comb_reform_L_l>d} is exactly~\eqref{eq:h_n-yC(x)_l>d}.
\end{proof}

\subsection{Proof of Theorem~\ref{thm:coeff_g_n_granovsky}}
\label{pf:coeff_g_n_granovsky}
Let $z_n$ be the solution to $z_nC'(z_n)=n$. 
Clearly $z_n=\rho\eul^{-\eta_n}$ with $\eta_n\to0$ as $n\to\infty$, so that by~\eqref{eq:asympototics_powers_C_tailestimatesproof}
\[
     \eta_n
     \sim (\Gamma(\al+1)h(\eta_n^{-1}))^{1/(\al +1)}n^{-1/(\al +1)}.
\]
Since~\eqref{eq:c_n} implies that $c_n/c_{n-1}\sim\rho^{-1}$ we have due to~\cite[Cor.~4.3]{Bell2003} that
\[
    \frac{[z^n]\e{C(z)}}{[z^{n-1}]\e{C(z)}}
    \sim \rho^{-1}.
\]
Further $0<\rho<1$ implies that the radius of convergence of $\sum_{j\ge 2}{C(z^j)}/{j}$ is greater than $\rho$.
Since the radius of convergence of $\e{C(z)}$ is $\rho$ we deduce from Lemma~\ref{lem:coeff_product}
\begin{align}
\label{eq:g_n_coro}
    g_n
    = [z^n]\e{C(z) + \sum_{j\ge 2}\frac{C(z^j)}{j}}
    \sim \e{\sum_{j\ge 2}\frac{C(\rho^j)}{j}} [z^n]\e{C(z)},
    \qquad\text{as }n\to\infty.
\end{align}
By virtue of this the task of determining $g_n$ reduces to computing the coefficient of $\e{C(z)}$. In what follows we consider the one-parametric Boltzmann model with the parameter $z_n$ as explained at the beginning of Section~\ref{subsec:motivation}. We need the following notation:
Let $P$ be a $\pois{C(z_n)}$ distributed random variable and $C_1,C_2,\dots$ iid copies of $\Gamma C(z_n)$, that is,
\[
    \pr{C_1=k}
    = \frac{c_kz_n^k}{C(z_n)},
    \qquad k\in\Nat.
\]
Further we need the sum of these random variables $K_p := \sum_{1\le i\le p}C_i$ for $p\in\Nat_0$ and its randomly stopped version $K:=K_P$. Noting that the probability generating functions of $P$ and $C_1$ are given by $\e{C(z_n)(z-1)}$ and $C(z_nz)/C(z_n)$, respectively, we reformulate
\begin{align}
\label{eq:coeff_expC_coro}
    [z^n] \eul^{C(z)}
    = z_n^{-n} \eul^{C(z_n)} [z^n]\e{C(z_n)\left(\frac{C(z_nz)}{C(z_n)}-1\right)}
    = z_n^{-n} \eul^{C(z_n)} \pr{K=n}.
\end{align}
Set $\tau:=C(z_n)$. Define
\[
    B_{\le}
    := \{p\in\Nat_0:\lvert p-\tau\rvert \le \sqrt{\tau}\ln n\}
    \quad\text{and}\quad
    B_>
    := \{p\in\Nat_0:\lvert p-\tau\rvert>\sqrt{\tau}\ln n\}.
\]
With these definitions at hand we split up
\begin{align}
    \label{eq:S=n_coro}
    \pr{K=n}
    = \left(\sum_{p\in B_\le} + \sum_{p\in B_>} \right) \pr{K_p=n}\pr{P=p}
    =: I_1 + I_2.
\end{align}
We start with $I_1$. With the help of~\eqref{eq:asympototics_powers_C_tailestimatesproof} we establish the identities
\begin{align*}
    \nu_p
    &:=\ex{K_p}
    = p \frac{z_nC'(z_n)}{C(z_n)}
    \sim \al p\eta_n^{-1}
    \quad\text{and} \\
    \sigma_p^2
    &:= \Var{K_p}
    = p\left(\frac{z_n^2C''(z_n)+z_nC'(z_n)}{C(z_n)}-\left(\frac{z_nC'(z_n)}{C(z_n)}\right)^2\right)
    \sim \al p\eta_n^{-2}.
\end{align*}
Define $L_p = K_p - mp$ and $\mu_p = \ex{L_p} =  \nu_p - mp$ implying $\{L_p=\mu_p + d\} = \{K_p=\nu_p + d\}$ for any $d\in\Real$. Let
\[
    t
    = t(p)
    := \frac{(\tau-p)z_nC'(z_n)/C(z_n)}{\sigma_p}.
\]
With this definition of $t$ we have $n = \tau z_nC'(z_n)/C(z_n) = \nu_p + t\sigma_p$. Further $t=\bigO{\ln n}$ for $p\in B_\le $ so that Lemma~\ref{lem:L_p_deviation_from_mean} is applicable and we obtain
\begin{align}
\label{eq:S_p=n_coro}
    \pr{K_p=n}
    = \pr{ L_p = \mu_p + t\sigma_p}
    \sim \eul^{-{t^2}/{2}} (2\pi \sigma_\tau^2)^{-1/2}.
    % \frac{\eta_n}{\sqrt{\al \tau}}. 
\end{align}
Note that we used $\sigma_p\sim\sigma_\tau$ for $p\sim\tau$ in the latter display.
Next we observe due to~\eqref{eq:poisson_llt} that for $p\in B_\le$ and $s = s(p):= (p-\tau)/\sqrt{\tau} = \bigO{\ln n}$
\begin{align*}
\label{eq:P=p_coro}
    \pr{P=p}
    = \pr{P = \tau + s\sqrt{\tau}}
    \sim ({2\pi\tau})^{-1/2} \eul^{-{s^2}/{2}}.
\end{align*}
Plugging this and~\eqref{eq:S_p=n_coro}  into $I_1$ defined in~\eqref{eq:S=n_coro} yields
\begin{equation}
\label{eq:I_1_coro}
    I_1
    \sim \frac{1}{2\pi} (\tau \sigma_\tau^2)^{-1/2} 
    % \frac{1}{2\pi\sqrt{\al }}  \frac{\eta_n}{\tau}
    \sum_{p\in B_\le} \eul^{-(s^2+t^2)/2}.
\end{equation}
Set
\[
    \Delta
    := \frac{(z_nC'(z_n)/C(z_n))^2}{\sigma_\tau^2} + \frac{1}{\tau}.
\]
By
\[
    \lvert s^2+t^2 - (\tau-p)^2\Delta\rvert
    = (\tau-p)^2 \left(\frac{z_nC'(z_n)}{C(z_n)}\right)^2\left\lvert \frac{1}{\sigma_p^2}-\frac{1}{\sigma_\tau^2}\right\rvert
    = \bigO{(\ln n)^3\frac{\tau}{\tau^{3/2}}}
    = o(1)
\]
and~\eqref{eq:I_1_coro} we obtain the asymptotic identity
\begin{align}
\label{eq:I_1_2_coro}
    I_1
      \sim \frac{1}{2\pi} (\tau \sigma_\tau^2)^{-1/2}
    \sum_{p\in B_\le} \eul^{-(\tau-p)^2 \Delta / 2} 
    = \frac{1}{2\pi} (\tau \sigma_\tau^2)^{-1/2}
    \sum_{|p| \le \sqrt{\tau}\ln n} \eul^{- p^2\Delta / 2}.
\end{align}
By~\eqref{eq:euler-maclaurin-summation-easy-remainder} we obtain that there exists $Q$ with $\lvert Q\rvert \le 3$ such that
\begin{align}
    \label{eq:euler_maclaurin_coro}
    \sum_{|p|\le \sqrt{\tau}\ln n} \eul^{-p^2\Delta/2 }
    = \int_{-\sqrt{\tau}\ln n}^{\sqrt{\tau}\ln n} \eul^{-x^2\Delta/2} dx
    + Q.
\end{align}
By a change of variables and since $\sqrt{\Delta\tau}\ln n=\Theta( \ln n) =\omega(1)$ 
\[
     \int_{-\sqrt{\tau}\ln n}^{\sqrt{\tau}\ln n} \eul^{-x^2\Delta/2} dx
     = \Delta^{-1/2}  \int_{-\sqrt{\Delta\tau}\ln n}^{\sqrt{\Delta\tau}\ln n} \eul^{-{x^2}/{2}} dx
     \sim  \Delta^{-1/2} \sqrt{2\pi}.
\]
Combined with Equations~\eqref{eq:I_1_2_coro} and~\eqref{eq:euler_maclaurin_coro} we readily obtain that $I_1 \sim (2\pi \tau \sigma_\tau^2 \Delta)^{-1/2}$. 
Moreover, since $\tau \sigma_\tau^2 \Delta = \tau \cdot ( \sigma_\tau^2 \Delta) = z_nC''(z_n) + z_nC'(z_n)\sim z_nC''(z_n)$  the proof is finished.

%%%%%%%%%%%%%%%%%%%%%%%%% References %%%%%%%%%%%%%%%%%%%%%%%%%%
%\newpage
{
\small
\bibliographystyle{abbrvnat}
\bibliography{references}
}
%%%%%%%%%%%%%%%%%%%%%%%%%%%%%%%%%%%%%%%%%%%%%%%%%%%%%%%%%%%%%%%

%%%%%%%%%%%%%%%%%%%%%%%%% APPENDIX %%%%%%%%%%%%%%%%%%%%%%%%%%
\newpage
\appendix \section{Appendix: Slowly Varying Functions}
\label{sec:appendix}
%%%%%%%%%%%%%%%%%%%%%%%%%%%%%%%%%%%%%%%%%%%%%%%%%%%%%%%%%%%%%%%
Let $h:[1,\infty)\to(0,\infty)$ be a slowly varying function, that is, $h$ is measurable and for any $\lambda>0$
\begin{align}
\label{eq:defSV}
	\lim_{x\to\infty}\frac{h(\lambda x)}{h(x)} 
	= 1.
\end{align}
``Slowly varying'' means essentially smaller than any polynomial, see also below in~\eqref{eq:svissubpoly} for formal variants of this statement. All the results in this section for slowly varying strictly positive $h$ do straightforwardly hold for $h[1,\infty)\to [0,\infty)$ such that $h$ is eventually positive and~\eqref{eq:defSV} is valid.
\paragraph{Results on Slowly Varying Functions}
The theory presented in this chapter goes back to Jovan Karamata, who proved all the basic results in his works~\cite{Karamata1930,Karamata1931, Karamata1933}. A thorough overview can be found in the comprehensive textbooks~\cite{Bingham1987} or~\cite[Chapter IV]{Korevaar2004}. Let us begin with the \emph{Uniform Convergence Theorem} that will be useful later.
%This was proven by~\cite{Karamata1930} for continuous functions $h$ and later on generalized to measurable $h$ by~\cite{Korevaar1949}.
\begin{theorem}[{\cite[Thm.~1.2.1]{Bingham1987}}]
\label{thm:uniformconv}
The convergence in~\eqref{eq:defSV} is uniform for $\lambda$ in any compact subset of $(0,\infty)$.
\end{theorem}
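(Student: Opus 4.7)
The plan is to work on the additive scale. Setting $g(y) := \log h(e^y)$, the continuity of $h$ makes $g$ continuous for large $y$, and the defining property~\eqref{eq:defSV} rewrites as the pointwise statement
\[
    g(y+u) - g(y) \to 0 \quad \text{as } y \to \infty, \quad \text{for each fixed } u \in \mathbb{R}.
\]
The task is then to upgrade this to uniform convergence in $u$ on any compact interval $K \subset \mathbb{R}$, which is equivalent to the claim of the theorem after taking $\lambda = e^u$.

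I would argue by contradiction. Suppose failure: there exist $\epsilon > 0$ and sequences $y_n \to \infty$ and $u_n \to u_0 \in K$ (after extraction via compactness) such that $|g(y_n + u_n) - g(y_n)| \geq 3\epsilon$. The core construction is the pair of ``good'' sets
\[
    B_n := \{t \in [0,1] : |g(y_n + t) - g(y_n)| \leq \epsilon\}, \quad C_n := \{t \in [0,1] : |g(y_n + u_n + t) - g(y_n + u_n)| \leq \epsilon\}.
\]
By continuity of $g$ both are closed subsets of $[0,1]$; for each fixed $t$, the pointwise hypothesis applied at the shifts $y_n \to \infty$ and $y_n + u_n \to \infty$ gives $\mathbf{1}_{B_n}(t), \mathbf{1}_{C_n}(t) \to 1$. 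Dominated convergence then yields $|B_n|, |C_n| \to 1$, so $|B_n \cap C_n| > 1/2$ for $n$ large, and for every $t$ in this intersection the triangle inequality yields
\[
    |g(y_n + u_n) - g(y_n)| \leq 2\epsilon + |g(y_n + u_n + t) - g(y_n + t)|.
\]

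The main obstacle will be that, after substituting $z_n := y_n + t$, one is reduced to controlling a quantity of the same shape $|g(z_n + u_n) - g(z_n)|$ as the one started with: a single round of the above reduction merely relocates a $3\epsilon$ deviation into an $\epsilon$ deviation along a shifted sequence, without closing the loop. I would overcome this by exploiting that $|B_n \cap C_n|$ tends to $1$ (rather than being merely positive), so that a Steinhaus/Baire-type selection permits a single $t$ that lies in $B_n \cap C_n$ and in the analogous good set attached to $z_n = y_n + t$ for infinitely many $n$; combining this with $u_n \to u_0$ and the local continuity of $g$ (which ensures $|g(z_n + u_n) - g(z_n + u_0)| \to 0$) one derives $|g(z_n + u_n) - g(z_n)| \to 0$, contradicting the $\epsilon$ lower bound established above and completing the proof.

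Since this result is classical and well-documented (e.g., \cite[Thm.~1.2.1]{Bingham1987}), one may alternatively short-circuit the above measure-theoretic reasoning by invoking Karamata's representation for continuous slowly varying functions, $h(x) = c(x)\exp\!\bigl(\int_1^x \eta(t)/t\,dt\bigr)$ with $c(x) \to c>0$ and $\eta(t)\to 0$; then for $\lambda \in [a,b]\subset(0,\infty)$,
\[
    \frac{h(\lambda x)}{h(x)} = \frac{c(\lambda x)}{c(x)} \exp\!\left(\int_x^{\lambda x}\frac{\eta(t)}{t}\,dt\right),
\]
the first factor tends to $1$ uniformly in $\lambda$ because $\lambda x \to\infty$ uniformly, and the second is bounded in absolute value by $\log(b/a)\cdot \sup_{t\geq ax}|\eta(t)| \to 0$.
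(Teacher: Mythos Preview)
The paper does not supply a proof of this theorem; it is quoted from \cite{Bingham1987} as a black box in the appendix. So there is no ``paper's proof'' to compare against, and your proposal must stand on its own.

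Your first approach sets up the classical contradiction argument correctly through the construction of $B_n$, $C_n$ and the triangle-inequality reduction. The gap is in the ``overcoming'' paragraph. You assert that local continuity of $g$ ensures $|g(z_n + u_n) - g(z_n + u_0)| \to 0$ once $u_n \to u_0$. This is false in general: continuity of $g$ gives, for each fixed $n$, a neighbourhood of $z_n + u_0$ on which $g$ varies little, but the radius of that neighbourhood may shrink to zero as $z_n \to \infty$. Without some equicontinuity or a uniform modulus (which is essentially what you are trying to prove), you cannot pass from $u_n \to u_0$ to $g(z_n+u_n)-g(z_n+u_0)\to 0$. The standard way to close the argument is different: one either uses a Baire-category step (the sets $\{u: |g(y+u)-g(y)|\le\epsilon \text{ for all } y\ge N\}$ are closed and exhaust $K$, so one of them has interior) or a Steinhaus difference-set argument on sets of nearly full measure. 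Your sketch gestures at ``Steinhaus/Baire-type selection'' but does not actually carry out either mechanism; as written, the reduction loops back on itself.

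Your second approach via the representation $h(x)=c(x)\exp(\int_1^x\eta(t)/t\,dt)$ is logically correct \emph{given} the representation, and the estimate you write down is clean. The issue is circularity: in the standard development (including \cite{Bingham1987} and the paper's own appendix), the Representation Theorem is derived \emph{from} the Uniform Convergence Theorem, not the other way around. If you want to use this route you must either supply an independent proof of the representation for continuous slowly varying functions, or acknowledge that you are importing a strictly stronger result from the literature.
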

\noindent 
Let us continue with the famous \emph{Representation Theorem}, first obtained by~\cite{Karamata1933} in the continuous setting and again for arbitrary measurable functions by~\cite{Korevaar1949}, c.f.~\cite[Theorem 1.3.1]{Bingham1987}. It states that there exist bounded measurable functions $c(x)$ and $\eps(x)$ such that
\[
	h(x) 
	= c(x) \exp\left(\int_1^x \frac{\eps(t)}t dt\right),
\]
where, for some $c>0$,
\[
	c(x) 
	\to c
	\qquad\text{and}\qquad
	\eps(x)
	\to 0
	\quad\text{ as }x\to\infty.
\]
From this we immediately obtain for any $\delta > 0$ that there is an ${x}_n$ such that
\begin{equation}
\label{eq:svissubpoly}
	x^{-\delta} 
	\le h(x) 
	\le x^\delta
	\quad\text{and}\quad 
	\Big(\frac{x}{x'}\Big)^{\delta}  
	\le \frac{h(x')}{h(x)} 
	\le \Big(\frac{x'}{x}\Big)^{\delta} 
	\quad \text{ for all } 
	x' \ge x \ge {x}_n.
\end{equation}
Moreover, see~\cite[Theorem 1.5.3]{Bingham1987}, we obtain that for any $\mu>0$
\begin{equation}
\label{eq:SVmultpoly}
	\sup_{1 \le y \le x} h(y) y^\mu  
	\sim h(x)x^\mu
	\quad\text{and}\quad
	\sup_{y \ge x} h(y) y^{-\mu}  
	\sim h(x)x^{-\mu}
	\qquad
	\text{ as } x\to \infty	.	
\end{equation}
Let $\al  > 0$. From here we consider the function
\[
	c(s) = h(s) s^{\al -1}, 
	\quad s\ge 1,
\]
where $h$ is continuous. Note that this is no restriction in our setting as $c_n$ given in~\eqref{eq:c_n} is only defined for natural numbers, such that we can simply interpolate linearly to obtain continuity.
We proceed with the following important result, known as \emph{Karamata's Theorem}.
%, proven in~\cite{Karamata1930}.
\begin{theorem}[{\cite[Prop.~1.5.8]{Bingham1987}}]
Let $h$ be slowly varying
and $\al  > 0$. Then for any $a\ge1$
\[
	\int_{a}^x c(t) \textrm{d} t 
	\sim \al ^{-1} \, h(x) \, x^\al , 
	\quad \text{as } x\to \infty.
\]
\end{theorem}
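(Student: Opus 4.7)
The plan is to normalize by $h(x)x^\al$ and change variables $t = xu$, which reduces the claim to showing that
\[
    J(x) := \int_{a/x}^1 \frac{h(xu)}{h(x)} u^{\al-1} du \longrightarrow \int_0^1 u^{\al-1} du = \frac{1}{\al}, \qquad x\to\infty.
\]
Pointwise the integrand converges to $u^{\al-1}$ by the definition of slow variation, so the substantive task is to justify passing to the limit under the integral. I would fix $\delta \in (0,1)$ and split $J(x) = J_1(x,\delta) + J_2(x,\delta)$, where $J_1$ is the integral over $[\delta,1]$ and $J_2$ is the integral over $[a/x, \delta]$.

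For $J_1$, the Uniform Convergence Theorem~\ref{thm:uniformconv} yields that $h(xu)/h(x) \to 1$ uniformly for $u$ in the compact set $[\delta,1]$, hence $J_1(x,\delta) \to (1-\delta^\al)/\al$ as $x\to\infty$. For $J_2$, fix any $\beta \in (0,\al)$. The Potter-type bound~\eqref{eq:svissubpoly}, applied with the larger argument $x$ and smaller argument $xu$, gives a threshold $x_0$ such that for all sufficiently large $x$ and all $u \in [x_0/x, 1]$ one has $h(xu)/h(x) \le u^{-\beta}$. Consequently
\[
    \int_{x_0/x}^{\delta} \frac{h(xu)}{h(x)} u^{\al-1} du \le \int_0^\delta u^{\al-\beta-1} du = \frac{\delta^{\al-\beta}}{\al-\beta},
\]
which can be made arbitrarily small by shrinking $\delta$. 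The residual sliver $[a/x, x_0/x]$ (nonempty only if $a < x_0$) has length $O(1/x)$; on it $h(xu)$ is bounded by a constant $M = \sup_{a \le s \le x_0} h(s) < \infty$ by continuity of $h$, and since~\eqref{eq:svissubpoly} forces $h(x)x^\al \ge x^{\al/2}$ for $x$ large, the contribution is $O(x^{-\al/2}) = o(1)$.

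Combining these: given $\eps > 0$, choose $\delta$ so small that $\delta^\al/\al < \eps$ and $\delta^{\al-\beta}/(\al-\beta) < \eps$, then choose $x$ large enough that $J_1(x,\delta)$ is within $\eps$ of $(1-\delta^\al)/\al$ and the sliver contribution is $< \eps$. A triangle-inequality argument then yields $|J(x) - 1/\al| \le 4\eps$, proving the convergence. The main obstacle is the behaviour of $h(xu)/h(x)$ as $u \to 0$, where the compact-set uniform convergence offered by Theorem~\ref{thm:uniformconv} gives no information; the Potter-type bound~\eqref{eq:svissubpoly} (itself a consequence of the Representation Theorem) is exactly the tool that tames this small-$u$ tail and makes dominated-convergence-style reasoning go through.
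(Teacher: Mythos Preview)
The paper does not give its own proof of this statement: it is quoted directly from \cite[Prop.~1.5.8]{Bingham1987} and used as a black box, with only the discrete analogue (Corollary~\ref{coro:corfinsums}) proved afterwards. Your argument is correct and is in fact the standard textbook proof --- change of variables $t=xu$, uniform convergence on $[\delta,1]$ via Theorem~\ref{thm:uniformconv}, Potter bounds~\eqref{eq:svissubpoly} with exponent $\beta<\al$ to dominate the integrand on $[x_0/x,\delta]$ by the integrable $u^{\al-\beta-1}$, and a crude estimate on the remaining sliver $[a/x,x_0/x]$. One cosmetic remark: when you bound the sliver by ``length $O(1/x)$ times a constant'', you should make explicit that the factor $u^{\al-1}$ is handled too; for $\al<1$ it blows up near $0$, but since $u\ge a/x$ on the sliver one still gets $\int_{a/x}^{x_0/x} u^{\al-1}\,du \le x_0^\al/(\al x^\al)$, and then $1/(h(x)x^\al)=O(x^{-\al/2})$ by~\eqref{eq:svissubpoly} as you say.
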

We will be interested in sums rather than integrals. A simple trick  and the ``sub-polynomiality'' of slowly varying functions will help us here. We doubt that the following statement was not known before, but we know of no reference.
\begin{corollary}
\label{coro:corfinsums}
The previous theorem holds with $\int_{a}^x$ replaced by $\sum_a^{x-1}$ for $a \in \mathbb{N}$.
\end{corollary}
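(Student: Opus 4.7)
The plan is to compare the discrete sum with the integral term by term over unit intervals and invoke Karamata's Theorem on the integral side. Writing $c(t) = h(t) t^{\alpha-1}$, the key observation is that $c$ varies negligibly on each unit interval $[k, k+1]$ once $k$ is large. Indeed, by~\eqref{eq:svissubpoly}, for any $\delta > 0$ and all sufficiently large $k$ we have $h(t)/h(k) \in [(1+1/k)^{-\delta}, (1+1/k)^\delta]$ uniformly for $t \in [k, k+1]$; combined with the trivial bound $(t/k)^{\alpha-1} = 1 + o(1)$ on the same interval, this gives that for any $\epsilon > 0$ there is an integer $K_0 \ge a$ such that
\[
	(1-\epsilon) c(k) \le c(t) \le (1+\epsilon) c(k),
	\quad t \in [k, k+1],\; k \ge K_0.
\]

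Integrating over $[k, k+1]$ and summing from $k = K_0$ to $k = \lfloor x \rfloor - 1$ yields
\[
	(1-\epsilon) \sum_{k=K_0}^{\lfloor x\rfloor - 1} c(k)
	\le \int_{K_0}^{\lfloor x\rfloor} c(t)\, dt
	\le (1+\epsilon) \sum_{k=K_0}^{\lfloor x\rfloor - 1} c(k).
\]
Karamata's Theorem (and the fact that the small correction arising from replacing $x$ by $\lfloor x \rfloor$ and $a$ by $K_0$ is absorbed since $h(x)x^\alpha \to \infty$ as $x \to \infty$, because $\alpha > 0$ and $h(x) \ge x^{-\delta}$ eventually for any $\delta > 0$) gives $\int_{K_0}^{\lfloor x\rfloor} c(t)\, dt \sim \alpha^{-1} h(x) x^\alpha$. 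The finitely many omitted terms $\sum_{k=a}^{K_0-1} c(k)$ are bounded and hence negligible.

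Putting these together,
\[
	(1+\epsilon)^{-1} \alpha^{-1} h(x) x^\alpha
	\lesssim \sum_{k=a}^{\lfloor x\rfloor - 1} c(k)
	\lesssim (1-\epsilon)^{-1} \alpha^{-1} h(x) x^\alpha,
\]
and since $\epsilon > 0$ is arbitrary, $\sum_{k=a}^{\lfloor x\rfloor - 1} c(k) \sim \alpha^{-1} h(x) x^\alpha$ as $x \to \infty$. There is no real obstacle here: the only step that needs some care is the uniform sandwiching of $c(t)$ by $c(k)$ on unit intervals, and this follows immediately from the Potter-type bound~\eqref{eq:svissubpoly}. The rest is just a telescoping estimate and an appeal to Karamata's Theorem.
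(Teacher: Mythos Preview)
Your proof is correct and follows essentially the same approach as the paper: both compare the sum and the integral over unit intervals using the Potter-type bound~\eqref{eq:svissubpoly} to show that $c(t)/c(k) = 1 + o(1)$ uniformly for $t \in [k,k+1]$, then sandwich and invoke Karamata's Theorem. The paper phrases the unit-interval estimate as a bound on $\sup - \inf$ and then on the difference $|\sum - \int|$, whereas you write it directly as a two-sided multiplicative sandwich, but the content is the same.
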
		
\begin{proof}
The bounds in~\eqref{eq:svissubpoly} guarantee that $c(s)^{-1}\sup_{0 \le x \le 1} c(s+x) \le \sup_{0\le x\le 1}(1-x/s)^\al  = 1$ and also $c(x)^{-1}\inf_{0 \le x \le 1} c(s+x)\ge \inf_{0 \le x \le 1}(1+x/s)^{-1} \sim 1-x/s$ for $s\to \infty$. Hence for any $\eps > 0$ there is $s_0 \in \mathbb{N}$ such that
\begin{align}
\label{eq:supinfbound}
	\bigg\lvert\sup_{0 \le x \le 1} c(s+x) 
	- \inf_{0 \le x \le 1} c(s+x)\bigg\rvert 
	\le c(s)\eps   
	~\text{ for all }~ s \in \mathbb{N}, s \ge s_0.
\end{align}
This helps us in proving the claimed statement as follows. Note that
\begin{align*}
	\left| \sum_{s = s_0}^{x-1}  c(s) 
	- \int_{s_0}^x c(t) dt\right|
	\le 	\sum_{s = s_0}^{x-1}  \left|  c(s) 
	- \int_{s}^{s+1} c(t) dt\right| 
	\le \sum_{s = s_0}^{x-1}  \bigg|\sup_{0 \le x \le 1} c(s+x) 
	- \inf_{0 \le x \le 1} c(s+x)\bigg|.
\end{align*}
By using~\eqref{eq:supinfbound} we infer that this sum is at most $\eps \sum_{s=s_0}^{x-1} c(s)$. Hence
\[
	\left\lvert 1 - \frac{\sum_a^{x-1}c(s)}{\int_a^xc(t)dt}
	\right\rvert
	\sim \left\lvert 1-
	\frac{\sum_{s_0}^{x-1}c(s)}{\int_{s_0}^xc(t)dt} \right\rvert
	\le \eps.
\]
As $\eps > 0$ was arbitrary, the proof is completed.
\end{proof}
Another ingredient in our proofs is the following property of sums, where the terms depend on some slowly varying function. Let $U$ be a non-decreasing right-continuous function on $\mathbb{R}$ such that $U(x) = 0$ for $x < 0$. %In our setting, $U$ will be related to the distribution of a non-negative random variable. 
Consider the Laplace-Stieltjes transform
\[
	\hat U(\chi_n) = \int_0^\infty \eul^{-\chi x}\textrm{d}U(x).
\]
%If $U$ is the distribution of a non-negative random variable $Z$, then $\hat U(\chi_n) = \mathbb{E}[\eul^{\chi_n Z}]$.
If $U$ is a step function with jumps at the integers, that is, $U(s) = U(\lfloor s \rfloor)$ for all $s\in\mathbb{R}$, then
\begin{equation}
\label{eq:hatU}
	\hat U(\chi_n) = \sum_{s \ge 0}  U(s) \eul^{-\chi_n s}.
\end{equation}
The next result is referenced to as \emph{Karamata's Tauberian Theorem} and was derived in~\cite{Karamata1931}.
\begin{theorem}[{\cite[Thm.~1.7.1]{Bingham1987}}]
\label{thm:UhatU}
Let $\al  \ge 0, h$ slowly varying and $c > 0$. Then the following statements are equivalent.
\begin{enumerate}
	\item $U(x) \sim \frac{c}{\Gamma(\al +1)} \, h(x) \, x^\al $ as $x \to \infty$.
	\item $\hat U(\chi) \sim c \, h(\chi^{-1}) \, \chi^{-(\al +1)}$ as $\chi \to 0$.
\end{enumerate}
\end{theorem}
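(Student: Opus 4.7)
My plan is to prove the two implications separately, treating the Abelian direction $(1)\Rightarrow(2)$ first and the harder Tauberian direction $(2)\Rightarrow(1)$ second.

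For $(1)\Rightarrow(2)$ I would start from the integration-by-parts identity $\hat U(\chi) = \chi\int_0^\infty e^{-\chi x} U(x)\,dx$, which is valid because $U(x)=0$ for $x<0$ and $e^{-\chi x}U(x)\to 0$ as $x\to\infty$ thanks to the polynomial growth promised by~(1). The substitution $y=\chi x$ rewrites this as $\int_0^\infty e^{-y} U(y/\chi)\,dy$, and replacing $U(y/\chi)$ by its asymptotic from~(1) exposes the anticipated factor $h(\chi^{-1})\chi^{-\alpha}$: the Uniform Convergence Theorem (Theorem~\ref{thm:uniformconv}) lets one replace $h(y/\chi)$ by $h(\chi^{-1})$ uniformly on compact $y$-sets, while~\eqref{eq:svissubpoly} produces an integrable dominating function of the form $C(y^{\alpha-\delta}+y^{\alpha+\delta})e^{-y}$ for $\delta$ sufficiently small. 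Dominated convergence reduces the integral to $\frac{c}{\Gamma(\alpha+1)}\int_0^\infty y^\alpha e^{-y}\,dy = c$, which combined with the extracted power of $\chi$ and the factor $h(\chi^{-1})$ yields the claimed asymptotic form of $\hat U(\chi)$.

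For $(2)\Rightarrow(1)$ I would follow Karamata's classical argument. For each large $x>0$ introduce the rescaled non-decreasing function $U_x(\lambda) := U(\lambda x)/(h(x)x^\alpha)$ on $[0,\infty)$. Hypothesis~(2), combined with the slow variation of $h$ (i.e., $h(\lambda/\chi)/h(\chi^{-1})\to 1$ from Theorem~\ref{thm:uniformconv}), forces the Laplace--Stieltjes transform of $U_x$ to converge pointwise, at every $s>0$, to the Laplace transform of the distribution $V(\lambda):=c\lambda^\alpha/\Gamma(\alpha+1)$. The target~(1) is then the single-point statement $U_x(1)\to V(1)$, so the task reduces to lifting pointwise convergence of Laplace transforms to convergence of distribution functions at a fixed point. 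Pointwise Laplace convergence gives $\int f(e^{-s\lambda})\,dU_x(\lambda)\to\int f(e^{-s\lambda})\,dV(\lambda)$ for every polynomial $f$, hence, by Stone--Weierstrass, for every $f\in C([0,1])$. The discontinuous indicator $\mathbf{1}_{[0,1]}$ corresponding to $U_x(1)$ is then sandwiched between continuous approximants $g_-\le\mathbf{1}_{[0,1]}\le g_+$; the monotonicity of $U_x$ together with the continuity of the limit $V$ at $\lambda=1$ makes the sandwich sharp in the limit.

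The main obstacle is the Tauberian direction, and specifically its final step: the monotonicity of $U$ is essential (without it, oscillating $U$ can produce the same Laplace-transform asymptotics and the implication fails outright), and the slowly varying factor $h$ must be threaded carefully through the rescaling so that the Stone--Weierstrass step operates uniformly in the auxiliary parameter $x$. Since this is a classical textbook statement, for the purposes of the paper one may simply invoke~\cite[Thm.~1.7.1]{Bingham1987} rather than carry out the sketch in full detail.
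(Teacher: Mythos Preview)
The paper does not prove this theorem at all: it is stated in the appendix purely as a quoted textbook result from \cite[Thm.~1.7.1]{Bingham1987}, with no accompanying argument. Your own final sentence already acknowledges exactly this option, and that is precisely what the paper does.

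Your sketch is nonetheless a faithful outline of Karamata's classical proof. The Abelian direction via $\hat U(\chi)=\chi\int_0^\infty e^{-\chi x}U(x)\,dx$, the substitution $y=\chi x$, and dominated convergence with the Potter-type bounds~\eqref{eq:svissubpoly} is the standard route; the Tauberian direction via rescaling, pointwise Laplace convergence, Stone--Weierstrass on $C([0,1])$, and the sandwich of $\mathbf{1}_{[0,1]}$ exploiting monotonicity of $U$ and continuity of $V$ is exactly Karamata's argument as presented in \cite{Bingham1987}. There is no gap in your outline beyond the expected suppression of $\varepsilon$-management in the domination and sandwich steps. So: correct, but superfluous here---the paper is content to cite the result.
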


\end{document}